\numberwithin{equation}{section}
\newcommand{\MM}{\mathcal{M}}
\newtheorem{thm}{Theorem}[section]
\newtheorem{lem}[thm]{Lemma}
\newtheorem{prop}[thm]{Proposition}
\newtheorem{cor}[thm]{Corollary}
\theoremstyle{definition}
\newtheorem{exam}[thm]{Example}
\newtheorem{defn}[thm]{Definition}
\newtheorem{remark}[thm]{Remark}
\newcommand\aftersubsection{\hfill\vspace{5pt}}
\newcommand\LL{\mathcal{L}}
\newcommand\FF{\mathcal{F}}
\newcommand\la{\lambda}
\newcommand\Mot{\operatorname{Motz}}
\newcommand\wt{\operatorname{wt}}
\newcommand\FT{\operatorname{FT}}
\newcommand\add{\mathsf{add}}
\newcommand\rem{\mathsf{remove}}
\newcommand\wLL{\LL^*}
\newcommand\vP{P^\vee}
\newcommand\va{a^\vee}
\newcommand\vb{b^\vee}
\newcommand\vwt{\wt^\vee}
\newcommand\Sch{\operatorname{Sch}}
\newcommand\LH{\operatorname{LH}}
\newcommand\MH{\operatorname{MH}}
\newcommand\MS{\operatorname{MS}}
\newcommand\Sym{\mathfrak{S}}
\newcommand\Fix{\operatorname{Fix}}
\newcommand\Span{\operatorname{span}}
\newcommand\cyc{\operatorname{cyc}}
\newcommand\nexc{\operatorname{nexc}}
\newcommand\Hyper[5]{
  \:{}_{#1}F_{#2} \left( \left.
    \begin{matrix}
      #3\\
      #4\\
    \end{matrix}
    \:\right|\: #5
    \right)
}
\newcommand\qHyper[5]{
  \:{}_{#1}\phi_{#2} \left( \left.
    \begin{matrix}
      #3\\
      #4\\
    \end{matrix}
    \:\right|\: #5
    \right)
}
\newcommand\bd{\mathsf{bd}}
\newcommand\bm{\mathsf{bm}}
\newcommand\rd{\mathsf{rd}}
\renewcommand\rm{\mathsf{rm}}
\def\addable(#1,#2){\draw [blue, line width=1pt] (#1,#2) circle [radius=6pt];}
\def\remove(#1,#2){\fill [red] (#1,#2) circle [radius=4pt];}
\def\BM#1{\draw [line width=2pt] (#1+0.9,0.9) rectangle +(-0.8,-0.8);} 
\def\RM#1{\draw [line width=2pt,red] (#1+0.9,0.9) rectangle +(-0.8,-0.8);}
\def\BD#1{\draw [line width=2pt] (#1+0.9,0.9) rectangle +(-1.8,-0.8);}
\def\RD#1{\draw [line width=2pt,red] (#1+0.9,0.9) rectangle +(-1.8,-0.8);}
\def\LBM#1{\BM{#1} \node at (#1+0.5, 1.3) {$1$};}
\def\LRM#1{\RM{#1} \node at (#1+0.5, 1.3) {$-b_{#1}$};}
\def\LBD#1{\BD{#1} \node at (#1, 1.3) {$-a_{#1}$};}
\def\LRD#1{\RD{#1} \node at (#1, 1.3) {$-\lambda_{#1}$};}
\title{Combinatorics of orthogonal polynomials of type $R_I$}
\author{Jang Soo Kim and Dennis Stanton}
\thanks{The first author was supported by NRF grants \#2019R1F1A1059081 and \#2016R1A5A1008055.} 
\address{Department of Mathematics,
Sungkyunkwan University (SKKU), Suwon, Gyeonggi-do 16419, South Korea}
\email{jangsookim@skku.edu}
\address{School of Mathematics,
University of Minnesota,
Minneapolis, Minnesota 55455, USA}
\email{stanton@math.umn.edu}
\date{\today}
\subjclass[2010]{Primary: 33C45 Secondary: 05A15}
\begin{document}

\begin{abstract} A combinatorial theory for type $R_I$ orthogonal polynomials is
  given. The ingredients include weighted generalized Motzkin paths, moments,
  continued fractions, determinants, and histories. Several explicit examples in
  the Askey scheme are given.
\end{abstract}

\dedicatory{Dedicated to Dick Askey, our great mentor.}
\maketitle
\tableofcontents

\section{Introduction}

Ismail and Masson \cite{IsmailMasson} defined orthogonal polynomials of type
$R_I$ by generalizing the three-term recurrence relation that classical
orthogonal polynomials satisfy. In this paper we develop the combinatorial
theory of type $R_I$ orthogonal polynomials. This theory parallels the
Flajolet--Viennot development \cite{Flajolet1980,ViennotLN}, for
classical orthogonal polynomials, using weighted paths. We also give type $R_I$
versions of several classical polynomials in the Askey scheme.

The orthogonal polynomials $P_n(x)$ of type $R_I$  are defined recursively.
Let $\{P_n(x)\}_{n\ge0}$ be defined by $P_{-1}(x)=0$, $P_0(x)=1$ and for $n\ge0$,
\begin{equation}
  \label{eq:favard1}
  P_{n+1}(x) = (x-b_n)P_n(x) -(a_n x+\lambda_n )P_{n-1}(x).  
\end{equation}
This defines $P_n(x)$ as a monic polynomial in $x$ of degree $n$ and also a
polynomial in the recurrence coefficients, $\{b_k\}_{k\ge0}$, $\{a_k\}_{k\ge0}$,
and $\{\lambda_k\}_{k\ge0},$ where the values \( a_0 \) and \( \lambda_0 \) are
irrelevant. Let
$$
d_m(x) := \prod_{i=1}^m (a_i x+\lambda_i), \quad and \quad Q_m(x) := \frac{P_m(x)}{d_m(x)}.
$$
The orthogonality relation for $P_n(x)$ is defined by a linear functional $\LL$ on a
certain vector space of rational functions:
$$
\LL(P_n(x)Q_m(x))=0, \quad \mbox{if} \quad 0\le n<m.
$$

The motivation for this paper is to explain this orthogonality combinatorially. 
Our combinatorial models for $\LL$ and $P_n(x)$ have weighted objects, where the 
weights depend on the coefficients $b_n$, $a_n$, and $\lambda_n$. 
Note that $a_n=0$ is the classical orthogonal polynomial case. 
In this case the objects of weight $0$
may be deleted, and Viennot's theory is recovered. 

\textbf{Throughout this paper we assume that $a_n\ne0$ and 
$P_n(-\lambda_n/a_n)\ne0$ for all $n\ge0$ unless otherwise stated.}

In the classical theory, the orthogonality takes place via a linear functional
$\LL$ on the vector space of polynomials. For type $R_I$ orthogonality, we will
need to extend the definition of $\LL$ from the space of polynomials to a larger
vector space
$$
V=\mathrm{span}\{x^n Q_m(x): n,m\ge0\}
$$ 
of rational functions.

In Section~\ref{sec:proof-orthogonality} we find several bases of the vector
space $V$. We show that there is a unique
linear functional $\LL$ on $V$ with respect to which the type $R_I$ orthogonal
polynomials $P_n(x)$ are orthogonal. This is a slight improvement of a result of
Ismail and Masson \cite{IsmailMasson}.

In Section~\ref{sec:moments-ops-type} we give combinatorial interpretations for
$\mu_n=\LL(x^n)$, $\mu_{n,m}=\LL(x^n Q_m(x))$, $\mu_{n,m,\ell}=\LL(x^n P_m(x)
Q_\ell(x))$, and $\rho_{n,m,\ell}=\LL(x^n P_m(x) P_\ell(x))$ in terms of lattice
paths called Motzkin-Schr\"oder paths. We find the infinite continued fraction
for the moment generating function. We also give a recursive formula for
$\nu_{n,m} = \LL\left(x^n/d_m(x)\right)$.

In Section~\ref{sec:inverted-polynomials} we study Laurent biorthogonal
polynomials $P_n(x)$, which are type $R_I$ orthogonal polynomials with
$\lambda_n=0$. Kamioka \cite{Kamioka2014} combinatorially studied these
polynomials. In this case there is another linear functional $\FF$ that gives a
different type of orthogonality for $P_n(x)$. We find a simple connection
between our linear functional $\LL$ and the other linear functional $\FF$ using
inverted polynomials. We then review Kamioka's results and generalize them.

In Section~\ref{sec:lattice-paths-with} we express the generating function for
Motzkin-Schr\"oder paths with bounded height in terms of inverted polynomials
where the indices of the sequences $a_n,b_n$, and $\lambda_n$ are shifted. There 
are finite continued fractions for these rational functions which are explicitly 
given by the three-term recurrence coefficients.

In Section~\ref{sec:hank-determ-type} we find determinant formulas for $P_n(x)$
and $Q_n(x)$ using $\nu_{i,j}$. We also consider some Hankel determinants using 
$\mu_n$.

In Sections~\ref{sec:explicit-type-r_i}, \ref{glue}, and \ref{powers} 
we give examples of type $R_I$ orthogonal
polynomials, including Askey--Wilson and $q$-Racah polynomials.

In Section~\ref{sec:combinatorics} we study combinatorial aspects of some
type $R_I$ polynomials in the previous sections.

\section{The orthogonality of type $R_I$ polynomials}
\label{sec:proof-orthogonality}

In this section we find several bases of the vector space 
\begin{equation}
  \label{eq:V}
  V: = \Span\{x^n Q_m(x):n,m\ge0\}.
\end{equation}
We then give a detailed proof of the following result of Ismail and Masson
\cite{IsmailMasson}.

\begin{thm}\cite[Theorem~2.1]{IsmailMasson}
  \label{thm:unique L}
  There is a unique linear functional $\LL$ on $V$ satisfying $\LL(1)=1$ and 
  \[
    \LL(x^n Q_m(x)) = 0  \qquad \mbox{if $0\le n< m$}.
  \]
\end{thm}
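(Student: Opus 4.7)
The plan is to find a concrete basis of $V$, then use that basis to show the conditions pin down $\LL$ uniquely (via a triangular subsystem of the orthogonality relations), and finally verify that the so-defined $\LL$ is consistent with the remaining conditions.

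\textbf{Step 1 (basis of $V$).} First I would show that
\[
\mathcal{B} = \{x^n : n \ge 0\} \cup \{Q_m(x) : m \ge 1\}
\]
is a basis of $V$. Dividing the recurrence~\eqref{eq:favard1} by $d_{n+1}(x)$ produces
\[
(a_{n+1}x + \lambda_{n+1})\,Q_{n+1}(x) = (x - b_n)\,Q_n(x) - Q_{n-1}(x).
\]
Specializing $n=0$ (with $Q_{-1}=0$, $Q_0=1$) solves for $x\,Q_1(x) \in \Span(\mathcal{B})$; iterating in $n$ yields $x\,Q_m(x) \in \Span(\mathcal{B})$ for every $m \ge 1$, and induction on the power of $x$ then shows every $x^n Q_m(x) \in \Span(\mathcal{B})$. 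For linear independence I would examine poles: in a putative relation $\sum_n c_n x^n + \sum_{m=1}^M d_m Q_m(x) = 0$, the hypothesis $P_M(-\lambda_M/a_M) \ne 0$ guarantees that $Q_M$ has a pole at $x=-\lambda_M/a_M$ which is absent from the polynomial terms and from each $Q_j$ with $j<M$; localizing at this pole forces $d_M=0$, and downward induction on $M$ completes the argument.

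\textbf{Steps 2--3 (uniqueness and existence).} Once $\mathcal{B}$ is a basis, $\LL$ is determined by $\mu_n = \LL(x^n)$ and $\LL(Q_m)$. The normalization gives $\mu_0=1$, and the $n=0$ orthogonality relations give $\LL(Q_m)=0$ for every $m \ge 1$. For the moments, expand $x^n Q_{n+1}(x)$ in $\mathcal{B}$: its polynomial part has exact degree $n$ with leading coefficient $1/\prod_{i=1}^{n+1}a_i \ne 0$, so $\LL(x^n Q_{n+1}) = 0$ becomes a triangular linear equation expressing $\mu_n$ in terms of $\mu_0, \ldots, \mu_{n-1}$ and the recurrence coefficients. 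Induction on $n$ then pins down every moment, yielding uniqueness. For existence, define $\LL$ on $\mathcal{B}$ by these values and extend linearly; the remaining relations $\LL(x^n Q_m) = 0$ for $0 \le n < m-1$ are checked by a double induction on $m$ and on $n$, applying $\LL$ to the identity obtained by multiplying the $Q_m$-recurrence above by $x^n$ and invoking the inductive hypotheses together with the equations used in the uniqueness step.

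\textbf{Main obstacle.} The subtlest step is the existence verification: the orthogonality conditions form an overdetermined system, and one must confirm that the ``extra'' relations $\LL(x^n Q_m) = 0$ with $n < m-1$ are automatic consequences of the triangular subsystem used to define the moments. This compatibility is really a restatement of the three-term recurrence~\eqref{eq:favard1} and is the essential content of the Ismail--Masson theorem. A secondary technicality arises in Step~1 when the zeros $-\lambda_i/a_i$ are not all distinct; in that case linear independence requires tracking the orders of poles of the $Q_m$ rather than merely the locations.
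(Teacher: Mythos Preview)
Your proposal is correct and follows the same overall architecture as the paper's proof: exhibit a basis of $V$, argue uniqueness from it, then verify existence by the double induction using the $Q_m$-recurrence. The existence induction you sketch is literally the paper's argument.

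The one substantive difference is the choice of basis. The paper works with
\[
B=\{x^{m-1}Q_m(x):m\ge2\}\cup\{Q_m(x):m\ge0\},
\]
which is already contained in $\{1\}\cup\{x^nQ_m(x):0\le n<m\}$; once $B$ is known to be a basis, uniqueness is a one-line observation with no triangular system needed. Your basis $\mathcal{B}=\{x^n:n\ge0\}\cup\{Q_m:m\ge1\}$ is arguably more natural and your independence argument via pole orders is clean (the key point, which you correctly flag, is that $Q_M$ has strictly higher pole order at $-\lambda_M/a_M$ than any $Q_j$ with $j<M$, because the factor $a_Mx+\lambda_M$ appears in $d_M$ but not in $d_j$). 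The cost is that uniqueness now requires the extra step of expanding $x^nQ_{n+1}$ in $\mathcal{B}$ and reading off the leading polynomial coefficient; your claim that this coefficient equals $1/\prod_{i=1}^{n+1}a_i\neq0$ is correct (compare asymptotics as $x\to\infty$, noting each $Q_j$ tends to a constant), so the triangular determination of the $\mu_n$ goes through. Either basis does the job; the paper's choice front-loads the work into proving the basis property and then gets uniqueness for free, while yours trades an easier basis verification for a short additional argument at the uniqueness step.
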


We note that Theorem~\ref{thm:unique L} is stated slightly differently in
\cite[Theorem~2.1]{IsmailMasson}. Instead of stating the uniqueness of $\LL$,
they write that the values of $\LL$ on the elements in the set
$\{x^n:n\ge0\}\cup\{1/d_m(x):m\ge 1\}$ are uniquely determined. We will show
that this set is a basis of $V$, hence the two statements are equivalent. In the
proof of \cite[Theorem~2.1]{IsmailMasson} they implicitly use the fact that
$\{x^{m-1}Q_{m}(x):m\ge2\}\cup \{Q_m(x):m\ge0\}$ is a basis of $V$ without a proof.
In this section we give a detailed proof of Theorem~\ref{thm:unique L} by
showing this fact.

Observe that dividing both sides of \eqref{eq:favard1} by $d_n(x)$ gives
\begin{equation}
  \label{eq:favard2}
  (a_{n+1}x+\lambda_{n+1}) Q_{n+1}(x) = (x-b_n)Q_n(x) -Q_{n-1}(x).  
\end{equation}

Let
\begin{equation}
  \label{eq:V'}
   V':= \Span\{x^n/d_m(x):n,m\ge0 \}.  
\end{equation}
We first find a basis of $V'$ and then show that $V=V'$.

\begin{lem}\label{lem:V'}
The vector space $V'$ has a basis 
  \begin{equation}
    \label{eq:basis'}
    \{x^n:n\ge0\}\cup\{1/d_m(x):m\ge 1\}.        
  \end{equation}
\end{lem}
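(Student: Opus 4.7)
The plan is to show separately that the proposed set spans $V'$ and is linearly independent. The crucial ingredient for both parts is the identity
\[
 \frac{x}{d_m(x)} = \frac{1}{a_m}\left(\frac{1}{d_{m-1}(x)} - \frac{\lambda_m}{d_m(x)}\right),
\]
which comes from writing $a_m x = (a_m x + \lambda_m) - \lambda_m$ inside the numerator. This identity makes essential use of the assumption $a_m\ne 0$.

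For spanning, I would proceed by induction on $n$ to show that $x^n/d_m(x)$ lies in the span of \eqref{eq:basis'} for every $m\ge 0$ (with the convention $d_0(x)=1$). The base case $n=0$ is immediate: the element $1/d_m(x)$ is in the proposed set for $m\ge 1$, and $1/d_0(x)=1$ is $x^0$. For the inductive step, when $m=0$ the element is $x^n$ itself, and when $m\ge 1$ the identity above multiplied by $x^{n-1}$ expresses $x^n/d_m(x)$ as a linear combination of $x^{n-1}/d_{m-1}(x)$ and $x^{n-1}/d_m(x)$, both of which are in the span by the inductive hypothesis.

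For linear independence, suppose $\sum_{n=0}^{N} c_n x^n + \sum_{m=1}^{M} \alpha_m/d_m(x) = 0$ as rational functions. Clear denominators by multiplying through by $d_M(x)$: since $d_M(x)/d_m(x) = \prod_{i=m+1}^{M}(a_i x + \lambda_i)$ is a polynomial, this produces a polynomial identity
\[
\sum_{n=0}^{N} c_n x^n d_M(x) + \sum_{m=1}^{M}\alpha_m \prod_{i=m+1}^{M}(a_i x+\lambda_i) = 0.
\]
The first sum has degree $N+M$ while the second has degree at most $M-1$, so comparing top coefficients of $x^{N+M}, x^{N+M-1}, \ldots, x^M$ forces $c_N = c_{N-1} = \cdots = c_0 = 0$, using $a_i\ne 0$ to ensure the leading coefficient of $d_M(x)$ is nonzero. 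What remains is $\sum_{m=1}^{M}\alpha_m \prod_{i=m+1}^{M}(a_i x+\lambda_i) = 0$, and successively reading off coefficients of $x^{M-1}, x^{M-2}, \ldots, x^0$ forces $\alpha_1, \alpha_2, \ldots, \alpha_M$ to vanish one by one, again because each $a_i\ne 0$.

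The main thing to watch out for is simply the bookkeeping with degrees in the polynomial identity and making certain that the hypothesis $a_n \ne 0$ is invoked at the two places where it matters (reducing $x/d_m(x)$ in the spanning argument, and extracting leading coefficients in the independence argument). No argument about whether the roots $-\lambda_i/a_i$ of $d_m(x)$ are distinct is needed, which is convenient since no such hypothesis is assumed.
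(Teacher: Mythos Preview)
Your proof is correct. The spanning argument differs from the paper's: rather than inducting on $n$ with the identity $x/d_m(x)=\tfrac{1}{a_m}(1/d_{m-1}(x)-\lambda_m/d_m(x))$, the paper first performs polynomial division of $p(x)$ by $d_j(x)$ to reduce to the case $\deg p<j$, and then iteratively divides the remainder by $a_jx+\lambda_j$ to peel off one factor of the denominator at a time. Both arguments ultimately exploit the same algebraic fact $a_mx=(a_mx+\lambda_m)-\lambda_m$, but your induction on the numerator exponent is a bit more streamlined, while the paper's version is organized around the degree of the denominator. For linear independence the paper simply asserts the elements are independent; your degree-comparison argument after clearing denominators fills that in explicitly and is a welcome addition.
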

\begin{proof}
  Let $B$ be the given set. Since the elements in $B$ are linearly independent,
  it suffices to show that $B$ spans $V'$. To do this, it suffices to show that
  $p(x)/d_j(x)\in \mathrm{span}(B)$ for any polynomial $p(x)$ and any integer
  $j\ge1$. 

  By dividing $p(x)$ by $d_j(x)$, we can write
\begin{equation}\label{eq:iterate1}
  \frac{p(x)}{d_j(x)} = q(x) +\frac{r(x)}{d_j(x)},
\end{equation}
where $q(x)$ and $r(x)$ are polynomials and $\deg r(x)<j$. If $r(x)$ is
constant, we have $p(x)/d_j(x)\in\mathrm{span}(B)$. Otherwise, we can write
\begin{equation}\label{eq:iterate2}
  \frac{r(x)}{d_j(x)} = \frac{(a_jx+\lambda_j)r_1(x)+c}{d_j(x)} =
  \frac{r_1(x)}{d_{j-1}(x)}+\frac{c}{d_j(x)},
\end{equation}
where $r_1(x)$ is a polynomial with $\deg r_1(x)<j-1$ and $c$ is a constant. By
iterating \eqref{eq:iterate2} we can express $r(x)/d_j(x)$ as a linear
combination of the elements in $B$. Then by \eqref{eq:iterate1} we have
$p(x)/d_j(x)\in\mathrm{span}(B)$ as desired.
\end{proof}

\begin{lem}\label{lem:span}
  We have $V=V'$.
\end{lem}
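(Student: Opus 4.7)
The plan is to verify the two inclusions separately. The direction $V\subseteq V'$ will be immediate: since $P_m(x)$ is a polynomial, $x^n Q_m(x) = x^n P_m(x)/d_m(x)$ is a polynomial divided by $d_m(x)$, hence lies in $V'$.

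For the reverse inclusion $V'\subseteq V$, I would use Lemma~\ref{lem:V'} to reduce to showing that both pure monomials $x^n$ and the elements $1/d_m(x)$ (for $m\ge 1$) lie in $V$. The monomial case is trivial since $Q_0(x)=1$, giving $x^n = x^n Q_0(x)\in V$. The substantive step is to prove $1/d_m(x)\in V$ for all $m\ge 1$, which I would establish by induction on $m$.

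For the induction step, the key device is polynomial division of $P_m(x)$ by the linear factor $a_m x+\lambda_m$, writing
\[
P_m(x) = (a_m x+\lambda_m)\,S_m(x) + P_m(-\lambda_m/a_m)
\]
for a polynomial $S_m(x)$ of degree $m-1$. Dividing through by $d_m(x)$ yields the decomposition $Q_m(x) = S_m(x)/d_{m-1}(x) + P_m(-\lambda_m/a_m)/d_m(x)$. The standing hypothesis $P_m(-\lambda_m/a_m)\ne 0$ then lets us solve for $1/d_m(x)$ as a linear combination of $Q_m(x)$ (which lies in $V$ by definition) and $S_m(x)/d_{m-1}(x)$. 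The latter is handled by applying the iteration argument from the proof of Lemma~\ref{lem:V'} to rewrite it as a polynomial plus a linear combination of the elements $1/d_k(x)$ with $1\le k\le m-1$; each such term lies in $V$ by the inductive hypothesis, and polynomials lie in $V$ as already noted.

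The main delicate point, and the place where the assumption is essential, is precisely the step of solving for $1/d_m(x)$: if $P_m(-\lambda_m/a_m)$ were zero, the linear factor $a_m x+\lambda_m$ would cancel in $Q_m(x)$, so $Q_m(x)$ would already lie in $\operatorname{span}\{x^n/d_{m-1}(x):n\ge 0\}$ and the induction could not advance. Beyond isolating this observation, everything reduces to routine polynomial manipulations and an appeal to Lemma~\ref{lem:V'}.
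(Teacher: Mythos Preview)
Your proof is correct and follows essentially the same route as the paper's: both reduce $V'\subseteq V$ via Lemma~\ref{lem:V'} to showing $1/d_m(x)\in V$, then induct on $m$ by dividing $P_m(x)$ by $a_m x+\lambda_m$ and solving for $1/d_m(x)$ using the standing hypothesis $P_m(-\lambda_m/a_m)\ne 0$. The only cosmetic difference is that the paper writes the remainder term $S_m(x)/d_{m-1}(x)$ directly as $\sum_{i=0}^{m-1} c_i/d_i(x)$ (absorbing the constant into the $i=0$ summand), whereas you phrase it as ``a polynomial plus a linear combination of $1/d_k(x)$ for $1\le k\le m-1$''; these are equivalent since $\deg S_m(x)=m-1=\deg d_{m-1}(x)$ forces the polynomial part to be constant.
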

\begin{proof}
  By the definitions \eqref{eq:V} and \eqref{eq:V'} of $V$ and $V'$, it is clear
  that $V\subseteq V'$ and $x^n\in V$ for $n\ge0$. By Lemma~\ref{lem:V'}, the
  set in \eqref{eq:basis'} is a basis of $V'$. Thus it suffices to show that
  $1/d_m(x)\in V$ for all $m\ge0$. We prove this by induction on $m$, where the
  base case $m=0$ is clear.

  For $m\ge1$, let $U_{m-1}(x)$ be the quotient of $P_m(x)$ when divided by $a_mx+\lambda_m$:
  \[
    P_m(x) = (a_m x+\lambda_m) U_{m-1}(x) +P_m(-\lambda_m/a_m).
  \]
  Then
  \[
    \frac{P_m(x)}{d_m(x)} = \frac{U_{m-1}(x)}{d_{m-1}(x)} + \frac{P_m(-\lambda_m/a_m)}{d_m(x)},
  \]
and
\begin{equation}
  \label{eq:14}
    \frac{1}{d_m(x)} = \frac{1}{P_m(-\lambda_m/a_m)}
    \left(\frac{P_m(x)}{d_m(x)} - \frac{U_{m-1}(x)}{d_{m-1}(x)}\right).
\end{equation}
Since $U_{m-1}(x)$ is a polynomial of degree $m-1$, by iterating
\eqref{eq:iterate2}, we can write
  \[
\frac{U_{m-1}(x)}{d_{m-1}(x)} = \sum_{i=0}^{m-1} \frac{c_i}{d_i(x)},
  \]
  for some constants $c_0,c_1,\dots,c_{m-1}$. Then by the induction hypothesis,
  $U_{m-1}(x)/d_{m-1}(x)\in V$. Since $P_m(x)/d_m(x)=Q_m(x)\in V$, \eqref{eq:14}
  shows that $1/d_m(x)\in V$, completing the proof.
\end{proof}

Observe that every element of $V$ is of the form $p(x)/d_m(x)$ for some
polynomial $p(x)$ and an integer $m\ge0$. One can find many bases of $V$ as
follows.

\begin{prop}\label{prop:basis}
  Let $\{t_n:n\ge1\}$ be a sequence of nonnegative integers and let
  $\{f_n(x):n\ge1\}$ and $\{g_n(x):n\ge0\}$ be sequences of polynomials
  satisfying the following conditions:
  \begin{itemize}
  \item $\deg (f_n(x))=n+t_n$ for all $n\ge1$, and
  \item $\deg (g_n(x))\le n$ and $g_n(-\lambda_n/a_n)\ne 0$ for all $n\ge0$.
  \end{itemize}
  Then $V$ has a basis given by
  \[
   B= \{f_n(x)/d_{t_n}(x):n\ge1\}\cup\{g_n(x)/d_n(x):n\ge 0\}.    
  \]
\end{prop}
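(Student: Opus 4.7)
The plan is to compare the proposed set $B$ with the basis $B' = \{x^n:n\ge 0\}\cup\{1/d_m:m\ge 1\}$ of $V$ established in Lemmas~\ref{lem:V'} and~\ref{lem:span}. The core observation is that each element of $B$ admits a clean expansion in $B'$ in which one coefficient is ``leading'' and nonzero; this produces a two-variable upper-triangular change-of-basis between $B$ and $B'$.

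First I would expand each element of $B$ in the basis $B'$. The element $g_0$ is a nonzero constant. For $n\ge 1$, polynomial division yields $f_n = q_n d_{t_n}+r_n$ with $\deg r_n<t_n$, and the hypothesis $\deg f_n = n+t_n$ forces $q_n$ to have degree exactly $n$ with a nonzero leading coefficient. Iterating~\eqref{eq:iterate2} rewrites the proper fraction $r_n/d_{t_n}$ as a linear combination of $1/d_1,\dots,1/d_{t_n}$, so $f_n/d_{t_n}$ becomes a degree-$n$ polynomial plus such a combination. Applying the same iteration to $g_n/d_n$ gives $g_n/d_n = c_{n,0}+\sum_{i=1}^{n} c_{n,i}/d_i$; tracking the recursion shows $c_{n,n} = g_n(-\lambda_n/a_n)$, which is nonzero by hypothesis.

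For linear independence, given a finite relation $\alpha_0 g_0+\sum_{n=1}^N \alpha_n (f_n/d_{t_n})+\sum_{n=1}^N \beta_n (g_n/d_n) = 0$, I substitute these expansions to obtain a relation in $B'$ and equate each coefficient to $0$. The coefficient of $x^n$, read off for $n$ decreasing from $N$ down to $1$, forces $\alpha_n = 0$, because $\alpha_n$ appears multiplied by the nonzero leading coefficient of $q_n$ and all $\alpha_k$ with $k>n$ have already been killed. Then reading the coefficient of $1/d_n$, again for $n$ decreasing from $N$ down to $1$, forces $\beta_n = 0$, using $c_{n,n}\ne 0$ and the same triangular structure. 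Finally the coefficient of $1$ reduces to $\alpha_0 g_0 = 0$, so $\alpha_0 = 0$.

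For spanning, I show $B'\subseteq\Span(B)$ by two nested inductions. First, I show by induction on $n\ge 0$ that $1/d_n\in\Span(B)$ (with $1/d_0 := 1$): the base $n=0$ is $1 = g_0/g_0\in\Span(B)$, and for $n\ge 1$ the expansion of $g_n/d_n$ lets me solve for $1/d_n$ in terms of $g_n/d_n \in B$ and the already-constructed $1/d_k$ with $k<n$, using $c_{n,n}\ne 0$. Second, knowing every $1/d_k$ lies in $\Span(B)$, the expansion of $f_n/d_{t_n}$ places $q_n = f_n/d_{t_n}-r_n/d_{t_n}$ in $\Span(B)$, and the nonzero leading coefficient of $q_n$ lets me extract $x^n$ by induction on $n$. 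The main obstacle is essentially bookkeeping: the two hypotheses on $f_n$ and $g_n$ are precisely calibrated so that the change-of-basis matrix between $B$ and $B'$ is upper triangular with nonzero diagonal in the two independent ``directions'' (polynomial part and denominator), after which both linear independence and spanning fall out.
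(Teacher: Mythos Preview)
Your proof is correct and follows essentially the same strategy as the paper: both exploit the triangular structure coming from $\deg q_n=n$ (for the $f_n/d_{t_n}$) and the nonvanishing $c_{n,n}=g_n(-\lambda_n/a_n)$ (for the $g_n/d_n$). The only difference is packaging: you frame everything as an upper-triangular change of basis against the known basis $B'$, whereas the paper argues linear independence directly via the limit $x\to\infty$ and substitution $x=-\lambda_m/a_m$, and proves spanning through the slightly stronger inductive claim that $p(x)/d_m(x)\in\Span(B)$ whenever $\deg p\le m$.
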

\begin{proof}
  We first show that the elements in $B$ are linearly independent. Suppose that
  \begin{equation}
    \label{eq:16}
  \sum_{i=1}^n C_i \frac{f_i(x)}{d_{t_i}(x)} + \sum_{j=0}^m D_j \frac{g_j(x)}{d_j(x)} = 0,
  \end{equation}
  where $C_i$ and $D_j$ are constants. We must show that the coefficients $C_i$
  and $D_j$ are all zero. Dividing both sides of \eqref{eq:16} by $x^{n}$ and
  taking the limit $x\to\infty$, we obtain
\[
C_n \frac{\mathrm{lead}(f_n(x))}{\mathrm{lead}(d_{t_n}(x))}=0,
\]
where $\mathrm{lead}(p(x))$ is the leading coefficient of $p(x)$. This shows
$C_n=0$. Repeating this process with $x^n$ replaced by $x^i$, for
$i=n-1,n-2,\dots,1$, we obtain that $C_i=0$ for all $1\le i\le n$.
Then \eqref{eq:16} becomes
  \begin{equation}
    \label{eq:16'}
 \sum_{j=0}^m D_j \frac{g_j(x)}{d_j(x)} = 0.
  \end{equation}

  Let $k$ be the number of integers $1\le j\le m$ such that
  $-\lambda_j/a_j=-\lambda_m/a_m$. Multiplying both sides of \eqref{eq:16'} by
  $(a_mx+\lambda_m)^k$ and substituting $x=-\lambda_m/a_m$ we obtain $D_m=0$. In
  this way one can show that $D_j=0$ for all $1\le j\le m$. Then we also have
  $D_0=0$. Therefore the elements in $B$ are linearly independent.

  Now we show that $B$ spans $V$. By Lemmas~\ref{lem:V'} and \ref{lem:span}, it
  suffices to show that $x^n, 1/d_m(x)\in \mathrm{span}(B)$ for all $n\ge1$ and
  $m\ge0$.

  We claim that, for any $m\ge0$ and any polynomial $p(x)$,
\begin{equation}\label{eq:in_span}
\frac{p(x)}{d_m(x)} \in \mathrm{span}(B) \qquad \mbox{if $\deg p(x)\le m$}.
\end{equation}
We proceed by induction on $m$, where the base case $m=0$ is true because
$p(x)/d_0(x)$ is a constant and $g_0(x)/d_0(x)\in B$ is a nonzero constant.
Let $m\ge1$ and suppose \eqref{eq:in_span} holds for all integers less than $m$.
Then by the same argument as in the proof of Lemma~\ref{lem:span} we can write
\[
\frac{p(x)}{d_m(x)} = \sum_{i=0}^{m} \frac{c_i}{d_i(x)}
\]
for some constants $c_i$. By the induction hypothesis, we have $c_i/d_i(x)\in
\mathrm{span}(B)$ for \( 0\le i<m \), and therefore to show $p(x)/d_m(x)\in \mathrm{span}(B)$, it
is enough to show that $1/d_m(x)\in \mathrm{span}(B)$.
Dividing $g_m(x)$ by $(a_mx+\lambda_m)$, we have
\begin{equation}\label{eq:qd}
\frac{g_m(x)}{d_m(x)} = \frac{q(x)}{d_{m-1}(x)} + \frac{g_m(-a_m/\lambda_m)}{d_m(x)},
\end{equation}
where $q(x)$ is a polynomial with $\deg q(x) = \deg g_m(x)-1\le m-1$. By the
induction hypothesis, we have $q(x)/d_{m-1}(x)\in \mathrm{span}(B)$. Then
\eqref{eq:qd} shows that $1/d_m(x)\in \mathrm{span}(B)$ because
$g_m(-a_m/\lambda_m)\ne 0$. Thus \eqref{eq:in_span} is also true for $m$ and the
claim is proved.

By \eqref{eq:in_span}, we have $1/d_m(x)\in \mathrm{span}(B)$ for $m\ge0$.
Therefore it remains to show that $x^n\in \mathrm{span}(B)$ for $n\ge1$.
Dividing $f_n(x)$ by $d_{t_n}(x)$, we have
\begin{equation}
  \label{eq:13}
\frac{f_n(x)}{d_{t_n}(x)} = q_n(x) +  \frac{r_n(x)}{d_{t_n}(x)},
\end{equation}
where $q_n(x)$ and $r_n(x)$ are polynomials with $\deg q_n(x)=n$ and $\deg
r_n(x)<t_n$. By \eqref{eq:in_span}, we have $r_n(x)/d_{t_n}(x)\in
\mathrm{span}(B)$, and therefore \eqref{eq:13} shows $q_n(x)\in
\mathrm{span}(B)$. This implies that
\[
\Span \{x^n:n\ge0\} = \Span (\{1\}\cup \{q_n(x):n\ge1\}) \subseteq \Span(B).
\]
Hence we have $x^n\in \mathrm{span}(B)$ for all $n\ge0$, which completes the
proof.
\end{proof}

As a corollary of Proposition~\ref{prop:basis} we present three notable bases of
$V$. We use the following three choices in Proposition~\ref{prop:basis}:
\begin{align*}
f_n(x)&=x^nd_n(x), & t_n &= n , & g_n(x) &= 1,\\
f_n(x)&=P_n(x)d_n(x), & t_n &= n , & g_n(x) &= P_n(x),\\
f_n(x)&=x^nP_{n+1}(x), & t_n &= n+1 , & g_n(x) &= P_n(x).
\end{align*}

\begin{cor}\label{cor:basis2}
The following are bases of $V$:
\[
    \{x^n:n\ge1\}\cup\{1/d_m(x):m\ge 0\},
\]
\[
    \{P_n(x):n\ge1\}\cup\{Q_m(x):m\ge0\},
\]
\[
\{x^{n}Q_{n+1}(x):n\ge1\}\cup \{Q_m(x):m\ge0\}.
\]
\end{cor}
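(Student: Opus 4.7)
The plan is to observe that this corollary is a direct unpacking of Proposition~\ref{prop:basis} for the three triples $(f_n,g_n,t_n)$ listed just before the statement, so the work reduces to verifying the hypotheses of Proposition~\ref{prop:basis} in each case and then simplifying the resulting ratios $f_n(x)/d_{t_n}(x)$ and $g_n(x)/d_n(x)$.

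First I would handle the degree conditions, which are immediate. In each of the three choices we get $\deg f_n = n+t_n$: for $f_n = x^n d_n$ since $\deg d_n = n = t_n$; for $f_n = P_n d_n$ for the same reason; and for $f_n = x^n P_{n+1}$ because $\deg P_{n+1} = n+1 = t_n$. The bound $\deg g_n \le n$ is trivial in the first case ($g_n=1$) and sharp in the other two ($g_n = P_n$, which has degree $n$).

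Next I would verify the nonvanishing hypothesis $g_n(-\lambda_n/a_n)\neq 0$. For the first triple this is just $1\neq 0$. For the second and third triples it says $P_n(-\lambda_n/a_n)\neq 0$, which is precisely the standing assumption made in bold earlier in the introduction. So Proposition~\ref{prop:basis} applies in all three cases.

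Finally I would compute the two pieces of the basis produced by Proposition~\ref{prop:basis} in each case, using $Q_m(x)=P_m(x)/d_m(x)$. For the first triple, $f_n/d_{t_n} = x^n d_n/d_n = x^n$ and $g_n/d_n = 1/d_n$, giving the first basis. For the second, $f_n/d_{t_n} = P_n d_n/d_n = P_n$ and $g_n/d_n = Q_n$, giving the second basis. For the third, $f_n/d_{t_n} = x^n P_{n+1}/d_{n+1} = x^n Q_{n+1}$ and $g_n/d_n = Q_n$, giving the third. There is no substantive obstacle here; the only care needed is to keep track of the index shift $t_n = n+1$ in the third case, which is precisely what produces $Q_{n+1}$ rather than $Q_n$ in the first family of that basis.
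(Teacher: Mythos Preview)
Your proposal is correct and matches the paper's approach exactly: the corollary is stated immediately after the three choices of $(f_n,t_n,g_n)$ are listed, and the paper treats it as a direct application of Proposition~\ref{prop:basis} without a separate proof. You have simply spelled out the routine verifications (degree conditions and the standing nondegeneracy assumption $P_n(-\lambda_n/a_n)\neq 0$) that the paper leaves implicit.
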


Now we give a detailed proof of Theorem~\ref{thm:unique L}. Our proof is
basically the same as the proof in \cite{IsmailMasson} with some details provided.

\begin{proof}[Proof of Theorem~\ref{thm:unique L}]
  By Corollary~\ref{cor:basis2},
\[
B:= \{x^{m-1}Q_{m}(x):m\ge2\}\cup \{Q_m(x):m\ge0\}
\]
is a basis of $V$. Since the values of $\LL$ on $\{1\}\cup\{x^n Q_m(x):0\le
n<m\}$ are given, and this set contains $B$, if $\LL$ exists, it is unique. For
the existence, we construct $\LL$ by defining its values on the basis $B$ as
follows:
\[
\LL(1)=1,\qquad \LL(Q_m(x))=\LL(x^{m-1}Q_m(x))=0,\qquad \mbox{for $m\ge1$} .
\]
It suffices to show that $\LL$ satisfies 
\begin{equation}\label{eq:LL=0}
\LL(x^n Q_m(x)) = 0  \qquad \mbox{if $0\le n< m$}.
\end{equation}

We prove \eqref{eq:LL=0} by induction on $(n,m)$. The base case $n=0$ is true by
definition of $\LL$. Let $1\le n<m$ and assume that \eqref{eq:LL=0} is true for all
pairs $(n',m')\ne (n,m)$ such that $0\le n'<m'$, $n'\le n$ and $m'\le m$. We now
show that it is also true for $(n,m)$. Since it is true for $(m-1,m)$ by
definition of $\LL$, we may assume $1\le n\le m-2$. By \eqref{eq:favard2} we
have
\[
(a_mx+\lambda_m)Q_m(x) = (x-b_{m-1})Q_{m-1}(x) - Q_{m-2}(x).
\]
Multiplying both sides of the above equation by $x^{n-1}$ we have
\[
a_mx^n Q_m(x)+\lambda_mx^{n-1} Q_m(x) = 
 x^nQ_{m-1}(x) -b_{m-1}x^{n-1}Q_{m-1}(x) - x^{n-1} Q_{m-2}(x).
\]
Since $0\le n-1\le m-3$, by the induction hypothesis,
taking $\LL$ on both sides gives
\[
a_m \LL(x^nQ_m(x))=0.
\]
Since $a_m\ne 0$, we obtain $\LL(x^nQ_m(x))=0$. Hence \eqref{eq:LL=0} is also
true for $(n,m)$ and the proof is completed by induction.
\end{proof}

\section{Moments of type $R_I$ orthogonal polynomials}
\label{sec:moments-ops-type}

Recall from Theorem~\ref{thm:unique L} that there is a unique linear functional
$\LL$ on $V$ satisfying $\LL(1)=1$ and
\[
\LL(x^n Q_m(x)) = 0 \qquad \mbox{if $0\le n< m$}.
\]
In this section we give combinatorial interpretations for 
\begin{align*}
\mu_n&:=\LL(x^n),\\
  \mu_{n,m}&:=\LL(x^n Q_m(x)),\\
  \mu_{n,m,\ell}&:=\LL(x^n P_m(x) Q_\ell(x)),\\
\rho_{n,m,\ell}&:=\LL(x^n P_m(x) P_\ell(x))
\end{align*}
 in terms of lattice paths called Motzkin-Schr\"oder
paths. We also give a recursive formula for 
\[
\nu_{n,m} = \LL\left(x^n/d_m(x)\right).
\]
 Note that $Q_n(x)Q_m(x)\not\in V$ in general, so we
do not consider $\LL(Q_n(x)Q_m(x))$.

\subsection{Combinatorial interpretations for $\mu_n$ and 
$\mu_{n,m}$}
\aftersubsection

In this subsection we give combinatorial interpretations
for $\mu_n$ and $\mu_{n,m}$ using recursive formulas.

Note that $\mu_{n,m}$ has the initial conditions given by
\begin{equation}
  \label{eq:init}
\mu_{0,0}=1, \qquad \mu_{n,m}=0\quad \mbox{for $0\le n<m$}. 
\end{equation}
The following lemma
gives a recurrence, which determines $\mu_{n,m}$ since the recurrence decreases
either $n$ or $n-m$

\begin{lem}\label{lem:mu_rec}
  For $n,m\ge0$ with $(n,m)\ne (0,0)$, we have
\[
   \mu_{n,m} = 
    \begin{cases}
      0, & \mbox{if $n<m$},\\
      a_{m+1}\mu_{n,m+1} + b_{m}\mu_{n-1,m} +\mu_{n-1,m-1}+
    \lambda_{m+1} \mu_{n-1,m+1}, & \mbox{if $n\ge m$},
    \end{cases}
\]  
where $\mu_{i,j}=0$ if $j<0$. 
\end{lem}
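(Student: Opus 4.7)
The first case ($n<m$) is already built into the definition of $\LL$ from Theorem~\ref{thm:unique L}, so there is nothing to check there. My plan for the substantive case $n\ge m$ is to derive the recurrence directly from the three-term relation \eqref{eq:favard2} applied to $Q_m$, then expand under the linear functional.

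More explicitly, I would start from \eqref{eq:favard2} with the index $n$ replaced by $m$:
\[
(a_{m+1}x+\lambda_{m+1})Q_{m+1}(x) = (x-b_m)Q_m(x) - Q_{m-1}(x),
\]
and solve for $xQ_m(x)$ to obtain
\[
xQ_m(x) = a_{m+1}xQ_{m+1}(x)+\lambda_{m+1}Q_{m+1}(x)+b_m Q_m(x)+Q_{m-1}(x).
\]
Note that for $n\ge m$ with $(n,m)\ne(0,0)$ we always have $n\ge 1$, so I can multiply through by $x^{n-1}$ without issue, getting an identity in $V$. Applying $\LL$ and using linearity then gives
\[
\mu_{n,m} = a_{m+1}\mu_{n,m+1} + \lambda_{m+1}\mu_{n-1,m+1} + b_m\mu_{n-1,m} + \mu_{n-1,m-1},
\]
which is exactly the claimed identity.

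The only thing to verify carefully is that the boundary case $m=0$ behaves correctly: there $Q_{-1}(x)=P_{-1}(x)/d_{-1}(x)=0$, so the last term $\mu_{n-1,m-1}$ drops out, which is consistent with the convention $\mu_{i,j}=0$ for $j<0$ stated in the lemma. The relation \eqref{eq:favard2} reduces in this case to $(a_1 x+\lambda_1)Q_1(x)=(x-b_0)Q_0(x)$, and the derivation above goes through verbatim. I do not foresee a substantial obstacle; the main conceptual point is simply that the recurrence \eqref{eq:favard2} for $Q_n$ is the natural tool to produce a recurrence for $\mu_{n,m}$, since the operator "multiplication by $x$" on the basis $\{Q_m\}$ is tridiagonal (up to the factor $a_{m+1}x+\lambda_{m+1}$), and $\LL$ is defined on all of $V$ so the identity can be applied freely.
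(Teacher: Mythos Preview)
Your proof is correct and is essentially the same as the paper's: both derive the recurrence by applying $\LL$ to $x^{n-1}$ times the three-term relation \eqref{eq:favard2}, the only cosmetic difference being that the paper starts from $a_{m+1}\mu_{n,m+1}$ and unwinds, whereas you first solve \eqref{eq:favard2} for $xQ_m(x)$ and then apply $\LL$. Your handling of the boundary case $m=0$ is also fine.
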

\begin{proof}
  We have already seen that $\mu_{n,m}=0$ for $n< m$. Suppose
  $n\ge m$. Since $(n,m)\ne (0,0)$ we have $n\ge1$. Then
  \begin{align*}
    a_{m+1} \mu_{n,m+1} &= \LL\left(\frac{a_{m+1}  x^n P_{m+1}(x)}{d_{m+1}(x)}\right) \\
    & = \LL\left(\frac{x^{n-1}(a_{m+1}x+\lambda_{m+1}) P_{m+1}(x)}{d_{m+1}(x)}-\frac{\lambda_{m+1}x^{n-1}P_{m+1}(x)}{d_{m+1}(x)}\right)\\
    &= \LL\left(\frac{x^{n-1}((x-b_m)P_m(x) - (a_mx+\lambda_m)P_{m-1}(x))}{d_m(x)}\right) -
      \lambda_{m+1}\mu_{n-1,m+1}\\
                &= \mu_{n,m} - b_m\mu_{n-1,m}-\mu_{n-1,m-1} -\lambda_{m+1}\mu_{n-1,m+1}.
  \end{align*}
  Thus
  \[
    \mu_{n,m} = a_{m+1}\mu_{n,m+1} + b_{m}\mu_{n-1,m} +\mu_{n-1,m-1}+
    \lambda_{m+1} \mu_{n-1,m+1},
  \]
as desired.
\end{proof}

\begin{defn}
A \emph{Motzkin path} is a path on or above the $x$-axis consisting of up steps
$U=(1,1)$, horizontal steps $H=(1,0)$, and down steps $D=(1,-1)$. A \emph{Schr\"oder
  path} is a path on or above the $x$-axis consisting of up steps $U=(1,1)$,
horizontal steps $H=(1,0)$, and vertical down steps $V=(0,-1)$.
\end{defn}

\begin{remark}\label{rem:sch}
  A Schr\"oder path is more commonly defined as a path consisting of up steps
  $(1,1)$, double horizontal steps $(2,0)$, and down steps $(1,-1)$. One can
  easily convert from our definition of a Schr\"oder path to this one by
  changing each horizontal step $(1,0)$ to a double horizontal step $(2,0)$, and
  each vertical down step $(0,-1)$ to a diagonal down step $(1,-1)$.
\end{remark}

Now we define another lattice path, which contains both Motzkin and Schr\"oder
paths.

\begin{defn}
\label{defn:MSpath}
A \emph{Motzkin-Schr\"oder path} is a path on or above the $x$-axis consisting
of up steps $U=(1,1)$, horizontal steps $H=(1,0)$, vertical down step
$V=(0,-1)$, and diagonal down steps $D=(1,-1)$. For a Motzkin-Schr\"oder path
$\pi$, the \emph{weight} $\wt(\pi)$ of $\pi$ is the product of the weight of
each step, where every up step has weight $1$, a horizontal step starting at
height $k$ has weight $b_{k}$, a vertical down step $(0,-1)$ starting at height
$k$ has weight $a_{k}$, and a diagonal down step $(1,-1)$ starting at height $k$
has weight $\lambda_k$. See Figure~\ref{fig:MS}.
\end{defn} 

\begin{remark}
  In \cite{Kim1992}, Kim found a combinatorial interpretation for
  moments of biorthogonal polynomials using lattice paths. The steps are up
  steps, horizontal steps, and $d$ types of down steps. If $d=2$, these lattice
  paths are equivalent to our Motzkin-Schr\"oder paths.
\end{remark}

Let $\MS((a,b)\to(c,d))$ denote the set of Motzkin-Schr\"oder paths from
$(a,b)$ to $(c,d)$ and let
\[
\MS_{n,m} := \MS((0,0)\to(n,m)), \qquad \MS_{n} := \MS_{n,0}.
\]
We also denote by $\Mot((a,b)\to(c,d))$ (resp.~$\Sch((a,b)\to(c,d))$) the set of
Motzkin (resp.~Schr\"oder) paths from $(a,b)$ to $(c,d)$. The sets $\Mot_{n,m}$,
$\Mot_{n}$, $\Sch_{n,m}$, and $\Sch_{n}$ are defined similarly.

\begin{figure}
  \centering
\begin{tikzpicture}[scale=0.75]
    \draw[help lines] (0,0) grid (13,3);
    \foreach \y in {0,...,3} \draw node at (-.5,\y) {$\y$};
    \draw[line width = 1.5pt] (0,3)-- ++(0,-1)-- ++(0,-1)-- ++(1,-1)-- ++(1,1)-- ++(1,1)-- ++(0,-1)-- ++(1,-1)-- ++(1,0)-- ++(1,1)-- ++(1,1)-- ++(1,1)-- ++(1,-1)-- ++(1,0)-- ++(0,-1)-- ++(1,1)-- ++(1,0)-- ++(0,-1)-- ++(1,0);
    \node at (0.4,2.5) {$a_{3}$};
    \node at (0.4,1.5) {$a_{2}$};
    \node at (0.8,0.8) {$\lambda_{1}$};
    \node at (3.4,1.5) {$a_{2}$};
    \node at (3.8,0.8) {$\lambda_{1}$};
    \node at (4.5,0.3) {$b_{0}$};
    \node at (8.8,2.8) {$\lambda_{3}$};
    \node at (9.5,2.3) {$b_{2}$};
    \node at (10.4,1.8) {$a_{2}$};
    \node at (11.5,2.3) {$b_{2}$};
    \node at (12.4,1.8) {$a_{2}$};
    \node at (12.7,1.3) {$b_{1}$};
\end{tikzpicture}
\caption{A Motzkin-Schr\"oder path $\pi$ from $(0,3)$ to $(13,1)$ with
  $\wt(\pi)=a_2^4a_3b_0b_1b_2^2\lambda_1^2\lambda_3$.}
  \label{fig:MS}
\end{figure}
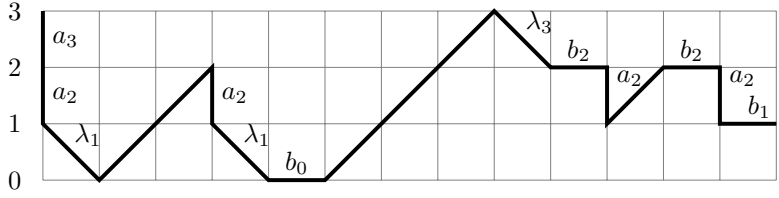

\begin{thm}\label{thm:R1mu}
  For $n,m\ge0$, we have
  \[
    \mu_{n,m} = \LL(x^nQ_m(x)) = \sum_{\pi\in\MS_{n,m}} \wt(\pi).
  \]
\end{thm}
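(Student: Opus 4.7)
The plan is a straightforward induction matching the combinatorial side to the recursion of Lemma~\ref{lem:mu_rec}. Let $f_{n,m} := \sum_{\pi \in \MS_{n,m}} \wt(\pi)$. I would first verify that $f_{n,m}$ satisfies the same initial conditions \eqref{eq:init} as $\mu_{n,m}$: the empty path gives $f_{0,0} = 1$, and for $n < m$ there are no Motzkin--Schr\"oder paths from $(0,0)$ to $(n,m)$, since each of the four step types advances the $x$-coordinate by at most $1$ and only the up step $U$ increases the height, so reaching height $m$ at $x = n$ forces $n \ge m$. Hence $f_{n,m} = 0$ for $n < m$.

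The main step is to show that $f_{n,m}$ satisfies the recurrence of Lemma~\ref{lem:mu_rec} when $n \ge m$ and $(n,m) \ne (0,0)$. I would do this by classifying a path $\pi \in \MS_{n,m}$ according to its last step, which ends at $(n,m)$:
\begin{itemize}
\item If the last step is an up step $U$, it goes from $(n-1,m-1)$ to $(n,m)$ with weight $1$; removing it gives a path in $\MS_{n-1,m-1}$, contributing $\mu_{n-1,m-1}$.
\item If the last step is a horizontal step $H$, it goes from $(n-1,m)$ to $(n,m)$ at starting height $m$, with weight $b_m$; this contributes $b_m\,\mu_{n-1,m}$.
\item If the last step is a vertical down step $V$, it goes from $(n,m+1)$ to $(n,m)$ at starting height $m+1$, with weight $a_{m+1}$; this contributes $a_{m+1}\,\mu_{n,m+1}$.
\item If the last step is a diagonal down step $D$, it goes from $(n-1,m+1)$ to $(n,m)$ at starting height $m+1$, with weight $\lambda_{m+1}$; this contributes $\lambda_{m+1}\,\mu_{n-1,m+1}$.
\end{itemize}
Summing over these four disjoint cases yields exactly the recurrence for $\mu_{n,m}$ in Lemma~\ref{lem:mu_rec}.

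Finally, I would remark, as already noted in the paragraph preceding Lemma~\ref{lem:mu_rec}, that the recurrence together with the initial conditions \eqref{eq:init} determines all values of $\mu_{n,m}$, since each application of the recursion either decreases $n$ or decreases the quantity $n - m$ (via the $a_{m+1}\mu_{n,m+1}$ term), and the base case $n < m$ terminates the recursion. Therefore $f_{n,m} = \mu_{n,m}$ for all $n,m \ge 0$, which gives the theorem. There is no real obstacle here; the only thing to be careful about is confirming that the recursion is well-founded, i.e.\ that iterating the $a_{m+1}\mu_{n,m+1}$ term eventually pushes $m$ above $n$ and so terminates, which is immediate.
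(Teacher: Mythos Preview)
Your proposal is correct and follows exactly the paper's approach: verify that the combinatorial sum satisfies the initial conditions \eqref{eq:init} and the recurrence of Lemma~\ref{lem:mu_rec}, then conclude by uniqueness. The paper states this in one sentence without spelling out the last-step analysis, so your write-up is simply a more detailed version of the same argument (one cosmetic slip: in the bullet list the contributions should be $f_{n-1,m-1}$, etc., since you are checking the recurrence for $f$).
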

\begin{proof}
  Let $\mu'_{n,m}$ be the right hand side. Then one can easily check that
  $\mu'_{n,m}$ satisfies the same initial conditions in \eqref{eq:init} and the
  recurrence relation in Lemma~\ref{lem:mu_rec} as $\mu_{n,m}$. Therefore
  $\mu_{n,m}=\mu'_{n,m}$ for all $n,m\ge0$.
\end{proof}

\begin{cor}\label{cor:R1cfrac}
  We have
  \[
    \mu_{n} = \sum_{\pi\in\MS_{n}} \wt(\pi).
  \]
Equivalently,
  \[
    \sum_{n\ge0} \mu_{n} x^n =
    \cfrac{1}{
      1-b_0x- \cfrac{a_1x+\lambda_1x^2}{
        1-b_1x- \cfrac{a_2x+\lambda_2x^2}{
          \ddots}}}.
  \]
\end{cor}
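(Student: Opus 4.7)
The first identity is immediate: specializing Theorem~\ref{thm:R1mu} to $m=0$ gives $\mu_n = \LL(x^n Q_0(x)) = \LL(x^n) = \sum_{\pi \in \MS_{n,0}} \wt(\pi)$, and $\MS_{n,0} = \MS_n$ by definition. The content of the corollary therefore lies in the continued fraction expansion, which I would establish by the standard Flajolet-style first-return decomposition adapted to the enlarged step set of Motzkin--Schr\"oder paths.

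The plan is to define, for each $k \ge 0$, the height-$k$ generating function
\[
F_k(x) := \sum_{n \ge 0} x^n \sum_{\pi} \wt(\pi),
\]
where the inner sum is over Motzkin--Schr\"oder paths from $(0,k)$ to $(n,k)$ that stay weakly above the horizontal line $y=k$, and the weight of each step is as in Definition~\ref{defn:MSpath} (which depends only on the starting height). The corollary will follow once I show $F_0(x) = \sum_n \mu_n x^n$ and that the $F_k$ satisfy the recursion that produces the claimed continued fraction.

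Next, I would decompose a nonempty path counted by $F_k$ according to the nature of its first step. If the first step is horizontal, it contributes weight $b_k$ and length $1$, followed by an arbitrary path counted by $F_k$. Otherwise the first step is an up step (weight $1$, length $1$), which must eventually be matched by a first return to height $k$; that return is either a vertical down step from height $k+1$, contributing weight $a_{k+1}$ and length $0$, or a diagonal down step, contributing weight $\lambda_{k+1}$ and length $1$. The segment strictly between the initial up step and the matching down step is an arbitrary path counted by $F_{k+1}$, and what follows after the return to height $k$ is again an arbitrary path counted by $F_k$. Collecting these cases yields
\[
F_k(x) = 1 + b_k x\, F_k(x) + x\,(a_{k+1} + \lambda_{k+1} x)\, F_{k+1}(x)\, F_k(x),
\]
which rearranges to
\[
F_k(x) = \cfrac{1}{1 - b_k x - (a_{k+1} x + \lambda_{k+1} x^2)\, F_{k+1}(x)}.
\]
Iterating this functional equation produces the infinite continued fraction in the statement, and taking $k=0$ finishes the proof since $F_0(x) = \sum_n \mu_n x^n$ by Theorem~\ref{thm:R1mu}.

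The arguments are entirely routine; the only mild subtlety is bookkeeping the length contributions of the two kinds of down steps (a vertical down step consumes no horizontal distance while a diagonal one consumes $1$), which is exactly what is responsible for the numerator $a_{k+1} x + \lambda_{k+1} x^2$ rather than a single monomial. As a formal power series identity in $x$ (with the $b_k, a_k, \lambda_k$ treated as indeterminates), the iteration converges coefficientwise, so no convergence issue arises.
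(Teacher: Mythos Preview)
Your proposal is correct and follows exactly the paper's approach: the paper obtains the first identity as the $m=0$ case of Theorem~\ref{thm:R1mu} and then simply cites Flajolet's theory \cite{Flajolet1980} for the continued fraction, whereas you have spelled out the standard first-return decomposition that constitutes that theory in this setting. Your bookkeeping of the two kinds of down steps and the resulting numerator $a_{k+1}x+\lambda_{k+1}x^2$ is accurate.
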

\begin{proof}
  The first statement is the special case $m=0$ of Theorem~\ref{thm:R1mu}. The
  second statement follows from the first using Flajolet's theory
  \cite{Flajolet1980}.
\end{proof}

For example, by Corollary~\ref{cor:R1cfrac}, the moments $\mu_n$ for $n=0,1,2$
can be computed as
\begin{align*}
  \mu_{0} &= 1,\\
  \mu_{1} &= b_0+a_1,\\
  \mu_{2} &= b_0^2 + \lambda_1 + 2a_1b_0 + a_2a_1+b_1a_1+a_1^2.
\end{align*}

\begin{remark}
  Lattice paths containing Motzkin-Schr\"oder paths are considered in
  \cite{Dziemianczuk}. The number of Motzkin-Schr\"oder paths is listed in
  \cite[A064641]{OEIS}. If $b_n=a_n=0$ and $\lambda_n=1$, then $\mu_{2n+1}=0$
  and $\mu_{2n}=C_n$, the $n$th Catalan number. If $b_n=\lambda_n=0$ and
  $a_n=1$, then $\mu_{n}=C_n$.
\end{remark}

By taking $a_n=b_n=\lambda_n=1$ for all $n$ in Corollary~\ref{cor:R1cfrac} we
obtain the following result, which also appears in \cite{Dziemianczuk}.

\begin{prop}
\label{prop: CF}
The generating function for the number of Motzkin-Schr\"oder paths is given by
  \[
    \sum_{n\ge0} |\MS_n| x^n = \frac{1-x-\sqrt{1-6x-3x^2}}{2(x+x^2)}
    =1 + 2x+7x^2+29x^3+133x^4+ 650x^5+\cdots.
   \]
\end{prop}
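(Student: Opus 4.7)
The plan is to derive a functional equation for $F(x) := \sum_{n\ge0}|\MS_n|x^n$ from the continued fraction in Corollary~\ref{cor:R1cfrac}, and then solve the resulting quadratic. Specializing $a_n=b_n=\lambda_n=1$ in Corollary~\ref{cor:R1cfrac} gives
\[
F(x) = \cfrac{1}{1-x-\cfrac{x+x^2}{1-x-\cfrac{x+x^2}{\ddots}}}.
\]
Because every level of this continued fraction has the same coefficients, the tail beyond the first level again equals $F(x)$. This self-similarity yields the functional equation
\[
F(x) = \frac{1}{1-x-(x+x^2)F(x)},
\]
equivalently
\[
(x+x^2)F(x)^2 - (1-x)F(x) + 1 = 0.
\]

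Next I would apply the quadratic formula to solve for $F(x)$. Computing the discriminant,
\[
(1-x)^2 - 4(x+x^2) = 1 - 6x - 3x^2,
\]
so
\[
F(x) = \frac{(1-x) \pm \sqrt{1-6x-3x^2}}{2(x+x^2)}.
\]
To select the correct branch, I would check the behavior as $x\to 0$: since $\sqrt{1-6x-3x^2}=1-3x+O(x^2)$, the $+$ sign produces a Laurent series with a pole at $0$, while the $-$ sign gives a power series with $F(0)=1$, matching $|\MS_0|=1$. Thus the minus sign is correct, which is the desired closed form.

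The only mild obstacle is the self-similarity step: one must justify that the infinite continued fraction in Corollary~\ref{cor:R1cfrac} is indeed a well-defined formal power series so that substituting $F(x)$ for its tail is legitimate. This is standard for $J$-fractions of this shape (and is already implicit in Flajolet's theory invoked in Corollary~\ref{cor:R1cfrac}), so in the writeup I would simply remark that the continued fraction converges as a formal power series and then proceed with the algebraic manipulation above. The expansion $1+2x+7x^2+29x^3+\cdots$ can then be verified by Taylor expanding $\sqrt{1-6x-3x^2}$ to the desired order.
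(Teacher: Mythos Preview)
Your proposal is correct and follows the same route the paper indicates: specialize $a_n=b_n=\lambda_n=1$ in Corollary~\ref{cor:R1cfrac}, use the self-similarity of the resulting periodic continued fraction to obtain the quadratic $(x+x^2)F^2-(1-x)F+1=0$, and select the branch with $F(0)=1$. The paper merely states that the result follows from this specialization (and cites \cite{Dziemianczuk}), leaving exactly the algebraic details you have supplied.
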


\subsection{Combinatorial interpretations for $\mu_{n,m,\ell}$ and
  $\rho_{n,m,\ell}$}
\aftersubsection

In this subsection we give a combinatorial interpretation for $P_n(x)$ in terms
of tilings (Theorem~\ref{thm:favard}). This allows us to write
$\mu_{n,m,\ell}=\LL(x^n P_m(x) Q_\ell(x))$ as a signed sum of $\mu_{n,m}=\LL(x^n
P_m(x))$. We will show that these are positive sums by finding a sign-reversing
involution which cancel all negative terms. Using a similar argument we will
also find a positive formula for $\rho_{n,m,\ell}=\LL(x^n P_m(x) P_\ell(x))$.

\begin{defn}
A \emph{(bicolored) Favard tiling of size $n$} is a tiling of a $1\times n$
square board with tiles where each tile is a domino or a monomino and is colored
black or red. We label the squares in the $1\times n$ board by $1,2,\dots,n$
from left to right. The set of Favard tilings of size $n$ is denoted by $\FT_n$.
We define \( \FT_0 \) to be the set consisting of the empty tiling.
\end{defn}

For $T\in\FT_n$, we define $\bm(T)$ (resp.~$\bd(T)$, $\rm(T)$, and $\rd(T)$) to
be the number of black monominos (resp.~black dominos, red monominos, and red
dominos) in $T$. We also define
\[
  \wt(T) = \prod_{\tau\in T} \wt(\tau),
\]
where 
\[
  \wt(\tau) =
  \begin{cases}
    1 & \mbox{if $\tau$ is a black monomino,}\\
   -b_{i-1} & \mbox{if $\tau$ is a red monomino with (largest) entry $i$,}\\
   -a_{i-1} & \mbox{if $\tau$ is a black domino with largest entry $i$,}\\
     -\lambda_{i-1}  & \mbox{if $\tau$ is a red domino with largest entry $i$.}
  \end{cases}
\]
For example, see Figure~\ref{fig:tiling}.

\begin{figure}
  \centering
\begin{tikzpicture}[scale=0.75]
  \draw [help lines] (0,0) grid (9,1);
  \foreach \x in {1,...,9} \draw node at (\x-0.5,0.5) {\x};
  \LRM0 \LRD2 \LBD4 \LBM5 \LRM6 \LBD8
\end{tikzpicture}
\caption{A Favard tiling $T\in\FT_9$ with $\wt(T)=-a_4a_8b_0b_6\lambda_2$. This tiling contributes
$\wt(T) x^{\bm(T)+\bd(T)} = -a_4a_8b_0b_6\lambda_2x^3$ to $P_8(x)$.}
  \label{fig:tiling}
\end{figure}

The recurrence \eqref{eq:favard1} gives the following combinatorial interpretation for $P_n(x)$.

\begin{thm}\label{thm:favard}
For $n\ge0$, we have
  \[
    P_n(x) = \sum_{T\in\FT_n} \wt(T) x^{\bm(T)+\bd(T)}.
  \]
\end{thm}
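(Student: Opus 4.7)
The plan is to prove the formula by induction on $n$, classifying Favard tilings according to their rightmost tile and matching the four resulting contributions with the four terms in the recurrence \eqref{eq:favard1}.

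First I would handle the base cases. For $n=0$ the only tiling in $\FT_0$ is the empty tiling, whose weight is the empty product $1$, so the right-hand side equals $1=P_0(x)$. For $n=1$, $\FT_1$ contains exactly two tilings: a single black monomino (weight $1$, contributing $x$) and a single red monomino with largest entry $1$ (weight $-b_0$, contributing $x^0$); their sum is $x-b_0=P_1(x)$, as required.

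For the inductive step I assume the formula holds for all sizes up to $n$ and compute the right-hand side for $n+1$ by partitioning $\FT_{n+1}$ into four disjoint classes according to the rightmost tile $\tau$, which occupies either position $n+1$ (a monomino) or positions $n,n+1$ (a domino). If $\tau$ is a black monomino, the remaining tiling $T'$ lies in $\FT_n$, and the contribution is $\wt(\tau)\,x \cdot \wt(T')\,x^{\bm(T')+\bd(T')}=x \cdot \wt(T')\,x^{\bm(T')+\bd(T')}$, summing to $x P_n(x)$. If $\tau$ is a red monomino with largest entry $n+1$, it carries weight $-b_n$ and contributes $x^0$, giving total $-b_n P_n(x)$. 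If $\tau$ is a black domino with largest entry $n+1$, it carries weight $-a_n$ and contributes $x$, with the remainder in $\FT_{n-1}$, giving total $-a_n x P_{n-1}(x)$. Finally, a red domino with largest entry $n+1$ contributes $-\lambda_n P_{n-1}(x)$. Adding these four pieces and applying the induction hypothesis to $P_n(x)$ and $P_{n-1}(x)$ reproduces $(x-b_n)P_n(x)-(a_n x+\lambda_n)P_{n-1}(x)=P_{n+1}(x)$.

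There is no real obstacle here: the entire content is the bookkeeping verification that the four tile types in the definition of $\wt$ are calibrated exactly with the four terms on the right-hand side of \eqref{eq:favard1}, including the appearance of the factor $x$ precisely for the two black tile types (so that the exponent $\bm(T)+\bd(T)$ picks up a $+1$ in the monomino/domino cases of the recurrence that carry an $x$). The only point worth being careful about is the indexing convention: because a tile whose largest entry is $n+1$ sits at the right end of a size-$(n+1)$ board, its weight uses subscript $n=(n+1)-1$, which matches the subscripts appearing in the recurrence for $P_{n+1}(x)$.
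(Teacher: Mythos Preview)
Your argument is correct and is exactly the approach the paper takes: the paper's proof simply names the right-hand side $U_n(x)$, asserts that by classifying tilings according to their rightmost tile one checks $U_{n+1}(x)=(x-b_n)U_n(x)-(a_nx+\lambda_n)U_{n-1}(x)$ with $U_{-1}(x)=0$, $U_0(x)=1$, and concludes by the uniqueness of solutions to \eqref{eq:favard1}. You have spelled out the four-case bookkeeping that the paper leaves implicit, but there is no difference in method.
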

\begin{proof}
  Let $U_n(x)$ denote the right hand side. Then by definition one can easily
  check that, for $n\ge0$, we have
\[
    U_{n+1}(x) = (x-b_n)U_n(x) -(a_n x+\lambda_n )U_{n-1}(x)
\]
where $U_{-1}(x)=0$ and $U_{0}(x)=1$. Therefore, by \eqref{eq:favard1}, $U_n(x)$
and $P_n(x)$ satisfy the same recurrence relation and the same initial
conditions. This shows the theorem.
\end{proof}

Now we give a combinatorial interpretation for $\mu_{n,m,\ell}$.

\begin{thm}\label{thm:mu_nml}
For $n,m,\ell\ge0$, we have
  \[
   \mu_{n,m,\ell} = \LL(x^n P_m(x) Q_\ell(x)) =
    \sum_{\pi\in \MS((0,m)\to (n,\ell))} \wt(\pi).
  \]
\end{thm}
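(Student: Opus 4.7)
My plan is to prove the identity by induction on $m$. The idea is that both sides satisfy the same three-term recursion in $m$: the $\mu$-side via~\eqref{eq:favard1}, and the path-sum side via a first-step decomposition of Motzkin--Schr\"oder paths. The base case $m=0$ is immediate, since $P_0(x)=1$ gives $\mu_{n,0,\ell}=\LL(x^n Q_\ell(x))=\mu_{n,\ell}$, which by Theorem~\ref{thm:R1mu} equals $\sum_{\pi\in\MS_{n,\ell}}\wt(\pi)=\sum_{\pi\in\MS((0,0)\to(n,\ell))}\wt(\pi)$.

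For the inductive step, assume the identity for all $m'<m$. Multiplying~\eqref{eq:favard1} by $x^n Q_\ell(x)$ and applying $\LL$ (with the convention $P_{-1}=0$) yields
\[
\mu_{n,m,\ell} = \mu_{n+1,m-1,\ell} - b_{m-1}\mu_{n,m-1,\ell} - a_{m-1}\mu_{n+1,m-2,\ell} - \lambda_{m-1}\mu_{n,m-2,\ell}.
\]
Write $\sigma_{n',m',\ell}:=\sum_{\pi\in\MS((0,m')\to(n',\ell))}\wt(\pi)$. The induction hypothesis replaces each $\mu$ on the right with the corresponding $\sigma$, so it suffices to verify the identical recurrence among the $\sigma$'s, which is itself exactly the first-step decomposition of paths in $\MS((0,m-1)\to(n+1,\ell))$. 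For $m\ge2$ the first step from $(0,m-1)$ is one of $U,H,D,V$ with weights $1,b_{m-1},\lambda_{m-1},a_{m-1}$ and, after horizontally translating the remainder, contributes $\sigma_{n,m,\ell}$, $\sigma_{n,m-1,\ell}$, $\sigma_{n,m-2,\ell}$, $\sigma_{n+1,m-2,\ell}$ respectively; rearrangement gives the required identity.

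The only subtle point is the boundary $m=1$: the path starts at height $0$, so the $D$ and $V$ first steps are forbidden, but this matches exactly the vanishing of the $P_{-1}$-terms on the $\mu$-side, and the simpler decomposition $\sigma_{n+1,0,\ell}=\sigma_{n,1,\ell}+b_0\sigma_{n,0,\ell}$ again gives the identity. A more combinatorial alternative--the approach the paper alludes to--would expand $P_m$ as a signed sum of Favard tilings via Theorem~\ref{thm:favard} and then construct a sign-reversing involution on pairs (tiling, path) so that the surviving pairs biject with $\MS((0,m)\to(n,\ell))$; the induction above sidesteps the need for any explicit such involution.
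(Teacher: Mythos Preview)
Your proof is correct, and it is genuinely different from the paper's argument. The paper expands $P_m(x)$ as a signed sum over Favard tilings (Theorem~\ref{thm:favard}), pairs each tiling with a Motzkin--Schr\"oder path via Theorem~\ref{thm:R1mu}, and then constructs an explicit sign-reversing involution on these pairs whose fixed points are exactly paths that begin with $m$ up steps; these biject with $\MS((0,m)\to(n,\ell))$. You instead induct on $m$, using the three-term recurrence~\eqref{eq:favard1} on the $\mu$-side and a first-step decomposition of paths from $(0,m-1)$ on the $\sigma$-side to verify that both sides satisfy the same linear recursion with the same initial data.

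Your route is more elementary and avoids designing the involution, at the cost of being less bijective: the identity is established through a recursion containing signs rather than through explicit cancellation. The paper's involution, on the other hand, is what makes the proof of Theorem~\ref{thm:x^nP_mP_l} essentially a one-line modification (read the path backwards and use the last steps instead of the first), whereas an inductive proof of that result along your lines would require setting up a different last-step decomposition and matching it to an expansion of $P_\ell(x)$. Both approaches are valid; yours is a clean shortcut for this particular statement.
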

\begin{proof}
 We will find a sign-reversing involution on a larger set whose fixed point set
 is given by the Motzkin-Schr\"oder paths in this theorem.
  
Applying Theorem~\ref{thm:favard} to $P_m(x)$ and using Theorem~\ref{thm:R1mu}, we have
\begin{equation}
  \label{eq:18}
    \LL(x^n P_m(x) Q_\ell(x)) = \sum_{T\in\FT_m} \wt(T) \LL(x^{n+\bm(T)+\bd(T)}Q_\ell(x))
    =\sum_{(\pi,T)\in X} \wt(\pi)\wt(T),
\end{equation}
where $X$ is the set of pairs $(\pi,T)$ of a Motzkin-Schr\"oder path $\pi\in
\MS_{t,\ell}$ and a Favard tiling $T\in\FT_m$ satisfying $t=n+\bm(T)+\bd(T)$.
The sign-reversing involution on $X$ will remove or add a horizontal step or a
peak ($(U,V)$ or $(U,D)$) in $\pi$, and modify $T$ accordingly.

  Consider $(\pi,T)\in X$ and write $\pi=S_1\dots S_r$ as a sequence of steps.
  Suppose that $i$ and $j$ are the largest integers such that $\pi$ starts with
  $i$ up steps and $T$ starts with $j$ black monominos.
  \begin{description}
  \item[Case 1] $j\ge i+1$. In this case we have $i+1\le \bm(T)\le
    n+\bm(T)+\bd(T)=t$. Therefore $\pi$ must have the $(i+1)$st step. We define
    $\pi'$ and $T'$ in the following three cases depending on the step
    $S_{i+1}$.
\begin{description}
\item[Case 1-a] $S_{i+1}$ is a horizontal step. In this case let
    \[
      \pi' = S_1\dots \widehat{S}_{i+1} \dots S_r,
\]
and define $T'$ to be the Favard tiling obtained from $T$ by replacing the black
monomino at position $i+1$ by a red monomino. Here the notation
$\widehat{S}_{i+1}$ means that $S_{i+1}$ is removed from the sequence. See Figure~\ref{fig:case-a}.
\item[Case 1-b] $S_{i+1}$ is a vertical down step. In this case let
    \[
      \pi' = S_1\dots \widehat{S}_{i}\widehat{S}_{i+1} \dots S_r,
\]
and define $T'$ to be the Favard tiling obtained from $T$ by replacing the two
black monominos at positions $i$ and $i+1$ by a black domino. See Figure~\ref{fig:case-b}.
\item[Case 1-c] $S_{i+1}$ is a diagonal down step. In this case let
    \[
      \pi' = S_1\dots \widehat{S}_{i}\widehat{S}_{i+1} \dots S_r,
\]
and define $T'$ to be the Favard tiling obtained from $T$ by replacing the two
black monominos at positions $i$ and $i+1$ by a red domino. See Figure~\ref{fig:case-c}.
\end{description}

\item[Case 2] $j\le i$ and $j<m$. In this case $T$ contains a tile, say $A$,
  with entry $j+1$. We define $\pi'$ and $T'$ in the following three cases
  depending on the tile $A$.
  \begin{description}
  \item[Case 2-a] $A$ is a red monomino. In this case let
    \[
      \pi' = S_1\dots S_j H S_{j+1} \dots S_r,
\]
and define $T'$ to be the Favard tiling obtained from $T$ by replacing $A$ by a
black monomino. See Figure~\ref{fig:case-a}.
\item[Case 2-b] $A$ is a black domino. In this case let
    \[
      \pi' = S_1\dots S_j UV S_{j+1} \dots S_r,
\]
and define $T'$ to be the Favard tiling obtained from $T$ by replacing $A$ by two
black monominos. See Figure~\ref{fig:case-b}.
\item[Case 2-c] $A$ is a red domino. In this case let
    \[
      \pi' = S_1\dots S_j UD S_{j+1} \dots S_r,
\]
and define $T'$ to be the Favard tiling obtained from $T$ by replacing $A$ by two
black monominos. See Figure~\ref{fig:case-c}.
\end{description}
\item[Case 3] $j\le i$ and $j=m$. In this case define $\pi'=\pi$ and $T'=T$.
  See Figure~\ref{fig:case3}.
  \end{description}

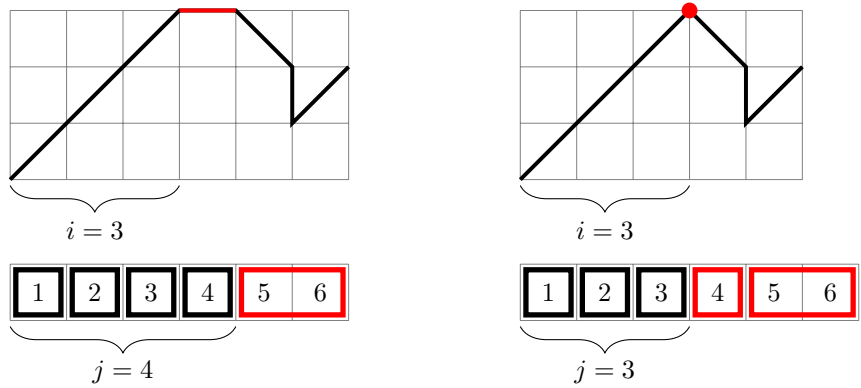
\begin{figure}
  \centering
\begin{tikzpicture}[scale=0.75]
    \draw[help lines] (0,0) grid (6,3);
   \draw[line width = 1.5pt] (0,0)--++(3,3)--++(1,0)--++(1,-1)--++(0,-1)--++(1,1);
   \draw[line width = 1.5pt, red] (3,3)--++(1,0);
  \draw [decorate,decoration={brace,amplitude=10pt},xshift=-0pt,yshift=-3pt]
(3,0) -- (0,0) node [black,midway,yshift=-0.6cm] {$i=3$};
\begin{scope}[shift={(0,-2.5)}]
  \draw [help lines] (0,0) grid (6,1);
  \foreach \x in {1,...,6} \draw node at (\x-0.5,0.5) {\x};
  \BM0 \BM1 \BM2 \BM3 \RD5
  \draw [decorate,decoration={brace,amplitude=10pt},xshift=-0pt,yshift=-3pt]
(4,0) -- (0,0) node [black,midway,yshift=-0.6cm] {$j=4$};
\end{scope}
\end{tikzpicture}   \qquad\qquad\qquad
\begin{tikzpicture}[scale=0.75]
    \draw[help lines] (0,0) grid (5,3);
   \draw[line width = 1.5pt] (0,0)--++(3,3)--++(1,-1)--++(0,-1)--++(1,1);
   \remove(3,3)
  \draw [decorate,decoration={brace,amplitude=10pt},xshift=-0pt,yshift=-3pt]
(3,0) -- (0,0) node [black,midway,yshift=-0.6cm] {$i=3$};
\begin{scope}[shift={(0,-2.5)}]
  \draw [help lines] (0,0) grid (6,1);
  \foreach \x in {1,...,6} \draw node at (\x-0.5,0.5) {\x};
  \BM0 \BM1 \BM2 \RM3 \RD5
  \draw [decorate,decoration={brace,amplitude=10pt},xshift=-0pt,yshift=-3pt]
(3,0) -- (0,0) node [black,midway,yshift=-0.6cm] {$j=3$};
\end{scope}
\end{tikzpicture}   
\caption{A pair $(\pi,T)\in X$ in Case 1-a on the left and the corresponding
  pair $(\pi',T')$ in Case 2-a on the right, for $(n,m,\ell)=(2,6,2)$.
The horizontal step starting at $(3,3)$ in $\pi$ is collapsed to a point.}
  \label{fig:case-a}
\end{figure}

\begin{figure}
  \centering
\begin{tikzpicture}[scale=0.75]
    \draw[help lines] (0,0) grid (6,3);
   \draw[line width = 1.5pt] (0,0)--++(3,3)--++(0,-2)--++(1,1)--++(1,0)--++(1,1)--++(0,-1);
   \draw[line width = 1.5pt, red] (2,2)--++(1,1)--++(0,-1);
  \draw [decorate,decoration={brace,amplitude=10pt},xshift=-0pt,yshift=-3pt]
(3,0) -- (0,0) node [black,midway,yshift=-0.6cm] {$i=3$};
\begin{scope}[shift={(0,-2.5)}]
  \draw [help lines] (0,0) grid (6,1);
  \foreach \x in {1,...,6} \draw node at (\x-0.5,0.5) {\x};
  \BM0 \BM1 \BM2 \BM3 \RD5
  \draw [decorate,decoration={brace,amplitude=10pt},xshift=-0pt,yshift=-3pt]
(4,0) -- (0,0) node [black,midway,yshift=-0.6cm] {$j=4$};
\end{scope}
\end{tikzpicture}   \qquad\qquad\qquad
\begin{tikzpicture}[scale=0.75]
    \draw[help lines] (0,0) grid (5,3);
   \draw[line width = 1.5pt] (0,0)--++(2,2)--++(0,-1)--++(1,1)--++(1,0)--++(1,1)--++(0,-1);
   \remove(2,2)
  \draw [decorate,decoration={brace,amplitude=10pt},xshift=-0pt,yshift=-3pt]
(2,0) -- (0,0) node [black,midway,yshift=-0.6cm] {$i=2$};
\begin{scope}[shift={(0,-2.5)}]
  \draw [help lines] (0,0) grid (6,1);
  \foreach \x in {1,...,6} \draw node at (\x-0.5,0.5) {\x};
  \BM0 \BM1 \BD3 \RD5
  \draw [decorate,decoration={brace,amplitude=10pt},xshift=-0pt,yshift=-3pt]
(2,0) -- (0,0) node [black,midway,yshift=-0.6cm] {$j=2$};
\end{scope}
\end{tikzpicture}   
\caption{A pair $(\pi,T)\in X$ in Case 1-b on the left and the corresponding
  pair $(\pi',T')$ in Case 2-b on the right, for $(n,m,\ell)=(2,6,2)$.
The peak $(U,V)$ starting at $(2,2)$ in $\pi$ is collapsed to a point.}
  \label{fig:case-b}
\end{figure}

\begin{figure}
  \centering
\begin{tikzpicture}[scale=0.75]
    \draw[help lines] (0,0) grid (6,3);
   \draw[line width = 1.5pt] (0,0)--++(3,3)--++(1,-1)--++(1,0)--++(0,-1)--++(1,1);
   \draw[line width = 1.5pt, red] (2,2)--++(1,1)--++(1,-1);
  \draw [decorate,decoration={brace,amplitude=10pt},xshift=-0pt,yshift=-3pt]
(3,0) -- (0,0) node [black,midway,yshift=-0.6cm] {$i=3$};
\begin{scope}[shift={(0,-2.5)}]
  \draw [help lines] (0,0) grid (6,1);
  \foreach \x in {1,...,6} \draw node at (\x-0.5,0.5) {\x};
  \BM0 \BM1 \BM2 \BM3 \RD5
  \draw [decorate,decoration={brace,amplitude=10pt},xshift=-0pt,yshift=-3pt]
(4,0) -- (0,0) node [black,midway,yshift=-0.6cm] {$j=4$};
\end{scope}
\end{tikzpicture}   \qquad\qquad\qquad
\begin{tikzpicture}[scale=0.75]
    \draw[help lines] (0,0) grid (4,3);
   \draw[line width = 1.5pt] (0,0)--++(2,2)--++(1,0)--++(0,-1)--++(1,1);
   \remove(2,2)
  \draw [decorate,decoration={brace,amplitude=10pt},xshift=-0pt,yshift=-3pt]
(2,0) -- (0,0) node [black,midway,yshift=-0.6cm] {$i=2$};
\begin{scope}[shift={(0,-2.5)}]
  \draw [help lines] (0,0) grid (6,1);
  \foreach \x in {1,...,6} \draw node at (\x-0.5,0.5) {\x};
  \BM0 \BM1 \RD3 \RD5
  \draw [decorate,decoration={brace,amplitude=10pt},xshift=-0pt,yshift=-3pt]
(2,0) -- (0,0) node [black,midway,yshift=-0.6cm] {$j=2$};
\end{scope}
\end{tikzpicture}   
\caption{A pair $(\pi,T)\in X$ in Case 1-c on the left and the corresponding
  pair $(\pi',T')$ in Case 2-c on the right, for $(n,m,\ell)=(2,6,2)$.
The peak $(U,D)$ starting at $(2,2)$ in $\pi$ is collapsed to a point.}
  \label{fig:case-c}
\end{figure}

\begin{figure}
  \centering
  \begin{tikzpicture}[scale=0.75]
    \draw[help lines] (0,0) grid (8,6);
   \draw[line width = 1.5pt] (0,0)--++(6,6)--++(1,0)--++(0,-3)--++(1,1)--++(0,-2);
  \draw [decorate,decoration={brace,amplitude=10pt},xshift=-0pt,yshift=-3pt]
(6,0) -- (0,0) node [black,midway,yshift=-0.6cm] {$i=6$};
\begin{scope}[shift={(0,-2.5)}]
  \draw [help lines] (0,0) grid (6,1);
  \foreach \x in {1,...,6} \draw node at (\x-0.5,0.5) {\x};
  \BM0 \BM1 \BM2 \BM3 \BM4 \BM5
  \draw [decorate,decoration={brace,amplitude=10pt},xshift=-0pt,yshift=-3pt]
(6,0) -- (0,0) node [black,midway,yshift=-0.6cm] {$j=6$};
\end{scope}
\end{tikzpicture}   
\caption{A pair $(\pi,T)\in X$ in Case 3 for $(n,m,\ell)=(2,6,2)$. In this case
  $(\pi,T) = (\pi',T')$ is a fixed point.}
  \label{fig:case3}
\end{figure}
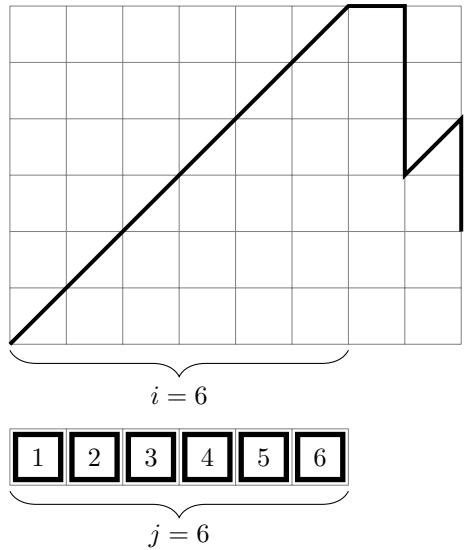

It is straightforward to verify that $(\pi,T)\mapsto(\pi',T')$ is a
sign-reversing involution on $X$ whose fixed points are the pairs $(\pi,T)$ with
$\pi\in \MS((0,0)\to(m+n,\ell))$ and $T\in \FT_m$ such that the first $m$ steps
of $\pi$ are up steps and $T$ consists of $m$ black monominos. Note that if
$(\pi,T)$ is a fixed point, then $\wt(T)=1$ and $\wt(\pi)=\wt(\pi_{\ge m})$,
where $\pi_{\ge m}$ is the subpath of $\pi$ from $(m,m)$ to $(m+n,\ell)$. This
shows that
\begin{equation}
  \label{eq:19}
  \sum_{(\pi,T)\in X} \wt(\pi)\wt(T)= \sum_{\pi\in \MS((m,m)\to(m+n,\ell))} \wt(\pi)
  = \sum_{\pi\in \MS((0,m)\to(n,\ell))} \wt(\pi).
\end{equation}
Then the theorem follows from \eqref{eq:18} and \eqref{eq:19}.
\end{proof}

Now we list a number of special cases of Theorem~\ref{thm:mu_nml}.

First of all, if $a_n= 0$, we obtain Viennot's result.
\begin{cor}\cite[Proposition~17 on page I-15]{ViennotLN}
If \( a_i=0 \) for all \( i\ge1 \), then we have
\[
\LL(x^n P_m(x)P_\ell(x)) = \lambda_1\dots\lambda_\ell \sum_{\pi\in \Mot((0,m)\to (n,\ell))} \wt(\pi).
\]
\end{cor}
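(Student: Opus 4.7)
The plan is to reduce the claim directly to Theorem~\ref{thm:mu_nml} by rewriting $P_\ell(x)$ in terms of $Q_\ell(x)$ and then observing that the Motzkin--Schr\"oder sum collapses to a Motzkin sum.

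First I would use the assumption $a_i = 0$ for $i \ge 1$ to simplify the denominator:
\[
d_\ell(x) = \prod_{i=1}^\ell (a_i x + \lambda_i) = \lambda_1 \cdots \lambda_\ell,
\]
which is a constant. Since $Q_\ell(x) = P_\ell(x)/d_\ell(x)$, this yields $P_\ell(x) = \lambda_1 \cdots \lambda_\ell \cdot Q_\ell(x)$, and therefore
\[
\LL(x^n P_m(x) P_\ell(x)) = \lambda_1 \cdots \lambda_\ell \cdot \LL(x^n P_m(x) Q_\ell(x)) = \lambda_1 \cdots \lambda_\ell \cdot \mu_{n,m,\ell}.
\]
By Theorem~\ref{thm:mu_nml} the right-hand side equals $\lambda_1 \cdots \lambda_\ell \sum_{\pi \in \MS((0,m)\to(n,\ell))} \wt(\pi)$.

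It then remains to argue that this sum reduces to one over Motzkin paths. Recall from Definition~\ref{defn:MSpath} that a vertical down step $(0,-1)$ starting at height $k$ carries weight $a_k$. Since any Motzkin--Schr\"oder path stays on or above the $x$-axis, every vertical down step must start at some height $k \ge 1$, so under the hypothesis $a_i = 0$ for $i \ge 1$ such a step contributes weight $0$ and annihilates the whole path. Consequently the sum is supported on paths with no vertical down steps, i.e., on elements of $\Mot((0,m)\to(n,\ell))$, giving the claimed identity.

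There is essentially no obstacle here; the corollary is a consistency check that specializes Theorem~\ref{thm:mu_nml} to Viennot's classical setting by collapsing the constant $d_\ell(x)$ and killing the diagonal-step stratum of the path model.
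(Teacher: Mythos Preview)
Your argument is correct and is exactly the specialization the paper has in mind: set $a_i=0$, note that $d_\ell(x)$ collapses to the constant $\lambda_1\cdots\lambda_\ell$ so that $P_\ell=\lambda_1\cdots\lambda_\ell\,Q_\ell$, apply Theorem~\ref{thm:mu_nml}, and discard any path containing a vertical down step since each such step carries weight $a_k=0$. One small slip in your closing sentence: it is the \emph{vertical}-step stratum that is killed, not the diagonal one---the diagonal down steps (weight $\lambda_k$) are precisely the ones that survive to give Motzkin paths.
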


In the next section we will show that if $\lambda_n=0$ and $\ell=0$ in
Theorem~\ref{thm:mu_nml}, then we obtain Kamioka's result
\cite[Lemma~3.1]{Kamioka2014} on Laurent biorthogonal polynomials.

If $m=0$ or $\ell=0$ in Theorem~\ref{thm:mu_nml}, we obtain the following
corollary.

\begin{cor}\label{cor:L(x^nP_m)}
For $n,m\ge0$, we have
  \begin{align*}
    \LL(x^n P_m(x)) &= \sum_{\pi\in \MS((0,m)\to (n,0))} \wt(\pi),\\
    \LL(x^n Q_m(x)) &= \sum_{\pi\in \MS((0,0)\to (n,m))} \wt(\pi).
  \end{align*}
\end{cor}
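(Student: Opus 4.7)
The plan is to derive this corollary as a direct specialization of Theorem~\ref{thm:mu_nml}. Since Theorem~\ref{thm:mu_nml} already gives a positive combinatorial formula for $\mu_{n,m,\ell}=\LL(x^n P_m(x) Q_\ell(x))$ as a weighted sum over $\MS((0,m)\to(n,\ell))$, the two identities in Corollary~\ref{cor:L(x^nP_m)} should fall out by plugging in $\ell=0$ and $m=0$, respectively, and checking that the inserted factors reduce to $1$.

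For the first identity, I would set $\ell=0$ in Theorem~\ref{thm:mu_nml}. By definition $d_0(x)=\prod_{i=1}^{0}(a_i x+\lambda_i)=1$ (empty product), so $Q_0(x)=P_0(x)/d_0(x)=1$. Consequently
\[
\LL(x^n P_m(x)) = \LL(x^n P_m(x) Q_0(x)) = \mu_{n,m,0} = \sum_{\pi\in\MS((0,m)\to(n,0))}\wt(\pi),
\]
which is the first equation.

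For the second identity, I would set $m=0$ in Theorem~\ref{thm:mu_nml}. Since $P_0(x)=1$ by the initial condition in the recurrence~\eqref{eq:favard1}, we get
\[
\LL(x^n Q_\ell(x)) = \LL(x^n P_0(x) Q_\ell(x)) = \mu_{n,0,\ell} = \sum_{\pi\in\MS((0,0)\to(n,\ell))}\wt(\pi),
\]
and renaming $\ell$ as $m$ yields the second equation. (Alternatively, this case is already contained in Theorem~\ref{thm:R1mu}.)

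There is essentially no obstacle here: all the combinatorial work has been absorbed into the sign-reversing involution used to prove Theorem~\ref{thm:mu_nml}, and the only thing to verify is the trivial specialization $P_0(x)=Q_0(x)=1$. The corollary is really just a convenient restatement of the two endpoint cases of Theorem~\ref{thm:mu_nml}.
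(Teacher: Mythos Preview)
Your proof is correct and matches the paper's own argument: the corollary is stated immediately after Theorem~\ref{thm:mu_nml} as the specialization $\ell=0$ (using $Q_0(x)=1$) for the first identity and $m=0$ (using $P_0(x)=1$) for the second. There is nothing to add.
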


If $n=0$ in Theorem~\ref{thm:mu_nml}, we obtain the following corollary.

\begin{cor}\label{cor:L(P_mQ_ell)}
  We have
  \[
    \LL(P_n(x)Q_m(x)) = 
    \begin{cases}
     0 & \mbox{if $n<m$},\\
     a_{m+1}a_{m+2}\dots a_n & \mbox{if $n\ge m$}.
    \end{cases}
  \]
  In particular,
  \begin{align*}
    \LL(P_n(x)Q_m(x)) &= \delta_{n,m}, \qquad \mbox{if $0\le n\le m$},\\
    \LL(P_n(x)) &= a_1 a_2 \cdots a_n,\\
    \LL(P_n(x)(Q_m(x)-a_{m+1}Q_{m+1}(x))) &= \delta_{n,m}.
  \end{align*}
\end{cor}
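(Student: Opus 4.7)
The plan is to apply Theorem~\ref{thm:mu_nml} in the special case $n=0$ (with the roles of the other indices renamed), which gives
\[
\LL(P_n(x)Q_m(x)) = \mu_{0,n,m} = \sum_{\pi\in\MS((0,n)\to(0,m))}\wt(\pi),
\]
and then to observe that the set of Motzkin-Schr\"oder paths from $(0,n)$ to $(0,m)$ is trivial to describe: since the $x$-coordinate cannot change, every step must satisfy $\Delta x=0$, and the only such step available in Definition~\ref{defn:MSpath} is the vertical down step $V=(0,-1)$.

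Consequently, the sum is empty (hence $0$) whenever $n<m$, and when $n\ge m$ the unique Motzkin-Schr\"oder path in the sum is the sequence of $n-m$ vertical down steps starting at heights $n,n-1,\dots,m+1$. By the weight convention in Definition~\ref{defn:MSpath}, a vertical down step starting at height $k$ contributes $a_k$, so the total weight is $a_{m+1}a_{m+2}\cdots a_n$, giving the main formula.

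The three special cases then follow by direct substitution. The first, $\LL(P_n(x)Q_m(x))=\delta_{n,m}$ for $0\le n\le m$, comes from reading the two cases $n<m$ and $n=m$ (the latter yields the empty product $1$). The second, $\LL(P_n(x))=a_1 a_2\cdots a_n$, follows because $Q_0(x)=P_0(x)/d_0(x)=1$, so taking $m=0$ in the main formula gives the claim. For the third, write
\[
\LL(P_n(x)(Q_m(x)-a_{m+1}Q_{m+1}(x)))=\LL(P_n(x)Q_m(x))-a_{m+1}\LL(P_n(x)Q_{m+1}(x))
\]
and check the three ranges: for $n\le m$ only the first term survives and equals $\delta_{n,m}$; for $n=m+1$ the difference is $a_{m+1}-a_{m+1}\cdot 1=0$; and for $n\ge m+2$ both products equal $a_{m+1}a_{m+2}\cdots a_n$ and cancel. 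There is no real obstacle here: once Theorem~\ref{thm:mu_nml} is in hand, the combinatorial picture collapses completely because the path is forced to move only in the vertical direction, and the remaining verifications are routine.
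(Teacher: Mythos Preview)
Your proof is correct and follows exactly the approach indicated in the paper, which derives the corollary by setting the exponent of $x$ to $0$ in Theorem~\ref{thm:mu_nml} and reading off the unique (or nonexistent) Motzkin--Schr\"oder path consisting of vertical down steps. Your verification of the three special cases is the intended routine check.
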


If $m=0$ and $n=\ell$, or $n=0$ and $m=\ell$, we obtain the following corollary, which is equivalent
to \cite[Corollary 2.2]{IsmailMasson}.

\begin{cor}\label{cor:L(x^nQ_n)=1}
  We have
  \[
    \LL(x^nQ_n(x)) = \LL(P_n(x)Q_n(x)) = 1.
  \]
\end{cor}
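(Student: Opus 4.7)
The plan is to derive both identities as immediate endpoint specializations of the combinatorial models already established, so the only real work is to verify that in each case a single lattice path survives.

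First, I would specialize Theorem~\ref{thm:R1mu} at $m = n$ to reduce $\LL(x^n Q_n(x)) = \mu_{n,n}$ to the claim $\sum_{\pi \in \MS_{n,n}} \wt(\pi) = 1$. Writing a Motzkin-Schr\"oder path from $(0,0)$ to $(n,n)$ as $u$ up steps, $h$ horizontal steps, $v$ vertical down steps, and $d$ diagonal down steps, the endpoint equations read $u+h+d = n$ and $u - v - d = n$; subtracting gives $h + v + 2d = 0$, forcing $h = v = d = 0$ and $u = n$. Hence $\MS_{n,n}$ consists of the single path $U^n$, whose weight is $1$.

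Second, for $\LL(P_n(x) Q_n(x)) = 1$ there are two equally short routes. Applying Theorem~\ref{thm:mu_nml} with outer exponent $0$ and $m = \ell = n$ reduces the identity to $\sum_{\pi \in \MS((0,n)\to(0,n))} \wt(\pi) = 1$; any such path has zero horizontal displacement, so it can only use vertical down steps, but every vertical step lowers the height, leaving only the empty path of weight $1$. Alternatively, the statement is the $m = n$ case of Corollary~\ref{cor:L(P_mQ_ell)}, where the product $a_{m+1}\cdots a_n$ is empty and equals $1$.

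There is essentially no technical obstacle; the content of the corollary is that the general machinery specializes cleanly when the endpoints of the lattice paths coincide. The only thing to be careful about is confirming that the endpoint constraints really do isolate a unique path in each case, which the short displacement arguments above make explicit.
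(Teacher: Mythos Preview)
Your proof is correct and matches the paper's approach: the corollary is stated as the specialization of Theorem~\ref{thm:mu_nml} at $m=0$, $n=\ell$ (equivalently Theorem~\ref{thm:R1mu} at $m=n$) and at $n=0$, $m=\ell$, and you have supplied the easy verification that in each case the relevant set of Motzkin--Schr\"oder paths collapses to a single path of weight $1$.
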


Using Corollary~\ref{cor:L(P_mQ_ell)} we can find the coefficients in the
expansion of an arbitrary polynomial as a linear combination of $P_n(x)$.

\begin{prop} Let $p(x)$ be a polynomial in $x$, and expand
$$
p(x)=\sum_{m=0}^\infty c_m P_m(x).
$$
Then
$$
c_m= \LL\left(p(x) (Q_m(x)-a_{m+1}Q_{m+1}(x))\right).
$$
\begin{proof}
This follows immediately from the last identity in Corollary~\ref{cor:L(P_mQ_ell)}.
\end{proof}
\end{prop}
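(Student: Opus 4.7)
The plan is essentially a one-line calculation built on the biorthogonality relation
\[
\LL\bigl(P_n(x)(Q_m(x)-a_{m+1}Q_{m+1}(x))\bigr) = \delta_{n,m}
\]
established as the last identity in Corollary~\ref{cor:L(P_mQ_ell)}. First I would observe that the expansion $p(x)=\sum_{m\ge 0} c_m P_m(x)$ is a legitimate finite sum: since $\{P_n(x)\}_{n\ge 0}$ is a sequence of monic polynomials with $\deg P_n = n$, it forms a basis of the polynomial ring, so if $\deg p = N$ then $c_m = 0$ for $m > N$ and the expansion has only finitely many nonzero terms. In particular $p(x)(Q_m(x)-a_{m+1}Q_{m+1}(x))$ is a finite linear combination of elements of $V$, so the linear functional $\LL$ applies and commutes with the sum.

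Next I would substitute the expansion into the right-hand side of the claimed formula and use linearity of $\LL$:
\[
\LL\bigl(p(x)(Q_m(x)-a_{m+1}Q_{m+1}(x))\bigr)
=\sum_{n\ge 0} c_n \,\LL\bigl(P_n(x)(Q_m(x)-a_{m+1}Q_{m+1}(x))\bigr).
\]
Applying the cited identity from Corollary~\ref{cor:L(P_mQ_ell)} to each term collapses the sum to $\sum_{n\ge 0} c_n\,\delta_{n,m} = c_m$, which is exactly what we want.

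There is no real obstacle here; the only subtlety worth flagging is the legitimacy of the expansion and the interchange of $\LL$ with the sum, and both are immediate from the fact that $p$ has finite degree and $\{P_n\}$ is a basis of polynomials.
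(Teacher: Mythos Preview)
Your argument is correct and is exactly the approach the paper intends: it simply cites the biorthogonality relation $\LL(P_n(x)(Q_m(x)-a_{m+1}Q_{m+1}(x)))=\delta_{n,m}$ from Corollary~\ref{cor:L(P_mQ_ell)} and your write-up merely makes explicit the substitution and linearity that the paper leaves implicit in the phrase ``follows immediately.''
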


The following theorem implies that $\rho_{n,m,\ell}=\LL(x^n P_m(x) P_\ell(x))$ is a
positive polynomial in $a_k,b_k$, and $\lambda_k$.

\begin{thm}\label{thm:x^nP_mP_l}
  For $n,m,\ell\ge0$, we have
  \[
    \rho_{n,m,\ell} = \LL(x^n P_m(x) P_\ell(x)) = \sum_{\pi} \wt(\pi),
  \]
  where the sum is over all Motzkin-Schr\"oder paths from $(0,m)$ to
  $(n+\ell,0)$ such that the last $\ell$ steps consist only of vertical down steps
  and diagonal down steps.
\end{thm}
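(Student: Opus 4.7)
The plan is to reduce the claim to Theorem~\ref{thm:mu_nml} via the identity $P_\ell(x) = d_\ell(x)Q_\ell(x)$, and then to encode the extra factor $d_\ell(x)$ bijectively as an appended ``tail'' of down steps in the Motzkin--Schröder path.

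First I would expand
$$d_\ell(x) = \prod_{j=1}^\ell (a_j x + \lambda_j) = \sum_{S \subseteq \{1,\ldots,\ell\}} x^{|S|} \prod_{j \in S} a_j \prod_{j \notin S} \lambda_j,$$
so that
$$\rho_{n,m,\ell} = \LL\bigl(x^n P_m(x) d_\ell(x) Q_\ell(x)\bigr) = \sum_{S \subseteq \{1,\ldots,\ell\}} \Bigl(\prod_{j\in S} a_j \prod_{j\notin S}\lambda_j\Bigr)\, \LL\bigl(x^{n+|S|} P_m(x) Q_\ell(x)\bigr).$$
Applying Theorem~\ref{thm:mu_nml} to each term rewrites this as
$$\rho_{n,m,\ell} = \sum_{S \subseteq \{1,\ldots,\ell\}} \sum_{\pi \in \MS((0,m)\to(n+|S|,\ell))} \wt(\pi) \prod_{j \in S} a_j \prod_{j \notin S} \lambda_j.$$

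Next I would construct a weight-preserving bijection between the pairs $(\pi, S)$ above and Motzkin--Schröder paths $\widetilde\pi \in \MS((0,m)\to(n+\ell,0))$ whose last $\ell$ steps consist only of vertical down and diagonal down steps. The forward map appends to $\pi$ a tail of $\ell$ down steps, declaring the step that starts at height $j \in \{1,\ldots,\ell\}$ to be a vertical down step (of weight $a_j$) if $j \in S$, and a diagonal down step (of weight $\lambda_j$) if $j \notin S$. Since the tail contains exactly $\ell - |S|$ diagonal down steps, it advances the horizontal coordinate by $\ell - |S|$, so $\widetilde\pi$ terminates at $(n+\ell, 0)$ as required, and $\wt(\widetilde\pi) = \wt(\pi)\prod_{j\in S} a_j \prod_{j\notin S}\lambda_j$ matches the summand exactly. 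The inverse map cuts off the last $\ell$ steps of $\widetilde\pi$ and defines $S$ to be the set of heights at which the tail step is vertical.

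The main, and only, thing to verify carefully is that every tail of $\ell$ down steps from $(\cdot,\ell)$ to $(\cdot,0)$ visits each height in $\{1,\ldots,\ell\}$ exactly once as a starting height; this follows because all $\ell$ tail steps strictly decrease the height by one. Consequently the encoding of the tail by the subset $S$ is both well defined and uniquely invertible, and both the weight check and the coordinate check are immediate. With these observations in place, Theorem~\ref{thm:mu_nml} supplies the substantive combinatorics and completes the proof.
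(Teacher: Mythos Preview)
Your proof is correct and takes a genuinely different route from the paper. The paper expands $P_\ell(x)$ via Favard tilings (Theorem~\ref{thm:favard}), obtaining a signed sum of values $\LL(x^{n+\bm(T)+\bd(T)}P_m(x))$; applying Theorem~\ref{thm:mu_nml} with endpoint height $0$ then gives a signed sum over pairs $(\pi,T)$ of paths and tilings, which must be resolved by a sign-reversing involution (the same mechanism as in the proof of Theorem~\ref{thm:mu_nml}, but reading the path backwards from the end). Your approach instead uses $P_\ell = d_\ell Q_\ell$ and expands $d_\ell(x)=\prod_{j=1}^\ell(a_jx+\lambda_j)$ over subsets $S\subseteq\{1,\dots,\ell\}$, which is entirely sign-free; the remaining bijection is just appending the tail of down steps encoded by $S$, with no cancellation needed. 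This is arguably cleaner: it exploits the full statement of Theorem~\ref{thm:mu_nml} (with endpoint at height~$\ell$) rather than only its $\ell=0$ special case, and thereby sidesteps the involution altogether. The paper's approach, on the other hand, has the virtue of reusing exactly the combinatorial machinery already built for Theorem~\ref{thm:mu_nml}, so the two theorems have structurally parallel proofs.
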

\begin{proof}
  By Theorem~\ref{thm:favard} and Theorem~\ref{thm:mu_nml}, we have
  \[
    \LL(x^n P_m(x) P_\ell(x)) = \sum_{T\in\FT_\ell} \wt(T) \LL(x^{n+\bm(T)+\bd(T)}P_m(x))
    =\sum_{(\pi,T)\in X} \wt(\pi)\wt(T),
  \]
  where $X$ is the set of pairs $(\pi,T)$ of a Motzkin-Schr\"oder path $\pi$
  from $(0,m)$ to $(n+t,0)$ and a Favard tiling $T\in\FT_\ell$ satisfying
  $t=n+\bm(T)+\bd(T)$. 

  By the same argument as in the proof of Theorem~\ref{thm:mu_nml}, we can find
  a sign-reversing involution on $X$ whose fixed points are exactly the
  Motzkin-Schr\"oder paths described in this theorem. The only difference in the
  construction of the sign-reversing involution is that we write
  $\pi=S_rS_{r-1}\dots S_1$ and let $i$ be the largest integer such that the
  last $i$ steps $S_1,\dots,S_i$ consist of vertical and diagonal down steps. We omit the
  details.
\end{proof}

If $\ell=0$ in Theorem~\ref{thm:x^nP_mP_l}, we obtain
Corollary~\ref{cor:L(x^nP_m)}. If $n=0$ in Theorem~\ref{thm:x^nP_mP_l}, we
obtain the following corollary.

\begin{cor}
  For $m,n\ge0$, we have
  \[
    \LL(P_m(x) P_n(x)) = \sum_{\pi} \wt(\pi),
  \]
  where the sum is over all Motzkin-Schr\"oder paths from $(0,m)$ to $(n,0)$ 
such that the last $n$ steps consist only of vertical down steps
  and diagonal down steps.
\end{cor}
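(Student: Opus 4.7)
The plan is essentially to obtain this corollary as an immediate specialization of Theorem~\ref{thm:x^nP_mP_l} by setting the exponent parameter equal to zero. Concretely, I would take $n=0$ in the statement of Theorem~\ref{thm:x^nP_mP_l}, so that $x^n=1$ and $\LL(x^n P_m(x)P_\ell(x))$ collapses to $\LL(P_m(x)P_\ell(x))$. The endpoint of the paths in that theorem is $(n+\ell,0)$, which becomes $(\ell,0)$, and the side condition that the last $\ell$ steps consist only of vertical and diagonal down steps is unchanged. Finally I would relabel the index $\ell$ as $n$ to match the variable names used in the corollary's statement.

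Since Theorem~\ref{thm:x^nP_mP_l} has already been proved via the sign-reversing involution on pairs $(\pi,T)\in X$ of a Motzkin-Schr\"oder path and a Favard tiling $T\in\FT_\ell$, no additional combinatorial work is required here: the fixed point set in the proof of Theorem~\ref{thm:x^nP_mP_l} for $n=0$ is exactly the set of Motzkin-Schr\"oder paths from $(0,m)$ to $(\ell,0)$ whose final $\ell$ steps are vertical or diagonal down steps, carrying the correct weight $\wt(\pi)$.

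There is no real obstacle; the only step that even requires a moment's thought is confirming that the condition on the last $\ell$ steps survives the specialization $n=0$ unaltered, which it does because the construction of the involution in Theorem~\ref{thm:x^nP_mP_l} processes $\pi$ from the right and does not depend on the value of $n$. Thus the proof is a one-line appeal to Theorem~\ref{thm:x^nP_mP_l} with $n=0$ and a relabeling $\ell\leftrightarrow n$.
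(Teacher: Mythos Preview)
Your proposal is correct and matches the paper's approach exactly: the corollary is obtained as the special case $n=0$ of Theorem~\ref{thm:x^nP_mP_l}, followed by relabeling $\ell$ as $n$. The additional remarks about the involution are unnecessary (the corollary follows formally from the theorem statement alone), but they do no harm.
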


\begin{exam}
If $(n,m,\ell)=(0,1,1)$, we have
  \[
    \LL(P_1(x)P_1(x)) = a_1a_2 + a_1b_1+\lambda_1.
  \]
  One path is eliminated: the path $VH$, because the last step is a horizontal
  step, which is neither a vertical down step nor a diagonal down step.

If $(n,m,\ell)=(0,2,1)$, we have
  \[
    \LL(P_2(x)P_1(x)) = a_1^2a_2 + a_1a_2^2 + a_1a_2a_3 + a_1a_2b_1
    + a_1a_2b_2+ a_2\lambda_1 + a_1\lambda_2.
  \]
One path is eliminated: the path $VVH$, because the last step is a horizontal 
step, which is neither a vertical down step nor a diagonal down step.
\end{exam}

\subsection{The moments $\nu_{n,m}$}
\aftersubsection

We will find a recurrence for $\nu_{n,m}= \LL\left(x^n/d_m(x)\right)$ and a generating function for them.
We do not have a combinatorial interpretation for them.

For $m\ge1$, define $U_m(x)$ to be the quotient of $P_m(x)$ when divided by $a_mx+\lambda_m$:
  \[
    P_m(x) = (a_m x+\lambda_m) U_m(x) +P_m(-\lambda_m/a_m).
  \]
Let $f_{m,i}$ be the coefficients of $U_m(x)$:
  \[
    U_m(x) = \sum_{i=0}^{m-1} f_{m,i} x^i.
  \]

  The following Lemma~\ref{lem:nu} with $\nu_{0,0}=1$ allows us to compute
  $\nu_{n,m}$ for all $n,m\ge0$.

\begin{lem}\label{lem:nu}
For $n,m\ge1$ we have
  \begin{align}
    \label{eq:nu_n0}
    \nu_{n,0} & =\mu_{n,0} = \mu_n,  \\
    \label{eq:nu_0m}
    \nu_{0,m} & = -\frac{1}{P_m(-\lambda_m/a_m)} \sum_{i=0}^{m-1} f_{m,i}\nu_{i,m-1},\\
    \label{eq:nu_nm}
    \nu_{n,m} &= \frac{1}{a_m} \nu_{n-1,m-1} - \frac{\lambda_{m}}{a_{m}}\nu_{n-1,m}.
  \end{align}
\end{lem}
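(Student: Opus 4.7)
The plan is to verify each of the three identities directly from the definition $\nu_{n,m}=\LL(x^n/d_m(x))$ by manipulating the integrand into forms on which $\LL$ is already known to act.

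Identity \eqref{eq:nu_n0} is immediate: since $d_0(x)$ is the empty product, $d_0(x)=1$, so $\nu_{n,0}=\LL(x^n)=\mu_n$.

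For \eqref{eq:nu_nm}, the key observation is the algebraic identity
\[
x \;=\; \frac{1}{a_m}\bigl((a_m x+\lambda_m)-\lambda_m\bigr),
\]
which, after multiplying by $x^{n-1}/d_m(x)$ and using $d_m(x)=(a_mx+\lambda_m)d_{m-1}(x)$, gives
\[
\frac{x^n}{d_m(x)} \;=\; \frac{1}{a_m}\cdot\frac{x^{n-1}}{d_{m-1}(x)} \;-\; \frac{\lambda_m}{a_m}\cdot\frac{x^{n-1}}{d_m(x)}.
\]
Both sides lie in $V$, so applying $\LL$ yields \eqref{eq:nu_nm}.

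For \eqref{eq:nu_0m}, I would start from the definition of $U_m(x)$ and divide by $d_m(x)$ to obtain
\[
Q_m(x) \;=\; \frac{U_m(x)}{d_{m-1}(x)} + \frac{P_m(-\lambda_m/a_m)}{d_m(x)},
\]
which is already used in the proof of Lemma~\ref{lem:span}. Expanding $U_m(x)=\sum_{i=0}^{m-1}f_{m,i}x^i$ and applying $\LL$ gives
\[
\LL(Q_m(x)) \;=\; \sum_{i=0}^{m-1}f_{m,i}\,\nu_{i,m-1} \;+\; P_m(-\lambda_m/a_m)\,\nu_{0,m}.
\]
Since $m\ge 1$, the orthogonality condition from Theorem~\ref{thm:unique L} forces $\LL(Q_m(x))=\mu_{0,m}=0$, and our running hypothesis $P_m(-\lambda_m/a_m)\ne 0$ lets us solve for $\nu_{0,m}$ to obtain \eqref{eq:nu_0m}.

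There is no real obstacle here: \eqref{eq:nu_n0} is a convention, \eqref{eq:nu_nm} is a one-line algebraic rearrangement, and \eqref{eq:nu_0m} follows from the decomposition of $Q_m(x)$ already appearing in \eqref{eq:14} together with the vanishing $\LL(Q_m)=0$ for $m\ge1$. The only subtlety worth flagging is that each rational function that appears in the derivation must lie in $V$ so that $\LL$ can be applied, which is the case because every term is a linear combination of $x^i/d_j(x)$'s that belong to $V$ by Lemmas~\ref{lem:V'} and~\ref{lem:span}.
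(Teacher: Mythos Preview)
Your proof is correct and follows essentially the same route as the paper's: the first identity is immediate from $d_0(x)=1$; the third comes from writing $a_m x^n/d_m(x)=x^{n-1}(a_mx+\lambda_m)/d_m(x)-\lambda_m x^{n-1}/d_m(x)$ and applying $\LL$; and the second comes from applying $\LL$ to the decomposition $Q_m(x)=U_m(x)/d_{m-1}(x)+P_m(-\lambda_m/a_m)/d_m(x)$ together with $\LL(Q_m(x))=0$ for $m\ge1$. Your added remark that every term lies in $V$ is a welcome clarification that the paper leaves implicit.
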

\begin{proof}
 The first identity is immediate from the definitions of $\nu_{n,m}$ and $\mu_{n,m}$.
 The second identity follows from
 \[
   0 = \LL\left( \frac{P_m(x)}{d_m(x)}\right)
   =\LL\left( \frac{U_m(x)}{d_{m-1}(x)} + \frac{P_m(-\lambda_m/a_m)}{d_{m}(x)}\right).
 \]
 The third identity follows from
\[
    a_m \nu_{n,m}  
= \LL\left( \frac{x^{n-1}(a_mx+\lambda_m)}{d_m}-\frac{\lambda_m x^{n-1}}{d_m}\right)
=\nu_{n-1,m-1} -\lambda_m\nu_{n-1,m}.
\]
\end{proof}

For $m\ge0$, let
  \[
    V_m(x) = \sum_{n\ge0} \nu_{n,m} x^n. 
  \]

\begin{prop}\label{prop:Vm}
For an integer $m\ge1$, we have
  \[
    V_m(x) = \frac{a_m\nu_{0,m}}{a_m+\lambda_m x}+\frac{x V_{m-1}(x)}{a_m+\lambda_m x}.
  \]
\end{prop}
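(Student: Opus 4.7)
The plan is to derive the stated identity directly from the recurrence \eqref{eq:nu_nm} in Lemma~\ref{lem:nu}. Since $V_m(x) = \sum_{n\ge 0} \nu_{n,m} x^n$, I would multiply both sides of \eqref{eq:nu_nm} by $x^n$ and sum over $n \ge 1$. On the left-hand side this yields $V_m(x) - \nu_{0,m}$. On the right-hand side, the sums $\sum_{n\ge 1} \nu_{n-1,m-1} x^n$ and $\sum_{n\ge 1} \nu_{n-1,m} x^n$ are just $x V_{m-1}(x)$ and $x V_m(x)$ respectively (after reindexing).

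This produces the functional equation
\[
V_m(x) - \nu_{0,m} = \frac{x}{a_m} V_{m-1}(x) - \frac{\lambda_m x}{a_m} V_m(x).
\]
Then I would collect the $V_m(x)$ terms on the left, giving
\[
\left(1 + \frac{\lambda_m x}{a_m}\right) V_m(x) = \nu_{0,m} + \frac{x}{a_m} V_{m-1}(x),
\]
and finally multiply through by $a_m/(a_m + \lambda_m x)$ to obtain the desired form.

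There is no real obstacle here, as Lemma~\ref{lem:nu} already does the substantive work of relating $\nu_{n,m}$ to $\nu_{n-1,m-1}$ and $\nu_{n-1,m}$; this proposition is simply the generating-function repackaging of that recurrence. The only technicality is the standing assumption $a_m \ne 0$ (so division by $a_m$ is legitimate) and the implicit treatment of $V_m(x)$ as a formal power series in $x$, so that $a_m + \lambda_m x$ is invertible as a formal power series since $a_m \ne 0$.
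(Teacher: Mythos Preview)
Your proposal is correct and follows essentially the same approach as the paper's proof: both multiply the recurrence \eqref{eq:nu_nm} by $x^n$, sum over $n\ge 1$, and rearrange. The only cosmetic difference is that the paper first clears denominators by writing the recurrence as $a_m\nu_{n,m}+\lambda_m\nu_{n-1,m}=\nu_{n-1,m-1}$ before summing, whereas you keep the $1/a_m$ factors and clear them at the end.
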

\begin{proof}
  By  \eqref{eq:nu_nm}, for $n\ge1$, 
  \[
    a_m \nu_{n,m} +\lambda_m \nu_{n-1,m} = \nu_{n-1,m-1}.
  \]
  By multiplying both sides by $x^n$ and summing over $n\ge1$, we obtain
  \[
    a_m(V_m(x) -\nu_{0,m}) + \lambda_m x V_{m}(x) = x V_{m-1}(x),
  \]
which is equivalent to the desired equation.
\end{proof}

By iterating the equation in Proposition~\ref{prop:Vm} 
and observing the fact $V_0(x)=\sum_{n\ge0}\mu_n x^n$
we obtain the following corollary.
\begin{cor}
For $m\ge1$, we have
  \[
    V_m(x) = \frac{x^m\sum_{n\ge0}\mu_n x^n}{\prod_{j=1}^m (a_j+\lambda_jx)}
    + \sum_{i=1}^m \frac{\nu_{0,i}x^{m-i}}{\prod_{j=i}^m (1+\lambda_jx/a_j)}.
  \]
\end{cor}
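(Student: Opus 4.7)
The plan is to iterate the affine recursion of Proposition \ref{prop:Vm} until the base case $V_0(x)$ is reached. Since $V_0(x)=\sum_{n\ge0}\nu_{n,0}x^n=\sum_{n\ge0}\mu_n x^n$ by Lemma \ref{lem:nu}, this anchors the closed form at the ordinary moment generating function, which is exactly what appears in the numerator of the first term of the claim.

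Concretely, I would rewrite Proposition \ref{prop:Vm} schematically as $V_m(x)=\alpha_m(x)V_{m-1}(x)+\beta_m(x)$ with
\[
\alpha_m(x)=\frac{x}{a_m+\lambda_m x},\qquad \beta_m(x)=\frac{a_m\nu_{0,m}}{a_m+\lambda_m x}.
\]
A straightforward induction on $m$ (or direct telescoping) gives
\[
V_m(x)=\Bigl(\prod_{j=1}^m\alpha_j(x)\Bigr)V_0(x)+\sum_{i=1}^m \beta_i(x)\prod_{j=i+1}^m\alpha_j(x),
\]
where the empty product at $i=m$ is taken to be $1$. The telescoped product evaluates to $x^m/\prod_{j=1}^m(a_j+\lambda_j x)$, which together with $V_0(x)=\sum_{n\ge0}\mu_n x^n$ produces the first term of the claim. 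For each term of the sum, I would then normalize the linear factors by writing $a_j+\lambda_j x=a_j(1+\lambda_j x/a_j)$; the constant factors $a_j$ pulled out of the denominators absorb the $a_i$ sitting in the numerator of $\beta_i$, leaving the denominator $\prod_{j=i}^m(1+\lambda_j x/a_j)$ displayed in the claim.

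There is no substantive obstacle here; the entire proof is telescoping of a one-term affine recursion combined with the normalization $a_j+\lambda_j x=a_j(1+\lambda_j x/a_j)$. The only care needed is in bookkeeping — keeping the index ranges of the telescoped products correct (in particular the endpoint convention at $i=m$), and tracking precisely how the constants pulled out during normalization combine with the leading factor $a_i$ of $\beta_i$ so that the $i$th summand reduces cleanly to the form stated.
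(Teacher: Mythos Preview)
Your approach is exactly the paper's: the corollary is obtained there too simply by iterating Proposition~\ref{prop:Vm} and invoking $V_0(x)=\sum_{n\ge0}\mu_n x^n$.

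One caution on your final normalization step, though: when you factor $a_j+\lambda_j x=a_j(1+\lambda_j x/a_j)$ for $j=i,\dots,m$, you pull out a full product $\prod_{j=i}^m a_j$ from the denominator, and only the single factor $a_i$ is cancelled by the numerator of $\beta_i$. The iteration therefore actually yields
\[
\sum_{i=1}^m \frac{a_i\,\nu_{0,i}\,x^{m-i}}{\prod_{j=i}^m(a_j+\lambda_j x)}
=\sum_{i=1}^m \frac{\nu_{0,i}\,x^{m-i}}{\bigl(\prod_{j=i+1}^m a_j\bigr)\prod_{j=i}^m(1+\lambda_j x/a_j)},
\]
which differs from the displayed sum by the constant $\prod_{j=i+1}^m a_j$ in each term (check $m=2$, $i=1$ directly from Proposition~\ref{prop:Vm} to see this). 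So the discrepancy is a typo in the stated formula, not a defect in your method.
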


\section{Moments of Laurent biorthogonal polynomials}
\label{sec:inverted-polynomials}

In this section we study Laurent biorthogonal polynomials $P_n(x)$, which are
type $R_I$ orthogonal polynomials with $\lambda_n=0$. Kamioka \cite{Kamioka2007,
  Kamioka2008, Kamioka2014} combinatorially studied this case. There is another
linear functional $\FF$ that gives a different type of orthogonality for
$P_n(x)$. We will first study the connection between $\FF$ and our linear
functional $\LL$. We then show that Kamioka's results can be derived as
special cases of Theorem~\ref{thm:mu_nml}.

In this section we consider the case $\lambda_n=0$ for all $n\ge0$, so that the
polynomials $P_n(x)$ are defined by $P_{-1}(x)=0$, $P_0(x)=1$, and for $n\ge0$,
\begin{equation}
  \label{eq:12}
  P_{n+1}(x) = (x-b_n)P_n(x) -a_n x P_{n-1}(x).  
\end{equation}

\textbf{Throughout this section we assume that $P_n(0)\ne0$ and $a_n\ne0$ for
  all $n\ge0$.} Since $P_n(0)=(-1)^nb_0b_1\dots b_{n-1}$, we must have $b_n\ne0$
for all $n\ge0$.

For $n\ge0$, let 
\[
  Q_n(x) = \frac{P_n(x)}{a_1\dots a_n x^n},
\]
where $Q_0(x)=1$. Then $V=\mathrm{span}\{x^n Q_m(x): n,m\ge0\}$ is the vector
space of Laurent polynomials.

Zhedanov \cite[Proposition~1.2]{Zhedanov_1998} showed the following Favard-type
theorem, see also Kamioka \cite[Theorem~2.1]{Kamioka2014}.

\begin{thm}\label{thm:kamioka1}
  There is a unique linear functional $\FF$
  on $V$ such that $\FF(1)=1$ and 
\[
  \FF(x^{-n}P_m(x)) = 0, \qquad 0\le n<m.
\]
\end{thm}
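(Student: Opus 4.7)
The plan is to mirror the argument used for Theorem~\ref{thm:unique L}.
First, observe that with $\lambda_n=0$ and $P_n(0)\ne 0$ for all $n$, the space $V$ coincides with the Laurent polynomial ring $\mathbb{C}[x,x^{-1}]$, since each $Q_m(x) = P_m(x)/(a_1\cdots a_m x^m)$ is a Laurent polynomial whose highest-degree term is a nonzero constant and whose lowest-degree term is a nonzero multiple of $x^{-m}$. The strategy is to exhibit a basis $B$ of $V$ contained in $\{1\}\cup\{x^{-n}P_m(x):0\le n<m\}$ (this delivers uniqueness), then construct $\FF$ on $B$ and verify the remaining orthogonality conditions by induction using \eqref{eq:12}.

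The candidate basis I would use is
\[
  B = \{P_m(x) : m\ge 0\} \;\cup\; \{x^{-(m-1)}P_m(x) : m\ge 2\},
\]
which is clearly contained in $\{1\}\cup\{x^{-n}P_m(x):0\le n<m\}$. For linear independence, consider a finite relation $\sum_m c_m P_m(x)+\sum_{m\ge 2}d_m x^{-(m-1)}P_m(x)=0$. Inspecting the lowest power of $x$: since the $P_m$'s contribute only nonnegative powers, this power must come from the second sum, and at the largest index $M$ with $d_M\ne 0$ it equals $d_M P_M(0)\,x^{-(M-1)}$; by $P_M(0)\ne 0$ this forces $d_M=0$, and iterating kills all $d_m$. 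The equation reduces to $\sum c_m P_m=0$, whence all $c_m=0$ since the $P_m$ are monic of distinct degrees. For spanning, the $P_m$'s span $\mathbb{C}[x]$ by triangularity, and subtracting the nonnegative-power part of $x^{-(m-1)}P_m(x)$ (which lies in $\mathbb{C}[x]\subseteq\Span(B)$) yields a purely-negative-power element with leading term $P_m(0)\,x^{-(m-1)}$; since $P_m(0)\ne 0$, a triangular argument then extracts $x^{-1},x^{-2},\ldots$ in turn.

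With $B$ in hand, uniqueness of $\FF$ is immediate. For existence I would define $\FF(1)=1$, $\FF(P_m)=0$ for $m\ge 1$, $\FF(x^{-(m-1)}P_m)=0$ for $m\ge 2$, and extend linearly to $V$, after which it remains only to check $\FF(x^{-n}P_m)=0$ for the intermediate indices $1\le n\le m-2$ (so $m\ge 3$). Multiplying \eqref{eq:12} by $x^{-n}$ with $n\ge 1$ yields
\[
  x^{-n}P_{m+1}(x) = x^{-(n-1)}P_m(x) - b_m\,x^{-n}P_m(x) - a_m\,x^{-(n-1)}P_{m-1}(x),
\]
so applying $\FF$ reduces the claim at $m+1$ to claims at strictly smaller values of $m$; a straightforward induction on $m$ (using the boundary cases $n=0$ and $n=m$ already built into the definition of $\FF$) closes the argument. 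As in Theorem~\ref{thm:unique L}, the main obstacle is the basis claim; the hypothesis $P_m(0)\ne 0$ (equivalently $b_n\ne 0$, via $P_n(0)=(-1)^n b_0\cdots b_{n-1}$) is exactly what powers the triangular extraction of negative powers of $x$ in the spanning step.
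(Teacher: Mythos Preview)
Your proof is correct. The paper does not give its own proof of this theorem---it cites Zhedanov and Kamioka---so there is no paper argument to compare against directly. What you have written is the natural transplant of the paper's proof of Theorem~\ref{thm:unique L} to the Laurent-polynomial setting: your basis $B=\{P_m:m\ge0\}\cup\{x^{-(m-1)}P_m:m\ge2\}$ is the analogue of the basis $\{Q_m:m\ge0\}\cup\{x^{m-1}Q_m:m\ge2\}$ used there (with the roles of highest and lowest degree interchanged, and the hypothesis $P_m(0)\ne0$ replacing $P_m(-\lambda_m/a_m)\ne0$), and your inductive verification of the intermediate orthogonality relations via the three-term recurrence runs in parallel.

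As an aside, the paper's own framework suggests an alternative route once Theorem~\ref{thm:kamioka2} and Proposition~\ref{prop:F=L} are available: define $\FF(f):=b_0\,\LL(x^{-1}f)$ and check directly that this $\FF$ meets the required conditions (existence), while uniqueness follows because any such $\FF$ produces, via $\LL'(f):=b_0^{-1}\FF(xf)$, a functional satisfying the hypotheses of Theorem~\ref{thm:kamioka2} and hence equal to $\LL$. Your direct argument is self-contained and avoids that detour.
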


\begin{remark}
  We note that the original statement in \cite[Proposition~1.2]{Zhedanov_1998}
  and \cite[Theorem~2.1]{Kamioka2014} is that
there is a unique linear functional $\FF$ on $V$ such that $\FF(1)=1$ and
\[
  \FF(x^{-n}P_m(x)) = h_n\delta_{n,m}, \qquad 0\le n\le m,
\]
for some constants $h_n\ne 0$. Since $\Span(\{1\}\cup\{x^{-n}P_m(x):0\le
n<m\})=V$ and $x^{-k}P_k(x)$, for $k\ge0$, is a linear combination of the
elements in the spanning set $\{1\}\cup\{x^{-n}P_m(x):0\le n<m\}$, where the
coefficient of $1$ is nonzero, the two statements are equivalent.
\end{remark}

Using Theorem~\ref{thm:unique L} we obtain a slightly different Favard-type
theorem.

\begin{thm}\label{thm:kamioka2}
  There is a unique linear functional $\LL$ on $V$ such that
  $\LL(1)=1$ and 
\[
  \LL(x^{-n}P_m(x)) = 0, \qquad 0< n\le m.
\]
\end{thm}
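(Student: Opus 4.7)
The plan is to reduce Theorem~\ref{thm:kamioka2} directly to Theorem~\ref{thm:unique L} via the explicit identity relating $x^{-n}P_m(x)$ and $x^{m-n}Q_m(x)$ in the Laurent biorthogonal case. Since $\lambda_n=0$, we have $d_m(x)=a_1\cdots a_m\, x^m$, and therefore
\[
x^{-n}P_m(x) \;=\; a_1\cdots a_m\, x^{m-n}Q_m(x)
\]
for every $m\ge 0$ and every integer $n$ with $0\le n\le m$. Under our standing assumption $a_i\ne 0$, the scalar factor $a_1\cdots a_m$ is invertible, so this identity sets up a bijective correspondence (up to nonzero scalars) between the two sets of test functions used in Theorems~\ref{thm:unique L} and \ref{thm:kamioka2}.

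For existence, I would take $\LL$ to be the linear functional supplied by Theorem~\ref{thm:unique L}. For $0<n\le m$ the shifted index $k=m-n$ satisfies $0\le k<m$, so the identity above gives
\[
\LL\bigl(x^{-n}P_m(x)\bigr) \;=\; a_1\cdots a_m\,\LL\bigl(x^{m-n}Q_m(x)\bigr)\;=\;0,
\]
which, together with $\LL(1)=1$, verifies the hypotheses of Theorem~\ref{thm:kamioka2}.

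For uniqueness, let $\LL'$ be any linear functional on $V$ satisfying the stated conditions. The same identity shows that specifying $\LL'$ on $\{1\}\cup\{x^{-n}P_m(x):0<n\le m\}$ is equivalent to specifying it on $\{Q_0(x)\}\cup\{x^kQ_m(x):0\le k<m,\ m\ge 1\}$. In particular, taking $n=m$ yields $Q_m(x)$ up to a nonzero scalar, and taking $n=1$ yields $x^{m-1}Q_m(x)$ up to a nonzero scalar; so the specified set determines $\LL'$ on the basis
\[
B\;=\;\{x^{m-1}Q_m(x):m\ge 2\}\cup\{Q_m(x):m\ge 0\}
\]
of $V$ furnished by Corollary~\ref{cor:basis2}. (Note that in the present setting the standing hypothesis $P_n(-\lambda_n/a_n)\ne 0$ reads $P_n(0)\ne 0$, which we have assumed, so Corollary~\ref{cor:basis2} applies.) Hence $\LL'$ is determined on a basis, proving uniqueness.

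There is essentially no obstacle beyond correctly bookkeeping the dictionary between the two index conventions and observing that the scalars $a_1\cdots a_m$ are nonzero; the substantive content is entirely carried by Theorem~\ref{thm:unique L} and Corollary~\ref{cor:basis2}.
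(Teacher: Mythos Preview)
Your proof is correct and follows essentially the same route as the paper: both reduce the statement to Theorem~\ref{thm:unique L} via the identity $x^{-n}P_m(x)=a_1\cdots a_m\,x^{m-n}Q_m(x)$ (which is just the change of index $n\mapsto m-n$ in the orthogonality condition). Your uniqueness argument via the explicit basis $B$ is slightly more elaborate than necessary---since the two sets of conditions are equivalent under the nonzero rescaling, uniqueness follows immediately from the uniqueness in Theorem~\ref{thm:unique L} without reinvoking the basis---but it is certainly correct.
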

\begin{proof}
 By Theorem~\ref{thm:unique L}, there is a unique linear functional
$\LL$ on $V$ satisfying the orthogonality
\[
\LL\left(x^{n-m} P_m(x)/a_1\dots a_m\right) = 0, \qquad 0\le n< m.
\]
Replacing $n$ by $m-n$ in the above equation gives the theorem.
\end{proof}

Note that, since $P_1(x)=x-b_0$, if $n=0$ and $m=1$ in
Theorem~\ref{thm:kamioka1} we obtain
\begin{equation}
  \label{eq:F(x)}
 \FF(x)  = b_0.
\end{equation}
Similarly, if $n=m=1$ in Theorem~\ref{thm:kamioka2}, we obtain
\begin{equation}
  \label{eq:L(x)}
 \LL(x^{-1})  = b_0^{-1}.
\end{equation}

We now show that the linear functionals $\FF$ and $\LL$ in the above two
Favard-type theorems have a simple connection.

\begin{prop}\label{prop:F=L}
  For all $f(x)\in V$, we have
\[
\FF(f(x)) = b_0 \cdot \LL(x^{-1}f(x)).
\]
\end{prop}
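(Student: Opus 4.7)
The plan is to exploit the uniqueness statement in Theorem~\ref{thm:kamioka1}. Define an auxiliary linear functional $\FF' \colon V \to \mathbb{C}$ by
\[
\FF'(f(x)) := b_0 \cdot \LL(x^{-1} f(x)).
\]
Since $V$ is the space of Laurent polynomials (in the Laurent biorthogonal setting with $\lambda_n=0$, so that $Q_m(x) = P_m(x)/(a_1\cdots a_m x^m)$), the space $V$ is closed under multiplication by $x^{-1}$, so $\FF'$ is well-defined and linear on $V$. The strategy is then to show that $\FF'$ satisfies the two defining properties of $\FF$ in Theorem~\ref{thm:kamioka1}, and conclude $\FF' = \FF$ by uniqueness.

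The first condition to check is $\FF'(1) = 1$. By the definition of $\FF'$ and by \eqref{eq:L(x)}, we have
\[
\FF'(1) = b_0 \cdot \LL(x^{-1}) = b_0 \cdot b_0^{-1} = 1.
\]
The second condition is the orthogonality $\FF'(x^{-n} P_m(x)) = 0$ for $0 \le n < m$. Unwinding the definition,
\[
\FF'(x^{-n} P_m(x)) = b_0 \cdot \LL(x^{-(n+1)} P_m(x)),
\]
and when $0 \le n < m$, the exponent $k := n+1$ satisfies $1 \le k \le m$, so Theorem~\ref{thm:kamioka2} gives $\LL(x^{-k} P_m(x)) = 0$. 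Hence $\FF'(x^{-n} P_m(x)) = 0$ for $0 \le n < m$, as required.

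With both conditions verified, the uniqueness half of Theorem~\ref{thm:kamioka1} forces $\FF' = \FF$ on $V$, which is precisely the claimed identity. There is no real obstacle here beyond being careful about the index shift between $\FF$'s orthogonality (strict inequality $n < m$) and $\LL$'s orthogonality (range $0 < n \le m$), which is exactly the shift produced by the extra factor of $x^{-1}$ inside $\LL$; this is also why the normalizing constant $b_0$ is forced on us by the value of $\LL(x^{-1})$ in \eqref{eq:L(x)}.
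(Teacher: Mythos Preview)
Your proof is correct and takes essentially the same approach as the paper: both arguments define an auxiliary functional and invoke the uniqueness half of a Favard-type theorem. The only (inessential) difference is the direction---the paper sets $\LL'(f(x))=b_0^{-1}\FF(xf(x))$ and appeals to the uniqueness of $\LL$ in Theorem~\ref{thm:kamioka2}, whereas you set $\FF'(f(x))=b_0\,\LL(x^{-1}f(x))$ and appeal to the uniqueness of $\FF$ in Theorem~\ref{thm:kamioka1}; the index-shift bookkeeping is identical.
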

\begin{proof}
  We will show the equivalent statement 
\begin{equation}\label{eq:LL=FF}
\LL(f(x)) = b_0^{-1} \cdot \FF(xf(x)).
\end{equation}
Let $\LL'(f(x))$ be the right hand side of \eqref{eq:LL=FF}.
Then by Theorem~\ref{thm:kamioka2} it suffices to show that
$\LL'(1)=1$ and
\[
  \LL'(x^{-n}P_m(x)) = 0, \qquad 0< n\le m.
\]
By definition of $\LL'$ and \eqref{eq:F(x)}, we have $\LL'(1)=b_0^{-1}\FF(x)=1$. For
$0<n\le m$, we have
\[
\LL'(x^{-n}P_m(x))=b_0^{-1} \cdot \FF(x^{-(n-1)}P_m(x)) =0
\]
because $0\le n-1<m$. This completes the proof.
\end{proof}

We will find another connection between the linear functionals $\FF$ and $\LL$
using inverted polynomials.

\begin{defn}
  The \emph{inverted polynomial} of $P_n(x)$ is defined by $\vP_n(x) := x^n
  P_n(x^{-1})/P_n(0)$. For any linear functional $\MM$ on $V$ define $\MM^\vee$ by
\[
   \MM^\vee(f(x)) := \MM(f(x^{-1})).
\]
\end{defn}

Using \eqref{eq:12} and $P_n(0)=(-1)^n
b_0b_1\dots b_{n-1}$, we have
\[
\vP_{n+1}(x)=(x-\vb_{n})\vP_{n}(x)-\va_{n}x \vP_{n-1}(x),
\]
where
\[
  \vb_n := \frac{1}{b_n}, \qquad
  \va_n := \frac{a_n}{b_{n-1} b_n}.
\]

It is easy to check that the map $X\mapsto X^\vee$ is an involution, i.e.,
$(X^{\vee})^{\vee}=X$, for each $X\in\{P_n,a_n,b_n,\MM\}$.

Let $P=\{P_n(x)\}_{n\ge0}$ be the sequence of polynomials given by \eqref{eq:12}. By
Theorems~\ref{thm:kamioka1} and \ref{thm:kamioka2} there are unique linear
functionals, denoted by $\LL_P$ and $\FF_P$, satisfying $\LL_P(1)=\FF_P(1)=1$ and 
  \begin{align*}
  \LL_P(x^{-n}P_m(x)) &= 0, \qquad 0< n\le m,\\
  \FF_P(x^{-n}P_m(x)) &= 0, \qquad 0\le n< m.
  \end{align*}
  We will sometimes write $\LL$ in place of $\LL_P$.

\begin{prop}\label{prop:L,F}
  Let $P=\{P_n(x)\}_{n\ge0}$ be the sequence of polynomials given by \eqref{eq:12}
  and let $P^\vee=\{P^\vee_n(x)\}_{n\ge0}$. Then we have
\[
\FF_{P^\vee} = \LL_P^\vee, \qquad \LL_{P^\vee} = \FF_P^\vee.
\]
\end{prop}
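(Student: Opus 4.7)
The plan is to exploit the uniqueness half of Theorems~\ref{thm:kamioka1} and \ref{thm:kamioka2}: to verify $\FF_{P^\vee}=\LL_P^\vee$ it is enough to check that $\LL_P^\vee$ satisfies the two properties which characterize $\FF_{P^\vee}$, namely $\LL_P^\vee(1)=1$ and $\LL_P^\vee(x^{-n}\vP_m(x))=0$ for $0\le n<m$. The second identity $\LL_{P^\vee}=\FF_P^\vee$ will then follow by the same computation with the roles of $\LL$ and $\FF$ (and the corresponding orthogonality ranges) swapped, or simply by applying $\vee$ to the first identity and using $(X^\vee)^\vee=X$.

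First I would note that the inversion $f(x)\mapsto f(x^{-1})$ is a well-defined involution on $V=\Span\{x^n Q_m(x):n,m\ge0\}$, which in the Laurent biorthogonal setting coincides with the space of Laurent polynomials, so every value $\MM^\vee(g)$ that appears below makes sense. The linearity of $\MM^\vee$ is immediate. Then $\LL_P^\vee(1)=\LL_P(1)=1$ is immediate from the definition.

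The heart of the argument is the computation
\[
  \LL_P^\vee\bigl(x^{-n}\vP_m(x)\bigr)
  =\LL_P\bigl(x^{n}\vP_m(x^{-1})\bigr)
  =\frac{1}{P_m(0)}\,\LL_P\bigl(x^{n-m}P_m(x)\bigr)
  =\frac{1}{P_m(0)}\,\LL_P\bigl(x^{-(m-n)}P_m(x)\bigr),
\]
using the definition $\vP_m(x)=x^mP_m(x^{-1})/P_m(0)$. For $0\le n<m$ we have $0<m-n\le m$, so the orthogonality of $\LL_P$ in Theorem~\ref{thm:kamioka2} forces the right-hand side to vanish. Thus $\LL_P^\vee$ satisfies the defining conditions of $\FF_{P^\vee}$, and uniqueness gives $\FF_{P^\vee}=\LL_P^\vee$.

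For the second equality I would repeat this with $\FF$ in place of $\LL$: the same manipulation gives
\[
  \FF_P^\vee\bigl(x^{-n}\vP_m(x)\bigr)
  =\frac{1}{P_m(0)}\,\FF_P\bigl(x^{-(m-n)}P_m(x)\bigr),
\]
which vanishes for $0<n\le m$ since then $0\le m-n<m$, matching the orthogonality range for $\FF_P$ in Theorem~\ref{thm:kamioka1}. Combined with $\FF_P^\vee(1)=1$, the uniqueness clause of Theorem~\ref{thm:kamioka2} yields $\LL_{P^\vee}=\FF_P^\vee$. There is no real obstacle here beyond bookkeeping the index range $\{0\le n<m\}$ versus $\{0<n\le m\}$, which is precisely what is exchanged by the inversion $n\mapsto m-n$; this symmetry is the structural reason the proposition is true.
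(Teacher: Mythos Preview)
Your proof is correct and follows essentially the same route as the paper: the computation $\LL_P^\vee(x^{-n}\vP_m(x))=\LL_P(x^{-(m-n)}P_m(x))/P_m(0)$ combined with uniqueness from Theorem~\ref{thm:kamioka1} is exactly the paper's argument for $\FF_{P^\vee}=\LL_P^\vee$. For the second identity the paper uses only the involution trick $\LL_{P^\vee}=(\LL_{P^\vee}^\vee)^\vee=(\FF_{(P^\vee)^\vee})^\vee=\FF_P^\vee$, which you mention as an alternative, while your direct repetition with $\FF$ in place of $\LL$ is an equally valid variant.
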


\begin{proof}
For $0\le n<m$, by the definition of $\vP_m(x)$, we have
\[
\wLL_P(x^{-n}\vP_m(x)) =\LL_P(x^{n}\vP_m(x^{-1})) =\LL_P(x^{-(m-n)} P_m(x)/P_m(0)) 
= 0,
\]
where the last equality follows from Theorem~\ref{thm:kamioka2} since $0<m-n\le
m$. By Theorem~\ref{thm:kamioka1}, this shows the first identity $\FF_{P^\vee} =
\LL_P^\vee$. 

Applying the first identity to $P^\vee$, we have
$\LL_{P^\vee}^\vee = \FF_{(P^\vee)^\vee}$.
Then
\[
\LL_{P^\vee} = (\LL_{P^\vee}^\vee)^\vee = (\FF_{(P^\vee)^\vee})^\vee = \FF_P^\vee,
\]
which gives the second identity.
\end{proof}

Now we show that the special case \( \lambda_n=0 \) of Theorem~\ref{thm:mu_nml}
implies the following theorem, which is equivalent to Kamioka's result \cite[Theorem~17]{Kamioka2007}.

\begin{thm}\label{thm:kamioka}
For $n,m,\ell\ge0$, we have
\begin{align}
  \label{eq:k1}
  \LL(x^nP_m(x)Q_\ell(x)) &=  \sum_{\pi\in\Sch((0,m)\to(n,\ell))} \wt(\pi),\\
  \label{eq:k2}
  \LL(x^{-n-1}Q_m(x)P_\ell(x)) &=
  \frac{a_1^\vee\dots a_\ell^\vee P_m(0) P_\ell(0)}{a_1\dots a_m b_0}
  \sum_{\pi\in\Sch((0,m)\to(n,\ell))} \wt^\vee(\pi),
\end{align}
where $\vwt(\pi)$ is the same weight $\wt(\pi)$ with $b_n$ and $a_n$ replaced by
$\vb_n$ and $\va_n$, respectively.
\end{thm}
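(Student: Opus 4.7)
My plan is to deduce the first identity from Theorem~\ref{thm:mu_nml} as an immediate specialization, and to deduce the second identity from the first by composing the duality relations in Propositions~\ref{prop:F=L} and \ref{prop:L,F}.

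For the first identity, since $\lambda_n = 0$ for every $n\ge 0$, each diagonal down step in a Motzkin--Schr\"oder path contributes weight $0$. Only paths without diagonal down steps survive in the right-hand side of Theorem~\ref{thm:mu_nml}, and these are precisely the Schr\"oder paths from $(0,m)$ to $(n,\ell)$. Applying Theorem~\ref{thm:mu_nml} gives the first identity at once.

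For the second identity, I would first combine Propositions~\ref{prop:F=L} and \ref{prop:L,F}, together with the definition $\MM^\vee(f(x))=\MM(f(x^{-1}))$, into a single transfer formula
\[
  \LL_P(g(x)) = b_0^{-1}\, \LL_{P^\vee}\!\bigl(x^{-1} g(x^{-1})\bigr), \qquad g \in V.
\]
Specializing to $g(x) = x^{-n-1}Q_m(x)P_\ell(x)$ reduces the left-hand side of the second identity to $b_0^{-1}\LL_{P^\vee}(x^n Q_m(x^{-1})P_\ell(x^{-1}))$. Next, I would use the defining identity $P_k(x^{-1}) = P_k(0)\vP_k(x)/x^k$ to rewrite
\[
  Q_m(x^{-1}) = \frac{P_m(0)\,\vP_m(x)}{a_1\cdots a_m}, \qquad P_\ell(x^{-1}) = \frac{P_\ell(0)\,\vP_\ell(x)}{x^\ell},
\]
and then invoke $\vP_\ell(x) = \va_1\cdots \va_\ell\, x^\ell\, \vQ_\ell(x)$ to absorb the remaining factor of $x^{-\ell}$. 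The outcome is
\[
  \LL_P(x^{-n-1}Q_m(x)P_\ell(x)) = \frac{\va_1\cdots\va_\ell\, P_m(0) P_\ell(0)}{a_1\cdots a_m\, b_0}\, \LL_{P^\vee}\!\bigl(x^n \vP_m(x)\vQ_\ell(x)\bigr),
\]
and applying the first identity of the present theorem to the sequence $P^\vee$ (which also satisfies the $\lambda = 0$ hypothesis) evaluates the remaining functional as a sum of $\vwt$-weighted Schr\"oder paths in $\Sch((0,m)\to(n,\ell))$.

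The main obstacle is purely bookkeeping: tracking the powers of $x$ and the scalar factors through the substitution $x\mapsto x^{-1}$ so that the final prefactor reads exactly $\va_1\cdots\va_\ell P_m(0)P_\ell(0)/(a_1\cdots a_m b_0)$. No new combinatorics is required; the sign-reversing involution underlying Theorem~\ref{thm:mu_nml}, once applied to the inverted data $(\va_n, \vb_n)$, furnishes the claimed Schr\"oder-path enumeration on the right.
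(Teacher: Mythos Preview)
Your proposal is correct and follows essentially the same route as the paper's own proof: the first identity is the $\lambda_n=0$ specialization of Theorem~\ref{thm:mu_nml}, and the second identity is obtained by combining Propositions~\ref{prop:F=L} and \ref{prop:L,F} into the transfer formula $\LL_{P^\vee}(f(x))=b_0\,\LL_P(x^{-1}f(x^{-1}))$ and then applying the first identity to $P^\vee$. The only cosmetic difference is the direction in which you run the transfer formula (you start from the $\LL_P$ side, the paper starts from the $\LL_{P^\vee}$ side), but the ingredients and bookkeeping are identical.
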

\begin{proof}
  The first identity \eqref{eq:k1} is the special case $\lambda_n=0$ of
  Theorem~\ref{thm:mu_nml}.

  For the second identity, since $Q_\ell(x)= x^{-\ell}P_\ell(x)/a_1\dots
  a_\ell$, applying \eqref{eq:k1} to the inverted polynomials $P^\vee_k(x)$, we
  obtain
\begin{equation}
  \label{eq:15}
  \LL_{P^\vee}\left(x^nP^\vee_m(x) \frac{x^{-\ell}P^\vee_\ell(x)}{a^\vee_1\dots a^\vee_\ell} \right)
= \sum_{\pi\in\Sch((0,m)\to(n,\ell))} \vwt(\pi).
\end{equation}
Observe that
\[
\LL_{P^\vee}(f(x)) = \FF_P^\vee(f(x)) = \FF_P(f(x^{-1})) = b_0 \LL_P(x^{-1}f(x^{-1})),
\]
where the first, second, and third equalities follow from
Proposition~\ref{prop:L,F}, the definition of $\FF_P^\vee$, and
Proposition~\ref{prop:F=L}, respectively.
Therefore the left hand side of \eqref{eq:15} can be rewritten as
\begin{equation}\label{eq:15'}
    b_0\LL_{P}\left(x^{-1-n}P^\vee_m(x^{-1}) \frac{x^{\ell}P^\vee_\ell(x^{-1})}{a^\vee_1\dots a^\vee_\ell} \right)
  = \frac{a_1\dots a_m b_0}{a_1^\vee\dots a_\ell^\vee P_m(0) P_\ell(0)}
    \LL\left(x^{-1-n}Q_m(x) P_\ell(x) \right),
\end{equation}
where the following identities are used:
\[
  P^\vee_m(x^{-1}) = \frac{x^{-m} P_m(x)}{P_m(0)} = \frac{a_1\dots a_m Q_m(x)}{ P_m(0)} ,
  \qquad P^\vee_\ell(x^{-1}) = \frac{x^{-\ell} P_\ell(x)}{P_\ell(0)}.
\]
Using \eqref{eq:15} and \eqref{eq:15'} we obtain the second identity.
\end{proof}

If $m=\ell=0$ in Theorem~\ref{thm:kamioka} we obtain the following corollary.

\begin{cor}\cite[Theorem~3.1]{Kamioka2014}
  For $n\ge0$, we have
  \begin{align*}
\LL(x^{n}) &=  \sum_{\pi\in\Sch_n} \wt(\pi),\\
\LL(x^{-n-1}) &= b_0^{-1} \sum_{\pi\in\Sch_n} \vwt(\pi).
  \end{align*}
\end{cor}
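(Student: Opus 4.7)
The plan is to obtain this corollary as a direct specialization of Theorem~\ref{thm:kamioka} at $m=\ell=0$. The bulk of the work has already been done there; all that remains is to simplify the various prefactors and quantities that appear.

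First I would note that when $m=\ell=0$ we have $P_0(x)=Q_0(x)=1$ and $P_0(0)=1$, so $\LL(x^nP_0(x)Q_0(x))=\LL(x^n)$ and $\LL(x^{-n-1}Q_0(x)P_0(x))=\LL(x^{-n-1})$. Moreover, the empty products $a_1\cdots a_m$, $a_1\cdots a_\ell$, and $a_1^\vee\cdots a_\ell^\vee$ each equal $1$, so the scalar prefactor in \eqref{eq:k2} collapses to
\[
\frac{a_1^\vee\cdots a_\ell^\vee P_m(0)P_\ell(0)}{a_1\cdots a_m\, b_0}\;=\;\frac{1}{b_0}.
\]
Also the path sets $\Sch((0,0)\to(n,0))=\Sch_n$ by definition.

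Substituting $m=\ell=0$ into \eqref{eq:k1} then yields $\LL(x^n)=\sum_{\pi\in\Sch_n}\wt(\pi)$, and substituting into \eqref{eq:k2} yields $\LL(x^{-n-1})=b_0^{-1}\sum_{\pi\in\Sch_n}\vwt(\pi)$, which are exactly the two identities in the corollary. Since Theorem~\ref{thm:kamioka} is the entire content here, no obstacle of any substance arises; the proof is simply a matter of bookkeeping the empty products.
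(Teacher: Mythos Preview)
Your proof is correct and follows exactly the paper's approach: the corollary is stated immediately after Theorem~\ref{thm:kamioka} with the one-line justification that it is the case $m=\ell=0$. Your additional bookkeeping of the empty products and the simplification of the prefactor to $b_0^{-1}$ is accurate and simply makes explicit what the paper leaves to the reader.
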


\section{Lattice paths with bounded height and continued fractions}
\label{sec:lattice-paths-with}

In this section we express the generating function for Motzkin-Schr\"oder paths
with bounded height as quotients of inverted polynomials where the indices of
the sequences $b=\{b_n\}_{n\ge0}, a=\{a_n\}_{n\ge0}$, and
$\lambda=\{\lambda_n\}_{n\ge0}$ are shifted.

Recall that $P_n(x)$ are defined in \eqref{eq:favard1}. We will denote this
polynomial by $P_n(x;b,a,\lambda)$ to indicate that the three-term recurrence
coefficients are taken from the sequences $b=\{b_n\}_{n\ge0},
a=\{a_n\}_{n\ge0}$, and $\lambda=\{\lambda_n\}_{n\ge0}$. The inverted polynomial
$P_n^*(x) = x^n P_n(1/x)$ will also be written as $P_n^*(x;b,a,\lambda) = x^n
P_n(1/x;b,a,\lambda)$. Note that $P_n^*(x) = x^n P_n(1/x)$ satisfy
$P^*_{-1}(x)=0, P^*_1(x)=1$ and
\begin{equation}
  \label{eq:rec*}
P^*_{n+1}(x)=(1-b_{n}x)P^*_{n}(x)-(a_{n}x+\lambda_nx^2) P^*_{n-1}(x).  
\end{equation}

\begin{defn}
For a sequence $s=\{s_n\}_{n\ge 0}$ define $\delta s = \{s_{n+1}\}_{n\ge 0}$. 
For $P_n(x)=P_n(x;b,a,\lambda)$, we also define
\[
  \delta P_n(x) = \delta P_n(x;b,a,\lambda) = P_n(x;\delta b,\delta a,\delta \lambda),
\]
\[
  \delta P^*_n(x) = \delta P^*_n(x;b,a,\lambda) = P^*_n(x;\delta b,\delta a,\delta \lambda).
\]
\end{defn}

\begin{defn}
  We denote by $\MS_{n,r,s}^{\le k}$ the set of Motzkin-Schr\"oder paths from
  $(0,r)$ to $(n,s)$ such that the $y$-coordinate of every point is at most $k$.
  Define $\MS_{n}^{\le k}:=\MS_{n,0,0}^{\le k}$ and 
\[
  \mu^{\le k}_{n,r,s} := \sum_{\pi\in \MS_{n,r,s}^{\le k}} \wt(\pi),
\]
\[
  \mu^{\le k}_{n} :=  \sum_{\pi\in \MS_n^{\le k}} \wt(\pi).
\]
\end{defn}

The goal of this section is to prove the following theorem, which is a
generalization of Viennot's results \cite[(27) on page V-19]{ViennotLN} on
orthogonal polynomials (the case $a_n=0$).

\begin{thm}\label{thm:trunc}
Let \( r,s, \) and \( k \) be integers with \( 0\le r,s\le k \).
If $r\le s$, then 
\begin{equation}
  \label{eq:trunc2}
    \sum_{n\ge0} \mu_{n,r,s}^{\le k} x^n = \frac{P^*_r(x) \delta^{s+1} P^*_{k-s}(x)}{P^*_{k+1}(x)} \cdot x^{s-r}.  
\end{equation}
  If $r> s$, then
  \begin{equation}
    \label{eq:trunc3}
    \sum_{n\ge0} \mu_{n,r,s}^{\le k} x^n = \frac{ P^*_s(x) \delta^{r+1} P^*_{k-r}(x)}{P^*_{k+1}(x)}
    \cdot \prod_{i=s+1}^{r} (a_i+\lambda_i x).
  \end{equation}
\end{thm}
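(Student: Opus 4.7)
My plan is to adapt Viennot's first-passage argument to Motzkin-Schr\"oder paths. For integers $0 \le h' \le h \le h''$, let $B^{[h', h'']}(h; x) = \sum_n \sum_\pi \wt(\pi)\, x^n$, where the inner sum is over Motzkin-Schr\"oder paths from $(0, h)$ to $(n, h)$ with $y$-coordinates in $[h', h'']$. Three bridge formulas will be the key building blocks:
\[
B^{[0,i]}(i;x) = \frac{P^*_i(x)}{P^*_{i+1}(x)}, \quad B^{[i,k]}(i;x) = \frac{\delta^{i+1} P^*_{k-i}(x)}{\delta^i P^*_{k-i+1}(x)}, \quad B^{[0,k]}(s;x) = \frac{P^*_s(x)\,\delta^{s+1}P^*_{k-s}(x)}{P^*_{k+1}(x)}.
\]
The first follows by induction on $i$ from the first-return decomposition $B^{[0,i]}(i;x)^{-1} = 1 - b_i x - x(a_i + \lambda_i x) B^{[0,i-1]}(i-1;x)$ (no $U$-step is available at the top of the strip), combined with the recurrence \eqref{eq:rec*}. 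The second follows from the first by shifting all sequence indices and observing $B^{[i,k]}(i;x) = \delta^i B^{[0,k-i]}(0;x)$.

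For the full-strip bridge $B^{[0,k]}(s;x)$, first-return at height $s$ separates above-$s$ and below-$s$ excursions, giving
\[
B^{[0,k]}(s;x)^{-1} = 1 - b_s x - x(a_s + \lambda_s x)\, B^{[0,s-1]}(s-1;x) - x(a_{s+1} + \lambda_{s+1}x)\, B^{[s+1,k]}(s+1;x).
\]
Substituting the two preceding formulas and clearing denominators reduces the target to the Christoffel--Darboux-type identity
\[
P^*_{k+1}(x) = P^*_{s+1}(x)\,\delta^{s+1}P^*_{k-s}(x) - x(a_{s+1} + \lambda_{s+1} x)\, P^*_s(x)\,\delta^{s+2}P^*_{k-s-1}(x),
\]
which I prove by induction on $k \ge s$: both sides satisfy the same three-term recurrence in $k$ (via \eqref{eq:rec*} applied to the shifted factors $\delta^{s+1}P^*_{k-s}$ and $\delta^{s+2}P^*_{k-s-1}$), and the base cases $k = s, s+1$ follow from $P^*_{-1}=0$, $P^*_0 = 1$, and the definition of $P^*_{s+2}$. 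This identity is the main technical obstacle; the rest is routine.

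With the three bridge formulas in hand, I decompose each path in $\MS_{n,r,s}^{\le k}$ at its first-passage times. For $r \le s$, the path first visits heights $r+1, r+2, \ldots, s$ in order; each such passage uses a $U$-step (weight $1$, $x$-increment $1$) from the previous height, and between consecutive passages the path is a bridge at height $i$ confined to $[0, i]$ (higher heights are not yet visited), with a terminal bridge at $s$ in $[0, k]$. This yields
\[
\sum_n \mu^{\le k}_{n, r, s}\, x^n = \left(\prod_{i=r}^{s-1} B^{[0,i]}(i;x)\right) \cdot x^{s-r} \cdot B^{[0, k]}(s; x),
\]
and substituting the brick formulas together with the telescoping $\prod_{i=r}^{s-1} P^*_i/P^*_{i+1} = P^*_r/P^*_s$ produces \eqref{eq:trunc2}. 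For $r > s$, the decomposition is dual: the path first descends through heights $r-1, r-2, \ldots, s$, each descent being a $V$ or $D$ step with combined weight $a_i + \lambda_i x$, and each intermediate bridge is at height $i$ in the strip $[i, k]$; the analogous telescoping of the second brick over $i = s+1, \ldots, r$, combined with the terminal bridge $B^{[0,k]}(s;x)$, yields \eqref{eq:trunc3}.
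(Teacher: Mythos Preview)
Your approach is correct and is genuinely different from the paper's. The paper multiplies through by $P^*_{k+1}(x)$, interprets the product combinatorially via Proposition~\ref{prop:favard*} as a signed sum over pairs $(\pi,T)\in \MS_{*,r,s}^{\le k}\times \FT_{k+1}$, and then builds an explicit sign-reversing involution (removing/adding horizontal steps, peaks, and valleys against tiles of $T$) whose fixed points yield the factored right-hand side directly. Your route instead stays on the generating-function side: decompose each path at its first-passage times to successive heights, reduce to ``bridge'' generating functions, and evaluate those via first-return recursions and an algebraic Christoffel--Darboux-type identity. The involution proof is more self-contained bijectively and needs no auxiliary polynomial identity, while your argument is closer to the classical Viennot--Flajolet machinery and yields the intermediate bridge formulas $B^{[0,i]}(i;x)=P^*_i/P^*_{i+1}$ and $B^{[i,k]}(i;x)=\delta^{i+1}P^*_{k-i}/\delta^{i}P^*_{k-i+1}$ as reusable byproducts.

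One imprecision to fix: your claim that ``the second follows from the first by shifting'' is not quite right. Shifting gives $B^{[i,k]}(i;x)=\delta^i B^{[0,k-i]}(0;x)$, but $B^{[0,m]}(0;x)$ is a bridge at the \emph{bottom} of its strip, whereas your first formula computes the bridge at the \emph{top}. The first-return decomposition at the bottom reads $B^{[0,m]}(0;x)^{-1}=1-b_0x-x(a_1+\lambda_1x)\,\delta B^{[0,m-1]}(0;x)$, and matching this against $\delta P^*_m/P^*_{m+1}$ requires exactly the $s=0$ case of your Christoffel--Darboux identity, namely $P^*_{m+1}=(1-b_0x)\,\delta P^*_m-(a_1x+\lambda_1x^2)\,\delta^2 P^*_{m-1}$. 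Since you prove the general identity anyway, simply reorder the exposition: establish the Christoffel--Darboux identity first (your induction on $k$ is fine), then both the bottom-of-strip and full-strip bridge formulas follow cleanly.
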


If $r=s=0$ in Theorem~\ref{thm:trunc} we obtain the following corollary. 

\begin{cor}\label{cor:mu<k}
  For a nonnegative integer \( k \), we have
  \[
    \sum_{n\ge0} \mu_n^{\le k} x^n = \frac{\delta P^*_k(x)}{P^*_{k+1}(x)}.
  \]
\end{cor}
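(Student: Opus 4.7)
The plan is to deduce the corollary as the immediate specialization $r = s = 0$ of Theorem~\ref{thm:trunc}. First I would confirm that the diagonal case $r = s$ falls under the $r \le s$ branch \eqref{eq:trunc2}, and that the hypothesis $0 \le r, s \le k$ is satisfied whenever $k \ge 0$. Setting $r = s = 0$ in \eqref{eq:trunc2}: the factor $P^*_r(x) = P^*_0(x)$ equals $1$ by the initial condition of \eqref{eq:rec*}; the factor $\delta^{s+1} P^*_{k-s}(x)$ simplifies to $\delta P^*_k(x)$; the trailing monomial $x^{s-r}$ becomes $1$; and the denominator is $P^*_{k+1}(x)$. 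What remains is precisely the identity in Corollary~\ref{cor:mu<k}.

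As a sanity check I would verify the base cases $k = 0$ and $k = 1$ by hand. For $k = 0$ the only height-zero Motzkin--Schr\"oder paths are runs of horizontal steps at level zero of weight $b_0$, so the left side equals $1/(1 - b_0 x)$, matching $\delta P^*_0 / P^*_1 = 1/(1 - b_0 x)$. For $k = 1$, a first-return decomposition at the baseline gives the functional equation
\[
F_1(x) = \frac{1}{1 - b_0 x - (a_1 x + \lambda_1 x^2)/(1 - b_1 x)},
\]
and clearing the inner fraction produces numerator $1 - b_1 x = \delta P^*_1(x)$ and denominator $P^*_2(x)$ by one application of \eqref{eq:rec*}. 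This confirms the substitution is internally consistent.

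The only obstacle is the availability of Theorem~\ref{thm:trunc}, which is established immediately above in the text; once granted, the corollary is a pure substitution and requires no further bookkeeping. An alternative standalone route would be an induction on $k$ using the first-return recurrence $F_k = 1/[1 - b_0 x - (a_1 x + \lambda_1 x^2)\,\delta F_{k-1}]$ together with a ``dual'' three-term relation $P^*_{k+1} = (1 - b_0 x)\,\delta P^*_k - (a_1 x + \lambda_1 x^2)\,\delta^2 P^*_{k-1}$ for the inverted polynomials; however, this would essentially reprove the $r = s = 0$ case of the parent theorem, so the direct specialization route is the preferred plan.
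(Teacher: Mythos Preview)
Your proof is correct and follows exactly the paper's approach: the corollary is stated immediately after Theorem~\ref{thm:trunc} as the specialization $r=s=0$, and your substitution $P^*_0(x)=1$, $\delta^{1}P^*_{k}(x)=\delta P^*_k(x)$, $x^{0}=1$ is precisely what is intended. The sanity checks and the alternative inductive route you sketch are extras the paper does not include, but they are sound.
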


On the other hand, using Flajolet's argument \cite{Flajolet1980}, we obtain a
continued fraction expression for the generating function for $\mu_n^{\le k}$.

\begin{prop}\label{prop:mu=cont}
  For a nonnegative integer \( k \), we have
  \[
 \sum_{n\ge0} \mu_n^{\le k}x^n = 
    \cfrac{1}{
      1-b_0x- \cfrac{a_1x+\lambda_1x^2}{
        1-b_1x- \cfrac{a_2x+\lambda_2x^2}{
          1-b_2x- \genfrac{}{}{0pt}{1}{}{\displaystyle\ddots -
            \cfrac{a_kx+\lambda_kx^2}{1-b_kx}}} }}.
  \]
\end{prop}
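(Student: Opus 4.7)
The plan is to apply Flajolet's first-return decomposition, adapted to the four-step Motzkin--Schr\"oder setting. For each $0 \le j \le k$, let $F_j(x)$ denote the generating function for Motzkin--Schr\"oder paths that start at height $j$, end at height $j$, and whose $y$-coordinate remains in $[j,k]$ throughout, with the weights from Definition~\ref{defn:MSpath}. Then the left-hand side of the proposition is $F_0(x)$, so it suffices to derive a suitable recurrence for $F_j(x)$ and iterate it.

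The main step is to establish, for $0 \le j < k$, the identity
\[
F_j(x) = \frac{1}{1 - b_j x - (a_{j+1} x + \lambda_{j+1} x^2) F_{j+1}(x)},
\]
by decomposing a nonempty path counted by $F_j(x)$ according to its initial step. A path that begins with a horizontal step at height $j$ contributes a factor $b_j x$ and is followed by an arbitrary element enumerated by $F_j(x)$. A path that begins with an up step splits uniquely at its first return to height $j$: the initial up step contributes $x$, the excursion strictly above height $j$ (still confined to $[j+1,k]$) is enumerated by $F_{j+1}(x)$, and the returning step is either a vertical down step (weight $a_{j+1}$, no horizontal advance) or a diagonal down step (weight $\lambda_{j+1}$, horizontal advance $1$), contributing $a_{j+1} + \lambda_{j+1} x$; this first-return block is then followed by another arbitrary element enumerated by $F_j(x)$. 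Summing the two cases and solving the resulting linear equation in $F_j(x)$ yields the recurrence. The base case $F_k(x) = 1/(1 - b_k x)$ is immediate, since at height $k$ only horizontal steps are permitted.

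Iterating this recurrence from $j = 0$ down through $j = k$ by successive substitution produces exactly the finite continued fraction in the statement.

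The only real subtlety is bookkeeping for which down step advances the $x$-coordinate: the vertical step $(0,-1)$ does not, whereas the diagonal step $(1,-1)$ does. This is what produces the numerator $a_{j+1} x + \lambda_{j+1} x^2$ (the extra factor of $x$ comes from the initial up step) at each level of the continued fraction, rather than the single monomial familiar from the classical Motzkin case treated in \cite{Flajolet1980}. Beyond this careful accounting of weights, the argument is a direct application of Flajolet's theory and presents no serious obstacle.
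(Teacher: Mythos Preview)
Your proposal is correct and follows precisely the route the paper indicates: the paper simply says ``using Flajolet's argument \cite{Flajolet1980}'' without writing out details, and your first-return decomposition with the levelwise generating functions $F_j(x)$ is exactly that argument, with the weight bookkeeping for the two kinds of down steps handled correctly.
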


\begin{remark}
Combining Corollary~\ref{cor:mu<k} and Proposition~\ref{prop:mu=cont} gives
\begin{equation}\label{eq:P/P=cont}
    \frac{\delta P^*_k(x)}{P^*_{k+1}(x)} = \cfrac{1}{
      1-b_0x- \cfrac{a_1x+\lambda_1x^2}{
        1-b_1x- \cfrac{a_2x+\lambda_2x^2}{
          1-b_2x- \genfrac{}{}{0pt}{1}{}{\displaystyle\ddots -
            \cfrac{a_kx+\lambda_kx^2}{1-b_kx}}} }},
\end{equation}
which can also be shown using the fundamental recurrence relations for
continued fractions, see \cite[Chapter III, \S2]{Chihara}.
\end{remark}

For the remainder of this section we give a proof of Theorem~\ref{thm:trunc}. To
do this we give a combinatorial meaning to
\[
P^*_{k+1}(x) \sum_{n\ge0} \mu_{n,r,s}^{\le k} x^n .
\]

First we need a combinatorial interpretation for $P^*_n(x)$. Similarly to
Theorem~\ref{thm:favard}, the recurrence \eqref{eq:rec*} gives the following
proposition.

\begin{prop}\label{prop:favard*}
For $n\ge0$, we have
  \[
    P^*_n(x) = \sum_{T\in\FT_n} \wt(T)x^{\rm(T)+\bd(T)+2\rd(T)}.
  \]
\end{prop}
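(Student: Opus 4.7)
The plan is to deduce this proposition directly from Theorem~\ref{thm:favard} by exploiting the defining relation $P^*_n(x) = x^n P_n(1/x)$. Starting from
\[
P_n(x) = \sum_{T\in\FT_n} \wt(T)\, x^{\bm(T)+\bd(T)},
\]
substituting $x \mapsto 1/x$ and multiplying by $x^n$ yields
\[
P^*_n(x) = \sum_{T\in\FT_n} \wt(T)\, x^{\,n-\bm(T)-\bd(T)}.
\]
The key observation is that for any $T\in\FT_n$, the tiles partition the $n$ squares of the board, so counting squares by tile type gives
\[
n = \bm(T)+\rm(T)+2\bd(T)+2\rd(T).
\]
This immediately rewrites $n-\bm(T)-\bd(T) = \rm(T)+\bd(T)+2\rd(T)$, which produces the claimed formula.

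Alternatively, and perhaps more in the spirit of the rest of this section, one can verify the recurrence \eqref{eq:rec*} directly. Let $U^*_n(x)$ denote the right-hand side of the proposition. One classifies a tiling $T\in\FT_n$ by its rightmost tile: a black monomino contributes weight $1$ and $x^0$, a red monomino contributes weight $-b_{n-1}$ and $x^1$, a black domino contributes $-a_{n-1}$ and $x^1$, and a red domino contributes $-\lambda_{n-1}$ and $x^2$. Summing over the four cases and factoring out the contribution from $\FT_{n-1}$ or $\FT_{n-2}$ gives
\[
U^*_n(x) = (1-b_{n-1}x)\,U^*_{n-1}(x) - (a_{n-1}x+\lambda_{n-1}x^2)\,U^*_{n-2}(x),
\]
which matches \eqref{eq:rec*}. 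The initial conditions $U^*_0(x)=1$ (empty tiling) and $U^*_{-1}(x)=0$ agree with those of $P^*_n(x)$, so $U^*_n(x)=P^*_n(x)$ for all $n\ge 0$.

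There is no real obstacle here: the statement is essentially a reweighting of Theorem~\ref{thm:favard} under the involution $x\mapsto 1/x$, and the square-counting identity is what converts $\bm+\bd$ into $\rm+\bd+2\rd$. I would use the first (direct substitution) approach in the final write-up since it is shorter and emphasizes that $P^*_n(x)$ is obtained from $P_n(x)$ by complementing the tile statistics with respect to total square count.
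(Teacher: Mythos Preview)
Your proposal is correct. The paper does not give a detailed proof; it simply remarks that the recurrence \eqref{eq:rec*} yields the proposition in the same way that \eqref{eq:favard1} yields Theorem~\ref{thm:favard}, which is exactly your second approach. Your first approach, via the substitution $x\mapsto 1/x$ in Theorem~\ref{thm:favard} together with the square-count identity $n=\bm(T)+\rm(T)+2\bd(T)+2\rd(T)$, is a slightly different and more direct route that avoids rerunning the induction; it has the advantage of making transparent why the exponent changes from $\bm+\bd$ to $\rm+\bd+2\rd$, whereas the recurrence approach keeps the argument parallel to the rest of the section.
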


Let
  \[
    \MS_{*,r,s}^{\le k} := \bigcup_{n\ge0} \MS_{n,r,s}^{\le k}.
  \]
  For $\pi\in \MS_{*,r,s}^{\le k}$, we define $|\pi|$ to be $n$ if $\pi\in
  \MS_{n,r,s}^{\le k}$.
  Then by Proposition~\ref{prop:favard*} we have
\[
   P^*_{k+1}(x) \sum_{n\ge0} \mu_{n,r,s}^{\le k} x^n  =
 \sum_{T\in\FT_{k+1}} \sum_{\pi\in \MS_{*,r,s}^{\le k}} \wt(T)\wt(\pi) x^{|\pi|+\rm(T)+\bd(T)+2\rd(T)}.
\]

In what follows we construct a sign-reversing involution on $\MS_{*,r,s}^{\le
  k}\times \FT_{k+1}$, which cancels many terms in the above equation. The basic
idea is similar to that in Section~\ref{sec:moments-ops-type}: for $(\pi,T)\in
\MS_{*,r,s}^{\le k}\times \FT_{k+1}$ we add or remove a horizontal step, a peak
($(U,V)$ or $(U,D)$), or a valley ($(V,U)$ or $(D,U)$) in $\pi$ and modify the
corresponding tile(s) in $T$. 

\textbf{From now on we assume \( r\le s \).} The case \( r>s \) can be done similarly. We
first need several terminologies.

Let $\pi\in \MS_{*,r,s}^{\le k}$. A \emph{valley} of $\pi$ is a pair $(D,U)$ or
$(V,U)$ of consecutive steps in $\pi$ and a \emph{peak} of $\pi$ is a pair
$(U,D)$ or $(U,V)$ of consecutive steps in $\pi$.

Let $(\pi,T)\in \MS_{*,r,s}^{\le k}\times \FT_{k+1}$ and write $\pi=S_1\dots S_m$ as a sequence of steps. A
\emph{removable point} of $\pi$ is a point $(j,h)$ on $\pi$ satisfying one of the following conditions:
\begin{itemize}
\item $\pi$ has a horizontal step ending at $(j,h)$,
\item $h\ge r$ and $\pi$ has a peak ending at $(j,h)$, or
\item $h\le r$ and $\pi$ has a valley ending at $(j,h)$.
\end{itemize}
In other words, a removable point of $\pi$ is the ending point of a horizontal
step, a peak above the line $y= r$, or a valley below the line
$y= r$. Let $\rem(\pi)$ denote the smallest integer $i\ge1$ such that the
ending point of $S_1\dots S_i$ is a removable point. If there is no such integer
$i$, we define $\rem(\pi)=\infty$. See Figure~\ref{fig:removable1}.

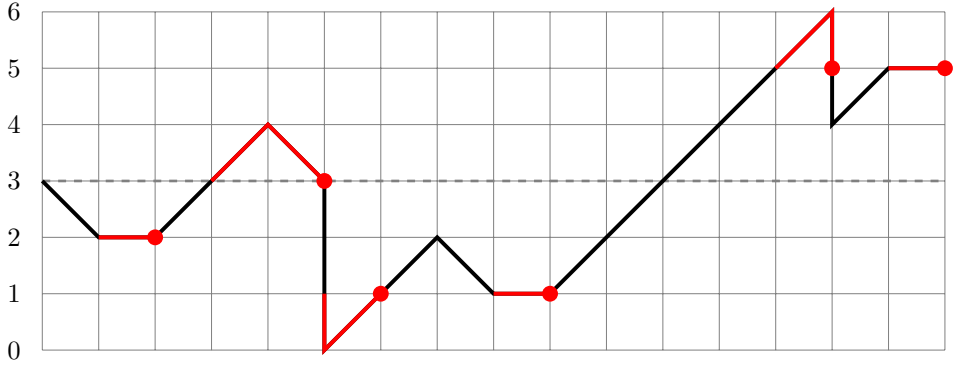
\begin{figure}
  \centering
\begin{tikzpicture}[scale=0.75]
    \draw[help lines] (0,0) grid (16,6);
    \foreach \y in {0,...,6} \draw node at (-.5,\y) {$\y$};
\draw[line width = 1pt, dashed, gray] (0,3) -- (16,3);
    \draw[line width = 1.5pt] (0,3)-- ++(1,-1)-- ++(1,0)-- ++(1,1)-- ++(1,1)-- ++(1,-1)-- ++(0,-1)-- ++(0,-1)-- ++(0,-1)-- ++(1,1)-- ++(1,1)-- ++(1,-1)-- ++(1,0)-- ++(1,1)-- ++(1,1)-- ++(1,1)-- ++(1,1)-- ++(1,1)-- ++(0,-1)-- ++(0,-1)-- ++(1,1)-- ++(1,0);
\remove(2,2) \remove(5,3)   \remove(6,1) \remove(9,1) \remove(14,5) \remove(16,5) 
\draw[line width = 1.5pt, red] (1,2) -- (2,2);
\draw[line width = 1.5pt, red] (3,3)--(4,4) -- (5,3);
\draw[line width = 1.5pt, red] (5,1) -- (5,0) -- (6,1);
\draw[line width = 1.5pt, red] (8,1) -- (9,1);
\draw[line width = 1.5pt, red] (13,5) -- (14,6) -- (14,5);
\draw[line width = 1.5pt, red] (15,5) -- (16,5);
\end{tikzpicture}  
\caption{A Motzkin-Schr\"oder path $\pi$ in $\MS_{*,r,s}^{\le k}$ for $r=3,s=5$ and $k=6$.  The dashed line is the line $y= r$.  The red dots are the removable points of $\pi$.  The horizontal step, a peak, or a valley
  ending at each removable point is colored red. In this case $\rem(\pi)=2$
  because the first removable point is the ending point of \( S_1S_2 \).}
  \label{fig:removable1}
\end{figure}

An \emph{addable point} of $(\pi,T)$ is a point $(j,h)$ on $\pi$ satisfying one of the
following conditions:
\begin{itemize}
\item $T$ has a red monomino containing $h+1$,
\item $h\ge r$ and $T$ has a (red or black) domino containing $h+1,h+2$, or
\item $h\le  r$ and $T$ has a (red or black) domino containing $h,h+1$.
\end{itemize}
In other words, an addable point of $(\pi,T)$ is an intersection of $\pi$ with
the line $y=h$ for some $h$ such that $T$ has a red monomino with $h+1$, a
domino with $h+1,h+2$ and $h\ge r$, or a domino with $h,h+1$ and $h\le
 r$. Let $\add(\pi,T)$ denote the smallest integer $i\ge0$ such that the
ending point of $S_1\dots S_i$ is an addable point. (If $i=0$, the ending point
of $S_1\dots S_i$ means the starting point of $\pi$, which is $(0,r)$.) If there
is no such integer $i$, we define $\add(\pi,T)=\infty$. See
Figure~\ref{fig:addable1}.

\begin{figure}
  \centering
\begin{tikzpicture}[scale=0.75]
    \draw[help lines] (0,0) grid (8,4);
    \foreach \y in {0,...,4} \draw node at (-.5,\y) {$\y$};
    \draw[line width = 1pt, dashed, gray] (0,2) -- (8,2);
    \draw[line width = 1.5pt] (0,2)--(0,1)-- ++(1,-1)-- ++(1,1)-- ++(1,1)-- ++(1,1)-- ++(1,1)-- ++(1,0)-- ++(0,-1)--
    ++(0,-1)-- ++(1,-1)-- ++(1,1) --++(0,1);
    \addable(0,2) \addable(3,2) \addable(4,3) \addable(6,3) \addable(6,2) \addable(1,0) \addable(8,2) \addable(8,3)
  \remove(2,1) \remove(6,4) \remove(8,2)
\end{tikzpicture}   \qquad
\begin{tikzpicture}[scale=0.75]
  \draw [help lines] (0,0) grid (6,1);
  \foreach \x in {1,...,6} \draw node at (\x-0.5,0.5) {\x};
  \LRM0 \LBD2 \LRD4 \LBM5
\end{tikzpicture}
\caption{An element $(\pi,T)\in \MS_{*,r,s}^{\le k}\times \FT_{k+1}$ for $r=2,s=3,k=5$.
  The dashed line is the line $y= r$. The removable points of $\pi$ are the red dots and $\rem(\pi)=3$.
The addable points of $(\pi,T)$ are circled.
Since the first addable point occurs at the beginning of $\pi$, we have $\add(\pi,T)=0$.}
  \label{fig:addable1}
\end{figure}
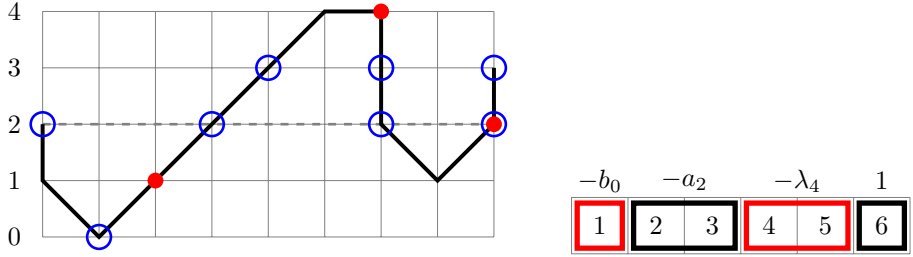

It is easy to see that if \( \rem(\pi), \add(\pi,T)<\infty \), then \(
\rem(\pi)\ne\add(\pi,T) \). For if \( \rem(\pi)=\add(\pi,T)=i \), then \( S_i \)
is a horizontal step or \( (S_{i-1},S_i) \) is a peak or a valley. But this
would imply \( \add(\pi,T)\le i-1 \) or \( \add(\pi,T)\le i-2 \)
since the ending point of \( S_1\dots S_{i-1} \) or \( S_1\dots S_{i-2} \) has
the same height as that of \( S_1\dots S_i \).

We are now ready to define a map $\phi:\MS_{*,r,s}^{\le k}\times \FT_{k+1} \to \MS_{*,r,s}^{\le k}\times \FT_{k+1}$.
Let $(\pi,T)\in \MS_{*,r,s}^{\le k}\times \FT_{k+1}$ and write $\pi=S_1\dots S_m$ as a sequence of steps.
Then $\phi(\pi,T)=(\pi',T')$ is defined as follows. 

\begin{description}
\item[Case 1] $\rem(\pi) =\add(\pi,T) =\infty$. In this case, $(\pi',T')=(\pi,T)$. 
\item[Case 2] $\rem(\pi) > \add(\pi,T)$.  Suppose that $i=\add(\pi,T)$ and $S_1\dots S_i$ ends at $(j,h)$, which is an
  addable point. Let $\tau$ be the tile of $T$ containing $h+1$.  Then 
    \[
      \pi' =
      \begin{cases}
        S_1\dots S_i H S_{i+1} \dots S_m, & \mbox{if $\tau$ is a red monomino,}\\
        S_1\dots S_i UD S_{i+1} \dots S_m, & \mbox{if $\tau$ is a red domino and $h\ge r$,}\\
        S_1\dots S_i UV S_{i+1} \dots S_m, & \mbox{if $\tau$ is a black domino and $h\ge r$,}\\
        S_1\dots S_i DU S_{i+1} \dots S_m, & \mbox{if $\tau$ is a red domino and $h\le r$,}\\
        S_1\dots S_i VU S_{i+1} \dots S_m, & \mbox{if $\tau$ is a black domino and $h\le r$,}
      \end{cases}
    \]
    and $T'$ is the tiling obtained from $T$ by replacing $\tau$ by one or two black monomino(s) according to the size
    of $\tau$.
  \item[Case 3] $\rem(\pi) < \add(\pi,T)$.  Suppose that $i=\rem(\pi,T)$ and $S_1\dots S_i$ ends at $(j,h)$, which is
    a removable point.  Then 
    \begin{align*}
      \pi' &=
      \begin{cases}
       S_1\dots \widehat{S}_{i}\dots S_m, & \mbox{if $S_{i}=H$,}\\
       S_1\dots \widehat{S}_{i-1}\widehat{S}_{i}\dots S_m, & \mbox{otherwise,}\\
      \end{cases}\\
      T' &=
      \begin{cases}
       T-B_{h+1}+R_{h+1}, & \mbox{if $S_{i}=H$,}\\
       T-B_{h+1}-B_{h+2}+R_{h+1,h+2}, & \mbox{if $(S_{i-1},S_{i})=(U,D)$ and $h\ge r$,}\\
       T-B_{h+1}-B_{h+2}+B_{h+1,h+2}, & \mbox{if $(S_{i-1},S_{i})=(U,V)$ and $h\ge r$,}\\
       T-B_{h}-B_{h+1}+R_{h,h+1}, & \mbox{if $(S_{i-1},S_{i})=(D,U)$ and $h\le r$,}\\
       T-B_{h}-B_{h+1}+B_{h,h+1}, & \mbox{if $(S_{i-1},S_{i})=(D,V)$ and $h\le r$.}\\
      \end{cases}
    \end{align*}
    Here, for example, $T-B_h-B_{h+1}+R_{h,h+1}$ means the tiling obtained from
    $T$ by removing a black monomino with $h$ and a black monomino with $h+1$
    and adding a red domino with $h,h+1$.
\end{description}

\begin{figure}
  \centering
\begin{tikzpicture}[scale=0.75]
    \draw[help lines] (0,0) grid (9,4);
    \foreach \y in {0,...,4} \draw node at (-.5,\y) {$\y$};
    \draw[line width = 1pt, dashed, gray] (0,2) -- (9,2);
    \draw[line width = 1.5pt] (0,2)--++(0,-1)--++(1,1)--++(0,-1)--++(1,-1)-- ++(1,1)-- ++(1,1)-- ++(1,1)-- ++(1,1)-- ++(1,0)-- ++(0,-1)--
    ++(0,-1)-- ++(1,-1)-- ++(1,1) --++(0,1);
    \addable(5,3) \addable(7,3)\addable(2,0) \addable(9,3)
  \remove(1,2) \remove(3,1) \remove(7,4) \remove(9,2)
\end{tikzpicture}   \qquad
\begin{tikzpicture}[scale=0.75]
  \draw [help lines] (0,0) grid (6,1);
  \foreach \x in {1,...,6} \draw node at (\x-0.5,0.5) {\x};
  \LRM0 \LBM1 \LBM2 \LRD4 \LBM5
\end{tikzpicture}
\caption{An element $(\pi,T)\in \MS_{*,r,s}^{\le k}\times \FT_{k+1}$ for $r=2,s=3,k=5$.
  The dashed line is the line $y= r$. The removable points of $\pi$ are the red dots and $\rem(\pi)=2$.
The addable points of $(\pi,T)$ are circled and $\add(\pi,T)=4$.}
  \label{fig:addable3}
\end{figure}
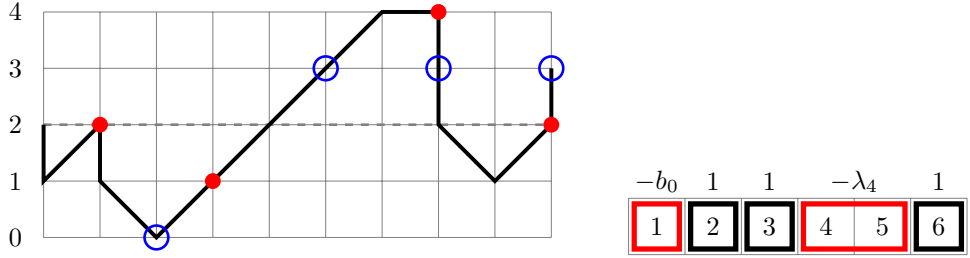

\begin{lem}\label{lem:well-defined}
  The map \( \phi \) is well defined.
\end{lem}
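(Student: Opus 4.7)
The plan is to check case by case that $\phi(\pi,T)=(\pi',T')$ actually lies in $\MS_{*,r,s}^{\le k}\times\FT_{k+1}$. Case 1 is immediate. For Case 2, at the addable point $(j,h)$ we splice into $\pi$ one of the zero net vertical displacement patterns $H$, $UD$, $UV$, $DU$, $VU$, so $\pi'$ still starts at $(0,r)$ and ends at height $s$. The new extremal heights stay in $[0,k]$ because the triggering tile of $T$ encodes precisely the correct bound: a red monomino at position $h+1$ forces $h+1\le k+1$; a domino at $\{h+1,h+2\}$ (used only when $h\ge r$) forces $h+1\le k$, matching the new peak height; and a domino at $\{h,h+1\}$ (used only when $h\le r$) forces $h\ge 1$, matching the new valley height. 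The tiling operation swaps one tile of length $\ell\in\{1,2\}$ for $\ell$ black monominos, so $T'$ has total length $k+1$.

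In Case 3 we delete a horizontal step or a peak/valley, which preserves net vertical displacement and only removes heights from the profile of $\pi$, so $\pi'$ automatically lies in $\MS_{*,r,s}^{\le k}$. The substantive issue, and the main obstacle of the proof, is to confirm that the tile modifications are legal: for example, in the sub-case $(U,V)$ with $h\ge r$ we need $T$ to contain black monominos at both positions $h+1$ and $h+2$ in order for the operation $T-B_{h+1}-B_{h+2}+B_{h+1,h+2}$ to make sense. The proof that these tiles really exist uses the hypothesis $\add(\pi,T)>\rem(\pi)=i$ at two different indices of $\pi$.

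I illustrate with this representative sub-case; the $(U,D)$, $(D,U)$, and $(V,U)$ sub-cases are symmetric. Non-addability at the ending point $(j,h)$ of $S_1\dots S_i$ (height $h\ge r$) gives no red monomino at $h+1$ and no domino at $\{h+1,h+2\}$; if $h=r$ the $h\le r$ branch additionally rules out a domino at $\{h,h+1\}$, so the tile at $h+1$ must be a black monomino. If $h>r$, then since $\pi$ starts at $r<h$ and heights change by at most one per step, $\pi$ passes through height $h-1\ge r$ at some index $i''<i$; a domino at $\{h,h+1\}$ in $T$ would make that earlier ending point addable through the $h'\ge r$ branch, contradicting $\add>\rem$. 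Either way, the tile at $h+1$ is a black monomino. Applying the analogous reasoning at the ending point of $S_1\dots S_{i-1}$, which has height $h+1$, rules out a red monomino at $h+2$ and a domino at $\{h+2,h+3\}$; combined with the already-established absence of a domino at $\{h+1,h+2\}$, this pins down the tile at $h+2$ as a black monomino. Since all tiling operations in Case 3 exchange equal total lengths, $T'\in\FT_{k+1}$, completing the verification.
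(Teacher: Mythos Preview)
Your argument is correct and follows essentially the same approach as the paper: both proofs identify Case~3 (the tiling modification) as the only nontrivial point, and both verify the presence of the required black monominos by exploiting non-addability at the endpoint of $S_1\dots S_i$ together with an earlier passage of $\pi$ through an adjacent height. Your treatment of Case~2 is in fact more explicit than the paper's, which simply declares it trivial.

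One small omission: in Case~3 you enumerate the peak/valley sub-cases $(U,V)$, $(U,D)$, $(D,U)$, $(V,U)$ but do not mention the horizontal sub-case $S_i=H$, where one must show only $B_{h+1}\in T$. For $h\ge r$ this is exactly the first half of your $(U,V)$ argument, and for $h<r$ it is handled by the symmetric reasoning (pass through height $h+1$ at an earlier index to rule out a domino at $\{h+1,h+2\}$). You should say so explicitly, since $H$ is not obviously ``symmetric'' to the two-step cases. With that one line added, the proof is complete and matches the paper's.
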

\begin{proof}
  The only nontrivial case is the well-definedness of \( T' \) in Case 3.

  Suppose we are in Case 3 and \( S_{i}=H \). Since \(
  i=\rem(\pi,T)<\add(\pi,T) \), the ending point \( (j,h) \) of \( S_1\dots S_i
  \) is not an addable point. In order to show that \( T' \) is well defined, we
  must show that \( B_{h+1}\in T \). By the definition of an addable point, \( T
  \) cannot have a red monomino containing \( h+1 \) because otherwise \( (j,h)
  \) would be an addable point. Thus it suffices to show that \( h+1 \) is not
  contained in any domino. We consider the three cases \( h=r \), \( h<r \), and
  \( h>r \).

  If \( h=r \), then since \( h\ge r \), \( T \) does not contain any domino
  with \( h+1,h+2 \) and since \( h\le r \), \( T \) does not contain any domino
  with \( h, h+1 \). Thus \( h+1 \) is not contained in any domino.

  If \( h>r \), then \( T \) does not contain any domino with \( h+1,h+2 \).
  Suppose that \( T \) contains a domino with \( h,h+1 \). Then since \( h-1\ge
  r \) and the line \( y=h-1 \) intersects \( \pi \), we must have \(
  \add(\pi,T)<i=\rem(\pi) \), which is a contradiction. Thus \( T \) does not
  contain a domino with \( h,h+1 \) either, and \( h+1 \) is not contained in
  any domino.

  The case \( h<r \) can be proved similarly as above, completing the proof of
  the well-definedness of \( T' \) in Case 3 when \( S_{i}=H \).

  Suppose now that $(S_{i-1},S_{i})=(U,D)$ and $h\ge r$ in Case 3. Note
  that in this case since \( i=\rem(\pi,T)<\add(\pi,T) \), the ending points \(
  (j,h) \) and \( (j-1,h+1) \) of \( S_1\dots S_i \) and \( S_1\dots S_{i-1} \)
  are not addable points. In order to show that \( T' \) is well defined, we
  must show that \( B_{h+1},B_{h+2}\in T \). By the same argument above we
  obtain that \( B_{h+1}\in T \). Moreover, since \( (j-1,h+1) \) is not an
  addable point, \( h+2 \) is not contained in a red monomino or a domino with
  \( h+2,h+3 \). Since \( h+1 \) is in a black monomino, there is no domino with
  \( h+1, h+2 \). Therefore \( h+2 \) must be in a black monomino, and we obtain
  \( B_{h+1},B_{h+2}\in T \) as desired.

  The remaining cases can be proved similarly.
\end{proof}

For example, if $(\pi,T)$ is the element in Figure~\ref{fig:addable1}, then
$\phi(\pi,T)$ is the element in Figure~\ref{fig:addable3}.

\begin{lem}\label{lem:involution}
The map $\phi:\MS_{*,r,s}^{\le k}\times \FT_{k+1} \to \MS_{*,r,s}^{\le k}\times \FT_{k+1}$ is a sign-reversing
involution, i.e., if $\phi(\pi,T)=(\pi',T')$ with $(\pi,T)\ne(\pi',T')$, then
$\wt(\pi',T')=-\wt(\pi,T)$, where
\[
\wt(\pi,T):=\wt(T)\wt(\pi) x^{|\pi|+\rm(T)+\bd(T)+2\rd(T)}.
\]
Moreover, the set of fixed points of $\phi$ is given by
\[
  \Fix(\phi) =\{(\pi,T): \rem(\pi)= \add(\pi,T)=\infty\}.
\]
\end{lem}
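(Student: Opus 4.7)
The plan is to verify in turn three assertions: that the fixed points of $\phi$ are exactly the pairs $(\pi,T)$ with $\rem(\pi)=\add(\pi,T)=\infty$, that $\phi$ is involutive, and that $\phi$ reverses signs on non-fixed points. The fixed-point characterization is immediate from the construction: Cases 2 and 3 strictly change the number of steps of $\pi$, so only Case 1 can produce $(\pi',T')=(\pi,T)$; conversely, Case 1 applies exactly when $\rem(\pi)=\add(\pi,T)=\infty$.

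For the involution property I would show that Cases 2 and 3 are inverse to each other. Suppose that $(\pi',T')=\phi(\pi,T)$ arises from Case 2 at position $i=\add(\pi,T)$, and let $\ell$ denote the number of inserted steps, so $\ell=1$ in the $H$-insertion sub-case and $\ell=2$ in each of the sub-cases $UD,UV,DU,VU$. The key claim is that $\rem(\pi')=i+\ell$ and $\add(\pi',T')>i+\ell$. That the last inserted step makes position $i+\ell$ of $\pi'$ removable is automatic, noting that the case hypothesis $h\ge r$ or $h\le r$ exactly matches the peak-above or valley-below requirement for removability. That no earlier position is removable uses the fact that the first $i$ steps of $\pi'$ coincide with $\pi$ and $\rem(\pi)>i$ in Case 2, together with a quick check that when $\ell=2$ the intermediate position $i+1$ ends on the wrong side of $y=r$ to qualify. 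For the second inequality, the essential observation is that replacing a red monomino or a red/black domino in $T$ by black monomino(s) can only shrink the collection of tiles that witness addability, so the first $i$ points of $\pi'$ remain non-addable in $(\pi',T')$; a direct inspection of the tiles at heights $h+1,h+2$ (or $h,h+1$) in $T'$ then shows the newly inserted positions are not addable either. Applying $\phi$ to $(\pi',T')$ thus falls into Case 3 at $\rem(\pi')=i+\ell$, whose construction precisely undoes the insertion and returns $(\pi,T)$. The Case 3 $\to$ Case 2 direction is symmetric.

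The sign-reversing property reduces to a direct check in each sub-case. In Case 2-a, a red monomino at $h+1$ of weight $-b_h$ is replaced by a black monomino of weight $1$, contributing the factor $-1/b_h$ to $\wt(T)$; the inserted horizontal step at height $h$ contributes $b_h$ to $\wt(\pi)$; and the $x$-exponent is preserved because $|\pi|$ increases by $1$ while $\rm(T)$ decreases by $1$. The net factor is $-1$. The other sub-cases are analogous: a red domino of weight $-\lambda_k$ pairs with a diagonal down step of matching weight $\lambda_k$, a black domino of weight $-a_k$ pairs with a vertical down step of matching weight $a_k$, and the change in $|\pi|$ from the two-step insertion exactly offsets the change in $2\rd(T)$ or $\bd(T)$.

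The main obstacle is the bookkeeping in the involution step, specifically verifying that the insertion in Case 2 does not inadvertently create an addable or removable point at a smaller index than expected. This requires a careful case analysis on the sign of $h-r$ and on the tiles of $T$ adjacent to the modified one, and is precisely the sort of routine but delicate verification the text warns it will partially omit.
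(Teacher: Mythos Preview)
Your proposal is correct and follows exactly the approach the paper intends: the paper's own proof is simply ``This is a straightforward verification using the definition of $\phi$. We omit the details,'' and your sketch supplies precisely those details---the fixed-point characterization from Case~1, the mutual inversion of Cases~2 and~3, and the step-by-step weight/exponent bookkeeping for sign reversal. One small caveat: the Case~3 $\to$ Case~2 direction is not literally ``symmetric'' (adding a red tile to $T$ enlarges rather than shrinks the addable witnesses), but it still goes through because the prefix $S_1,\dots,S_{i-1}$ before $\rem(\pi)$ is forced to be monotone (all $U$'s or all $D/V$'s), so each height is visited at most once and the newly created witness triggers only at position $i-\ell$.
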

\begin{proof}
  This is a straightforward verification using the definition of $\phi$. We omit
  the details.
\end{proof}

Now we are ready to prove Theorem~\ref{thm:trunc}.

\begin{proof}[Proof of Theorem~\ref{thm:trunc}]
The theorem can be reformulated as follows:
\begin{equation}
  \label{eq:Pmu}
  P^*_{k+1}(x)    \sum_{n\ge0} \mu_{n,r,s}^{\le k} x^n = 
  \begin{cases}
  P^*_r(x) \delta^{s+1} P^*_{k-s}(x)  x^{s-r},& \mbox{if $r\le s$},\\
  P^*_s(x) \delta^{s+1} P^*_{k-r}(x)  \prod_{i=s+1}^{r} (a_i+\lambda_i x),& \mbox{if $r> s$}.
  \end{cases}
\end{equation}

We first consider the case $r\le s$. By Lemma~\ref{lem:involution} we have
\begin{align*}
   P^*_{k+1}(x) \sum_{n\ge0} \mu_{n,r,s}^{\le k} x^n  &=
 \sum_{T\in\FT_{k+1}} \sum_{\pi\in \MS_{*,r,s}^{\le k}} \wt(T)\wt(\pi) x^{|\pi|+\rm(T)+\bd(T)+2\rd(T)}\\
&= \sum_{(\pi,T)\in\Fix(\phi)} \wt(T)\wt(\pi) x^{|\pi|+\rm(T)+\bd(T)+2\rd(T)},
\end{align*}
where $\Fix(\phi)$ is the set of pairs $(\pi,T)\in \MS_{*,r,s}^{\le k}\times \FT_{k+1}$ such that
$\rem(\pi)= \add(\pi,T)=\infty$.
Suppose $(\pi,T)\in\Fix(\phi)$. It is easy
to see that there is a unique $\pi$ satisfying $\rem(\pi)=\infty$, namely
$\pi=UU\dots U$ consisting of $s-r$ up steps. This $\pi$ contributes the factor
$x^{s-r}$ in \eqref{eq:Pmu}. Moreover, since $\add(\pi,T)=\infty$, we must have
that the tile in $T$ containing $h$ must be a black monomino for every $r+1\le
h\le s+1$. On the other hand, there is no restriction on the tiles containing
$i\le r$ and $j\ge s+2$. If we sum over all $(\pi,T)\in \Fix(\phi)$, the part of
$T$ consisting of tiles with entries in $\{1,2,\dots,r\}$
(resp.~$\{s+2,s+3,\dots,k+1\}$) contributes the factor $P_r^*(x)$
(resp.~$\delta^{s+1} P^*_{k-s}(x)$) in \eqref{eq:Pmu}.
This shows \eqref{eq:Pmu} when $r\le s$.

Now we consider the case $r>s$. This can be shown similarly as in the previous
case by modifying the map \( \phi \) so that we read the path \( \pi \)
backwards \( \pi=S_m\dots S_1 \). Note that in this case $\pi$ is not unique in
\( \Fix(\phi) \), but $\pi$ can be any path with $r-s$ steps such that each step is
either $U$ or $V$. The contribution of such $\pi$'s is the factor
$\prod_{i=s+1}^{r} (a_i+\lambda_i x)$ in \eqref{eq:Pmu}. This completes the
proof.
\end{proof}

\section{Determinants for type $R_I$ polynomials}  
\label{sec:hank-determ-type}

The classical orthogonal polynomials $p_n(x)=\sum_{j=0}^n c_j x^j$, $c_n\neq 0,$ are 
uniquely defined if there is a unique solution to 
the linear equations, for each $n$,  
\begin{align*}
\LL(x^ip_n(x))&=0, \quad  i=0,\dots, n-1\\
 \LL(x^np_n(x))&=K_n\ne 0.
\end{align*}
These linear equations for the coefficients $c_i$ are 
$$
\sum_{j=0}^{n} \mu_{i+j}c_j =K_n \delta_{i,n}, \  i=0,\dots,n.
$$
Thus the appropriate condition for a unique solution is \( \Delta_{n}\neq 0 \), where \(
\Delta_n=\det(\mu_{i+j})_{i,j=0}^n \) is the Hankel determinant of the system.
Moreover, Cramer's rule gives the following expression \cite[p.~12]{Chihara}.
  \begin{equation}\label{eq:cramer}
    p_n(x) =\frac{K_n}{\det(\mu_{i+j})_{i,j=0}^{n}}\det
    \begin{pmatrix}
      (\mu_{i+j})_{\substack{0\le i \le n-1\\ 0\le j \le n}}\\ (x^j)_{0\le j\le n}
    \end{pmatrix}.
  \end{equation}

  If the three-term recurrence for monic \( p_n(x) \) is given by \(
  p_{n+1}(x)=(x-b_n)p_n(x)-\lambda_np_{n-1}(x) \), then 
  \( K_n=\lambda_1\cdots \lambda_n \) and 
  the Hankel determinant
  factors:
  \begin{equation}\label{eq:Delta_n}
    \Delta_n := \det(\mu_{i+j})_{i,j=0}^n  = \prod_{k=1}^{n}\lambda_k^{n+1-k}.
  \end{equation}
  Therefore we can rewrite \eqref{eq:cramer} as the following theorem, which may
  also be obtained from the uniqueness of \( p_n(x) \) \cite[p.~17, Exercise
  3.1]{Chihara}.

\begin{thm}
\label{thm:P=det/det}
  If $\lambda_n\neq 0$ for all $n$, then the monic orthogonal polynomials satisfy
  \[
    P_n(x) =\frac{1}{\det(\mu_{i+j})_{i,j=0}^{n-1}}\det
    \begin{pmatrix}
      (\mu_{i+j})_{\substack{0\le i \le n-1\\ 0\le j \le n}}\\ (x^j)_{0\le j\le n}
    \end{pmatrix}.
  \]
\end{thm}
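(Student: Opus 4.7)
The plan is to verify that the right-hand side of the claimed identity is a monic polynomial of degree $n$ in $x$ satisfying the orthogonality conditions that characterize $P_n(x)$, then invoke uniqueness. I would not use \eqref{eq:cramer} directly, since it is cleaner to argue from scratch using the two standard tricks: expand along the row containing $x^j$ to read off the degree and leading coefficient, and use the ``two equal rows'' device to kill the orthogonality integrals.

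First I would expand the $(n+1)\times(n+1)$ determinant along its bottom row $(1,x,x^2,\ldots,x^n)$, writing it as $\sum_{j=0}^{n}(-1)^{n+j}x^j M_j$, where $M_j$ is the minor obtained by deleting the last row and the $j$th column. The coefficient of $x^n$ is $M_n=\det(\mu_{i+j})_{i,j=0}^{n-1}=\Delta_{n-1}$, which is nonzero by \eqref{eq:Delta_n} since $\lambda_k\neq 0$ for all $k$. Hence dividing by $\Delta_{n-1}$ produces a polynomial in $x$ that is monic of degree exactly $n$.

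Next I would check the $n$ orthogonality conditions. Fix $0\le i\le n-1$ and apply $\LL$ to $x^i$ times the right-hand side. Because $\LL$ is linear and the only $x$-dependence sits in the bottom row, $\LL$ turns that row into $(\mu_{i},\mu_{i+1},\ldots,\mu_{i+n})$, which coincides with row $i$ of the matrix. The resulting determinant has two equal rows and hence vanishes, so $\LL(x^i\cdot \text{RHS})=0$ for $0\le i\le n-1$. Since $P_n(x)$ is the unique monic polynomial of degree $n$ with this property (the $n\times n$ linear system for its lower coefficients is nonsingular, its matrix being $(\mu_{i+j})_{i,j=0}^{n-1}$ with determinant $\Delta_{n-1}\neq 0$), the two sides agree. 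There is no real obstacle here beyond invoking \eqref{eq:Delta_n} to guarantee $\Delta_{n-1}\neq 0$; the argument is the classical Heine-style determinant proof.
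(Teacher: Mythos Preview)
Your argument is correct and is precisely the ``uniqueness'' route the paper itself points to: the text derives the statement by combining \eqref{eq:cramer} (Cramer's rule) with \eqref{eq:Delta_n}, and remarks that it ``may also be obtained from the uniqueness of $p_n(x)$,'' which is exactly what you carry out. Nothing is missing.
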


The purpose of this section is to give analogues of Theorem~\ref{thm:P=det/det} and
\eqref{eq:Delta_n} for type $R_I$ polynomials. We also consider the special case
when the three-term recurrence coefficients are all constants.

\subsection{Determinants for general type \( R_I \) polynomials}
\aftersubsection

In this subsection we give analogues of Theorem~\ref{thm:P=det/det} and
\eqref{eq:Delta_n} for type $R_I$ polynomials. We consider three different
analogous determinants, and three factorizations of determinants of the system.
We use three sets 
\[
  \{b_j^{(t)}(x):j=0,1,\dots,n\}, \qquad t=1,2,3,
\]
 of linearly
independent elements of $V$ to expand the rational function $Q_n(x)$, where for
\( j=0,1,\dots,n \),
\begin{align*}
  b_j^{(1)}(x) &= x^j/d_n(x), \\
  b_j^{(2)}(x) &= x^j/d_j(x),\\
  b_j^{(3)}(x) &= 1/d_j(x).
\end{align*}

Let's consider first the case of $t=1$, and expand
$$
Q_n(x)=\sum_{j=0}^n c_j^{(1)} b_j^{(1)}(x).
$$
The defining orthogonality relations are

$$
\LL(x^iQ_n(x))=0, \  i=0,\dots, n-1, \quad \LL(x^nQ_n(x))=1,  {\text{ for all }}  n .
$$
These linear equations are 
\begin{equation}
\label{eq:detsys1}
\sum_{j=0}^{n} \nu_{i+j,n}c_j^{(1)} =\delta_{i,n}, \qquad i=0,\dots, n.
\end{equation}
The determinant of the system \eqref{eq:detsys1} is 
$$
\Delta_n'= \det(\nu_{i+j,n})_{i,j=0}^{n}.
$$

Assuming $\Delta_n'\neq0$ for all $n$, the solution is unique. 
In addition Corollary~\ref{cor:L(P_mQ_ell)} shows that the monic polynomial 
$P_n(x)$ does satisfy $\LL(x^nQ_n(x))=\LL(x^nP_n(x)/d_n(x))=1,$ so the 
Cramer's rule solution is monic.

\begin{prop}
\label{prop:PQunique}
  The \( P_n(x) \) and \( Q_n(x) \) are uniquely determined if and only if the
  following condition holds
  \begin{equation}\label{eq:Dne0}
 \Delta_n'= \det(\nu_{i+j,n})_{i,j=0}^{n} \neq 0  {\text{ for all  }} n.
  \end{equation}
\end{prop}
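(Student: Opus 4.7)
The plan is to recast the orthogonality together with the leading-coefficient normalization as the square linear system \eqref{eq:detsys1} for the expansion coefficients of \( Q_n(x) \) in the basis \( \{b_j^{(1)}(x)\}_{j=0}^n \), and then invoke elementary linear algebra: an \( (n+1)\times(n+1) \) system has a unique solution precisely when its coefficient determinant is nonzero.

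Writing \( Q_n(x)=\sum_{j=0}^n c_j^{(1)}\, x^j/d_n(x) \), the conditions \( \LL(x^i Q_n(x))=0 \) for \( 0\le i\le n-1 \) together with the normalization \( \LL(x^n Q_n(x))=1 \) (which holds for the monic recurrence-defined \( P_n \) by Corollary~\ref{cor:L(x^nQ_n)=1}) translate term-by-term into \eqref{eq:detsys1}. Because the rational functions \( \{x^j/d_n(x):0\le j\le n\} \) are linearly independent (multiplying by \( d_n(x) \) produces the linearly independent polynomials \( \{x^j\}_{j=0}^n \)), and because \( P_n(x)=d_n(x)Q_n(x) \), uniqueness of \( P_n \) and \( Q_n \) as rational functions satisfying the orthogonality and normalization is equivalent to uniqueness of the coefficient vector \( (c_0^{(1)},\dots,c_n^{(1)}) \) solving \eqref{eq:detsys1}.

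For the ``if'' direction, if \( \Delta_n'\ne0 \) then the matrix \( (\nu_{i+j,n})_{i,j=0}^n \) is invertible, so the system has a unique solution and hence unique \( P_n \) and \( Q_n \). For the ``only if'' direction I would argue the contrapositive. Under the standing assumptions, the recurrence-defined \( P_n \) from \eqref{eq:favard1} already furnishes a solution of \eqref{eq:detsys1}: orthogonality comes from Theorem~\ref{thm:unique L} and the normalization from Corollary~\ref{cor:L(x^nQ_n)=1}. So the system is consistent. If \( \Delta_n'=0 \), the kernel of the coefficient matrix is nontrivial, and adding any nonzero kernel vector to this solution produces a second coefficient vector and hence a second admissible \( Q_n \); consequently \( P_n \) and \( Q_n \) are not uniquely determined.

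There is no real obstacle here; the argument is the standard Cramer's-rule reasoning of the classical case adapted to the type-\(R_I\) basis. The only bookkeeping check is that the recurrence-defined polynomials do solve \eqref{eq:detsys1}, which is immediate from the cited corollaries.
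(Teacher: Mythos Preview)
Your argument is correct and is essentially the paper's own reasoning: the discussion immediately preceding Proposition~\ref{prop:PQunique} sets up the square system \eqref{eq:detsys1}, observes that Cramer's rule gives a unique (necessarily monic) solution precisely when \( \Delta_n' \neq 0 \), and cites Corollary~\ref{cor:L(x^nQ_n)=1} for the normalization \( \LL(x^nQ_n(x))=1 \). The paper does not spell out the contrapositive direction as explicitly as you do, but the underlying linear-algebra content is identical.
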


\begin{thm}
\label{thm:firstDelta}
Suppose that \eqref{eq:Dne0} holds. Then
the monic type $R_I$ polynomials are 
$$
P_n(x) =\frac{1}{\Delta_n'}\det
\begin{pmatrix}
  (\nu_{i+j,n})_{\substack{0\le i \le n-1\\ 0\le j \le n}}\\ (x^j)_{0\le j\le n}
\end{pmatrix},
$$
and 
$$
 \Delta_{n}'=\prod_{k=1}^{n}\frac{1}{ (-a_k)^k P_k(-\lambda_k/a_k)}.
$$
\end{thm}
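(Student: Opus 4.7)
The plan is to handle the two assertions separately. For the determinant formula for $P_n(x)$, apply Cramer's rule to the orthogonality system \eqref{eq:detsys1}, which by Corollary~\ref{cor:L(x^nQ_n)=1} the monic $P_n$ uniquely solves: the coefficient $c_j^{(1)} = \det M^{(j)}/\Delta_n'$, where $M^{(j)}$ replaces column $j$ of $(\nu_{i+k,n})_{0\le i,k\le n}$ by $(0,\dots,0,1)^T$. Cofactor expansion along the replaced column gives $\det M^{(j)} = (-1)^{n+j} D_j$, where $D_j$ is the minor obtained by deleting row $n$ and column $j$. Assembling $P_n(x) = \sum_j c_j^{(1)} x^j$ and recognizing the result as the cofactor expansion along a last row $(1,x,\ldots,x^n)$ of a single $(n+1)\times(n+1)$ determinant yields the stated formula.

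For the product factorization, I would induct on $n$. The base case $n=0$ is $\Delta_0'=\nu_{0,0}=\LL(1)=1$, matching the empty product. For the inductive step, substitute $x=-\lambda_n/a_n$ into the determinant formula just established to get
\[
\Delta_n'\, P_n(-\lambda_n/a_n) = \det\begin{pmatrix}(\nu_{i+j,n})_{0\le i\le n-1,\;0\le j\le n}\\ ((-\lambda_n/a_n)^j)_{0\le j\le n}\end{pmatrix}.
\]
Then for $j=n,n-1,\dots,1$ (in this order, so columns $0,\dots,j-1$ remain original when column $j$ is processed), I replace column $j$ by $a_n\cdot(\text{col }j)+\lambda_n\cdot(\text{col }j-1)$. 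Each such operation rescales the determinant by the factor $a_n$ (from scaling column $j$), and the subsequent addition leaves the determinant unchanged, so the net effect is multiplication by $a_n^n$. Using the rearrangement $a_n\nu_{k,n}+\lambda_n\nu_{k-1,n}=\nu_{k-1,n-1}$ of \eqref{eq:nu_nm}, the top-row entries in column $j\ge 1$ become $\nu_{i+j-1,n-1}$, while column $0$ is unchanged. In the last row, the $j$-th entry for $j\ge1$ becomes $(-\lambda_n/a_n)^{j-1}\bigl(a_n\cdot(-\lambda_n/a_n)+\lambda_n\bigr)=0$, and the column $0$ entry remains $1$.

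Expanding the transformed matrix along its last row $(1,0,\ldots,0)$ leaves only the $(n,0)$-cofactor, which equals $(-1)^n$ times the upper-right $n\times n$ block $\det(\nu_{i+j',n-1})_{0\le i,j'\le n-1}=\Delta_{n-1}'$. Therefore $a_n^n\Delta_n' P_n(-\lambda_n/a_n)=(-1)^n\Delta_{n-1}'$, which rearranges to
\[
\frac{\Delta_n'}{\Delta_{n-1}'}=\frac{(-1)^n}{a_n^n P_n(-\lambda_n/a_n)}=\frac{1}{(-a_n)^n P_n(-\lambda_n/a_n)},
\]
and iterating gives the desired product formula. The key insight, and the only nontrivial choice, is to substitute precisely $x=-\lambda_n/a_n$: it is the unique value that simultaneously makes the column-operated last row collapse to a single nonzero entry and delivers the factor $P_n(-\lambda_n/a_n)$ demanded by the answer. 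Beyond this, the argument is routine bookkeeping with Cramer's rule and column operations.
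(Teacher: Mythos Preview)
Your proof is correct and follows essentially the same approach as the paper: Cramer's rule for the determinant expression of $P_n(x)$, then the substitution $x=-\lambda_n/a_n$ together with column operations driven by the recurrence $a_n\nu_{k,n}+\lambda_n\nu_{k-1,n}=\nu_{k-1,n-1}$ to reduce $\Delta_n'$ to $\Delta_{n-1}'$. The only cosmetic difference is that the paper adds $(a_n/\lambda_n)\cdot(\text{col }j)$ to column $j-1$ (so it temporarily assumes $\lambda_n\neq 0$ and then invokes rationality in $\lambda_n$), whereas your version scales column $j$ by $a_n$ before adding $\lambda_n\cdot(\text{col }j-1)$, which avoids that extra step.
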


\begin{proof} The first statement follows from Cramer's rule.  

  Let's assume that $\lambda_n\neq 0$ and choose $x=-\lambda_n/a_n$ in the first
  statement. Then successively multiply the $j$th column by $a_n/\lambda_n$ and
  add it to the $(j-1)$st column, for $j=1,2,\dots, n$. The resulting last row
  is $(0,0,\dots,0,(-\lambda_n/a_n)^n)$. The new $(i,j)$-entry, for \( 0\le
  i,j\le n-1 \), is
 $$
 \nu_{i+j,n}+\frac{a_n}{\lambda_n}\nu_{i+j+1,n}=
 \frac{1}{\lambda_n} \nu_{i+j,n-1},
 $$
 where we have used \eqref{eq:nu_nm}. This proves 
 $$
 P_n(-\lambda_n/a_n)=(-\lambda_n/a_n)^n \Delta_{n-1}'(\lambda_n)^{-n} /\Delta_n',
 $$
 which is the second statement. Because all of the quantities are rational functions of 
 $\lambda_n$, it also holds for $\lambda_n=0.$
\end{proof}

Note that the conditions for uniqueness agree with our assumptions, $a_n\ne0$ and
$P_n(-\lambda_n/a_n)\ne0$ for all $n\ge0$, throughout the paper.

\begin{cor} The \( P_n(x) \) and \( Q_n(x) \) are uniquely determined if
$$
a_k\neq 0 {\text{ and }}\  P_k(-\lambda_k/a_k)\neq 0 {\text{ for all }} k\ge 0.
$$
\end{cor}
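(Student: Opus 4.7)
The plan is to obtain the hypothesis \eqref{eq:Dne0} of Proposition~\ref{prop:PQunique} directly from the factorization of $\Delta_n'$ given in Theorem~\ref{thm:firstDelta}. Under the assumption that $a_k \ne 0$ and $P_k(-\lambda_k/a_k) \ne 0$ for all $k \ge 0$, every factor in the product
$$\Delta_n' = \prod_{k=1}^n \frac{1}{(-a_k)^k P_k(-\lambda_k/a_k)}$$
is well-defined and nonzero, so $\Delta_n' \ne 0$ for all $n \ge 0$. Applying Proposition~\ref{prop:PQunique} then yields the uniqueness of $P_n(x)$ and $Q_n(x)$.

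The only subtlety—and the point I would want to be careful about—is to avoid a circular appeal to Theorem~\ref{thm:firstDelta}, whose proof invokes Cramer's rule and therefore presupposes $\Delta_n' \ne 0$. To sidestep this, I would observe that the column-operation step in the proof of Theorem~\ref{thm:firstDelta} (using the recursion $\nu_{n,m} = (1/a_m)\nu_{n-1,m-1} - (\lambda_m/a_m)\nu_{n-1,m}$ from Lemma~\ref{lem:nu}) actually establishes the unconditional polynomial identity
$$P_n(-\lambda_n/a_n)\,(-a_n)^n\,\Delta_n' = \Delta_{n-1}',$$
initially for $\lambda_n \ne 0$ and extending by rationality to $\lambda_n = 0$. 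This identity does not require $\Delta_n' \ne 0$ to state; it is simply an equality of Laurent polynomials in the recurrence data.

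The proof then proceeds by a short induction on $n$. The base case is immediate, since $\Delta_0' = \nu_{0,0} = \LL(1) = 1 \ne 0$. For the inductive step, assume $\Delta_{n-1}' \ne 0$; then the displayed identity, combined with the hypothesis $a_n \ne 0$ and $P_n(-\lambda_n/a_n) \ne 0$, forces $\Delta_n' \ne 0$. Verifying this induction (and, in particular, that the stated identity is truly unconditional) is the main obstacle; once it is settled, Proposition~\ref{prop:PQunique} delivers the corollary immediately.
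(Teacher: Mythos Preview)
Your proposal is correct and follows the same route the paper intends: the corollary is placed immediately after Theorem~\ref{thm:firstDelta} and Proposition~\ref{prop:PQunique} with no separate proof, and is meant to follow by combining the two. Your additional care about circularity is well-placed and goes slightly beyond the paper---Theorem~\ref{thm:firstDelta} is stated under the hypothesis $\Delta_n'\neq 0$, so strictly speaking one needs the unconditional recursive identity $P_n(-\lambda_n/a_n)(-a_n)^n\Delta_n'=\Delta_{n-1}'$ (extracted, as you say, from the column-operation argument together with Lemma~\ref{lem:nu}) to run the induction; the paper leaves this point implicit.
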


The Cramer's rule solution being monic gives the following determinant identity. 
\begin{cor}
We have
\[
\Delta'_n =\det(\nu_{i+j,n})_{0\le i,j \le n}= \det(\nu_{i+j,n}) _{0\le i,j \le n-1}.  
\]
\end{cor}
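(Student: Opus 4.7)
The plan is to extract the leading coefficient from the determinantal formula in Theorem~\ref{thm:firstDelta}. Recall from the paper's definition that $P_n(x)$ is monic of degree $n$. Hence if we compare the coefficient of $x^n$ on both sides of
$$
P_n(x) =\frac{1}{\Delta_n'}\det
\begin{pmatrix}
  (\nu_{i+j,n})_{\substack{0\le i \le n-1\\ 0\le j \le n}}\\ (x^j)_{0\le j\le n}
\end{pmatrix},
$$
the left-hand side contributes $1$, so the coefficient of $x^n$ inside the determinant, divided by $\Delta_n'$, must equal $1$.

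To compute that coefficient, I would expand the determinant along its last row $(1, x, x^2, \dots, x^n)$. Only the entry $x^n$ in the $(n+1,n+1)$ position contributes to the coefficient of $x^n$, and the associated cofactor (with sign $(-1)^{(n+1)+(n+1)}=1$) is precisely the minor obtained by deleting the last row and last column, namely
$$
\det(\nu_{i+j,n})_{0\le i,j\le n-1}.
$$
Therefore the coefficient of $x^n$ in the full determinant equals $\det(\nu_{i+j,n})_{0\le i,j\le n-1}$, and comparing with $\Delta_n'\cdot 1$ yields
$$
\Delta_n' = \det(\nu_{i+j,n})_{0\le i,j\le n-1},
$$
while the first equality $\Delta_n'=\det(\nu_{i+j,n})_{0\le i,j\le n}$ is just the definition of $\Delta_n'$ recalled from earlier in this section.

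There is essentially no obstacle: the hint preceding the corollary already identifies the mechanism (Cramer's rule gives a monic polynomial), and the argument reduces to the standard fact that cofactor expansion along the polynomial row reads off coefficients. The only point to be careful about is the sign in the cofactor expansion, which works out trivially since the $(n+1,n+1)$ entry sits in an even position.
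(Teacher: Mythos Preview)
Your proof is correct and takes essentially the same approach as the paper, which simply states that the identity follows from the Cramer's rule solution being monic. You have faithfully unpacked that one-line remark by comparing the coefficient of $x^n$ via cofactor expansion along the last row.
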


Next we give the results for the remaining two bases, $b_j^{(2)}$ and $b_j^{(3)}.$
The proofs are similar, again the Cramer's rule solution is the monic solution. 
Define
\begin{align*}
  \Delta''_{n} &:=  \det(\nu_{i+j,j})_{0\le i,j \le n},\\
  \Delta'''_{n} &:=  \det(\nu_{i,j})_{0\le i,j \le n}.
\end{align*}

\begin{thm}
\label{thm:PQ=det}
Suppose that  \eqref{eq:Dne0} holds.
We have, assuming \( \lambda_k\ne 0 \) for all \( k \) for \eqref{eq:QQ=det},
\begin{equation}
  \label{eq:QQ=det}
Q_n(x)= \frac{1}{\Delta''_{n}}\det
\begin{pmatrix}
\nu_{0,0}  &\nu_{1,1} & \cdots& \nu_{n,n} \\
\nu_{1,0}  &\nu_{2,1} & \cdots& \nu_{n+1,n} \\
\vdots  &\vdots  & \ddots& \vdots \\
\nu_{n-1,0}  &\nu_{n,1} & \cdots& \nu_{2n-1,n} \\
1&\frac{x}{d_1(x)}&\cdots &\frac{x^n}{d_n(x)}
\end{pmatrix}
=\frac{1}{\Delta''_{n}}\det
  \begin{pmatrix}
\left(\nu_{i+j,j}\right)_{\substack{0\le i \le n-1\\ 0\le j \le n}}\\ \left(\frac{x^j}{d_j(x)}\right)_{0\le j\le n}
  \end{pmatrix},
\end{equation}
\begin{equation}
  \label{eq:Q=det2}
Q_n(x)= \frac{1}{\Delta'''_{n}}\det
\begin{pmatrix}
\nu_{0,0}  &\nu_{0,1} & \cdots& \nu_{0,n} \\
\nu_{1,0}  &\nu_{1,1} & \cdots& \nu_{1,n} \\
\vdots  &\vdots  & \ddots& \vdots \\
\nu_{n-1,0}  &\nu_{n-1,1} & \cdots& \nu_{n-1,n} \\
1&\frac{1}{d_1(x)}&\cdots &\frac{1}{d_n(x)}
\end{pmatrix}
=\frac{1}{\Delta'''_{n}}\det
\begin{pmatrix}
\left(\nu_{i,j}\right)_{\substack{0\le i \le n-1\\ 0\le j \le n}}\\ \left(\frac{1}{d_j(x)}\right)_{0\le j\le n}
  \end{pmatrix} .
\end{equation}
\end{thm}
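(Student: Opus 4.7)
The plan is to repeat the Cramer's rule argument used for Theorem~\ref{thm:firstDelta} with the bases $\{b_j^{(2)}(x)=x^j/d_j(x)\}_{j=0}^{n}$ and $\{b_j^{(3)}(x)=1/d_j(x)\}_{j=0}^{n}$ in place of $\{b_j^{(1)}(x)\}_{j=0}^{n}$. First I would verify that each of these sets is a basis of the $(n+1)$-dimensional space $V_n:=\{p(x)/d_n(x):\deg p\le n\}$, which contains $Q_n(x)=P_n(x)/d_n(x)$. After clearing the common denominator $d_n(x)$, the $b_j^{(2)}$ become the polynomials $x^j\prod_{k=j+1}^{n}(a_kx+\lambda_k)$, which have vanishing order exactly $j$ at $x=0$ (this is where the hypothesis $\lambda_k\ne 0$ is used, since the constant term after the factor $x^j$ is $\prod_{k>j}\lambda_k$), hence are linearly independent. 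The $b_j^{(3)}$ similarly become $\prod_{k=j+1}^{n}(a_kx+\lambda_k)$ of distinct degrees $n-j$, so are linearly independent with no extra hypothesis. A dimension count upgrades each to a basis of $V_n$.

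Next, writing $Q_n(x)=\sum_{j=0}^{n}c_j^{(t)}b_j^{(t)}(x)$ and applying $\LL(x^i\cdot)$ for $0\le i\le n$, the orthogonality $\LL(x^iQ_n(x))=\delta_{i,n}$ (from Corollary~\ref{cor:L(x^nQ_n)=1} together with $\mu_{i,n}=0$ for $i<n$) yields the two linear systems
\[
\sum_{j=0}^{n}\nu_{i+j,j}\,c_j^{(2)}=\delta_{i,n},\qquad \sum_{j=0}^{n}\nu_{i,j}\,c_j^{(3)}=\delta_{i,n},\qquad 0\le i\le n.
\]
Uniqueness of the basis expansion of $Q_n(x)$ forces each system to have a unique solution, so that $\Delta''_n,\Delta'''_n\ne 0$ automatically; no further appeal to \eqref{eq:Dne0} is required at this step. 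Cramer's rule then gives
\[
c_j^{(t)}=(-1)^{n+j}\,\widetilde{M}_j^{(t)}/\Delta^{(t)}_n,
\]
where $\widetilde{M}_j^{(t)}$ is the minor of the coefficient matrix obtained by deleting its last row and its $j$-th column. Substituting back, $Q_n(x)=\sum_{j}c_j^{(t)}b_j^{(t)}(x)$ is precisely the cofactor expansion along the last row of the $(n+1)\times(n+1)$ determinant displayed in \eqref{eq:QQ=det} and \eqref{eq:Q=det2}, which finishes the proof.

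The main obstacle is the basis verification, especially the explicit role of the hypothesis $\lambda_k\ne 0$ for \eqref{eq:QQ=det}: when some $\lambda_k=0$, one checks directly that $x^k/d_k(x)$ collapses to a scalar multiple of $x^{k-1}/d_{k-1}(x)$, so that $\{b_j^{(2)}\}_{j=0}^{n}$ fails to be linearly independent and the statement \eqref{eq:QQ=det} breaks down. Once the basis property is in hand, the remainder is a routine Cramer's rule bookkeeping step identical in structure to the proof of Theorem~\ref{thm:firstDelta}, and no analogue of the factorization of $\Delta'_n$ from that theorem is needed here.
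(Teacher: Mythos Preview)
Your proposal is correct and follows essentially the same approach as the paper, which simply states that ``the proofs are similar, again the Cramer's rule solution is the monic solution'' and gives no further details. You have supplied exactly the verifications the paper omits: that $\{x^j/d_j(x)\}_{j=0}^n$ and $\{1/d_j(x)\}_{j=0}^n$ are bases of $V_n$ (with the precise point where $\lambda_k\ne 0$ enters), and why the resulting linear systems are nonsingular, before invoking Cramer's rule and the cofactor expansion along the last row.
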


\begin{thm}\label{thm:3Deltas}
  Suppose that  \eqref{eq:Dne0} holds. Then
    \begin{align}
      \label{eq:Delta''}
      \Delta''_{n}&=  \prod_{k=1}^{n}\frac{\lambda_k^k}{ (-a_k)^k P_k(-\lambda_k/a_k)},\\
      \label{eq:Delta'''}
      \Delta'''_n&=\prod_{k=1}^n\frac{1}{P_{k}(-\la_k/a_k)}.    
    \end{align}
\end{thm}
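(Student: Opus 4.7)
The plan is to reduce both formulas to a one-step induction on $n$ from the Cramer's rule determinant identities already established in Theorem~\ref{thm:PQ=det}, using the key trick of clearing denominators in the last row. Specifically, to handle $\Delta''_n$, I would start from \eqref{eq:QQ=det} and multiply the last row of the displayed matrix by $d_n(x)$. This turns the left-hand side of the identity into $P_n(x)\,\Delta''_n$ and replaces the bottom row by
\[
\bigl(x^j\,d_n(x)/d_j(x)\bigr)_{0\le j\le n} \;=\; \Bigl(x^j\prod_{i=j+1}^{n}(a_ix+\la_i)\Bigr)_{0\le j\le n}.
\]
Evaluating at $x=-\la_n/a_n$ kills every entry with $j<n$, since each product contains the factor $a_nx+\la_n$, while the $(n,n)$-entry becomes $(-\la_n/a_n)^n$. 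Expanding the resulting determinant along the last row collapses the top-left $n\times n$ minor to $\Delta''_{n-1}$, giving the one-step recursion
\[
P_n(-\la_n/a_n)\,\Delta''_n \;=\; (-\la_n/a_n)^n\,\Delta''_{n-1},
\]
from which \eqref{eq:Delta''} follows by induction with base case $\Delta''_0=\nu_{0,0}=1$.

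The argument for $\Delta'''_n$ is virtually identical, starting from \eqref{eq:Q=det2}. After multiplying its last row by $d_n(x)$, the $j$th entry becomes $d_n(x)/d_j(x)=\prod_{i=j+1}^{n}(a_ix+\la_i)$, which at $x=-\la_n/a_n$ vanishes for $j<n$ and equals $1$ for $j=n$. Cofactor expansion along the last row gives $P_n(-\la_n/a_n)\,\Delta'''_n=\Delta'''_{n-1}$, and \eqref{eq:Delta'''} then follows by induction with $\Delta'''_0=1$.

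Compared with the proof of the product formula for $\Delta'_n$ in Theorem~\ref{thm:firstDelta}, the main simplification is that no column operations using the recurrence \eqref{eq:nu_nm} for $\nu_{n,m}$ are needed: for $\Delta'_n$ every entry of the top rows shares the same second index $n$, so the displayed matrix had to be manipulated before the substitution could kill the bottom row, whereas in $\Delta''_n$ and $\Delta'''_n$ the second index already varies with the column, so multiplying by $d_n(x)$ alone isolates the surviving entry. For this reason I do not anticipate any real obstacle; the only minor caveat is that in the degenerate case where $\la_k=0$ for some $k\le n$ the determinant $\Delta''_n$ vanishes identically in $\la_n$, but since both sides of \eqref{eq:Delta''} are rational functions of $\la_1,\dots,\la_n$, the identity extends from the generic case by continuity.
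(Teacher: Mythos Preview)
Your proposal is correct and follows essentially the same approach as the paper, which evaluates the Cramer's rule determinant identity for $Q_n(x)$ at $x=-\lambda_n/a_n$ to obtain a one-step recursion; your observation that multiplying the last row by $d_n(x)$ makes the column operations unnecessary here (unlike in the proof of Theorem~\ref{thm:firstDelta}) is accurate and a pleasant simplification, but the core idea is the same.
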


\begin{remark}
  A referee suggested using elementary column operations for
  the proof of Theorem~\ref{thm:firstDelta}, also a proof of 
  Theorem~\ref{thm:3Deltas}. Our original proofs were more
  complicated, and we thank this referee.
\end{remark}

\begin{remark}
  We can take special cases of Theorem~\ref{thm:PQ=det} to recover the classical
  results on orthogonal polynomials and Laurent biorthogonal polynomials.
  
If $a_k=0$ for all $k\ge0$, then $d_k(x) = \lambda_1\dots\lambda_k$, 
$\nu_{i,j}=\mu_i/\lambda_1\dots\lambda_j$, and therefore
\begin{equation}
  \label{eq:3Deltas_a=0}
  \Delta'_n=\frac{\Delta_n}{(\lambda_1\dots\lambda_n)^{n+1}},\qquad 
\Delta''_n=\frac{\Delta_n}{\lambda_1^{n}\lambda_2^{n-1}\dots\lambda_n^1}, \qquad
\Delta'''_n=0.
\end{equation}
In this case $P_n(x)$ become the usual orthogonal polynomials and
\eqref{eq:QQ=det} reduces to  Theorem~\ref{thm:P=det/det}.

If $\lambda_k=0$ for all $k\ge0$, then $d_k(x) = a_1\dots a_k x^k$, 
$\nu_{i,j}=\LL(x^{i-j})/a_1\dots a_j$, and
\begin{equation}
  \label{eq:3Deltas_la=0}
  \Delta'_n=\frac{\det(\LL(x^{i+j-n}))_{0\le i,j\le n}}{(a_1\dots a_n)^{n+1}},\qquad 
\Delta''_n=0, \qquad
\Delta'''_n=\frac{\det(\LL(x^{i-j}))_{0\le i,j\le n}}{a_1^n a_2^{n-1} \dots a_n^1}.
\end{equation}
In this case the polynomials $P_n(x)$ become Laurent biorthogonal polynomials
and \eqref{eq:QQ=det} reduces to
\begin{equation}
  \label{eq:11}
P_n(x) =\frac{a_1\dots a_n}{\det(\LL(x^{i-j}))_{0\le i,j\le n}}\det
\begin{pmatrix}
(\LL(x^{i-j}))_{\substack{0\le i \le n-1\\ 0\le j \le n}}\\ (x^{n-j})_{0\le j\le n}
  \end{pmatrix},
\end{equation}
see \cite[(1.2)]{Zhedanov_1998} and \cite[(2.8)]{Kamioka2014}.
\end{remark}

\begin{remark}
  Zhedanov \cite[Proposition~3]{Zhedanov1999} showed that type \( R_I \) is
  obtained from type \( R_{II} \) as a special case. Spiridonov and Zhedanov
  \cite[(1.3)]{Spiridonov2003} have a determinant for type \( R_{II} \)
  orthogonal polynomials. We believe that a special case of this gives our
  determinant \eqref{eq:Q=det2}, but we have not done any details.
  Maeda and Tsujimoto \cite[(22)]{Maeda2013} also found a determinant expression
  for type \( R_{II} \) polynomials.
\end{remark}

We now state a variation of Theorems~\ref{thm:firstDelta} and \ref{thm:3Deltas}
without proof. Let
\begin{align*}
  \Delta'_{n,s} &= \det\left( \nu_{s+i+j,s+n}\right)_{i,j=0}^{n},\\  
  \Delta''_{n,s} &= \det\left( \nu_{s+i+j,s+j}\right)_{i,j=0}^{n},\\
  \Delta'''_{n,s} &= \det\left( \nu_{i,s+j}\right)_{i,j=0}^{n}.
\end{align*}
Then $\Delta'_n = \Delta'_{n,0}$, $\Delta''_n = \Delta''_{n,0}$, and
$\Delta'''_n = \Delta'''_{n,0}$. 

\begin{thm}
  We have
  \begin{align}
  \label{eq:Delta'n-1,1}
\Delta'_{n-1,1}  &= \frac{(-1)^{\binom{n}2}P_n(0)}{\prod_{k=1}^{n} a_k^k P_k(-\lambda_k/a_k)},\\
  \label{eq:Delta''n-1,1}
\Delta''_{n-1,1} & = \frac{(-1)^{\binom{n}2}\lambda_1^1 \lambda_2^2\dots \lambda_n^{n}P_n(0)}
{\prod_{k=1}^{n} a_k^k P_k(-\lambda_k/a_k)},\\
  \label{eq:Delta'''n-1,1}
\Delta'''_{n-1,1}&  = \frac{(-1)^{n}}
{\prod_{k=1}^{n} a_k P_k(-\lambda_k/a_k)}.
  \end{align}
\end{thm}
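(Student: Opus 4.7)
The plan is to extract the three identities from the Cramer's rule formulas for $P_n(x)$ and $Q_n(x)$ established in Theorems~\ref{thm:firstDelta} and \ref{thm:PQ=det}, by evaluating those formulas at carefully chosen points of $x$. In each of the three formulas the top $n$ rows form the defining matrix of $\Delta'_{n-1,1}$ (respectively $\Delta''_{n-1,1}$, $\Delta'''_{n-1,1}$) after deletion of a single column, while the bottom row depends on $x$. The strategy is to choose $x$ so that the bottom row has the form $(1,0,\dots,0)$; cofactor expansion along that row then exhibits the desired shifted minor directly.

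For \eqref{eq:Delta'n-1,1}, I would set $x=0$ in the formula of Theorem~\ref{thm:firstDelta}. The bottom row $(x^j)_{0\le j\le n}$ collapses to $(1,0,\dots,0)$, and cofactor expansion yields
\[
P_n(0)\,\Delta'_n \;=\; (-1)^n\,\det(\nu_{i+j+1,n})_{0\le i,j\le n-1} \;=\; (-1)^n\,\Delta'_{n-1,1}.
\]
Substituting the closed form of $\Delta'_n$ from Theorem~\ref{thm:firstDelta} and simplifying the sign via the congruence $n+\binom{n+1}{2}\equiv\binom{n}{2}\pmod{2}$ gives the stated identity.

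For \eqref{eq:Delta''n-1,1}, I would set $x=0$ in \eqref{eq:QQ=det}. Since $x^j/d_j(x)$ equals $1$ at $x=0$ when $j=0$ and vanishes for $j\ge 1$, the bottom row again collapses to $(1,0,\dots,0)$. Cofactor expansion gives $Q_n(0)\,\Delta''_n=(-1)^n\,\Delta''_{n-1,1}$; substituting $Q_n(0)=P_n(0)/d_n(0)=P_n(0)/(\lambda_1\cdots\lambda_n)$ together with the closed form of $\Delta''_n$ from Theorem~\ref{thm:3Deltas} yields the formula. If any $\lambda_k$ vanishes, one may first clear denominators by multiplying the bottom row of the Cramer matrix by $d_n(x)$ before setting $x=0$, so the identity remains valid as a polynomial identity in the recurrence parameters.

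For \eqref{eq:Delta'''n-1,1}, I would take $x\to\infty$ in \eqref{eq:Q=det2}. The bottom row $(1/d_j(x))_{0\le j\le n}$ tends to $(1,0,\dots,0)$, and $Q_n(x)\to 1/(a_1\cdots a_n)$ because $P_n$ is monic of degree $n$ while $d_n$ has leading coefficient $a_1\cdots a_n$. The same cofactor expansion yields $\Delta'''_n/(a_1\cdots a_n)=(-1)^n\,\Delta'''_{n-1,1}$, and substituting the closed form of $\Delta'''_n$ from Theorem~\ref{thm:3Deltas} completes the proof. The main obstacle throughout is sign bookkeeping: in each case one must carefully combine the $(-1)^n$ cofactor sign with the sign $(-1)^{\binom{n+1}{2}}$ hidden in the closed forms of $\Delta'_n$ and $\Delta''_n$. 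Beyond this bookkeeping, no ingredient outside Theorems~\ref{thm:firstDelta}, \ref{thm:PQ=det}, and \ref{thm:3Deltas} is needed.
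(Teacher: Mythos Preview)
The paper explicitly states this theorem ``without proof,'' so there is nothing to compare against; your approach is the natural one and is exactly in the spirit of the paper's own proof of Theorem~\ref{thm:firstDelta} (specialize $x$, reduce the bottom row to $(1,0,\dots,0)$, and read off the shifted minor by cofactor expansion). The arguments for \eqref{eq:Delta'n-1,1} and \eqref{eq:Delta'''n-1,1} go through cleanly and give the stated formulas after the sign computation $n+\binom{n+1}{2}\equiv\binom{n}{2}\pmod 2$ that you indicate.

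One point deserves care in \eqref{eq:Delta''n-1,1}. Carrying out your substitution literally gives
\[
\Delta''_{n-1,1}
=(-1)^n\,Q_n(0)\,\Delta''_n
=(-1)^n\,\frac{P_n(0)}{\lambda_1\cdots\lambda_n}\cdot
\prod_{k=1}^n\frac{\lambda_k^{\,k}}{(-a_k)^k P_k(-\lambda_k/a_k)}
=\frac{(-1)^{\binom{n}{2}}P_n(0)\prod_{k=1}^n\lambda_k^{\,k-1}}
{\prod_{k=1}^n a_k^{\,k}P_k(-\lambda_k/a_k)},
\]
i.e.\ the exponent on $\lambda_k$ is $k-1$, not $k$ as printed. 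A direct check at $n=1$ confirms this: $\Delta''_{0,1}=\nu_{1,1}=b_0/(\lambda_1+a_1b_0)$, which matches the above but not the printed formula (and also equals $\Delta'_{0,1}$, as it must, since both are the single entry $\nu_{1,1}$). So your method is correct and in fact exposes what appears to be a typographical slip in the stated identity; you should record the exponent as $k-1$ rather than assert that the computation ``yields the formula'' as printed.
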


\begin{remark}
If $\lambda_k=0$ for all $k\ge1$, then $P_n(0)=(-1)^nb_0b_1\dots b_{n-1}$ and $d_j(x)=a_1\dots a_j$.
In this case \eqref{eq:Delta'''n-1,1} reduce to 
\begin{align}
  \label{eq:Delta2*}
\det(\LL(x^{i-j-1}))_{0\le i,j\le n-1} &= \frac{(-1)^{\binom {n}2}}{a_1\dots a_n} \prod_{k=1}^{n} \left(\frac{a_k}{b_{k-1}}\right) ^{n+1-k},
\end{align}
which is equivalent to Kamioka's result \cite[2.14b]{Kamioka2014}.
\end{remark}
\subsection{Hankel determinants for constant values of \( b_n,\lambda_n\) and \( a_n \)}
\aftersubsection

Recall from \eqref{eq:Delta_n} that $\Delta_n$ factors in the classical case
($a_k=0$). In this subsection we will show in Theorem~\ref{thm:hankel} that
$\Delta_n$ always factors for type \( R_I \) if the three-term recurrence coefficients are all
constants. 

We need the following well-known lemma.

\begin{lem}\label{lem:hankel_change1}
  Let $p_k(x)$ and $q_k(x)$ be monic polynomials of degree $k$ for $0\le k\le n$.  Let
  $\LL$ be any linear functional defined on polynomials. Then we have
\[
    \det(\LL(x^{i+j}))_{i,j=0}^n = \det(\LL(p_i(x) q_j(x)))_{i,j=0}^n.
\]
\end{lem}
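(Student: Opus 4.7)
The plan is to express each monic polynomial in terms of the monomial basis and rewrite the right-hand determinant as a matrix product, whose outer factors have determinant one.

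Concretely, since $p_i(x)$ is monic of degree $i$, I can write
\[
p_i(x) = \sum_{k=0}^{i} c_{i,k}\, x^k, \qquad c_{i,i}=1,
\]
and similarly $q_j(x) = \sum_{\ell=0}^{j} d_{j,\ell}\, x^\ell$ with $d_{j,j}=1$. Setting $c_{i,k}=0$ for $k>i$ and $d_{j,\ell}=0$ for $\ell>j$, the $(n+1)\times(n+1)$ matrices $C=(c_{i,k})$ and $D=(d_{j,\ell})$ are lower triangular with $1$'s on the diagonal, hence $\det C = \det D = 1$.

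Applying the linearity of $\LL$ to $p_i(x)q_j(x)$ gives
\[
\LL(p_i(x) q_j(x)) = \sum_{k=0}^{n}\sum_{\ell=0}^{n} c_{i,k}\, \LL(x^{k+\ell})\, d_{j,\ell},
\]
which in matrix form reads $\bigl(\LL(p_i q_j)\bigr)_{i,j=0}^{n} = C \cdot \bigl(\LL(x^{k+\ell})\bigr)_{k,\ell=0}^{n} \cdot D^{T}$. Taking determinants and using $\det C = \det D^T = 1$ yields the claimed equality.

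There is no real obstacle here; the only thing to be careful about is keeping track of the index conventions so that the product $C M D^T$ comes out with the correct indices, and noting that monicity is exactly what makes the triangular matrices $C$ and $D$ have determinant $1$ (any nonzero leading coefficients $\mathrm{lead}(p_i)$ and $\mathrm{lead}(q_j)$ would otherwise contribute a factor $\prod_i \mathrm{lead}(p_i)\mathrm{lead}(q_i)$ on the right-hand side).
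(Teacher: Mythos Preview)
Your proof is correct and is essentially identical to the paper's own argument: both write the coefficient matrices of the $p_i$ and $q_j$ as lower unitriangular matrices and factor $(\LL(p_i q_j))$ as $C\,M\,D^{T}$ (the paper writes this as $PMQ$ with $Q$ already transposed), then take determinants. Your closing remark about what happens for non-monic polynomials is a nice extra observation.
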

\begin{proof}
  Let $p_k(x)=\sum_{i=0}^kp_{k,i} x^i$ and $q_k(x)=\sum_{i=0}^kq_{k,i} x^i$.  Then $P=(p_{i,j})_{i,j=0}^n$
  is a lower unitriangular matrix and $Q=(q_{j,i})_{j,i=0}^n$ is an upper unitriangular matrix. Thus,
  letting $M=(\LL(x^{i+j}))_{i,j=0}^n$, we have
  \[
    \det(\LL(x^{i+j}))_{i,j=0}^n  = \det(PMQ)_{i,j=0}^n.
  \]
Since  the $(k,s)$-entry of $PMQ$ is
 \[
   \sum_{i=0}^n\sum_{j=0}^n \LL(p_{ki}x^i q_{sj}x^j)=\LL(p_k(x)q_s(x)),
 \]
we are done.
\end{proof}

We first show that $\Delta_n$ factors if $a_k$ and $b_k$ are constants and
$\lambda_k=0$.

\begin{lem}\label{lem:xin}
  If $a_{k}=1$, $b_k=t$, and $\lambda_{k}=0$ for all $k\ge0$, then
\begin{equation}
  \label{eq:hankel la_k=0'}  
  \Delta_n = (1+t)^{\binom {n+1}2}.
\end{equation}

  If $a_{k}=A$, $b_k=B$, and $\lambda_{k}=0$ for all $k\ge0$, then
\begin{equation}
  \label{eq:hankel la_k=0}  
  \Delta_n = (A^2+AB)^{\binom {n+1}2}.
\end{equation}
\end{lem}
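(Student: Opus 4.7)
The plan is to prove \eqref{eq:hankel la_k=0'} first, by transferring the Hankel computation to a classical orthogonal polynomial system, and then to obtain \eqref{eq:hankel la_k=0} from it by a rescaling argument. For the rescaling, I substitute $x = Ay$ in the recurrence for the $(A,B,0)$ system and set $\tilde P_n(y) := A^{-n} P_n(Ay)$; a direct calculation shows that $\tilde P_n$ satisfies the $(1,B/A,0)$ recurrence, and a short check of the defining orthogonality relations gives $\mu_n^{(A,B)} = A^n \mu_n^{(1,B/A)}$. Pulling $A^i$ out of row $i$ and $A^j$ out of column $j$ of the Hankel matrix then yields
\[
\Delta_n = A^{n(n+1)} (1 + B/A)^{\binom{n+1}{2}} = A^{\binom{n+1}{2}} (A+B)^{\binom{n+1}{2}} = (A^2 + AB)^{\binom{n+1}{2}}.
\]

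For \eqref{eq:hankel la_k=0'}, by Corollary~\ref{cor:R1cfrac} the moment generating function $F(x) = \sum_n \mu_n x^n$ satisfies $F = 1 + tx F + x F^2$. The substitution $F = 1 + (1+t) x H$ transforms this into $H = 1 + (t+2) x H + (t+1) x^2 H^2$, which is the moment generating function of a classical orthogonal polynomial (Motzkin) system with constant recurrence data $b_n = t+2$ and $\lambda_n = t+1$. Viennot's classical Hankel determinant formula for orthogonal polynomials then gives $\det(\mu^H_{i+j})_{i,j=0}^n = (t+1)^{\binom{n+1}{2}}$, so it would suffice to identify $\det(\mu_{i+j})_{i,j=0}^n = \det(\mu^H_{i+j})_{i,j=0}^n$.

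From the substitution one reads off $\mu_0 = 1$ and $\mu_n = (1+t)\mu^H_{n-1}$ for $n \ge 1$. I would perform the elementary column operations $C_j \to C_j - (1+t)\mu^H_{j-1}\, C_0$ for $j = 1,\ldots,n$, which zero out every entry in row $0$ except the $(0,0)$ entry (which remains equal to $1$). Cofactor expansion along row $0$ and factoring one copy of $(1+t)$ out of each of the remaining $n$ rows then leaves
\[
\det(\mu_{i+j})_{i,j=0}^n = (1+t)^n \det\bigl(\mu^H_{i+j+1} - (1+t)\mu^H_i\mu^H_j\bigr)_{i,j=0}^{n-1}.
\]

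The main obstacle is to establish that this last determinant equals $(1+t)^{\binom{n}{2}}$. This identity exploits the special relation $b = 1+\lambda$ between the Motzkin recurrence coefficients forced by our substitution, and I plan to prove it by induction on $n$: the base case $n=1$ reduces to $b - \lambda = 1$, and for the inductive step I would apply another round of analogous column operations, using the Motzkin three-term recurrence to verify that the reduced matrix retains a suitable Hankel-like structure so that the induction can be iterated until the whole factor $(1+t)^{\binom{n}{2}} = (1+t)^{(n-1)+(n-2)+\cdots+0}$ has been extracted.
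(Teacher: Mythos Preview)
Your rescaling argument for \eqref{eq:hankel la_k=0} is correct and is essentially what the paper does (homogeneity of the moments).

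For \eqref{eq:hankel la_k=0'} the paper takes a completely different route: it invokes the Lindstr\"om--Gessel--Viennot lemma to identify $\Delta_n$ with a count of non-intersecting Schr\"oder paths and cites Sulanke--Xin for the evaluation. Your continued-fraction approach is an interesting alternative, but as written it has a genuine gap. After your column operations and cofactor expansion you are left with
\[
\det\bigl(\mu^H_{i+j+1}-(1+t)\mu^H_i\mu^H_j\bigr)_{i,j=0}^{n-1},
\]
and this matrix is \emph{not} Hankel: for instance $\mu^H_3-(1+t)(\mu^H_1)^2\neq \mu^H_3-(1+t)\mu^H_0\mu^H_2$ since $(\mu^H_1)^2=(t+2)^2\neq t^2+5t+5=\mu^H_2$. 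So ``another round of analogous column operations'' will not reproduce the same structure, and the induction you sketch does not go through as stated.

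There is, however, a one-line fix that avoids this detour entirely. From $F=1+txF+xF^2$ you have $1/F=1-tx-xF$; substituting your own relation $F=1+(1+t)xH$ gives
\[
\frac{1}{F}=1-(1+t)x-(1+t)x^2H.
\]
Since you have already shown that $H$ has the J-fraction with constant data $b_n=t+2$, $\lambda_n=t+1$, this equation exhibits $F$ \emph{itself} as a J-fraction with $B_0=1+t$, $B_n=t+2$ for $n\ge1$, and $\Lambda_n=1+t$ for all $n\ge1$. Now the classical Hankel formula \eqref{eq:Delta_n} applies directly to the $\mu$-sequence and gives $\Delta_n=\prod_{k=1}^n(1+t)^{n+1-k}=(1+t)^{\binom{n+1}{2}}$, with no need to compare two different Hankel determinants.
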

\begin{proof}
  Using the Lindstr\"om--Gessel--Viennot lemma \cite{Lindstrom, LGV}, one
  obtains that the first identity \eqref{eq:hankel la_k=0'} is equivalent to the
  result of Sulanke and Xin \cite[Lemma~2.2]{Sulanke_2008} on non-intersecting
  Schr\"oder paths where each horizontal step has weight $t$. The second
  identity \eqref{eq:hankel la_k=0} then follows from the first using the fact
  that $\mu_{i}$ is a polynomial in $A$ and $B$ of degree $i$.
\end{proof}

Now we show that $\Delta_n$ factors if $b_k,a_k$, and $\lambda_k$ are constants.

\begin{thm}\label{thm:hankel}
If $a_{k}=A$, $b_k = B$, $\lambda_{k}=C$, for all $k\ge0$, then
  \[
    \Delta_n = (A^2+AB+C)^{\binom {n+1}2}.
  \]
\end{thm}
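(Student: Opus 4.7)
The plan is to convert the self-similar continued fraction for $f(x) := \sum_{n\ge 0} \mu_n x^n$ given by Corollary~\ref{cor:R1cfrac} into a classical Jacobi $J$-fraction whose ``$\lambda$''-coefficients are all equal to $A^2 + AB + C$, and then invoke the classical Hankel determinant identity \eqref{eq:Delta_n}. First I would specialize Corollary~\ref{cor:R1cfrac} to $a_k = A$, $b_k = B$, $\lambda_k = C$. The self-similarity forces $f$ to satisfy the quadratic $(Ax + Cx^2)\,f^2 + (Bx - 1)\,f + 1 = 0$, yielding the explicit form $f(x) = [1 - Bx - \sqrt{\delta(x)}]/[2(Ax + Cx^2)]$ with $\delta(x) := (1 - Bx)^2 - 4(Ax + Cx^2)$.

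Next I would extract the Jacobi $J$-fraction of $f$ iteratively. Rationalizing gives $1/f = [(1 - Bx) + \sqrt{\delta(x)}]/2$. Writing $f = 1/(1 - c_0 x - d_1 x^2 f_1)$ and matching the leading terms (using the Taylor expansion $\sqrt{\delta(x)} = 1 - (B + 2A) x - 2(A^2 + AB + C) x^2 + O(x^3)$) yields $c_0 = A + B$ and forces $d_1 = A^2 + AB + C$, with remainder
\[
  f_1(x) = \frac{1 - (2A + B) x - \sqrt{\delta(x)}}{2(A^2 + AB + C) x^2}.
\]

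The heart of the argument is the self-reproducing nature of $f_1$: performing the same extraction on $f_1$ produces $c_1 = 2A + B$, $d_2 = A^2 + AB + C$, and $f_2 = f_1$. This reduces to the short algebraic identity
\[
  (1 - (2A + B) x)^2 - \delta(x) = 4(A^2 + AB + C) x^2,
\]
equivalently $(2A + B)^2 - (B^2 - 4C) = 4(A^2 + AB + C)$. By induction, the $J$-fraction of $f$ has all $d_n$ (for $n \ge 1$) equal to $A^2 + AB + C$. The $J$-fraction then represents $f$ as the moment generating function of a classical monic orthogonal polynomial sequence whose three-term recurrence coefficient is the constant $\lambda_n = A^2 + AB + C$, and the classical identity \eqref{eq:Delta_n} gives
\[
  \Delta_n = \prod_{k=1}^n (A^2 + AB + C)^{n+1-k} = (A^2 + AB + C)^{\binom{n+1}{2}}.
\]

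The main obstacle is the self-consistency verification for $f_1$: one must check that the single extraction step, applied to the remainder, yields the same remainder up to a constant shift of $c_n$. Once the short discriminant identity above is in hand, this is immediate, and the theorem follows from the classical theory of Jacobi continued fractions.
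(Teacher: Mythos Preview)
Your proof is correct, and it is essentially the alternative argument the paper itself sketches in the Proposition immediately following Theorem~\ref{thm:hankel}: there the authors observe (with only a sketch of proof, by ``explicitly solving'' the quadratic) that the $R_I$ moments coincide with the moments of a classical orthogonal polynomial family with constant $\Lambda_n=A^2+AB+C$, whence Theorem~\ref{thm:hankel} follows from \eqref{eq:Delta_n}. You have supplied exactly the details of that sketch, including the discriminant identity $(1-(2A+B)x)^2-\delta(x)=4(A^2+AB+C)x^2$ that makes the $J$-fraction self-reproduce.

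The paper's \emph{primary} proof of Theorem~\ref{thm:hankel} is genuinely different. Rather than extracting a $J$-fraction, it performs the affine substitution $y=x+C/A$, which converts the $R_I$ recurrence with $(A,B,C)$ into an $R_I$ recurrence with $(A,\,B+C/A,\,0)$ sharing the same linear functional $\LL$; Lemma~\ref{lem:hankel_change1} then lets them replace $\det(\LL(x^{i+j}))$ by $\det(\LL(y^{i+j}))$, and the $\lambda_k=0$ case is handled by Lemma~\ref{lem:xin} via the Sulanke--Xin result on nonintersecting Schr\"oder paths. Your route has the advantage of being entirely self-contained (no appeal to \cite{Sulanke_2008}) and works uniformly without first isolating the $C=0$ case; the paper's route, by contrast, makes the reduction to Laurent biorthogonal polynomials explicit and connects to the nonintersecting-path picture mentioned in the closing Remark of \S\ref{sec:hank-determ-type}. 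One small point worth stating in your write-up: the $J$-fraction extraction and \eqref{eq:Delta_n} require $A^2+AB+C\neq 0$, but since both sides of the claimed identity are polynomials in $A,B,C$, the degenerate case follows by continuity.
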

\begin{proof}
Let $y= x + C/A$ and $\widetilde{P}_n(y)=P_n(x)=P_n(y-C/A)$. Since
\[
P_{n+1}(x) = (x-B) P_n(x) - (Ax+C) P_{n-1}(x),
\]
we have
\[
\widetilde{P}_{n+1}(y) = (y-C/A-B) \widetilde{P}_n(y) -Ay \widetilde{P}_{n-1}(y).
\]
Moreover, since $d_n(x) = (Ax+C)^n$ and $\widetilde d_n(y) = A^ny^n$, we have $\widetilde
d_n(y)=d_n(x)$ and $\widetilde{Q}_n(y)= Q_n(x)$. Thus the orthogonality
\[
\LL(x^j Q_n(x)) = 0,  \qquad \mbox{if $0\le j<n$},
\]
implies that
\[
\LL(y^j \widetilde{Q}_n(y)) = 0,  \qquad \mbox{if $0\le j<n$}.
\]
Therefore $\widetilde{P}_n(y)$ are type $R_I$ orthogonal polynomials with the same linear functional $\LL$.

Now the original Hankel determinant is
\[
\det \left( \mu_{i+j}\right)_{i,j=0}^n= 
\det\left( \LL(x^{i+j}) \right)_{i,j=0}^n = \det\left( \LL(\left( y-C/A\right)^{i+j}) \right)_{i,j=0}^n.
\]
By Lemma~\eqref{lem:hankel_change1} with $p_k(y)=q_k(y)=(y-C/A)^k$, we have
\[
  \det\left( \LL(\left( y-C/A\right)^{i+j}) \right)_{i,j=0}^n = 
  \det\left( \LL(y^{i+j}) \right)_{i,j=0}^n.
\]
By \eqref{eq:hankel la_k=0}, we have
\[
  \det\left( \LL(y^{i+j}) \right)_{i,j=0}^n=(A^2+A(C/A+B))^{\binom{n+1}2},
\]
which finishes the proof.
\end{proof}

The following proposition provides another proof of Theorem~\ref{thm:hankel} via
\eqref{eq:Delta_n} because the $\lambda_n=A^2+AB+C$ are constant.

\begin{prop} Let $\mu_n$ be the moments for the type $R_I$ polynomials with 
$a_{k}=A$, $b_k=B$, and $\lambda_{k}=C$ for all \( k\ge0 \). Then $\mu_n$ are also the 
moments for classical orthogonal 
polynomials defined by  \( p_{-1}(x)=0 \), \( p_0(x)=1 \), and 
\[
  p_{n+1}(x) = (x-B_n)p_n(x) - \Lambda_np_{n-1}(x), \quad n\ge0,
\]
where
$$ 
B_0=A+B, \quad B_n=2A+B, \quad\Lambda_n= A^2+AB+C, \quad n\ge 1.
$$ 
\end{prop}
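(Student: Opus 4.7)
The plan is to apply Corollary~\ref{cor:R1cfrac} to both polynomial systems and then match the two generating functions by a simple algebraic substitution at the level of their self-similar continued fractions.

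Applying Corollary~\ref{cor:R1cfrac} with $b_n=B$, $a_n=A$, $\lambda_n=C$ for all $n\ge 0$ gives
\[
F(x):=\sum_{n\ge 0}\mu_n x^n = \cfrac{1}{1-Bx - \cfrac{Ax+Cx^2}{1-Bx - \cfrac{Ax+Cx^2}{\ddots}}}.
\]
Since the continued fraction is self-similar from level $0$, the auxiliary series $G(x):=(Ax+Cx^2)F(x)$ satisfies $F=1/(1-Bx-G)$ and the quadratic equation
\[
G^{2} + (Bx-1)G + Ax + Cx^{2} = 0.
\]
Next, since the classical recurrence for $p_n(x)$ is the special case $a_n=0$ of \eqref{eq:favard1}, Corollary~\ref{cor:R1cfrac} also applies to it and yields
\[
\tilde F(x):=\sum_{n\ge 0}\tilde\mu_n x^n = \cfrac{1}{1-(A+B)x - \cfrac{\Lambda x^{2}}{1-(2A+B)x - \cfrac{\Lambda x^{2}}{\ddots}}},
\]
where $\Lambda:=A^{2}+AB+C$. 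This continued fraction is self-similar from level $1$ onward, so $\tilde G(x):= \Lambda x^{2}/(1-(2A+B)x-\cdots)$ satisfies $\tilde F = 1/(1-(A+B)x-\tilde G)$ and
\[
\tilde G^{2} + \bigl((2A+B)x-1\bigr)\tilde G + \Lambda x^{2} = 0.
\]

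The key step is the substitution $G=\tilde G+Ax$: inserting this into the first quadratic and expanding simplifies directly to the second quadratic. Since each of the two quadratics has a unique formal power series solution vanishing at $x=0$ (the Taylor coefficients can be solved for recursively), this forces $G(x)=\tilde G(x)+Ax$, from which
\[
F = \frac{1}{1-Bx-G} = \frac{1}{1-Bx-(Ax+\tilde G)} = \frac{1}{1-(A+B)x-\tilde G} = \tilde F,
\]
so $\mu_n=\tilde\mu_n$ for every $n\ge 0$.

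The only non-routine ingredient is spotting the substitution $G=\tilde G+Ax$; it is suggested by comparing the constant-in-$G$ terms $Ax+Cx^{2}$ and $\Lambda x^{2}$ of the two quadratics, since shifting $G$ by $Ax$ is exactly what is needed to absorb the linear-in-$x$ weight on the ``down'' branch of the Motzkin--Schr\"oder continued fraction and convert it into the classical J-fraction. Everything else is a direct application of Corollary~\ref{cor:R1cfrac} together with the unwinding of the two self-similar continued fractions.
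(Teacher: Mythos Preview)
Your proof is correct and follows essentially the same strategy as the paper: both arguments exploit the self-similarity of the constant-coefficient continued fractions to reduce each moment generating function to a quadratic equation. The paper then solves both quadratics explicitly via the quadratic formula and checks that each equals
\[
\frac{1-Bx-\sqrt{1-4Ax-2Bx+B^2x^2-4Cx^2}}{2x(A+Cx)},
\]
whereas you bypass the closed form entirely by observing that the affine shift $G=\tilde G+Ax$ carries one quadratic to the other. Your route is a bit cleaner and avoids any branch-choice discussion for the square root, while the paper's route has the side benefit of producing the explicit generating function (which is what recovers Proposition~\ref{prop: CF} at $A=B=C=1$). Either way the key content is identical.
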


\begin{proof}[Sketch of Proof] Because the type $R_I$ coefficients are constant,
  the type $R_I$ continued fraction for the moment generating function in
  Corollary~\ref{cor:R1cfrac} satisfies a quadratic equation and may be solved.
  This also occurs in the classical case except for the $B_0$ term. Dealing with
  this term, and explicitly solving, shows that each moment generating function
  is
$$
\frac{1-Bx-\sqrt{1-4Ax-2Bx+B^2x^2-4Cx^2}}{2x(A+Cx)}.
$$ 
Note that Proposition~\ref{prop: CF} is the $A=B=C=1$ special case. 
\end{proof} 

\begin{remark}
  Note that in Theorem~\ref{thm:hankel}, the entries in the determinant of
  $\Delta_n = \det\left( \mu_{i+j}\right)_{i,j=0}^{n}$ are positive polynomials
  in $A,B$, and $C$. It would be interesting to prove this theorem using
  nonintersecting lattice paths such as the proof of the Aztec diamond theorem
  due to Eu and Fu \cite{Eu2005}. Kamioka \cite[Theorem~1.1]{Kamioka2014} found
  a $q$-version of the Aztec diamond theorem using moments of orthogonal Laurent
  polynomials when \( a_n=q^{2n-1} \) and \( b_n=t q^{2n} \). Brualdi and
  Kirkland \cite{Brualdi2005} evaluated the Hankel determinant of Schr\"oder
  numbers using J-fractions.
\end{remark}

\section{Explicit type $R_I$ polynomials}
\label{sec:explicit-type-r_i}

In this section we explain the methods for finding the explicit type $R_I$
polynomials later in Sections~\ref{glue} and \ref{powers}.

We need to define a linear functional $\LL$ on $V$. 
Recall that $V$ is the vector space whose basis is
$$
\{x^m: m\ge 0\} \cup \{1/d_n(x): n\ge 1\}.
$$
Note that we could replace the monomial $x^m$ by any set of polynomials, 
one for each degree.

In Section~\ref{glue} we start by taking a classical $\LL$ on the polynomial
part of $V$. We extend $\LL$ to the larger space $V$ by explicitly defining
$\LL(1/d_n(x)).$ Good choices of the denominator polynomials $d_n(x),$ found by
``gluing" onto a weight function representing $\LL$, allow the extension to be
explicitly defined by shifting parameters in the linear functional $\LL$. The
explicit type $R_I$ polynomials will be obtained by also shifting parameters.

As an example of this phenomenon, take the classical Jacobi polynomials on $[-1,1]$,
$$
\LL_{a,b}(f)=\frac{\Gamma(a+b+2)}{2^{a+b+1}\Gamma(a+1)\Gamma(b+1)}
\int_{-1}^1 w(x)f(x)dx, \quad w(x)=(1-x)^a(1+x)^b
$$ 
which includes the parameters $a$ and $b$. For the integral to converge for all
polynomials $f(x),$ one needs $a,b>-1.$ However one may extend these values to
all real numbers \( a,b \) with \( a+b \) not an integer by defining
$\LL_{a,b}$ on a basis
$$
\LL_{a,b}((1-x)^k)=2^k\frac{(a+1)_k}{(a+b+2)_k}, \quad k\ge 0.
$$
 
The choice of  $d_n(x)=(1+x)^n$ glues to $w(x)$ so that one defines
$$
\LL_{a,b}(1/d_n(x))= 2^{-n}\frac{(b-n+1)_n}{(a+b-n+2)_n}, \quad n\ge 1.
$$
This defines an extension for 
$\LL_{a,b}$ to $V$ by defining it on a basis, without referring to the integral. 
We shall see in Section~\ref{glue} that the type $R_I$ polynomials for this $\LL_{a,b}$ 
and $d_n(x)$ are the corresponding shifted Jacobi polynomials.

Since $\LL$ extended in this way always has the same moment sequence, we have 
equality of their moment generating functions, which are continued fractions. 
Each of the examples in Section~\ref{glue} satisfies this theorem.

\begin{thm} 
\label{weirdCFthm}
Suppose that $P_n(x)$ is a type $R_I$ orthogonal polynomial satisfying
\[
  P_{n+1}(x) = (x-b_n)P_n(x) -(a_n x+\lambda_n )P_{n-1}(x), \quad P_{-1}(x)=0, \quad P_0(x)=1,  
\]
whose linear functional extends that for an orthogonal polynomial
\[
  p_{n+1}(x) = (x-B_n)p_n(x) -\Lambda_n p_{n-1}(x),  \quad p_{-1}(x)=0, \quad p_0(x)=1.  
\]
Then  we have the formal power series equality between continued fractions
\begin{align*}
 \sum_{n\ge0} \mu_nx^n  &=
    \cfrac{1}{
      1-b_0x- \cfrac{a_1x+\lambda_1x^2}{
        1-b_1x- \cfrac{a_2x+\lambda_2x^2}{
          1-b_2x- \cdots}} }\\
            &=
 \cfrac{1}{
      1-B_0x- \cfrac{\Lambda_1x^2}{
        1-B_1x- \cfrac{\Lambda_2x^2}{
          1-B_2x-\cdots }}}.
 \end{align*}
\end{thm}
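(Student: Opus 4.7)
The plan is to observe that this is essentially a tautology once one unpacks what ``extends'' means. Both continued fractions are generating functions for the same moment sequence $\mu_n = \LL(x^n)$, so they must be equal as formal power series.

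First, I would note that since $x^n$ is a polynomial for each $n\ge 0$, the assumption that the type $R_I$ linear functional $\LL$ on $V$ extends the classical linear functional associated with $p_n(x)$ means $\mu_n = \LL(x^n)$ equals the $n$-th classical moment for the sequence $\{p_n(x)\}$. Thus there is just one moment sequence in play.

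Next, I would invoke Corollary~\ref{cor:R1cfrac} to obtain the first equality: the moment generating function $\sum_{n\ge 0}\mu_n x^n$ admits the displayed Motzkin--Schr\"oder continued fraction with numerators $a_n x + \lambda_n x^2$ and denominators $1 - b_n x$. For the second equality, I would apply the classical Flajolet theory (\cite{Flajolet1980}) for orthogonal polynomials satisfying $p_{n+1}(x) = (x-B_n)p_n(x) - \Lambda_n p_{n-1}(x)$, which expresses the moment generating function as the J-fraction with numerators $\Lambda_n x^2$ and denominators $1 - B_n x$. Alternatively, this is precisely the special case $a_n = 0$ of Corollary~\ref{cor:R1cfrac}, so one could cite just that single corollary twice, once for the type $R_I$ data $(b_n, a_n, \lambda_n)$ and once for the classical data $(B_n, 0, \Lambda_n)$.

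Since both continued fractions represent the formal power series $\sum_{n\ge 0}\mu_n x^n$, they are equal as formal power series, which is exactly the statement of the theorem. There is no real obstacle here; the only subtle point worth stating carefully in the proof is that the assumption ``extends the linear functional of $p_n(x)$'' identifies the two moment sequences, which is what makes the two distinct continued-fraction expansions produce the same series. One might also briefly remark that the equality is \emph{only} of formal power series (not necessarily of analytic functions), since no convergence hypothesis is made on the recurrence coefficients.
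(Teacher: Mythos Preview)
Your proposal is correct and follows exactly the paper's approach: the paper justifies this theorem in the paragraph preceding it by noting that since $\LL$ extends the classical functional the moment sequences coincide, and hence the two continued-fraction expansions of $\sum_{n\ge0}\mu_n x^n$ (from Corollary~\ref{cor:R1cfrac} and its $a_n=0$ specialization) agree as formal power series. There is nothing to add.
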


In Section~\ref{powers} our second method for explicit type $R_I$ polynomials chooses 
$d_n(x)=(1+ax)^n,$
an $n$th power, for some special orthogonal polynomials. 
This inserts $1+ax$ into the three-term recurrence \eqref{modevenodd}, 
which does change the linear functional $\LL$. 
Yet there is a relation between the type $R_I$ and classical 
moment generating functions, see Proposition~\ref{gencor} and Theorem~\ref{genthm}.

\section{Gluing}
\label{glue}

We shall show gluing works by considering the Jacobi polynomials
$P_n^{(a,b)}(x)$ on $[-1,1],$ whose weight function is $(1-x)^a(1+x)^b,$ and
$\LL_{a,b}$ defined as before. Then we use the same technique on classical
orthogonal polynomials in the subsections.

The linear functional on the vector space of polynomials is
\begin{equation}
\label{jacwt}
\LL_{a,b}(f(x))=\frac{\Gamma(a+b+2)}{2^{a+b+1}\Gamma(a+1)\Gamma(b+1)}
\int_{-1}^1(1-x)^a (1+x)^b f(x) dx,
\end{equation}
where
$$
\LL_{a,b}(1)=1, \quad \LL_{a,b}(P_n^{(a,b)}(x)P_m^{(a,b)}(x))=0, \quad m\neq n.
$$

As in Section~\ref{sec:explicit-type-r_i} we extend $\LL_{a,b}$ to the vector
space $V$ by defining $\LL_{a,b}$ on the basis elements $1/d_n(x)=1/(1+x)^n.$
Even though the integral may not exist, if $b<n,$ we may define the linear
functional by what the integral would give by shifting $b$ to $b-n$
\begin{equation}
\label{eq:L(1/d)}
\LL_{a,b}(1/d_n(x))=2^{-n}\frac{(b-n+1)_n}{(a+b+2-n)_n}.
\end{equation}

We assume that \( a+b+2 \) is not an integer so the denominator of
\eqref{eq:L(1/d)} is well defined. We tacitly make a similar assumption
in each of our examples.

\begin{prop}
We have for any polynomial $p(x)$
\begin{equation}
\label{masterthm}
\LL_{a,b}(p(x)/d_n(x))=2^{-n}\frac{(b-n+1)_n}{(a+b+2-n)_n}\LL_{a,b-n}(p(x)).
\end{equation}
\end{prop}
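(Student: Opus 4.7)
The plan is to use linearity to reduce to a convenient basis of polynomials, and then verify the identity on that basis directly. Since both sides are linear in $p(x)$, it suffices to prove the identity for $p(x)$ running over a spanning set of the polynomial ring. I will use the shifted basis $\{(1+x)^k : k\ge 0\}$, which is adapted to the denominator $d_n(x)=(1+x)^n$: for $p(x)=(1+x)^k$ the quotient $p(x)/d_n(x)=(1+x)^{k-n}$ falls neatly into one of the two halves of the spanning set $\{x^m:m\ge0\}\cup\{1/d_m(x):m\ge1\}$ of $V$, depending on the sign of $k-n$.

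When $k\ge n$, the quotient $(1+x)^{k-n}$ is a polynomial, so by a Beta-function calculation (substituting $u=(1+x)/2$) one has $\LL_{a,b}((1+x)^j)=2^j(b+1)_j/(a+b+2)_j$ for any integer $j\ge0$, and the left-hand side evaluates to $2^{k-n}(b+1)_{k-n}/(a+b+2)_{k-n}$. On the right-hand side, the same formula with $b$ replaced by $b-n$ gives $\LL_{a,b-n}((1+x)^k)=2^k(b-n+1)_k/(a+b-n+2)_k$. The desired equality then follows from the Pochhammer splittings
\[
(b-n+1)_k=(b-n+1)_n\,(b+1)_{k-n},\qquad (a+b+2-n)_k=(a+b+2-n)_n\,(a+b+2)_{k-n},
\]
which are valid for $k\ge n$ and reduce the match to an elementary cancellation.

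When $k<n$, the quotient $(1+x)^{k-n}=1/d_{n-k}(x)$ is a basis element of the rational part of $V$, so the left-hand side is read off from the defining formula $\LL_{a,b}(1/d_m(x))=2^{-m}(b-m+1)_m/(a+b+2-m)_m$ with $m=n-k$. The right-hand side evaluates as in the first case, and the identity again reduces to a manipulation of Pochhammer symbols.

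The main obstacle is pure bookkeeping: one must keep the Pochhammer shifts straight, and it is easy to invert a fraction by mistake. A cleaner conceptual check uses the observation that $(1-x)^a(1+x)^b\cdot p(x)/(1+x)^n=(1-x)^a(1+x)^{b-n}p(x)$ is precisely the integrand appearing in $\LL_{a,b-n}(p(x))$. Thus the proposition is essentially the ratio of two Jacobi-weight normalization constants applied to $\LL_{a,b-n}(p(x))$; computing this ratio and rewriting in Pochhammer notation using $\Gamma(a+b+2)/\Gamma(a+b+2-n)=(a+b+2-n)_n$ and $\Gamma(b+1)/\Gamma(b-n+1)=(b-n+1)_n$ recovers the factor in the statement. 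This ``weight gluing'' perspective is valid by analytic continuation in $b$ (so long as $a+b$ is non-integral) and handles both cases simultaneously without casework.
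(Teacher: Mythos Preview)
Your closing paragraph is essentially the paper's proof: when $a>-1$ and $b-n>-1$ the identity is just the integrand identity $(1-x)^a(1+x)^b\cdot(1+x)^{-n}=(1-x)^a(1+x)^{b-n}$ together with the ratio of the two normalizing constants, and one then extends to general $(a,b)$ because both sides are rational in the parameters. The paper makes the passage from the integral back to the abstractly defined $\LL_{a,b}$ explicit by first writing $p(x)/d_n(x)=t(x)+\sum_{i=1}^n c_i/d_i(x)$, so that every term lies in the defining basis of $V$.

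Your first approach, direct verification on the basis $\{(1+x)^k\}$, is a legitimate alternative, but you should actually carry out the algebra you announce. With your Pochhammer splittings the right-hand side for $k\ge n$ becomes
\[
2^{k-n}\left(\frac{(b-n+1)_n}{(a+b+2-n)_n}\right)^{2}\frac{(b+1)_{k-n}}{(a+b+2)_{k-n}},
\]
with an unwanted square of the prefactor, and your ratio of Gamma normalizations equals $2^{-n}(a+b+2-n)_n/(b-n+1)_n$, the \emph{reciprocal} of the displayed constant. The prefactor in the statement is in fact inverted (compare the companion definition of $\LL_{a,b}\big((1+x)^k/(1-x)^n\big)$ for $d_n(x)=(1-x)^n$ given just afterwards in the paper); with the fraction flipped, both of your arguments go through cleanly.
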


\begin{proof}
Write 
\begin{equation}
\label{hatethisproof}
\frac{p(x)}{d_n(x)}= t(x)+\sum_{i=1}^n \frac{c_i}{d_i(x)}
\end{equation}
for some polynomial $t(x)$ and some constants $c_i$. Assume that $b-n> -1$ and
$a> -1$. Then applying $\LL_{a,b}$ to \eqref{hatethisproof} may be done by
integration which is linear, so \eqref{masterthm} holds. Each term is a rational
function of $a$ and $b,$ thus \eqref{masterthm} is true in general.
 \end{proof}

Consider the type $R_I$ orthogonality for a fixed $n\ge 1$
$$
\LL_{a,b}\left(x^k \frac{P_n(x)}{(1+x)^n}\right)=0, \quad 0\le k\le n-1.
$$
This is by Proposition~\ref{hatethisproof} 
$$
\LL_{a,b-n}\left(x^k P_n(x)\right)=0, \quad 0\le k\le n-1.
$$
If $a>-1$ and $b-n>-1$, this is the usual orthogonal polynomial orthogonality, so 
$P_n(x)$ must be a multiple of the Jacobi polynomial $P_n^{(a,b-n)}(x).$
Note also that $P_n^{(a,b-n)}(-1)\neq 0$, which is required.

Once we know an explicit formula for the type $R_I$ polynomials, we can 
find their three term recurrence by considering the higher term coefficients.

To summarize, all we need to do is find the shift 
in parameters when $d_n(x)$ is glued onto a known weight $w(x).$ Then apply this 
shift to the parameters in the usual orthogonal polynomials.  
This is carried out on the next 7 subsections. The 
degeneracy condition $P_n(-\lambda_n/a_n)\neq 0$ holds because the 
polynomials may be explicitly evaluated at these values.

\subsection{Jacobi polynomials on $[-1,1]$}
\aftersubsection

Recall that the Jacobi polynomials are
\[
P_n^{(a,b)}(x)=\frac{(a+1)_n}{n!} \Hyper21{-n,n+a+b+1}{a+1}{\frac{1-x}{2}}.
\]
In Section~\ref{sec:explicit-type-r_i} we took $d_n(x)=(1+x)^n.$ Here we record
the case $d_n(x)=(1-x)^n$, which shifts $a$ to $a-n$. The values of the linear
functional are given by a beta function evaluation.

\begin{defn}
Let $\LL_{a,b}$ be the linear functional on $V$ such that
\begin{equation}
\LL_{a,b}\left(\frac{(1+x)^k}{(1-x)^n}\right)=2^{k-n} \frac{(b+1)_k}{(a-n+1)_n (a+b+2)_{k-n}}, \quad k,n\ge 0.
\end{equation}
\end{defn}

\begin{thm} Up to a constant $c_n,$ the type $R_I$ polynomials for 
$w(x)=(1-x)^a(1+x)^b$ on $[-1,1]$ and  $d_n(x)=(1-x)^n$
are shifted Jacobi polynomials
$$
p_n(x)= c_n P_n^{(a-n,b)}(x).
$$
\end{thm}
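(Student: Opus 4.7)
The plan is to mirror the argument that the excerpt gives in detail for the case $d_n(x)=(1+x)^n$, simply swapping the roles of $a$ and $b$. The key step is a gluing identity: for any polynomial $p(x)$,
\[
  \LL_{a,b}\!\left(\frac{p(x)}{(1-x)^n}\right)
  \;=\; \frac{(a+b-n+2)_n}{2^n(a-n+1)_n}\,\LL_{a-n,b}(p(x)).
\]
I would establish this in two steps. First, verify it for the basis polynomials $p(x)=(1+x)^k$, $k\ge 0$: the left-hand side is given explicitly by the definition of $\LL_{a,b}\bigl((1+x)^k/(1-x)^n\bigr)$, while the right-hand side is a beta integral computation for $\LL_{a-n,b}((1+x)^k)=2^k(b+1)_k/(a-n+b+2)_k$, and the prefactor that makes the two sides agree is independent of $k$. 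Second, extend by linearity, using the fact that $\{(1+x)^k:k\ge 0\}$ spans the polynomial ring and that both sides of the claimed identity are linear in $p$. (Alternatively, one can invoke the integral representation in the range $a-n>-1$, $b>-1$, and extend by rational-function continuation in $a,b$.)

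Once the gluing identity is in hand, the type $R_I$ orthogonality
\[
  \LL_{a,b}\!\left(x^k\,\frac{p_n(x)}{(1-x)^n}\right)=0,\qquad 0\le k\le n-1,
\]
translates, via the identity applied to $p(x)=x^k p_n(x)$, into
\[
  \LL_{a-n,b}\bigl(x^k p_n(x)\bigr)=0,\qquad 0\le k\le n-1,
\]
because the nonvanishing prefactor $(a+b-n+2)_n/(a-n+1)_n$ can be canceled (this is where the standing genericity assumption that $a+b+2$ is not an integer is used, so the denominators make sense). The last display is precisely the classical Jacobi orthogonality with shifted first parameter $a-n$. Since the space of degree-$n$ polynomials orthogonal to all polynomials of degree less than $n$ with respect to $\LL_{a-n,b}$ is one-dimensional, it follows that $p_n(x)=c_n P_n^{(a-n,b)}(x)$ for some constant $c_n$.

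Finally, the standing assumption $P_n(-\lambda_n/a_n)\ne 0$ must be verified. Here $d_n(x)=(1-x)^n$ corresponds to $a_i=-1$, $\lambda_i=1$, so $-\lambda_n/a_n=1$, and the required condition becomes $P_n^{(a-n,b)}(1)=(a-n+1)_n/n!\neq 0$, which holds for generic $a$ (failing only when $a\in\{0,1,\dots,n-1\}$). I expect no real obstacle beyond keeping track of the shift: the main technical point is simply running the partial-fractions / linearity argument of Proposition \eqref{masterthm} with $(1-x)^n$ in place of $(1+x)^n$ and confirming that the prefactor coming out of the beta integral is independent of the test polynomial.
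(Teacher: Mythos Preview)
Your proposal is correct and follows exactly the approach the paper uses: the paper proves the $(1+x)^n$ case in detail (the gluing identity of Proposition~\eqref{masterthm} followed by reduction to classical Jacobi orthogonality) and then, for the present theorem, simply records that the choice $d_n(x)=(1-x)^n$ shifts $a$ to $a-n$ by the symmetric argument. Your write-up in fact supplies more detail than the paper does here, including the explicit prefactor and the check of the degeneracy condition $P_n^{(a-n,b)}(1)\neq 0$.
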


\begin{prop} We have for the monic type $R_I$ polynomials, $\hat p_n(x)$,
$$
\hat p_{n+1}(x)=(x-b_n)\hat p_n(x) -(1-x) \lambda_n \hat p_{n-1}(x)
$$
where
$$
b_n= \frac{b-a+3n+1}{a+b+n+1}, \quad \lambda_n=\frac{2n(n+b)}{(a+b+n)(a+b+n+1)}.
$$
\end{prop}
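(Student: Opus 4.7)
The plan is to combine the identification of $\hat{p}_n$ with the monic shifted Jacobi polynomials (established in the theorem above) with point-evaluations at $x=\pm1$ to extract $b_n$ and $\lambda_n$. Since the leading coefficient of $P_n^{(\alpha,\beta)}(x)$ is $(n+\alpha+\beta+1)_n/(2^n n!)$, specializing at $(\alpha,\beta)=(a-n,b)$ gives
\[
\hat{p}_n(x)=\frac{2^n\, n!}{(a+b+1)_n}\,P_n^{(a-n,b)}(x),
\]
and the standard values $P_n^{(\alpha,\beta)}(1)=(\alpha+1)_n/n!$ and $P_n^{(\alpha,\beta)}(-1)=(-1)^n(\beta+1)_n/n!$ yield
\[
\hat{p}_n(1)=\frac{2^n(a-n+1)_n}{(a+b+1)_n},\qquad \hat{p}_n(-1)=\frac{(-2)^n(b+1)_n}{(a+b+1)_n}.
\]

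Substituting $x=1$ into the claimed recurrence annihilates the $(1-x)\lambda_n\hat{p}_{n-1}$ term, so
\[
b_n=1-\frac{\hat{p}_{n+1}(1)}{\hat{p}_n(1)}=1-\frac{2(a-n)}{a+b+n+1}=\frac{b-a+3n+1}{a+b+n+1},
\]
as advertised. Substituting $x=-1$ then gives a one-variable linear equation for $\lambda_n$, which, after using the ratio $\hat{p}_n(-1)/\hat{p}_{n-1}(-1)=-2(b+n)/(a+b+n)$ and the just-computed $b_n$, simplifies to $\lambda_n=2n(n+b)/[(a+b+n)(a+b+n+1)]$.

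The main obstacle is that checking the recurrence at only two points does not, by itself, force the polynomial identity to hold for all $x$. The cleanest way to conclude is to define
\[
Q_n(x):=(x-b_n)\hat{p}_n(x)-(1-x)\lambda_n\hat{p}_{n-1}(x)
\]
with the candidate values above, observe that $Q_n$ is monic of degree $n+1$, and verify that $Q_n$ satisfies the type $R_I$ orthogonality $\LL_{a,b}(x^k Q_n(x)/(1-x)^{n+1})=0$ for $0\le k\le n$. This check reduces, via the decomposition $x-b_n=(1-b_n)-(1-x)$ together with the gluing identity $\LL_{a,b}(f/(1-x)^m)=C_m\LL_{a-m,b}(f)$, to classical Jacobi orthogonality relations for the already-identified $\hat{p}_n$ and $\hat{p}_{n-1}$. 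Theorem~\ref{thm:unique L} then forces $Q_n=\hat{p}_{n+1}$, completing the proof.
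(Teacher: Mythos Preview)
Your computations of $b_n$ and $\lambda_n$ via evaluation at $x=\pm1$ are correct and clean; the paper simply says one can ``find the three-term recurrence by considering the higher term coefficients'' without further detail, so your endpoint method is a perfectly good alternative for extracting the two unknowns.

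You are also right to flag that two point-evaluations do not, by themselves, prove the polynomial identity. The issue is with your proposed closure. After the decomposition $x-b_n=(1-b_n)-(1-x)$ and gluing, the quantity $\LL_{a,b}\bigl(x^kQ_n(x)/(1-x)^{n+1}\bigr)$ splits into three pieces proportional to
\[
(1-b_n)\,\LL_{a-n-1,b}\bigl(x^k\hat p_n\bigr),\qquad
\LL_{a-n,b}\bigl(x^k\hat p_n\bigr),\qquad
\lambda_n\,\LL_{a-n,b}\bigl(x^k\hat p_{n-1}\bigr).
\]
Only the middle one is a classical Jacobi orthogonality: $\hat p_n$ is the monic $P_n^{(a-n,b)}$, so it is orthogonal with respect to $\LL_{a-n,b}$. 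The first term pairs $\hat p_n=P_n^{(a-n,b)}$ against the \emph{wrong} weight $(1-x)^{a-n-1}(1+x)^b$, and the third pairs $\hat p_{n-1}=P_{n-1}^{(a-n+1,b)}$ against $(1-x)^{a-n}(1+x)^b$, again shifted by one from its natural weight. Neither vanishes for $k<n$, and showing their combination cancels is precisely a Jacobi contiguity relation, not ``classical orthogonality.'' So the reduction you describe does not go through as stated.

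The quickest honest fix is to note that the existence of a recurrence of the form $\hat p_{n+1}=(x-b_n)\hat p_n-c_n(1-x)\hat p_{n-1}$ for \emph{some} $b_n,c_n$ is part of the general type $R_I$ Favard theory (Ismail--Masson, or equivalently the determinant representations in Section~\ref{sec:hank-determ-type}): once the monic polynomials are uniquely determined by the type $R_I$ orthogonality with $d_n(x)=(1-x)^n$, they automatically satisfy such a recurrence. Granting that, your two evaluations at $x=\pm1$ pin down $b_n$ and $c_n=\lambda_n$ and the argument is complete. Alternatively, one can verify the recurrence directly as a Jacobi contiguity identity connecting $P_{n+1}^{(\alpha,\beta)}$, $P_n^{(\alpha+1,\beta)}$, and $P_{n-1}^{(\alpha+2,\beta)}$; that is a short calculation but is not the ``classical orthogonality'' you invoke.
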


\begin{remark} 
\label{rem1}
The moments are the same as the moments for the usual 
Jacobi polynomials $P_n^{(a,b)}(x)$ on $[-1,1],$ 
$$
\LL_{a,b}( x^n)=\sum_{s=0}^n \binom{n}{s} (-2)^s  \frac{(a+1)_s}{(a+b+2)_s}.
$$
The usual three-term recurrence coefficients for the monic Jacobi polynomials are
\begin{align*}
B_n&= \frac{(b^2 - a^2)}{(2 n + a + b)(2 n + a + b + 2)},\\ 
\Lambda_n  &=\frac{4 n(n + a)(n + b)(n + a + b)}{
(2 n + a + b - 1)(2 n + a + b)^2(2 n + a + b + 1)}
\end{align*}
and Theorem~\ref{weirdCFthm} holds.
\end{remark}

\begin{remark} 
\label{rem2}
If $a=b=1/2$,  then the Catalan numbers $C_{k}=\frac{1}{k+1}\binom{2k}k$ are moments
$$
\{4^k \LL_{1/2,1/2}( x^{2k})= C_{k}: k\ge0\}=\{1,1,2,5,14,42,\dots \}.
$$
If $a=b=-1/2$,  then the central binomial coefficients are moments
$$
\left\{4^k \LL_{-1/2,-1/2}( x^{2k})= \binom{2k}{k}:k\ge0\right\} =\{1,2,6,20, 70,\dots \}.
$$
\end{remark}

\subsubsection{A mixed Jacobi formula}
\aftersubsection

One may alternate inserting $(1-x)$ with $(1+x)$ into the denominator by taking
$$
d_n(x)= (1-x)^{\lceil n/2\rceil}(1+x)^{\lfloor n/2\rfloor}.
$$

\begin{defn} Let $\LL_{a,b}$ be the linear functional on $V$ such that
$$
\LL_{a,b}\left(\frac{(1-x)^k}{(1-x)^{\lceil n/2\rceil}(1+x)^{\lfloor n/2\rfloor}}\right)=
2^{k-n} \frac{(a+1)_{k-\lceil n/2 \rceil}}{(b-\lfloor n/2 \rfloor+1)_{\lfloor n/2 \rfloor} 
(a+b+2)_{k-n}}, \quad k,n\ge 0.
$$
\end{defn}

\begin{thm} Up to a constant, the type $R_I$ polynomials for 
$w(x)=(1-x)^a(1+x)^b$ on $[-1,1]$ and  
$d_n(x)=(1-x)^{\lceil n/2\rceil}(1+x)^{\lfloor n/2\rfloor}$
are shifted Jacobi polynomials
\[
p_n(x)= c_n P_n^{(a-\lceil n/2\rceil,b-\lfloor n/2\rfloor)}(x)
=c_n' \Hyper21{-n,a+b+1}{a-\lceil n/2\rceil+1}{\frac{1-x}{2}}.
\]
\end{thm}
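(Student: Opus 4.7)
The plan is to mimic the gluing template used in the two previous Jacobi subsections, adapted to the parity-split denominator. The heart of the argument is an analogue of Proposition/equation \eqref{masterthm}: for any polynomial $p(x)$ and any $n\ge 0$,
\[
\LL_{a,b}\!\left(\frac{p(x)}{d_n(x)}\right) \;=\; C_{n,a,b}\cdot \LL_{a-\lceil n/2\rceil,\, b-\lfloor n/2\rfloor}(p(x)),
\]
for an explicit scalar $C_{n,a,b}$ that does not depend on $p$. To prove this it suffices, by the partial-fraction decomposition \eqref{hatethisproof}, to check it on each basis element $(1-x)^k$, i.e.\ to verify that
\[
\frac{\LL_{a,b}\!\left((1-x)^k/d_n(x)\right)}{\LL_{a-\lceil n/2\rceil,\, b-\lfloor n/2\rfloor}\!\left((1-x)^k\right)}
\]
is independent of $k$. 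The denominator is the classical Jacobi moment $2^k(a-\lceil n/2\rceil+1)_k/(a+b-n+2)_k$, and the numerator is the explicit Pochhammer formula given in the definition of $\LL_{a,b}$. Forming the ratio, the telescoping identities $(a+1)_{k-\lceil n/2\rceil}/(a-\lceil n/2\rceil+1)_k=1/(a-\lceil n/2\rceil+1)_{\lceil n/2\rceil}$ and $(a+b-n+2)_k/(a+b+2)_{k-n}=(a+b-n+2)_n$ eliminate the $k$-dependence, using the key relation $\lceil n/2\rceil+\lfloor n/2\rfloor=n$.

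With the gluing identity in hand, the type $R_I$ orthogonality $\LL_{a,b}(x^k P_n(x)/d_n(x))=0$ for $0\le k\le n-1$ collapses to the classical Jacobi orthogonality
\[
\LL_{a-\lceil n/2\rceil,\, b-\lfloor n/2\rfloor}(x^k P_n(x))=0,\qquad 0\le k\le n-1.
\]
Hence $P_n(x)$ is a scalar multiple of $P_n^{(a-\lceil n/2\rceil,\,b-\lfloor n/2\rfloor)}(x)$, which gives the first displayed expression. The second, hypergeometric, form follows by plugging $\alpha=a-\lceil n/2\rceil$, $\beta=b-\lfloor n/2\rfloor$ into the standard Jacobi $_2F_1$ representation and observing that $n+\alpha+\beta+1=a+b+1$; the fact that this upper parameter is cleanly $a+b+1$, independent of how $n$ is split, is exactly why the parity split is compatible with a single family of type $R_I$ orthogonal polynomials.

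Finally, one needs to confirm the standing non-degeneracy hypothesis $P_n(-\lambda_n/a_n)\ne 0$. From $d_n(x)=(1-x)^{\lceil n/2\rceil}(1+x)^{\lfloor n/2\rfloor}$, the linear factors $a_ix+\lambda_i$ alternate between $1-x$ and $1+x$, so $-\lambda_n/a_n\in\{1,-1\}$. At these points the Jacobi polynomials take the classical values $P_n^{(\alpha,\beta)}(1)=\binom{n+\alpha}{n}$ and $P_n^{(\alpha,\beta)}(-1)=(-1)^n\binom{n+\beta}{n}$, which are nonzero for generic $a,b$ (under the tacit assumption that $a+b+2$ is not a problematic integer, as mentioned just after \eqref{eq:L(1/d)}).

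The only genuinely work-requiring step is the Pochhammer bookkeeping in the gluing identity; everything else is structurally identical to the two preceding Jacobi subsections. The main obstacle is keeping the ceiling/floor indices straight so that all Pochhammer cancellations go through simultaneously, and it is precisely the identity $\lceil n/2\rceil+\lfloor n/2\rfloor=n$ that makes the shifts line up.
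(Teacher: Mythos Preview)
Your proposal is correct and follows the paper's gluing template exactly: establish the key identity
\[
\LL_{a,b}\!\left(\frac{p(x)}{d_n(x)}\right)=C_{n,a,b}\,\LL_{a-\lceil n/2\rceil,\,b-\lfloor n/2\rfloor}(p(x)),
\]
then read off that type $R_I$ orthogonality collapses to the classical Jacobi orthogonality with shifted parameters. The paper does not give a separate proof here, merely pointing back to the argument for Proposition~\eqref{masterthm}.

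One small remark. Your invocation of \eqref{hatethisproof} is not quite what you use: the partial-fraction decomposition in \eqref{hatethisproof} expresses $p(x)/d_n(x)$ as a polynomial plus constants over $d_i(x)$, which is not the same as spanning by $(1-x)^k/d_n(x)$. What you actually need (and what your Pochhammer computation correctly delivers) is just linearity in $p$ together with the fact that $\{(1-x)^k:k\ge0\}$ spans the polynomials; no partial fractions are required. The paper's own proof of \eqref{masterthm} takes a different route, namely specializing to parameters for which the integral \eqref{jacwt} converges and then analytically continuing in $a,b$. Your direct algebraic verification via the telescoping identities is a perfectly good alternative and arguably cleaner, since it avoids any appeal to integral convergence; the paper's method has the advantage of working uniformly for all the examples in Section~\ref{glue} without redoing the Pochhammer bookkeeping each time.
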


\begin{prop} For the monic type $R_I$ polynomials $\hat p_n(x)$, we have
$$
\hat p_{n+1}(x)=(x-b_n)\hat p_n(x) -(1-(-1)^{n-1}x) \lambda_n \hat p_{n-1}(x),
$$
where
$$
b_n= \frac{1}{a+b+n+1}
\begin{cases}
\times (b-a+1), {\text{ if $n$ is even}}\\ 
\times (b-a), {\text{ if $n$ is odd,}}\\
\end{cases}  
$$
and
$$
\lambda_n= \frac{2n}{(a+b+n)(a+b+n+1)}
\begin{cases}
\times (a+n/2), {\text{ if $n$ is even}}\\ 
\times (b+(n+1)/2), {\text{ if $n$ is odd.}}\\
\end{cases}
$$
\end{prop}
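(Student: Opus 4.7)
The plan is to read off $b_n$ and $\lambda_n$ from the explicit hypergeometric representation
\[
p_n(x) = c_n' \Hyper21{-n,a+b+1}{a-\lceil n/2\rceil+1}{\frac{1-x}{2}}
\]
provided by the preceding theorem, by testing the three-term recurrence at the two endpoints $x=\pm1$ and through a top-coefficient comparison.

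First I would normalize to make $\hat p_n$ monic. The coefficient of $x^n$ in the terminating ${}_2F_1$ equals $(a+b+1)_n/[2^n(a-\lceil n/2\rceil+1)_n]$, so $\hat p_n$ is obtained by rescaling $p_n$ by the reciprocal of this constant. Setting $x=1$ gives $\hat p_n(1)$ immediately (the series evaluates to $1$), while setting $x=-1$ reduces the ${}_2F_1$ to argument $1$, which sums in closed form by Chu--Vandermonde to give $\hat p_n(-1) = 2^n(-b-\lceil n/2\rceil)_n/(a+b+1)_n$.

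The key observation is that the factor $1-(-1)^{n-1}x$ in the recurrence vanishes at $x=1$ when $n$ is odd and at $x=-1$ when $n$ is even. Evaluating the recurrence at the appropriate endpoint therefore kills the $\hat p_{n-1}$ term and gives $b_n = \pm 1 - \hat p_{n+1}(\pm 1)/\hat p_n(\pm 1)$, which after the Pochhammer ratios collapse yields the two parity cases for $b_n$ claimed in the proposition. To extract $\lambda_n$, I would compare coefficients of $x^n$ on both sides of the recurrence: writing $\hat p_n(x) = x^n + e_n x^{n-1} + \cdots$ gives $e_{n+1} = e_n - b_n + (-1)^{n-1}\lambda_n$. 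The sub-leading coefficient $e_n$ is obtained by combining the $k=n$ and $k=n-1$ terms of the hypergeometric expansion, and the parity split of $\lceil n/2\rceil$ then determines $\lambda_n$.

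The main obstacle is purely algebraic bookkeeping: the ceiling and floor functions force a case split at each step of the Pochhammer manipulations, and one must verify that the two parity answers for $b_n$ and for $\lambda_n$ recombine consistently with the compact sign convention $(-1)^{n-1}$ chosen in the recurrence. There is no conceptual difficulty beyond systematic use of the explicit hypergeometric formula together with the two endpoint evaluations.
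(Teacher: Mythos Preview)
Your approach is correct and is essentially what the paper does. The paper gives no proof for this particular proposition; it only states at the start of Section~\ref{glue} that ``once we know an explicit formula for the type $R_I$ polynomials, we can find their three term recurrence by considering the higher term coefficients,'' and in the Askey--Wilson subsection it sketches exactly the endpoint trick you describe (evaluate at the root of $d_n(x)/d_{n-1}(x)$ to kill the $\hat p_{n-1}$ term and read off $b_n$, then determine $\lambda_n$ from the next coefficient). Your plan of evaluating at $x=1$ (hypergeometric argument $0$) and $x=-1$ (Chu--Vandermonde) to get $b_n$, followed by the sub-leading coefficient relation $e_{n+1}=e_n-b_n+(-1)^{n-1}\lambda_n$ for $\lambda_n$, is the natural execution of this and goes through without difficulty; the only labor, as you note, is tracking $\lceil n/2\rceil$ through the Pochhammer ratios.

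One small simplification you might consider: once $b_n$ is known, instead of computing the sub-leading coefficient $e_n$ you can evaluate the recurrence at the \emph{other} endpoint (where $1-(-1)^{n-1}x$ equals $2$ rather than $0$), which gives $\lambda_n$ directly as a ratio of the already-computed values $\hat p_k(\pm1)$ and avoids extracting $e_n$ from the series.
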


The moments are again given
by Remarks~\ref{rem1} and \ref{rem2}.

\subsection{Jacobi polynomials on $[0,1]$}
\aftersubsection

The Jacobi polynomials on $[0,1]$ are 
\[
P_n^{(a,b)}(1-2x)=\frac{(a+1)_n}{n!}  \Hyper21{-n,n+a+b+1}{a+1}{x}
\]
and have the weight function $w(x)=x^a(1-x)^b$ given by the linear functional  
$$
\MM_{a,b}(f(x))=\frac{\Gamma(a+b+2)}{\Gamma(a+1)\Gamma(b+1)}
\int_0^1 x^a(1-x)^b f(x) dx.
$$

\subsubsection{$d_n(x)=(1-x)^n$}
\aftersubsection

Let's choose $d_n(x)=(1-x)^n$ which naturally glues onto the weight $w(x)=x^a(1-x)^b$. 
So we see that the modified weight function $w'(x)$ 
occurs by replacing $b$ by $b-n$ in $w(x).$  

As before this beta integral may be evaluated.

\begin{defn} 
Let $\MM_{a,b}$ be the linear functional on $V$ such that
$$
\MM_{a,b}\left(\frac{x^k}{(1-x)^n}\right)= \frac{(a+1)_k}{(b-n+1)_n (a+b+2)_{k-n}}, \quad k,n\ge 0.
$$
\end{defn}

\begin{thm} Up to a constant $c_n,$ the type $R_I$ polynomials 
for  $w(x)=x^a(1-x)^b$ on $[0,1]$ and  $d_n(x)=(1-x)^n$
are shifted Jacobi polynomials
\[
p_n(x)= c_n P_n^{(a,b-n)}(1-2x) =c_n' \Hyper21{-n,a+b+1}{a+1}{x}.
\]
\end{thm}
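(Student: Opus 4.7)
The plan is to mimic the $[-1,1]$ argument given earlier in the section. The key step is to prove the analogue of \eqref{masterthm}: for every polynomial $p(x)$,
\[
  \MM_{a,b}\!\left(\frac{p(x)}{(1-x)^n}\right) = \frac{(a+b-n+2)_n}{(b-n+1)_n}\,\MM_{a,b-n}(p(x)).
\]
I would first verify this on the basis $p(x)=x^k$, where both sides reduce to explicit ratios of Pochhammer symbols. Using the definition of $\MM_{a,b}$ and the elementary identity $(a+b-n+2)_k = (a+b-n+2)_n\,(a+b+2)_{k-n}$ (valid even for $k<n$ by the standard extension of Pochhammer symbols), one checks that the ratio $\MM_{a,b}(x^k/(1-x)^n)/\MM_{a,b-n}(x^k)$ equals the constant $(a+b-n+2)_n/(b-n+1)_n$ independently of $k$. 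By linearity the identity then extends to all polynomials $p(x)$.

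Next, applying this identity to the defining type $R_I$ orthogonality
\[
  \MM_{a,b}\!\left(\frac{x^k P_n(x)}{(1-x)^n}\right)=0,\qquad 0\le k\le n-1,
\]
yields $\MM_{a,b-n}(x^k P_n(x)) = 0$ for $0\le k\le n-1$. Whenever $a>-1$ and $b-n>-1$, the linear functional $\MM_{a,b-n}$ is given by integration against the weight $x^{a}(1-x)^{b-n}$ on $[0,1]$, and this is precisely the orthogonality that characterizes the Jacobi polynomial $P_n^{(a,b-n)}(1-2x)$ up to a multiplicative constant. Hence $P_n(x)=c_n P_n^{(a,b-n)}(1-2x)$ for some $c_n$, which can then be read off as $c_n'\cdot{}_2F_1(-n,a+b+1;a+1;x)$ from the standard $_2F_1$ representation.

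To remove the temporary restriction $a>-1,\ b-n>-1$, I would observe that both sides of the proposed identity are rational functions of $a$ and $b$: the coefficients of $P_n(x)$ are rational by the recursion \eqref{eq:favard1} together with the values $\MM_{a,b}(x^k/(1-x)^j)$, which are rational in $(a,b)$ by definition. Since equality holds on the nonempty open set $\{a>-1,\ b>n-1\}$, it holds identically. Finally, to invoke the uniqueness portion of Theorem~\ref{thm:unique L}, one must check the nondegeneracy condition $P_n(-\lambda_n/a_n)\ne 0$; in our setting $d_n(x)=(1-x)^n$ so $-\lambda_n/a_n=1$, and $P_n^{(a,b-n)}(1-2\cdot 1)=P_n^{(a,b-n)}(-1)=(-1)^n(b-n+1)_n/n!$, which is nonzero under the standing nonintegrality assumption on $a+b$.

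The main obstacle is the clean verification that the ratio $\MM_{a,b}(x^k/(1-x)^n)/\MM_{a,b-n}(x^k)$ is truly independent of $k$; everything else is either a rewriting of known facts about Jacobi polynomials or a transparent analytic-continuation argument. The three-term recurrence coefficients $b_n$ and $\lambda_n$ can then be extracted mechanically by comparing the top two coefficients of $c_n P_n^{(a,b-n)}(1-2x)$ with the recursion \eqref{eq:favard1}, as was done in the $[-1,1]$ case.
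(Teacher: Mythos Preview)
Your proposal is correct and follows the same template as the paper: establish the shift identity (the analogue of \eqref{masterthm}), use it to reduce the type $R_I$ orthogonality to the classical Jacobi orthogonality with $b$ replaced by $b-n$, and then extend by rationality in the parameters. The only minor difference is that you verify the shift identity by a direct Pochhammer computation on the monomials $x^k$, whereas the paper (in the $[-1,1]$ prototype) decomposes $p(x)/d_n(x)$ into partial fractions and appeals to the integral on the open region where it converges; your route is slightly more elementary but not genuinely different.

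One small imprecision: in the nondegeneracy check you invoke ``the standing nonintegrality assumption on $a+b$,'' but $P_n^{(a,b-n)}(-1)=(-1)^n(b-n+1)_n/n!$ is nonzero exactly when $b\notin\{0,1,\dots,n-1\}$, a condition on $b$ rather than on $a+b$. This is already implicit in the definition of $\MM_{a,b}$ (the factor $(b-n+1)_n$ appears in a denominator there), and the paper explicitly says such assumptions are tacitly made in each example, so nothing is actually missing---just cite the correct hypothesis.
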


\begin{prop} We have for the monic type $R_I$ polynomials, $\hat p_n(x)$,
$$
\hat p_{n+1}(x)=(x-b_n)\hat p_n(x) -(1-x) \lambda_n \hat p_{n-1}(x)
$$
where
$$
b_n= \frac{a+2n+1}{a+b+n+1}, \quad \lambda_n=\frac{n(n+a)}{(a+b+n)(a+b+n+1)}.
$$
\end{prop}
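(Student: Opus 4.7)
The plan is to determine $b_n$ and $\lambda_n$ by evaluating the recurrence at the two strategic points $x=1$ and $x=0$, using only the explicit hypergeometric formula
\[
\hat p_n(x) = \frac{(-1)^n(a+1)_n}{(a+b+1)_n}\,\hyper21{-n,\,a+b+1}{a+1}{x},
\]
where the normalization comes from extracting the leading coefficient of the $_2F_1$ so that $\hat p_n$ is monic of degree $n$.

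First I would substitute $x=1$ into the recurrence
\[
\hat p_{n+1}(x) = (x-b_n)\hat p_n(x) - (1-x)\lambda_n\hat p_{n-1}(x).
\]
The factor $(1-x)$ vanishes, reducing it to $\hat p_{n+1}(1) = (1-b_n)\hat p_n(1)$. To compute $\hat p_n(1)$, apply the Chu--Vandermonde summation
\[
\hyper21{-n,\,a+b+1}{a+1}{1} = \frac{(-b)_n}{(a+1)_n},
\]
which yields $\hat p_n(1) = (-1)^n(-b)_n/(a+b+1)_n$. A short manipulation of the ratio $\hat p_{n+1}(1)/\hat p_n(1)$ then gives $1-b_n = (b-n)/(a+b+n+1)$, i.e.\ $b_n = (a+2n+1)/(a+b+n+1)$ as claimed.

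Next I would set $x=0$ in the recurrence, obtaining
\[
\hat p_{n+1}(0) = -b_n\hat p_n(0) - \lambda_n\hat p_{n-1}(0).
\]
Since the $_2F_1$ has constant term $1$, we have $\hat p_n(0) = (-1)^n(a+1)_n/(a+b+1)_n$. Substituting, dividing through by $\hat p_{n-1}(0)$ and using the value of $b_n$ from the previous step gives
\[
\lambda_n = \frac{(a+n)(a+2n+1)-(a+n)(a+n+1)}{(a+b+n)(a+b+n+1)} = \frac{n(n+a)}{(a+b+n)(a+b+n+1)}.
\]

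The only nontrivial ingredient is the Chu--Vandermonde evaluation at $x=1$; everything else is routine algebra with Pochhammer ratios. The method is really a general recipe: once $\hat p_n$ is known explicitly and the recurrence has a factor that vanishes at a specific point, evaluating at that point decouples $b_n$ and $\lambda_n$. A sanity check is that $b_n$ and $\lambda_n$ obtained this way are forced by the recurrence, since together with $\hat p_{-1}=0,\ \hat p_0=1$ they determine the monic family uniquely; thus verifying the two point-evaluations for all $n$ suffices.
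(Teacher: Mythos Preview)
Your proof is correct. The paper does not give a detailed proof of this proposition, but its general recipe (stated in Section~\ref{sec:explicit-type-r_i} and sketched for the Askey--Wilson case) is to evaluate at the root of $d_n(x)/d_{n-1}(x)$ to isolate $b_n$ and then match the coefficient of $x^n$ to get $\lambda_n$. Your step for $b_n$ is exactly this (evaluate at $x=1$), while for $\lambda_n$ you instead evaluate at a second convenient point $x=0$; this is a harmless variation, and in this particular example it is arguably cleaner than tracking the subleading coefficient, since $\hat p_n(0)$ is read off immediately from the constant term of the ${}_2F_1$.
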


\begin{remark} The moments are the same as the moments for the usual 
Jacobi polynomials $P_n^{(a,b)}(x),$ on $[0,1]$ instead of $[-1,1],$ 
evaluated by beta functions,
$$
\MM_{a,b}( x^k)= \frac{(a+1)_k}{(a+b+2)_k}.
$$
\end{remark}

\begin{remark} If $a=b=1/2$,  then the Catalan numbers are moments
$$
\{4^k\MM_{1/2,1/2}( x^k)= C_{k+1}:k\ge0\} =\{1,2,5,14,42,\dots \}.
$$
\end{remark}

\subsubsection{$d_n(x)=x^n$}
\aftersubsection

Let's choose $d_n(x)=x^n$ which naturally glues onto the weight $w(x)=x^a(1-x)^b$. 
So we see that the modified weight function $w'(x)$ 
occurs by replacing $a$ by $a-n$ in $w(x).$  

As before this beta integral may be evaluated.

\begin{defn} 
  Let $\MM_{a,b}$ be the linear functional on $V$ such that
  $$
  \MM_{a,b}\left(\frac{(1-x)^k}{x^n}\right)= \frac{(b+1)_k}{(a-n+1)_n (a+b+2)_{k-n}}, \quad k,n\ge 0.
  $$
\end{defn}

\begin{thm} Up to a constant the type $R_I$ polynomials 
for  $w(x)=x^a(1-x)^b$ on $[0,1]$ and  $d_n(x)= x^n$
are shifted Jacobi polynomials
\[
p_n(x)= c_n P_n^{(a-n,b)}(1-2x) =c_n' \Hyper21{-n,a+b+1}{a-n+1}{x}.
\]
\end{thm}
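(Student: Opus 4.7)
The proof is a direct parallel to the previous subsection (with $d_n(x)=(1-x)^n$), with the roles of $a$ and $b$ essentially interchanged. Gluing $d_n(x)=x^n$ onto the weight $w(x)=x^a(1-x)^b$ yields the modified weight $x^{a-n}(1-x)^b$, so one expects the type $R_I$ polynomials to be Jacobi polynomials on $[0,1]$ with parameter $a$ shifted to $a-n$. The plan has three steps.

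First, I would establish the master gluing identity analogous to \eqref{masterthm}: for any polynomial $p(x)$ and any $n\ge 1$,
\[
  \MM_{a,b}(p(x)/x^n) = \frac{(a-n+b+2)_n}{(a-n+1)_n}\,\MM_{a-n,b}(p(x)).
\]
The proof mirrors that of \eqref{masterthm} exactly: use partial fractions to write $p(x)/x^n = t(x)+\sum_{i=1}^n c_i/x^i$ for some polynomial $t(x)$ and constants $c_i$, apply $\MM_{a,b}$ term by term using its defining values on the basis, and note that both sides are rational functions of $a,b$, so an identity checked for $a-n>-1,\;b>-1$ by the beta-integral representation persists for all generic parameters. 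The needed Pochhammer factorization is $(a-n+b+2)_k = (a-n+b+2)_n\,(a+b+2)_{k-n}$, which is the bookkeeping that produces the shift.

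Second, with the master identity in hand, the type $R_I$ orthogonality $\MM_{a,b}(x^k p_n(x)/x^n)=0$ for $0\le k\le n-1$ translates into
\[
  \MM_{a-n,b}(x^k p_n(x)) = 0,\qquad 0\le k\le n-1,
\]
which is the classical orthogonality defining the Jacobi polynomial $P_n^{(a-n,b)}(1-2x)$ on $[0,1]$. Uniqueness of monic classical orthogonal polynomials then forces $p_n(x)=c_n P_n^{(a-n,b)}(1-2x)$, and the second equality in the statement is just the standard $\,{}_2F_1$ representation of this Jacobi polynomial. The standing nondegeneracy condition $p_n(-\lambda_n/a_n)\neq 0$ reduces, since here the factors of $d_n(x)=x^n$ force $-\lambda_n/a_n=0$, to $p_n(0)\neq 0$; and $P_n^{(a-n,b)}(1)=\binom{a}{n}\neq 0$ for generic $a$.

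The main (minor) obstacle is purely the Pochhammer bookkeeping in Step 1 needed to identify the explicit constant $(a-n+b+2)_n/(a-n+1)_n$ and to confirm that it is independent of the polynomial $p(x)$ tested against; everything else is routine given the template built in the preceding subsections.
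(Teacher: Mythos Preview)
Your proposal is correct and follows exactly the paper's gluing template: establish the master identity $\MM_{a,b}(p(x)/x^n)=\text{const}\cdot\MM_{a-n,b}(p(x))$ via partial fractions and analytic continuation in the parameters, then read off that the type $R_I$ orthogonality becomes classical orthogonality for the shifted Jacobi weight. The paper itself omits a separate proof here, pointing back to the general method developed at the start of the section, so your write-up is in fact more explicit than the paper's.
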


\begin{prop} We have for the monic type $R_I$ polynomials, $\hat p_n(x)$,
$$
\hat p_{n+1}(x)=(x-b_n)\hat p_n(x) -xa_n \hat p_{n-1}(x)
$$
where
$$
b_n=  \frac{a-n}{a+b+n+1}, \quad a_n=\frac{n(b+n)}{(a+b+n)(a+b+n+1)}.
$$
\end{prop}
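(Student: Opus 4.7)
The plan is to verify the recurrence directly using the explicit hypergeometric representation from the preceding theorem, $\hat p_n(x) = c_n \cdot {}_2F_1(-n, a+b+1; a-n+1; x)$, where $c_n$ is chosen so that $\hat p_n$ is monic, namely $c_n = (a-n+1)_n/((-1)^n (a+b+1)_n)$. Note that one cannot simply cite the classical three-term recurrence for Jacobi polynomials, because the parameter $a-n$ in $P_n^{(a-n,b)}(1-2x)$ varies with $n$. Since the claimed recurrence has $\lambda_n=0$, there are only two unknowns $b_n$ and $a_n$, so two scalar identities suffice; I would use the value at $x=0$ and the coefficient of $x^n$.

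First, evaluating the recurrence at $x=0$ gives $\hat p_{n+1}(0)=-b_n\hat p_n(0)$. Since ${}_2F_1(-n,a+b+1;a-n+1;0)=1$ and $(a-n+1)_n=(-1)^n(-a)_n$, one finds $\hat p_n(0)=(-a)_n/(a+b+1)_n$. A one-line simplification of the ratio $\hat p_{n+1}(0)/\hat p_n(0)=(n-a)/(a+b+n+1)$ then yields $b_n=(a-n)/(a+b+n+1)$, as claimed.

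Second, write $\hat p_n(x)=x^n+\alpha_n x^{n-1}+\cdots$ and equate coefficients of $x^n$ on the two sides of the recurrence. This gives $\alpha_{n+1}=\alpha_n-b_n-a_n$, so $a_n=\alpha_n-b_n-\alpha_{n+1}$. The coefficient $\alpha_n$ is read off from the degree-$(n-1)$ term of the hypergeometric series combined with the normalization $c_n$; a short Pochhammer calculation using $(-n)_{n-1}=(-1)^{n-1}n!$ gives $\alpha_n=-an/(a+b+n)$ (which is consistent with the direct check $\hat p_1(x)=x-a/(a+b+1)$). Plugging $\alpha_n,\alpha_{n+1},b_n$ into $a_n=\alpha_n-b_n-\alpha_{n+1}$ and putting everything over the common denominator $(a+b+n)(a+b+n+1)$, the numerator telescopes to $n(b+n)$, giving exactly $a_n=n(b+n)/((a+b+n)(a+b+n+1))$.

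The only substantive step is the final algebraic telescoping in $a_n$; the rest is mechanical Pochhammer bookkeeping. An alternative would be to invoke a contiguous relation for ${}_2F_1$ that shifts the lower parameter by $1$ and $-n$ simultaneously, which would yield the entire recurrence in one stroke, but the direct coefficient-matching approach seems shorter here since only two coefficients need to be pinned down.
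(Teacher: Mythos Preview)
Your proposal is correct. The paper does not give a proof here; it only remarks earlier that ``once we know an explicit formula for the type $R_I$ polynomials, we can find their three-term recurrence by considering the higher term coefficients.'' Your argument is exactly an implementation of that hint: matching the leading coefficients pins down $a_n$ once $b_n$ is known, and you fix $b_n$ by evaluating at $x=0$ (equivalently, matching constant terms) rather than by matching the $x^{n-1}$ coefficient---a harmless and slightly cleaner variant of the same approach.
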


\subsection{Laguerre polynomials}
\label{sec:laguerre-polynomials}
\aftersubsection

The Laguerre polynomials
\[
L_n^{a}(x)=\frac{(a+1)_n}{n!} \Hyper11{-n}{a+1}{x}
\]
have the linear functional
$$
\LL_a(f(x))=\frac{1}{\Gamma(a+1)}
\int_0^\infty x^a e^{-x} f(x) dx.
$$
Thus the choice of $d_n(x)=x^n$ shifts $a$.

\begin{defn} 
Extend the linear functional $\LL_{a}$ to $V$ by
$$
\LL_{a}\left(\frac{1}{d_n(x)}\right)= \frac{1}{(a-n+1)_n },  \quad n\ge1.
$$
\end{defn}

\begin{thm} Up to a constant the type $R_I$ polynomials for 
$w(x)=x^ae^{-x}$ on $[0,\infty)$ and  
$d_n(x)=x^n$
are shifted Laguerre polynomials
\[
p_n(x)= c_n L_n^{a-n}(x) =c_n' \Hyper11{-n}{a-n+1}{x}.
\]
\end{thm}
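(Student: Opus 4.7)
The plan is to mirror the argument given for Jacobi polynomials in the preceding subsections, substituting the Laguerre weight $w(x)=x^ae^{-x}$ and denominator $d_n(x)=x^n$. Here, because $d_n(x)=x^n$, we have $a_n x+\lambda_n=x$, so $a_n=1$ and $\lambda_n=0$; the degeneracy condition $P_n(-\lambda_n/a_n)\ne 0$ becomes $P_n(0)\ne 0$.

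The first step is to prove a Laguerre analog of the master identity \eqref{masterthm}: for any polynomial $p(x)$ and any $n\ge 1$,
\[
\LL_a\!\left(\frac{p(x)}{x^n}\right)=\frac{1}{(a-n+1)_n}\,\LL_{a-n}(p(x)).
\]
This is proved exactly as in the Jacobi case: by polynomial division one writes $p(x)/x^n=t(x)+\sum_{i=1}^n c_i/x^i$ with $t(x)$ a polynomial, applies linearity, and checks that both sides agree on the basis elements $\{x^k\}_{k\ge 0}\cup\{1/x^n\}_{n\ge 1}$. Validity when $a>-1$ and $a-n>-1$ follows by integration; the general statement then follows because both sides are rational functions of $a$.

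The second step is to invoke the type $R_I$ orthogonality. For each $n\ge 1$ and each $0\le k\le n-1$,
\[
0=\LL_a\!\left(x^k\frac{P_n(x)}{x^n}\right)=\frac{1}{(a-n+1)_n}\,\LL_{a-n}\!\left(x^k P_n(x)\right),
\]
so $\LL_{a-n}(x^k P_n(x))=0$ for $0\le k\le n-1$. This is exactly the classical orthogonality for Laguerre polynomials with parameter $a-n$. Since monic orthogonal polynomials are unique when the moment functional is non-degenerate, $P_n(x)$ must be a constant multiple of $L_n^{a-n}(x)$, which gives the claimed formula $p_n(x)=c_n L_n^{a-n}(x)=c_n' \Hyper11{-n}{a-n+1}{x}$ via the usual hypergeometric representation of Laguerre polynomials.

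The third and final step is to check $P_n(0)\ne 0$: since $L_n^{a-n}(0)=(a-n+1)_n/n!$, this holds provided $(a-n+1)_n\ne 0$, which is the same condition needed to define $\LL_a(1/x^n)$ in the first place, so no new assumption is required. The main subtlety, as in the Jacobi subsections, is simply justifying that the shift-of-parameter identity for $\LL_a$ holds at the level of the formal linear functional on $V$ (not merely where the integral converges); this is handled by the rational-function argument above. The rest is a direct application of the template already established earlier in Section~\ref{glue}.
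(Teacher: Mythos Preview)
Your proof is correct and follows exactly the template the paper establishes for Jacobi polynomials at the start of Section~\ref{glue}; the paper does not write out a separate proof for the Laguerre case but relies on the same gluing argument. One small slip: from $d_n(x)=x^n$ you infer $a_n=1$, but the monic recurrence in the Proposition immediately following the theorem has $a_n=n$ (so that $\prod_i(a_ix+\lambda_i)=n!\,x^n$); the constant factor is immaterial for the orthogonality, so your conclusions that $\lambda_n=0$ and that the degeneracy condition reduces to $P_n(0)\ne 0$ are unaffected.
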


\begin{prop} We have for the monic type $R_I$ polynomials, $\hat p_n(x)$,
$$
\hat p_{n+1}(x)=(x-b_n)\hat p_n(x) -x a_n \hat p_{n-1}(x)
$$
where
$$
b_n= a-n, \quad a_n=n.
$$
\end{prop}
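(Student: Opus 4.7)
The plan is to exploit the explicit formula \(p_n(x)=c_n L_n^{a-n}(x)\) given in the preceding theorem. First I would pass to the monic normalization: since the Laguerre polynomial \(L_n^\alpha(x)\) has leading coefficient \((-1)^n/n!\), the monic type \(R_I\) polynomial is \(\hat p_n(x)=(-1)^n n!\,L_n^{a-n}(x)\). From the standard expansion \(L_n^\alpha(x)=\sum_{k=0}^n\binom{n+\alpha}{n-k}(-x)^k/k!\), applied with \(\alpha=a-n\) (so that \(\binom{n+\alpha}{n-k}=\binom{a}{n-k}\) depends only on \(a\)), one extracts
\[
  \hat p_n(x)=x^n-na\,x^{n-1}+\tbinom{n}{2}a(a-1)\,x^{n-2}+\cdots,
\]
whose top three coefficients are independent of \(n\) apart from the binomial prefactor.

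Next I would argue that the recurrence has the advertised shape. The prescription \(d_n(x)=x^n\) (up to a nonzero scalar) forces every factor \(a_ix+\lambda_i\) of \(d_n\) to be a nonzero multiple of \(x\), so \(\lambda_i=0\) for all \(i\). Consequently the three-term recurrence \eqref{eq:favard1} reduces to \(\hat p_{n+1}(x)=(x-b_n)\hat p_n(x)-a_n x\,\hat p_{n-1}(x)\), with only two unknowns \(b_n,a_n\) to be determined.

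These are pinned down by matching the two highest coefficients of both sides, using the expansion above. Comparing the coefficient of \(x^n\) gives
\[
  -(n+1)a = -na - b_n - a_n, \qquad \text{i.e., } b_n+a_n=a,
\]
and comparing the coefficient of \(x^{n-1}\) (and dividing by \(a\)) gives
\[
  n b_n+(n-1)a_n=n(a-1).
\]
Solving this \(2\times 2\) linear system yields \(b_n=a-n\) and \(a_n=n\), as claimed. The degenerate case \(a=0\) is handled either by continuity in \(a\), or by matching coefficients of \(x^{n-2}\) in place of \(x^{n-1}\).

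The main obstacle is the careful bookkeeping of signs and binomial terms needed to read off the top two coefficients of \(\hat p_n(x)\); once those are in hand, the rest is a routine linear solve. Alternatively, one could derive the relation directly from the standard contiguous identities \(L_n^{\alpha-1}(x)=L_n^\alpha(x)-L_{n-1}^\alpha(x)\) and \(xL_{n-1}^{\alpha+1}(x)=(n+\alpha)L_{n-1}^\alpha(x)-nL_n^\alpha(x)\) specialized at \(\alpha=a-n\), combined with the classical recurrence for \(L_n^{a-n-1}\); this produces \((n+1)L_{n+1}^{a-n-1}(x)=(a-n-x)L_n^{a-n}(x)-xL_{n-1}^{a-n+1}(x)\), which is precisely the desired recurrence after the monic rescaling.
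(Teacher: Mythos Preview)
Your proposal is correct and follows exactly the approach the paper prescribes in the preamble to Section~\ref{glue}: ``Once we know an explicit formula for the type $R_I$ polynomials, we can find their three term recurrence by considering the higher term coefficients.'' The paper does not write out the details for the Laguerre case, so your computation (extracting the top two subleading coefficients of $\hat p_n(x)=(-1)^n n!\,L_n^{a-n}(x)$ and solving the resulting $2\times 2$ system) is precisely what is intended; the alternative via contiguous relations is a nice bonus.
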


\begin{remark}
The monic Laguerre polynomials have 
$$
B_n=2n+a+1, \quad \Lambda_n=n(n+a).
$$
Both the Laguerre and the type \( R_I \) Laguerre have moments
$$
\LL_a( x^{k})= (a+1)_k.
$$
\end{remark}

\subsection{Meixner polynomials}
\label{sec:meixner-polynomials}
\aftersubsection

The Meixner polynomials
\[
M_n(x;b,c)= \Hyper21{-n,-x}{b}{1-\frac{1}{c}}
\]
have for their linear functional
$$
\LL_{b,c}(f(x))=(1-c)^{b}
\sum_{x=0}^\infty \frac{(b)_x}{x!} c^x f(x).
$$
Thus the choice of $d_n(x)=(x+b-1)(x+b-2)\cdots (x+b-n)$ shifts $b$
to $b-n$.

\begin{defn} 
Extend the linear functional $\LL_{b,c}$ to $V$ by
$$
\LL_{b,c}\left(\frac{1}{d_n(x)}\right)= \frac{(1-c)^n}{(b-n)_n },  \quad n\ge 1.
$$
\end{defn}

\begin{thm} Up to a constant, the type $R_I$ polynomials for 
$\LL_{b,c}$ and  $d_n(x)=(x+b-n)_n$
are shifted Meixner polynomials
\[
p_n(x)= c_nM_n(x;b-n,c) = c_n \Hyper21{-n, -x}{b-n}{1-\frac{1}{c}}.
\]
\end{thm}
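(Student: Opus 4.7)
The plan is to adapt the gluing argument used for the Jacobi and Laguerre cases in the previous subsections. The key technical ingredient will be the parameter-shift identity
\[
\LL_{b,c}\!\left(\frac{p(x)}{d_n(x)}\right) = \frac{(1-c)^n}{(b-n)_n}\,\LL_{b-n,c}(p(x))
\]
for every polynomial $p(x)$, which plays the same role here as identity \eqref{masterthm} did in the Jacobi case.

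First I would prove this gluing identity. For real $b > n$ both $\LL_{b,c}$ and $\LL_{b-n,c}$ admit their convergent series representations, and the identity reduces to the Pochhammer computation
\[
\frac{(b)_x}{d_n(x)} = \frac{\Gamma(x+b-n)}{\Gamma(b)} = \frac{(b-n)_x}{(b-n)_n},
\]
obtained from $d_n(x) = (x+b-n)_n = \Gamma(x+b)/\Gamma(x+b-n)$. Substitution into the defining series for $\LL_{b,c}$ yields the claim for $b > n$, and since both sides are rational functions of $b$, it extends to all admissible $b$. This verification is also what makes the extension of $\LL_{b,c}$ to the basis $\{1/d_n(x):n\ge 1\}$ consistent with the given formula $\LL_{b,c}(1/d_n(x)) = (1-c)^n/(b-n)_n$.

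Next I would apply the identity with $p(x) = x^k P_n(x)$ for $0 \le k \le n-1$, where $P_n(x)$ is the monic type $R_I$ polynomial of degree $n$. Its defining orthogonality $\LL_{b,c}(x^k P_n(x)/d_n(x)) = 0$ becomes
\[
\LL_{b-n,c}(x^k P_n(x)) = 0, \qquad 0 \le k \le n-1,
\]
which is precisely the classical orthogonality satisfied by the Meixner polynomial $M_n(x;b-n,c)$. Since the monic polynomial of degree $n$ obeying these relations is unique, $P_n(x)$ must be a constant multiple of $M_n(x;b-n,c)$.

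Finally I would check the nondegeneracy hypothesis $P_n(-\lambda_n/a_n) \ne 0$: here $a_n=1$ and $\lambda_n = b-n$, so $-\lambda_n/a_n = n-b$. The Chu--Vandermonde specialization ${}_2F_1(-n,\alpha;\alpha;z) = (1-z)^n$ evaluates $M_n(n-b;\,b-n,\,c) = c^{-n} \ne 0$. The main obstacle is Step~1---making sure the series-level computation is actually compatible with the abstract basis definition of $\LL_{b,c}$ for all $b$ (not just $b>n$). Once the gluing identity is in hand, the remaining steps follow routinely from the pattern established in the Jacobi and Laguerre subsections.
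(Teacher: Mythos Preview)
Your argument is correct and follows exactly the gluing template the paper lays out at the start of Section~\ref{glue}: establish the parameter-shift identity $\LL_{b,c}(p(x)/d_n(x)) = \frac{(1-c)^n}{(b-n)_n}\,\LL_{b-n,c}(p(x))$ via the series when $b>n$ and extend by rationality, then reduce the type $R_I$ orthogonality to the classical Meixner orthogonality with shifted parameter. One small slip: you write ``$a_n=1$ and $\lambda_n=b-n$,'' but the actual recurrence coefficients are $a_n=\frac{cn}{1-c}$ and $\lambda_n=\frac{cn(b-n)}{1-c}$; what you meant is that $d_n(x)/d_{n-1}(x)=x+(b-n)$, so the root $-\lambda_n/a_n=n-b$ and the evaluation $M_n(n-b;b-n,c)=c^{-n}\neq 0$ are nonetheless correct.
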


\begin{prop} We have for the monic type $R_I$ polynomials, $\hat p_n(x)$,
  $$
  \hat p_{n+1}(x)=(x-b_n)\hat p_n(x) -(a_nx+\lambda_n)  \hat p_{n-1}(x),
  $$
where
$$
b_n= \frac{n-(2n+1)c+bc}{1-c}, \quad a_n= \frac{cn}{1-c},
\quad \lambda_n = \frac{cn(b-n)}{1-c}.
$$
\end{prop}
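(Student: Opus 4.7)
The plan is to determine the monic normalization of the explicit hypergeometric form and then identify the three recurrence coefficients $b_n$, $a_n$, $\lambda_n$ by extracting three independent linear conditions from the recurrence.

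First I would compute the leading coefficient of $M_n(x; b-n, c)$. Using $(-n)_n = (-1)^n n!$ and the fact that the leading coefficient of $(-x)_n$ in $x$ is $(-1)^n$, only the $k = n$ term of the defining series contributes to $x^n$, giving coefficient $((c-1)/c)^n/(b-n)_n$. Hence the monic version is
\[
    \hat p_n(x) = \frac{(b-n)_n\, c^n}{(c-1)^n}\, M_n(x; b-n, c).
\]

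With $\hat p_n$ explicit, I would impose the recurrence $\hat p_{n+1}(x) = (x - b_n)\hat p_n(x) - (a_n x + \lambda_n)\hat p_{n-1}(x)$ at three inputs to pin down the three unknowns. The cleanest choices are: the zero $x_* = n - b$ of $a_n x + \lambda_n$ (which isolates $b_n$, once the ratio $\lambda_n/a_n = b - n$ is assumed); the point $x = 0$, at which $\hat p_n(0) = c_n$ by termination of the hypergeometric series; and the subleading coefficient of $x^n$, which expresses $b_n + a_n$ in terms of the $x^{n-1}$-coefficients of $\hat p_n$ and $\hat p_{n+1}$. At $x_* = n-b$ the Chu--Vandermonde-type cancellation ${}_2F_1(-n, b-n; b-n; z) = (1-z)^n = c^{-n}$ gives $\hat p_n(n-b) = (b-n)_n/(c-1)^n$, and a short evaluation of ${}_2F_1(-n-1, b-n; b-n-1; z)$ using $(b-n)_k/(b-n-1)_k = 1 + k/(b-n-1)$ yields $\hat p_{n+1}(n-b)/\hat p_n(n-b) = (b - (n+1)c)/(c-1)$, which is precisely $n - b - b_n$ for the claimed $b_n$. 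The remaining evaluation at $x = 0$ and the subleading-coefficient comparison then supply two linear equations whose solution is $a_n = cn/(1-c)$ and $\lambda_n = cn(b-n)/(1-c)$.

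The main obstacle is that the lower parameter of the ${}_2F_1$ shifts with $n$, so the classical Meixner three-term recurrence (valid only when the second parameter is held fixed) cannot be invoked directly; the identification must proceed through a parameter-shifting contiguous relation, or equivalently through the direct hypergeometric evaluations sketched above. Once the key closed form ${}_2F_1(-n, b-n; b-n; 1-1/c) = c^{-n}$ is in hand, the remainder is routine bookkeeping.
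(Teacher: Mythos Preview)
Your proposal is correct and follows essentially the approach the paper indicates: the paper does not give a standalone proof of this proposition, but at the start of Section~\ref{glue} it says that once the explicit polynomials are known ``we can find their three term recurrence by considering the higher term coefficients,'' and in the Askey--Wilson case it sketches exactly the idea you use (evaluate at the root of $d_n/d_{n-1}$, where the hypergeometric collapses to a product, to isolate $b_n$, then determine the remaining coefficient by matching $x^n$-coefficients). One small presentational point: rather than saying the ratio $\lambda_n/a_n=b-n$ is ``assumed,'' you should note that $a_nx+\lambda_n$ is necessarily a scalar multiple of $d_n(x)/d_{n-1}(x)=x+b-n$ by the general type $R_I$ setup, so the special evaluation point $x=n-b$ is forced from the outset.
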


\begin{remark} 
  The monic Meixner polynomials have 
  $$
  B_n=\frac{n+(n+b)c}{1-c}, \quad \Lambda_n=\frac{n(n+b-1)c}{(1-c)^2}.
  $$
The  moments of each of these Meixner and type \( R_I \) Meixner are 
$$
\LL_{b,c}( x^{k})= \sum_{j=1}^k S(k,j) (b)_j \left( \frac{c}{1-c}\right)^j,
$$
where $S(k,j)$ are the Stirling numbers of the second kind.
\end{remark}

\subsection{Little $q$-Jacobi polynomials}
\aftersubsection

The little $q$-Jacobi polynomials are
\[
p_n(x;a,b|q)= \qHyper21{q^{-n}, abq^{n+1}}{aq}{q;qx}.
\]

The linear functional for the little \( q \)-Jacobi orthogonality is
\begin{equation}
\label{littleq}
\LL_{a,b}(f(x))=\frac{(aq)_\infty}{(abq^2)_\infty}
\sum_{x=0}^\infty \frac{(bq)_x}{(q)_x} (aq)^x f(q^x).
\end{equation}

Choosing $d_n(x)=(bx;q^{-1})_n$ we see that  $b$ shifts to $bq^{-n}$ as
$$
\frac{(bq)_x}{d_n(q^x)}=\frac{(bq^{1-n})_x}{(bq^{1-n})_n},
$$
so the next theorem results using the extension
$$
\LL_{a,b}\left(\frac{1}{d_n(x)}\right)= \frac{(abq^{2-n})_n}{(bq^{1-n})_n}, \quad n\ge 1.
$$

\begin{thm} Up to a constant, the type $R_I$ polynomials for 
the little $q$-Jacobi polynomials with $\LL_{a,b}$ given by \eqref{littleq} 
and  $d_n(x)=(bx;q^{-1})_n$
are shifted little $q$-Jacobi polynomials
$$
p_n(x)= c_n p_n(x;a,bq^{-n}|q).
$$
\end{thm}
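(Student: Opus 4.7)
The plan is to mimic the gluing template of the earlier examples in this section: extend $\LL_{a,b}$ to all of $V$ via the prescribed values on the basis $\{1/d_n(x)\}$, and establish a master identity showing that dividing by $d_n(x)$ inside $\LL_{a,b}$ is equivalent, up to an explicit scalar, to applying the parameter-shifted functional $\LL_{a,bq^{-n}}$ to the numerator. Once this is in hand, the type $R_I$ orthogonality $\LL_{a,b}(x^k P_n(x)/d_n(x))=0$ for $0\le k<n$ reduces to the classical little $q$-Jacobi orthogonality at the shifted second parameter $bq^{-n}$, and the theorem will follow from the uniqueness of the classical $q$-orthogonal polynomial.

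The gluing is supplied by the Pochhammer identity
\[
\frac{(bq)_x}{(bq^x;q^{-1})_n} = \frac{(bq^{1-n})_x}{(bq^{1-n})_n},\qquad x\ge 0,
\]
which I would prove by expanding each factor as a product of $(1-bq^j)$ over a range of integers $j$. The left side becomes $\prod_{j=1}^{x}(1-bq^j)\big/\prod_{j=x-n+1}^{x}(1-bq^j)$, the right side becomes $\prod_{j=1-n}^{x-n}(1-bq^j)\big/\prod_{j=1-n}^{0}(1-bq^j)$, and cross-multiplying reduces both sides to $\prod_{j=1-n}^{x}(1-bq^j)$ (with straightforward endpoint checks when $x<n$).

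Substituting this identity term by term into the series \eqref{littleq} and simplifying the prefactor using $(abq^{2-n})_\infty/(abq^2)_\infty=(abq^{2-n};q)_n$ yields the master formula
\[
\LL_{a,b}\!\left(\frac{f(x)}{d_n(x)}\right) = \frac{(abq^{2-n})_n}{(bq^{1-n})_n}\,\LL_{a,bq^{-n}}(f(x))
\]
for every polynomial $f$. Setting $f=1$ recovers precisely the prescribed extension $\LL_{a,b}(1/d_n(x))=(abq^{2-n})_n/(bq^{1-n})_n$, confirming that the formula is consistent with the set-up and hence holds on all of $V$ by linearity. Taking $f(x)=x^k P_n(x)$ for $0\le k<n$ then shows that $P_n(x)$ is a degree-$n$ polynomial satisfying the classical orthogonality characterizing $p_n(x;a,bq^{-n}|q)$ up to a scalar, so $P_n(x)=c_n\,p_n(x;a,bq^{-n}|q)$.

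The main obstacle is the $q$-bookkeeping: translating between the base-$q^{-1}$ Pochhammer symbol $(bx;q^{-1})_n$ occurring in $d_n(x)$ and the base-$q$ series expressing \eqref{littleq}, and keeping track of the shifted infinite-product normalization constants so that the master identity is an exact formal equality in $a$ and $b$. Beyond this, the standing non-degeneracy hypotheses $(bq^{1-n})_n\ne 0$, $a_n\ne 0$, and $P_n(-\lambda_n/a_n)\ne 0$ must be checked; these follow from the explicit $_2\phi_1$ formula for $p_n(\cdot;a,bq^{-n}|q)$ evaluated at $x=q^{n-1}/b$ together with generic assumptions on $a,b,q$.
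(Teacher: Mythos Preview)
Your proposal is correct and follows essentially the same approach as the paper: establish the gluing identity $(bq)_x/d_n(q^x)=(bq^{1-n})_x/(bq^{1-n})_n$, use it to derive the master formula $\LL_{a,b}(f(x)/d_n(x))=\tfrac{(abq^{2-n})_n}{(bq^{1-n})_n}\LL_{a,bq^{-n}}(f(x))$, and then reduce the type $R_I$ orthogonality to the classical little $q$-Jacobi orthogonality at the shifted parameter. The paper does not write out a separate proof for this theorem, simply stating the gluing identity and the extension value and declaring that the result follows by the template already spelled out for Jacobi polynomials; your proposal just makes that template explicit in this case.
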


\begin{prop} We have for the type $R_I$ monic polynomials, $\hat p_n(x)$,
  $$
  \hat p_{n+1}(x)=(x-b_n)\hat p_n(x) -(a_nx+\lambda_n)  \hat p_{n-1}(x),
  $$
where
$$
b_n= q^n\frac{1+a-aq^n-aq^{n+1}}{1-abq^{n+1}}, 
\quad 
\lambda_n= \frac{aq^{2n-1}(1-q^n)(1-aq^n)}{(1-abq^n)(1-abq^{n+1})},
\quad a_n = -\frac{abq^{n}(1-q^n)(1-aq^n)}{(1-abq^n)(1-abq^{n+1})}.
$$
\end{prop}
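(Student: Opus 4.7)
The plan is to make $\hat p_n(x)$ explicit via the preceding theorem and then extract the three recurrence coefficients by matching data. Substituting $b\mapsto bq^{-n}$ in the little $q$-Jacobi definition $p_n(x;a,b|q)=\qhyper21{q^{-n},abq^{n+1}}{aq}{q;qx}$ causes the $n$-dependent parameter $abq^{n+1}$ to collapse to the $n$-independent value $abq$, giving
\[
\hat p_n(x)=C_n\cdot\qhyper21{q^{-n},abq}{aq}{q;qx},
\]
and the identity $(q^{-n};q)_n=(-1)^nq^{-\binom{n+1}{2}}(q;q)_n$ pins down the leading coefficient, whence $C_n=(-1)^nq^{\binom{n}{2}}(aq;q)_n/(abq;q)_n$. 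In particular, the coefficient $\alpha_n$ of $x^{n-1}$ in $\hat p_n$ and the coefficient of $x^{n-2}$ can be written explicitly in terms of $q$-Pochhammer symbols, using simple ratios such as $(q^{-n};q)_{n-1}/(q^{-n};q)_n=q/(q-1)$.

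The ratio $-\lambda_n/a_n$ is then immediate from the setup: the denominator $(bx;q^{-1})_n=\prod_{i=1}^n(1-bxq^{1-i})$ used to extend $\LL_{a,b}$ to $V$ is proportional to the canonical $\prod_{i=1}^n(a_ix+\lambda_i)$ coming from the type $R_I$ three-term recurrence of Section~\ref{sec:proof-orthogonality}, so matching roots factor-by-factor yields $\lambda_n=-a_nq^{n-1}/b$, in agreement with the claimed formulas. Two further scalar equations come from matching coefficients in the recurrence: the $x^n$ coefficient gives $\alpha_{n+1}-\alpha_n=-b_n-a_n$, and the $x^{n-1}$ coefficient gives a linear relation involving $\alpha_n$, $\alpha_{n-1}$, and the $x^{n-2}$ coefficients of $\hat p_n$ and $\hat p_{n+1}$. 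Together with $\lambda_n+a_nq^{n-1}/b=0$, this is a $3\times 3$ linear system uniquely determining $b_n$, $a_n$, $\lambda_n$.

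A slicker route for $b_n$ exploits $q$-Chu-Vandermonde,
\[
\qhyper21{q^{-n},abq}{aq}{q;q^n/b}=\frac{(1/b;q)_n}{(aq;q)_n},
\]
which evaluates $\hat p_n$ in closed form precisely at $x=q^{n-1}/b$, the root of $a_nx+\lambda_n$. Substituting this $x$ into the recurrence kills the $\hat p_{n-1}$-term and produces $b_n=q^{n-1}/b-\hat p_{n+1}(q^{n-1}/b)/\hat p_n(q^{n-1}/b)$; the numerator, not itself a $q$-Chu-Vandermonde sum, reduces via the identity $(q^{-n-1};q)_k=(1-q^{-n-1})(q^{-n};q)_{k-1}$ to an expression built from $q$-Chu-Vandermonde sums, after which $a_n$ and $\lambda_n$ follow from the $x^n$-coefficient equation and the ratio relation. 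The main obstacle by either route is the concluding algebra: reducing the expressions to the compact rational forms stated in the proposition requires a systematic if routine chain of $q$-Pochhammer manipulations and common-denominator reductions involving the factors $(1-q^n)$, $(1-aq^n)$, $(1-abq^n)$, and $(1-abq^{n+1})$.
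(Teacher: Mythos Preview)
Your proposal is correct and follows essentially the same approach that the paper indicates (though the paper does not spell out a proof for this particular proposition): once the explicit form of the shifted little $q$-Jacobi polynomials is known, extract $b_n$, $a_n$, $\lambda_n$ by matching high-order coefficients in the recurrence, using the root of $d_n(x)/d_{n-1}(x)$ to pin down $\lambda_n/a_n$. Your ``slicker route'' of evaluating at $x=q^{n-1}/b$ via $q$-Chu--Vandermonde is exactly the technique the paper sketches later for the Askey--Wilson case, so there is no methodological divergence.
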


\begin{remark} The  moments are 
$$
\LL_{a,b}( x^{k})=\frac{(aq;q)_k}{ (abq^{2};q)_k}.
$$
\end{remark}

\subsection{Big $q$-Jacobi polynomials}
\aftersubsection

The big $q$-Jacobi polynomials are
\[
P_n(x;a,b,c;q)= \qHyper32{q^{-n},abq^{n+1},x}{aq,cq}{q;q}.
\]

The linear functional for orthogonality is given by a $q$-integral
\begin{equation}
\label{bigq}
\LL_{a,b,c}(f(x))=\frac{1}{aq(1-q)}
\frac{(aq,bq,cq,abq/c;q)_\infty}{(q,abq^2,c/a,aq/c;q)_\infty}
\int_{cq}^{aq} \frac{(x/a,x/c;q)_\infty}{(x,bx/c;q)_\infty} f(x) d_q(x).
\end{equation}

There are two choices for $d_n(x)$ which shift parameters 
\begin{align*}
d_n(x) &=(bx/cq;q^{-1})_n, \ b\rightarrow bq^{-n},\\
d_n(x) &=(x/a;q)_n, \ a\rightarrow aq^{-n}. 
\end{align*} 

Extending $\LL_{a,b,c}$ may be accomplished via
$$
\LL_{a,b}\left(\frac{1}{d_n(x)}\right)=
\begin{cases}
\frac{(abq^{2-n})_n}{(bq^{1-n})_n (abq^{1-n}/c)_n}, {\text{ if }} d_n(x)=(bx/cq;q^{-1})_n, n\ge 1\\
\frac{(abq^{2-n})_n(aq^{1-n}/c)_n}{(aq^{1-n})_n (abq^{1-n}/c)_n (c/a)_n}, {\text{ if }} d_n(x)=(x/a;q)_n, n\ge 1.
\end{cases}
$$

\begin{thm} Up to a constant, the type $R_I$ polynomials for 
the big $q$-Jacobi polynomials with $\LL_{a,b,c}$ given by \eqref{bigq} 
and  $d_n(x)=(bx/cq;q^{-1})_n$
are shifted big $q$-Jacobi polynomials
$$
p_n(x)= c_n p_n(x;a,bq^{-n};q).
$$
Also choosing $d_n(x)=(x/a;q)_n$
we obtain the shifted big $q$-Jacobi polynomials
$$
p_n(x)= c_n p_n(x;aq^{-n},b;q).
$$
\end{thm}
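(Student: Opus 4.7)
The plan is to apply the gluing template from the earlier subsections in Section~\ref{glue}: identify a $q$-Pochhammer shift identity that turns $w(x)/d_n(x)$ into a big $q$-Jacobi weight with one parameter shifted, lift this to a shift identity for the linear functional, and then recognize the type $R_I$ polynomial as the shifted big $q$-Jacobi polynomial.

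For $d_n(x)=(bx/cq;q^{-1})_n$, the key identity is
\[
(bxq^{-n}/c;q)_\infty \;=\; (bx/cq;q^{-1})_n \cdot (bx/c;q)_\infty,
\]
which is a specialization of $(aq^{-n};q)_\infty = (aq^{-n};q)_n(a;q)_\infty$ together with $(aq^{-n};q)_n = (a/q;q^{-1})_n$. Dividing the big $q$-Jacobi weight by $d_n(x)$ therefore replaces $(bx/c;q)_\infty$ by $(bxq^{-n}/c;q)_\infty$ in the denominator, which is precisely the big $q$-Jacobi weight with $b\mapsto bq^{-n}$. For $d_n(x)=(x/a;q)_n$, the analogous identity $(x/a;q)_\infty = (x/a;q)_n\,(xq^n/a;q)_\infty$ lets the factor $(x/a;q)_n$ cancel and yields the big $q$-Jacobi weight with $a\mapsto aq^{-n}$.

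With the weight-level shift established, I would mimic the argument leading to \eqref{masterthm}: decompose any polynomial via $p(x)/d_n(x) = t(x) + \sum_{i=1}^n c_i/d_i(x)$ using the basis of Corollary~\ref{cor:basis2}, and prove that for some explicit constant $\kappa_n$,
\[
\LL_{a,b,c}\bigl(p(x)/d_n(x)\bigr) \;=\; \kappa_n\,\LL_{a,bq^{-n},c}\bigl(p(x)\bigr).
\]
One checks this in a region of parameters where the $q$-integrals converge and both sides are literal $q$-integrals, then extends by the rationality of both sides in $a,b,c$. Applied with $p(x)=x^k P_n(x)$ for $0\le k<n$, the type $R_I$ orthogonality $\LL_{a,b,c}(x^k P_n(x)/d_n(x))=0$ reduces to the classical big $q$-Jacobi orthogonality with $b\mapsto bq^{-n}$; hence $P_n(x)$ is a scalar multiple of $p_n(x;a,bq^{-n};q)$. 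The second choice $d_n(x)=(x/a;q)_n$ is treated identically via the second shift identity.

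The main obstacle is verifying that the constant $\kappa_n$ matches the value of $\LL_{a,b,c}(1/d_n(x))$ declared in the extension of the linear functional (the ugly fraction built from Pochhammer symbols). This amounts to evaluating a single $q$-integral of $w(x)/d_n(x)$, which is the normalization constant of the shifted big $q$-Jacobi measure, and tracking the prefactors in \eqref{bigq} through the shift $b\mapsto bq^{-n}$ (or $a\mapsto aq^{-n}$). Once that bookkeeping is done, the required non-degeneracy condition $P_n(-\lambda_n/a_n)\ne 0$ reduces to a terminating ${}_3\phi_2$ evaluation at the zero of $a_nx+\lambda_n$, as noted in the introduction to Section~\ref{glue}.
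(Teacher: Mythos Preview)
Your proposal is correct and follows the paper's intended approach. The paper does not give a separate proof of this theorem; it is one of the seven subsections covered by the gluing template established at the beginning of Section~\ref{glue} (the proposition containing \eqref{masterthm} and the summary paragraph following it), and you have faithfully instantiated that template with the correct $q$-Pochhammer shift identities for the big $q$-Jacobi weight.

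One small remark: the partial-fraction decomposition $p(x)/d_n(x)=t(x)+\sum_{i=1}^n c_i/d_i(x)$ you invoke is the content of \eqref{hatethisproof} (equivalently the argument in the proof of Lemma~\ref{lem:V'}), not Corollary~\ref{cor:basis2}; the latter gives bases of $V$ but not directly this expansion. Also, for the second choice $d_n(x)=(x/a;q)_n$ the $q$-integration endpoint $aq$ in \eqref{bigq} depends on $a$, so the ``check in a convergent region'' step is slightly more delicate than for the $b$-shift; the paper sidesteps this by defining $\LL_{a,b,c}(1/d_n(x))$ directly on the basis and appealing to rationality, exactly as you indicate.
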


\begin{prop} We have for the type $R_I$ monic polynomials, $\hat p_n(x)$,
$$
\hat p_{n+1}(x)=(x-b_n)\hat p_n(x) -(1-bxq^{-n}/c) \lambda_n \hat p_{n-1}(x),
\quad d_n(x)=(bx/cq;q^{-1})_n,
$$
where  
\begin{align*}
b_n&= -q \frac{ab-aq^n-cq^n-acq^n+acq^{2n}+acq^{2n+1}}
{1-abq^{n+1}} , \\ 
\lambda_n&=   -acq^{n+1} \frac{(1-q^n)(1-aq^n)(1-cq^n)}
{(1-abq^{n})(1-abq^{n+1})}.
\end{align*}
\end{prop}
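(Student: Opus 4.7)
The plan is to verify the recurrence directly from the explicit formula
\[
\hat p_n(x) = c_n\,{}_3\phi_2(q^{-n},\,abq,\,x;\,aq,\,cq;\,q,\,q) = c_n\sum_{k=0}^n \frac{(q^{-n};q)_k(abq;q)_k}{(q;q)_k(aq;q)_k(cq;q)_k}\,q^k(x;q)_k,
\]
pinned down in the preceding theorem by setting $b\mapsto bq^{-n}$ in the standard big $q$-Jacobi representation (note that the middle numerator parameter $abq^{n+1}$ collapses to the $n$-independent $abq$ under this shift). Expanding $(x;q)_n$ via the $q$-binomial theorem, the coefficient of $x^n$ in the ${}_3\phi_2$ arises only from $k=n$ and, after using $(q^{-n};q)_n = (-1)^nq^{-\binom{n+1}{2}}(q;q)_n$, equals $(abq;q)_n/[(aq;q)_n(cq;q)_n]$. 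Monicity therefore forces
\[
c_n = \frac{(aq;q)_n(cq;q)_n}{(abq;q)_n}.
\]

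Next I would extract two further pieces of data from $\hat p_n(x)$. First, the subleading coefficient $e_n$ of $x^{n-1}$, obtained by combining the $k=n$ contribution (the $x^{n-1}$-term of $(x;q)_n$, namely $[n]_q(-1)^{n-1}q^{\binom{n-1}{2}}$) with the $k=n-1$ contribution (the leading $x^{n-1}$-term of $(x;q)_{n-1}$); this is a routine $q$-Pochhammer calculation. Second, since $(1;q)_k=0$ for $k\ge1$, only the $k=0$ summand survives at $x=1$, yielding the clean evaluation $\hat p_n(1) = c_n$.

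With these in hand, and noting that $d_n(x)/d_{n-1}(x) = 1 - bxq^{-n}/c$, matching the coefficient of $x^n$ on both sides of the stated recurrence gives
\[
-b_n + \lambda_n\,\tfrac{b}{c}\,q^{-n} = e_{n+1} - e_n,
\]
while substituting $x=1$ gives
\[
c_{n+1} = (1-b_n)c_n - \lambda_n\Bigl(1-\tfrac{b}{c}q^{-n}\Bigr)c_{n-1}.
\]
This is a $2\times 2$ linear system in $b_n,\lambda_n$ whose solution, after telescoping the $c_n$-ratios via $(aq;q)_{n+1}/(aq;q)_n = 1-aq^{n+1}$ and its analogues for $(cq;q)_n$ and $(abq;q)_n$, collapses to the claimed closed forms. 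The main obstacle I foresee is the bookkeeping with shifted $q$-Pochhammer symbols: the parameter shift $b\mapsto bq^{-n}$ makes the ratios $c_{n\pm 1}/c_n$ and the sum defining $e_n$ algebraically messy, and confirming that the $2\times 2$ system simplifies to exactly the stated $b_n$ and $\lambda_n$ requires careful $q$-series manipulation. The overall strategy parallels the analogous verifications for the Meixner and little $q$-Jacobi polynomials in the preceding subsections.
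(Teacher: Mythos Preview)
Your approach is correct and in the same spirit as the paper's: the paper does not spell out a proof of this particular proposition, but its announced method (Section~8, and the sketch for the Askey--Wilson case) is exactly to extract $b_n$ and $\lambda_n$ from the explicit formula by comparing leading/subleading coefficients together with one special evaluation. The only tactical difference is the choice of evaluation point. You evaluate at $x=1$, where $(1;q)_k=0$ kills all $k\ge1$ terms and gives $\hat p_n(1)=c_n$ for free; the paper (in the analogous Askey--Wilson sketch) instead evaluates at the zero of the factor $1-bxq^{-n}/c$, i.e.\ $x_n=cq^n/b$, which annihilates the $\lambda_n$ term in the recurrence and therefore determines $b_n$ directly as $x_n-\hat p_{n+1}(x_n)/\hat p_n(x_n)$, after which $\lambda_n$ falls out from the $x^n$-coefficient. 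Your route costs an extra linear equation but saves you the evaluation of the ${}_3\phi_2$ at $x_n$; the paper's route decouples the two unknowns but needs a product evaluation at $x_n$. Either way the remaining work is straightforward $q$-Pochhammer bookkeeping, as you note.
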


\begin{prop} We have for the type $R_I$ monic polynomials, $\hat p_n(x)$,
$$
\hat p_{n+1}(x)=(x-b_n)\hat p_n(x) -(1-xq^{n-1}/a) \lambda_n \hat p_{n-1}(x),
\quad d_n(x)=(x/a;q)_n,
$$
where  
\begin{align*}
b_n&= q^{-n} \frac{a+aq-aq^{n+1}-abq^{n+1}-acq^{n+1}+cq^{2n+1}}
{1-abq^{n+1}} , \\ 
\lambda_n&=   a^2q^{2-2n} \frac{(1-q^n)(1-bq^n)(1-cq^n)}
{(1-abq^{n})(1-abq^{n+1})}.
\end{align*}
\end{prop}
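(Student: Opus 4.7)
The plan is to combine the preceding theorem, which identifies $\hat p_n(x) = c_n\,P_n(x;aq^{-n},b,c;q)$ up to a normalising scalar $c_n$, with a direct verification of the claimed three-term recurrence. The scalar $c_n$ is fixed by requiring monicity: since the top term of the ${}_3\phi_2$ series representing $P_n(x;A,b,c;q)$ comes from $k=n$ and $[x^n](x;q)_n = (-1)^n q^{\binom{n}{2}}$, the value of $c_n$ is read off immediately as an explicit ratio of $q$-Pochhammer symbols in $a,b,c,q$ (after substituting $A = aq^{-n}$). With $\hat p_{n-1}$, $\hat p_n$, and $\hat p_{n+1}$ all explicit, the claimed recurrence becomes a polynomial identity of degree at most $n$ (the $x^{n+1}$ coefficients cancel automatically by monicity), and it is determined by two scalar unknowns $b_n$ and $\lambda_n$.

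Next I would pin down $b_n$ by comparing the coefficient of $x^n$ on both sides, which yields a linear equation for $b_n$ involving $[x^n]\hat p_{n+1}$, $[x^{n-1}]\hat p_n$, and the $x^n$-coefficient of $(1-xq^{n-1}/a)\lambda_n\hat p_{n-1}$. To determine $\lambda_n$ I would exploit the vanishing $1 - xq^{n-1}/a = 0$ at $x = aq^{1-n}$: at this point the recurrence collapses to $\hat p_{n+1}(aq^{1-n}) = (aq^{1-n}-b_n)\hat p_n(aq^{1-n})$, which independently cross-checks the formula for $b_n$, and evaluation at a second convenient value (for instance $x=0$, where $(x;q)_n = 1$) then forces $\lambda_n$. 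Because $\hat p_n$ is uniquely characterised by the orthogonality relations of Theorem~\ref{thm:unique L} together with monicity, matching these two coefficients (or values) suffices to guarantee the full identity.

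The main obstacle will be the $q$-Pochhammer bookkeeping. The substitution $A = aq^{-n}$ depends on $n$, so the standard contiguous relations for big $q$-Jacobi polynomials do not apply off the shelf, and one must handle the three series for $\hat p_{n-1}, \hat p_n, \hat p_{n+1}$ in parallel, keeping track of parameter shifts as $n \to n\pm 1$. The key simplifications
\[
(Aq;q)_k\big|_{A=aq^{-n}} = (aq^{1-n};q)_k, \qquad (Abq^{n+1};q)_k\big|_{A=aq^{-n}} = (abq;q)_k,
\]
and their variants, realign the $n$-dependence across the three series so that terms can be compared. Once these simplifications are in place, the calculation parallels the previous proposition for the choice $d_n(x) = (bx/cq;q^{-1})_n$, and the stated closed forms for $b_n$ and $\lambda_n$ emerge from a direct $q$-hypergeometric verification rather than from any new combinatorial input.
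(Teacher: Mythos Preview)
Your proposal is correct and aligns with the paper's method. The paper does not prove this proposition explicitly, but its intended approach is indicated in Section~7 (``once we know an explicit formula for the type $R_I$ polynomials, we can find their three term recurrence by considering the higher term coefficients'') and in the sketch given for the Askey--Wilson case, which evaluates at the zero of $d_n(x)/d_{n-1}(x)$ to determine $b_n$ and then reads off $\lambda_n$ from the coefficient of $x^n$; your plan uses exactly these two ingredients, just in a slightly different order. One small organizational point: the $x^n$-coefficient comparison you lead with is a linear relation in \emph{both} $b_n$ and $\lambda_n$ (because $(1-xq^{n-1}/a)\lambda_n\hat p_{n-1}$ contributes at degree $n$), so in practice it is cleanest to do what your second paragraph already suggests and what the paper's Askey--Wilson sketch does---first fix $b_n$ from the evaluation at $x=aq^{1-n}$, then substitute back to get $\lambda_n$ from either the $x^n$-coefficient or the value at $x=0$.
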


\subsection{The Askey--Wilson polynomials}
\aftersubsection

Here we consider separately the absolutely continuous case and the purely discrete case.

\subsubsection{The continuous case}
\aftersubsection

The Askey--Wilson polynomials are defined by 
\[
p_n(x;a,b,c,d|q)= \frac{(ab,ac,ad;q)_n}{a^n}  \qHyper43{q^{-n}, abcdq^{n-1},az,a/z}{ab,ac,ad}{q;q},
\]
$$
z=e^{i\theta}, \quad x=\cos\theta=(z+1/z)/2.
$$

Note that
$$
(Az,A/z;q)_n=\prod_{j=0}^{n-1} (1-2Axq^j+A^2q^{2j})
$$
is a polynomial in $x$ of degree $n$. Thus $p_n(x;a,b,c,d|q)$ is a function of $x$.

The weight function for the Askey--Wilson polynomials is
$$
\LL_{a,b,c,d}(r(x))=\frac{(q,ab,ac,ad,bc,bd,cd)_\infty}{2\pi(abcd)_\infty}
\int_0^\pi r((e^{i\theta}+e^{-i\theta})/2) w(\theta,a,b,c,d) d\theta,
\quad \LL_{a,b,c,d}(1)=1,
$$
where
$$
w(\theta,a,b,c,d)=\frac{(e^{2i\theta},e^{-2i\theta})_\infty}
{(ae^{i\theta},ae^{-i\theta},be^{i\theta},be^{-i\theta},
ce^{i\theta},ce^{-i\theta},de^{i\theta},de^{-i\theta})_\infty}.
$$

Let 
$$
d_n(x)=(bz/q,b/zq;q^{-1})_n= \prod_{j=0}^{n-1} (1-2bxq^{-1-j}+b^2q^{-2-2j}).
$$
We next define the extension of $\LL_{a,b,c,d}$ to $V$ using the 
Askey--Wilson integral and 
$$
\frac{w(\theta,a,b,c,d)}{d_n(x)}= w(\theta,a,bq^{-n},c,d).
$$

\begin{defn}
Suppose $\LL_{a,b,c,d}$ is a linear functional on $V$  such that
\begin{equation}
\label{myL}
\LL_{a,b,c,d}\left(\frac{(cz,c/z;q)_j (az,a/z;q)_k}{(bzq^{-n},bq^{-n}/z;q)_n}\right)
=
\frac{(cd;q)_j (ac;q)_{k+j} (ad;q)_k}
{(abq^{k-n};q)_{n-k}(bcq^{j-n};q)_{n-j}(bdq^{-n};q)_n(abcd;q)_{k+j-n}}.
\end{equation}
\end{defn}

\begin{thm}
The type $R_I$ polynomials for $\LL_{a,b,c,d}$ and denominator polynomials 
$$
d_n(x)= \prod_{j=0}^{n-1} (1-2bxq^{-1-j}+b^2q^{-2-2j})
$$
are shifted Askey--Wilson polynomials 
\[
p_n(x;a,bq^{-n},c,d|q)= \frac{(abq^{-n},ac,ad;q)_n}{a^n}  \qHyper43{q^{-n},abcd/q,az,a/z}{ac,ad,abq^{-n}}{q;q}.
\]
\end{thm}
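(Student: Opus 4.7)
The plan is to imitate the gluing paradigm developed throughout Section~\ref{glue} and previously applied to the Jacobi, Laguerre, Meixner, and (big and little) $q$-Jacobi families. The central observation is that the denominator polynomial $d_n(x) = (bz/q, b/(zq); q^{-1})_n$ glues perfectly onto the Askey--Wilson weight: using the telescoping identity $(bz;q)_\infty = (bz/q;q^{-1})_n (bzq^{-n};q)_\infty$ together with the analogous identity for $b/z$, one obtains
\[
\frac{w(\theta, a, b, c, d)}{d_n(x)} = w(\theta, a, bq^{-n}, c, d).
\]
Thus dividing by $d_n(x)$ has the effect of shifting the parameter $b \to bq^{-n}$ in the weight function.

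The steps I would carry out are as follows. First, I would verify that the algebraic definition (myL) of $\LL_{a,b,c,d}$ on $V$ agrees with what the Askey--Wilson $q$-beta integral would produce on the relevant basis elements, after the shift $b\to bq^{-n}$. Concretely, the normalization constants in the definition of $\LL_{a,b,c,d}$ evaluated at the shifted parameter differ from those at the original parameter by the ratio of $q$-shifted factorials that precisely matches the denominators in (myL), while the numerators arise from the extended Askey--Wilson integral evaluation
\[
\LL_{a',b',c,d}\bigl((cz,c/z;q)_j(a'z,a'/z;q)_k\bigr) = \frac{(cd;q)_j\,(a'c;q)_{k+j}\,(a'd;q)_k}{(a'b';q)_{k+j}\,\cdots}
\]
specialized at $a'=a$, $b'=bq^{-n}$. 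Second, since the family $\{(cz,c/z;q)_j(az,a/z;q)_k : j,k\ge 0\}$ spans the polynomials in $x$ (its degrees cover all nonnegative integers), the rule (myL) extends uniquely to a linear functional on $V$ by Lemma~\ref{lem:V'} and Proposition~\ref{prop:basis}, and in fact the extension is determined by the shift: for any polynomial $p(x)$,
\[
\LL_{a,b,c,d}\bigl(p(x)/d_n(x)\bigr) = K_n \cdot \LL_{a, bq^{-n}, c, d}\bigl(p(x)\bigr),
\]
for a suitable nonzero constant $K_n$ depending only on $n$ and the parameters.

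Third, with this key shift identity in hand, the type $R_I$ orthogonality
\[
\LL_{a,b,c,d}\bigl(x^k P_n(x)/d_n(x)\bigr) = 0, \qquad 0\le k < n,
\]
translates directly into
\[
\LL_{a,bq^{-n},c,d}\bigl(x^k P_n(x)\bigr) = 0, \qquad 0\le k < n,
\]
which is the classical Askey--Wilson orthogonality with $b$ replaced by $bq^{-n}$. Uniqueness of monic orthogonal polynomials then forces $P_n(x) = c_n\, p_n(x; a, bq^{-n}, c, d \mid q)$, as claimed. The nondegeneracy condition $P_n(-\lambda_n/a_n) \ne 0$ can be verified directly from the explicit ${}_4\phi_3$ expression, since $-\lambda_n/a_n$ corresponds under the $x = (z+z^{-1})/2$ substitution to the roots of $d_n$, at which $p_n(x; a, bq^{-n}, c, d\mid q)$ evaluates to a nonzero truncated $q$-hypergeometric sum for generic parameters. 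The main obstacle is the bookkeeping in the second step: checking that the explicit RHS of (myL) really does match what the shifted Askey--Wilson integral produces for the given basis, since one must track the ratio of the normalizing infinite $q$-products against the mixed finite $q$-shifted factorials in the denominator of (myL). Once that identification is made, the conclusion is automatic.
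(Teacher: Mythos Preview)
Your proposal is correct and follows exactly the gluing paradigm that the paper establishes in the Jacobi case and then applies uniformly to each subsequent family, including Askey--Wilson: identify the shift $b\to bq^{-n}$ induced by dividing by $d_n(x)$, use the resulting identity $\LL_{a,b,c,d}(p(x)/d_n(x)) = K_n\,\LL_{a,bq^{-n},c,d}(p(x))$ to convert the type $R_I$ orthogonality into classical Askey--Wilson orthogonality with shifted parameter, and invoke uniqueness. The paper in fact states this theorem without writing out a separate proof, relying on the template already worked out for Jacobi; your write-up simply fills in the bookkeeping the paper leaves implicit.
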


Note that
$$
d_n(x)/d_{n-1}(x)=1-2bxq^{-n}+b^2q^{-2n}
$$
which is the factor in the type $R_I$ recurrence relation.

\begin{prop} We have for the monic type $R_I$ polynomials, $\hat p_n(x)$,
$$
\hat p_{n+1}(x)=(x-b_n)\hat p_n(x) - \lambda_n (1-2bxq^{-n}+b^2q^{-2n}) \hat p_{n-1}(x),
$$
where
\begin{align*}
b_n&= \frac{bq^{-n}+q^n/b}2-\frac{q^{1+2n}}{2b(1-abcdq^{n-1})}\bigg(
  (1-abq^{-1-n}) (1-bcq^{-1-n})(1-bdq^{-1-n})\\
& \qquad -(1-q^{-1-n})(1-abcd/q)(1-b^2q^{-1-2n})\bigg),\\
 \lambda_n&= \frac{(1-q^n)(1-acq^{n-1})(1-adq^{n-1})(1-cdq^{n-1})}
 {4(1-abcdq^{n-2})(1-abcdq^{n-1})}.
\end{align*}
\end{prop}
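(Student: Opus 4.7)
My plan is a direct leading-coefficient computation built on the identification from the preceding theorem. That theorem gives $\hat p_n(x)$ as a monic rescaling of the shifted Askey--Wilson polynomial $p_n(x;a,bq^{-n},c,d|q)$. By the very setup of type $R_I$ polynomials, $\hat p_n$ satisfies a three-term recurrence of the form $\hat p_{n+1}=(x-b_n)\hat p_n-(a_n x+\lambda_n)\hat p_{n-1}$. Since the gluing denominator is $d_n(x)=(bz/q,b/zq;q^{-1})_n$ with ratio $d_n(x)/d_{n-1}(x)=1-2bxq^{-n}+b^2q^{-2n}$, the polynomial factor $a_n x+\lambda_n$ is proportional (up to an overall rescaling of $P_n$ which we absorb into a scalar renamed $\lambda_n$) to $1-2bxq^{-n}+b^2q^{-2n}$. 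The task therefore reduces to pinning down the two scalars $b_n$ and $\lambda_n$.

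Writing $\hat p_n(x)=x^n+c_{n,1}x^{n-1}+c_{n,2}x^{n-2}+\cdots$, comparison of the coefficients of $x^n$ and $x^{n-1}$ on the two sides of the claimed recurrence gives the $2\times 2$ linear system
\begin{align*}
c_{n+1,1}&=c_{n,1}-b_n+2bq^{-n}\lambda_n,\\
c_{n+1,2}&=c_{n,2}-b_nc_{n,1}-(1+b^2q^{-2n})\lambda_n+2bq^{-n}\lambda_n\, c_{n-1,1}.
\end{align*}
The coefficients $c_{n,1}$ and $c_{n,2}$ are to be read off from the explicit ${}_{4}\phi_{3}$ expression for $\hat p_n$: each factor $(az,a/z;q)_k=\prod_{j=0}^{k-1}(1-2axq^j+a^2q^{2j})$ is a degree-$k$ polynomial in $x$ whose three highest coefficients are elementary symmetric functions of the roots $(aq^j+q^{-j}/a)/2$, and these admit clean closed forms via $q$-binomial identities. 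Summing over $k$ then expresses $c_{n,1}$ and $c_{n,2}$ as ${}_{3}\phi_{2}$-type basic hypergeometric sums in $a,b,c,d,q^n$, which I would substitute into the linear system and solve for $b_n$ and $\lambda_n$.

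The main obstacle is the final algebraic simplification: the ${}_{3}\phi_{2}$ series entering $c_{n,1}$ and $c_{n,2}$ must collapse, after the linear solve, to the compact rational expressions stated in the proposition. This cancellation is not mechanical and typically requires a judicious application of a $q$-Saalsch\"utz or Jackson summation, or equivalently a contiguous relation for ${}_{4}\phi_{3}$. A practical sanity check before committing to the full $q$-series calculation is to verify both formulas for $n=1$ and $n=2$ directly; once agreement is established there, the expected pole/zero structure in $q^n$ strongly constrains the closed form and often identifies the precise summation identity needed to finish the general argument.
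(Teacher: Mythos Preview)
Your approach is valid in principle and would eventually succeed, but it takes a different and considerably heavier route than the paper's. You set up a $2\times2$ linear system in $b_n,\lambda_n$ by matching the coefficients of $x^{n}$ and $x^{n-1}$, which forces you to compute \emph{both} sub-leading coefficients $c_{n,1}$ and $c_{n,2}$ of the monic shifted Askey--Wilson polynomial and then simplify their combination. The paper instead decouples the two unknowns: it evaluates the recurrence at the special point $x_n=\tfrac12(bq^{-n}+q^n/b)$, i.e.\ $z=bq^{-n}$, which kills the factor $1-2bxq^{-n}+b^2q^{-2n}$ and leaves $b_n=x_n-\hat p_{n+1}(x_n)/\hat p_n(x_n)$. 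At this $z$, $\hat p_n(x_n)$ reduces to a balanced ${}_3\phi_2$ summable in closed form, while $\hat p_{n+1}(x_n)$ is handled by a Sears transformation of the balanced ${}_4\phi_3$. With $b_n$ in hand, $\lambda_n$ then follows from a \emph{single} coefficient comparison (the coefficient of $x^n$), which is exactly your first equation; the second equation and hence $c_{n,2}$ are never needed.

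The practical upshot: your plan requires an explicit closed form for $c_{n,2}$ of a general Askey--Wilson polynomial (with shifted $b$) and a subsequent algebraic collapse that you yourself flag as the main obstacle. The paper's evaluation-at-a-root trick trades that computation for two standard ${}_q$-hypergeometric identities applied to \emph{values}, not coefficients, and is substantially shorter. If you want to push your method through, note that only the combination $c_{n+1,1}-c_{n,1}$ enters the first equation and is the classical quantity $A_n+C_n$ (shifted) for Askey--Wilson, so $\lambda_n$ actually follows from $c_{n,1}$ alone once you have $b_n$; it is really only the determination of $b_n$ that distinguishes the two arguments.
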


\begin{proof}[Sketch of Proof]  The value $z=z_n=bq^{-n}$ puts $x=x_n=1/2(bq^{-n}+q^n/b)$, 
and then $d_n(x_n)/d_{n-1}(x_n)=0$. For this 
value of $x_n$, $p_n(x_n)$ is evaluable as a product by 
the 1-balanced ${}_3\phi_2$ evaluation.  This choice also allows $p_{n+1}(x_n)$ to be a sum of 
2 terms, using the Sears transformation for a 1-balanced  ${}_4\phi_3.$  This determines the value of 
$b_n$, and $\lambda_n$ can be found by finding the coefficients of $x^{n}.$
\end{proof}

\subsubsection{The $q$-Racah case}
\aftersubsection

For completeness we record the analogous results for the $q$-Racah polynomials.

\begin{defn} For $0\le n\le N$, let $p_n(X;b,c,d,N;q)$ be 
the polynomial of degree $n$ in $X=\mu(x)= q^{-x}+cdq^{x+1}$ 
\[
p_n(\mu(x);b,c,d,N;q)= \qHyper43{q^{-n},bq^{n-N},q^{-x},cdq^{x+1}}{q^{-N},bdq,cq}{q;q}.
\]
\end{defn}

Since
$$
(q^{-x};q)_j(cdq^{x+1};q)_j=\prod_{s=0}^{j-1} (1-q^s\mu(x)+cdq^{1+2s})
$$
$p_n(\mu(x);b,c,d,N;q)$ is a polynomial in $\mu(x)=X$ of degree $n$.

Let 
\[
  d_n(X)=\prod_{j=0}^{n-1} (1-Xq^j/bd+q^{2j+1}c/b^2d), 
\]
or
\[
  d_n(\mu(x))=(q^{-x}/bd, cq^{x+1}/b;q)_n.
\]

\begin{defn} 
Let $\MM_{b,c,d,N}$ be the linear functional defined on $V$ by
\[
\MM_{b,c,d,N}(r(X))= \sum_{x=0}^N vwp(x,b,c,d,N) r(\mu(x)),
\]
where
\[
vwp(x,b,c,d,N) = \frac{(dq)_N (cq/b)_N}{(cdq^2)_N (1/b)_N}
\frac{(cdq)_x}{(q)_x}\frac{1-cdq^{1+2x}}{1-cdq}
\frac{(q^{-N})_x}{(cdq^{N+2})_x} \frac{(cq)_x}{(dq)_x}
\frac{(bdq)_x}{(cq/b)_x} (q^{N}/b)^{x}.
\]
See \cite[(II-21)]{GR}. Here the constants have been chosen, using the very well
poised ${}_6\phi_5$ summation theorem, so that $\MM_{b,c,d,N}(1)=1.$
\end{defn}

The classical orthogonal polynomials for $\MM_{b,c,d,N}$ are $q$-Racah polynomials
$$
\MM_{b,c,d,N}\left(p_n(X;b,c,d,N;q)p_m(X;b,c,d,N;q)\right)=0  \qquad{\text{if }} n\neq m.
$$

Note that gluing does occur as
$$
\frac{vwp(x,b,c,d,N)}{d_n(\mu(x))} = c(n,b,c,d,N)\times  vwp(x,bq^{-n},c,d,N),
$$
for some constant $c(n,b,c,d,N)$ independent of $x$.

\begin{thm} 
\label{typeIracah}
The polynomials $p_n(X;bq^{-n},c,d,N;q)$, $0\le n\le N$, are the type 
$R_I$ polynomials 
for the $q$-Racah linear functional $\MM_{b,c,d,N}$ with 
\begin{align*}
d_n(X)&=\prod_{j=0}^{n-1} (1-Xq^j/bd+q^{2j+1}c/b^2d), \\
d_n(\mu(x))&=(q^{-x}/bd, cq^{x+1}/b;q)_n.
\end{align*}
\end{thm}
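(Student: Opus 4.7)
The plan is to mirror the argument for the continuous Askey--Wilson case, where the key mechanism is that dividing the weight function by $d_n$ shifts exactly one parameter ($b \mapsto bq^{-n}$) in the linear functional. First, I would rigorously verify the ``gluing'' identity stated just before the theorem, namely
\[
\frac{vwp(x,b,c,d,N)}{d_n(\mu(x))} = c(n,b,c,d,N)\cdot vwp(x,bq^{-n},c,d,N),
\]
by expanding $d_n(\mu(x))=(q^{-x}/bd,\,cq^{x+1}/b;q)_n$ and matching the $x$-dependent $q$-shifted factorials in $vwp(x,b,c,d,N)$ against those in $vwp(x,bq^{-n},c,d,N)$. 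The $x$-dependent terms $(bdq;q)_x/(cq/b;q)_x\cdot(q^N/b)^x$ of $vwp(x,b,c,d,N)$ transform into $(bdq^{1-n};q)_x/(cq^{n+1}/b;q)_x\cdot(q^{N+n}/b)^x$ after the parameter shift, and the ratio is accounted for precisely by the two $q$-shifted factorials in $d_n(\mu(x))$.

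Second, given the gluing identity, for any monomial $X^k$ with $0\le k<n$,
\[
\MM_{b,c,d,N}\!\left(\frac{X^k\, p_n(X;bq^{-n},c,d,N;q)}{d_n(X)}\right)
= c(n,b,c,d,N)\,\MM_{bq^{-n},c,d,N}\!\bigl(X^k\, p_n(X;bq^{-n},c,d,N;q)\bigr) = 0,
\]
since $p_n(X;bq^{-n},c,d,N;q)$ is the classical $q$-Racah polynomial orthogonal with respect to $\MM_{bq^{-n},c,d,N}$ (the shifted parameter is still a legitimate $q$-Racah parameter as long as $0\le n\le N$, which is our range). This is exactly the defining orthogonality $\LL(X^k Q_n(X))=0$ of the type $R_I$ polynomial attached to $\MM_{b,c,d,N}$ with denominator $d_n$. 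By Theorem~\ref{thm:unique L}, such $P_n$ is unique up to a scalar, so $p_n(X;bq^{-n},c,d,N;q)$ agrees with the type $R_I$ polynomial up to normalization.

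Third, I would verify the standing nondegeneracy assumptions $a_n\neq 0$ and $P_n(-\lambda_n/a_n)\neq 0$. The factor $d_n(X)/d_{n-1}(X) = 1 - Xq^{n-1}/bd + q^{2n-1}c/b^2d$ has a single root $X=X_n=bdq^{1-n}+cq^n/b$, which gives $-\lambda_n/a_n=X_n$, and the coefficient of $X$ is $-q^{n-1}/bd\neq 0$ for generic $b,d$, so $a_n\neq 0$. Evaluating $p_n(X_n;bq^{-n},c,d,N;q)$ corresponds to substituting $q^{-x}=bdq^{1-n}$ or $cdq^{x+1}=cq^n/b$ into the terminating, 1-balanced ${}_4\phi_3$, which reduces to a $q$-Saalsch\"utz summation and produces a nonzero product of $q$-shifted factorials under generic parameter values.

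The main obstacle will be the first step: bookkeeping the $q$-shifted factorials, $q$-powers, and sign conventions in $vwp(x,b,c,d,N)/d_n(\mu(x))$ so that they collapse exactly into $vwp(x,bq^{-n},c,d,N)$ with an $x$-independent constant. Once this gluing identity is established, the reduction to the classical $q$-Racah orthogonality and the verification of nondegeneracy run in exact parallel with the Askey--Wilson proof sketched earlier in the section.
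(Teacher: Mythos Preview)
Your proposal is correct and follows exactly the paper's approach: the paper does not write out a separate proof for this theorem but simply records the gluing identity $vwp(x,b,c,d,N)/d_n(\mu(x)) = c(n,b,c,d,N)\cdot vwp(x,bq^{-n},c,d,N)$ and then states the result, relying on the template argument already given at the start of Section~\ref{glue} for the Jacobi case (Proposition around \eqref{masterthm}). Your three steps---verify the gluing identity, reduce the type $R_I$ orthogonality to the classical $q$-Racah orthogonality with parameter $b\mapsto bq^{-n}$, and check nondegeneracy via the $q$-Saalsch\"utz evaluation---are precisely that template made explicit; the only minor quibble is that the uniqueness you invoke is really Proposition~\ref{prop:PQunique} (uniqueness of $P_n$ from the determinant condition) rather than Theorem~\ref{thm:unique L} (uniqueness of $\LL$).
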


\begin{prop} The monic type $R_I$ $q$-Racah polynomials satisfy
for $0\le n \le N-1$
$$
\hat p_{n+1}(X)=(X-b_n)\hat p_n(X) -\lambda_n(1-Xq^{n-1}/bd+q^{2n-1}c/b^2d)\hat p_{n-1}(X)
$$
where
\begin{align*}
b_n&=-( -b+bd(-1+q^{N-n}+q^{N-n+1}-q^{N+1})+q^n\\
& \qquad + c(q^n-q^{2n}-q^{2n+1}+q^{N+n+1})-bcdq^{N+1}+cdq^{N+n+1})/(bq^n-q^{N}),\\
\lambda_n&= dq^{1-2n} \frac{(1-q^n)(1-cq^{n})(1-q^{N-n+1})(1-dq^{N-n+1})}
 {(1-q^{N-n}/b)(1-q^{N-n+1}/b)}.
\end{align*}
\end{prop}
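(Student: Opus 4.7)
The plan mirrors the proof sketch given for the continuous Askey--Wilson case. Set \( X_n := bdq^{1-n}+cq^n/b \), which is the unique zero of the linear factor
\[
  \frac{d_n(X)}{d_{n-1}(X)} = 1 - \frac{Xq^{n-1}}{bd} + \frac{q^{2n-1}c}{b^2d}.
\]
In the parametrization \( X=\mu(x)=q^{-x}+cdq^{x+1} \), the value \( X_n \) corresponds to the two formal specializations \( q^{-x_n}=bdq^{1-n} \) and \( cdq^{x_n+1}=cq^n/b \). Evaluating the three-term recurrence at \( X=X_n \) kills the \( \hat p_{n-1} \) term, leaving
\[
  b_n = X_n - \frac{\hat p_{n+1}(X_n)}{\hat p_n(X_n)}.
\]

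To compute \( \hat p_n(X_n) \), substitute these values into the defining \({}_4\phi_3\) for \( p_n(X;bq^{-n},c,d,N;q) \); the upper parameter \( q^{-x_n}=bdq^{1-n} \) then coincides with the lower parameter \( bdq^{1-n} \), so the series collapses to a \({}_3\phi_2\). A direct check shows the resulting series is \( q \)-Saalsch\"utzian, and the \( q \)-Saalsch\"utz summation formula yields a closed-form product. For \( \hat p_{n+1}(X_n) \) the parameter shift is \( b\mapsto bq^{-n-1} \), so this cancellation does not occur; however, the \({}_4\phi_3\) remains Saalsch\"utzian, and Sears' transformation rewrites it as a sum of two \({}_3\phi_2\)'s, each summable. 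The monic normalizations are obtained by dividing by the leading coefficients
\[
  \alpha_n = \frac{(bq^{-N};q)_n}{(q^{-N};q)_n\,(bdq^{1-n};q)_n\,(cq;q)_n},
\]
read off from the \( j=n \) term of the defining series. Cancelling the common \( q \)-Pochhammer factors in the ratio \( \hat p_{n+1}(X_n)/\hat p_n(X_n) \) produces the stated formula for \( b_n \).

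For \( \lambda_n \), compare the coefficients of \( X^n \) on both sides of the recurrence. Writing \( s_n \) for the subleading coefficient of the monic \( \hat p_n(X) \), the recurrence gives
\[
  s_{n+1} = s_n - b_n + \frac{\lambda_n q^{n-1}}{bd},
\]
so that \( \lambda_n = (bd/q^{n-1})(s_{n+1}-s_n+b_n) \). Each \( s_n \) is computed from the \( j=n-1 \) and \( j=n \) terms of the defining \({}_4\phi_3\) together with the leading-coefficient formula above; after substituting the expression for \( b_n \) obtained in the previous step, the resulting expression simplifies to the claimed product formula for \( \lambda_n \).

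The main obstacle is the \( q \)-series bookkeeping rather than any single conceptual step. One must carefully track the two distinct parameter shifts \( b\mapsto bq^{-n} \) and \( b\mapsto bq^{-n-1} \) through the \({}_4\phi_3\) parameters, verify the Saalsch\"utzian balance at each stage, apply the correct form of Sears' transformation, and then telescope the shifted Pochhammer symbols into the symmetric shape displayed in the proposition (for example, consolidating factors into \( (1-q^n)(1-cq^n)(1-q^{N-n+1})(1-dq^{N-n+1}) \) in the numerator of \( \lambda_n \)). Verifying the given formula for \( b_n \), whose numerator has no obvious factorization, will likely require the most patience.
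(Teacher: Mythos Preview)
Your proposal is correct and follows essentially the same approach as the paper. The paper gives no proof for the $q$-Racah proposition, but the ``Sketch of Proof'' for the preceding Askey--Wilson proposition---evaluate at the root $X_n$ of $d_n/d_{n-1}$, sum the resulting balanced series for $\hat p_n(X_n)$, handle $\hat p_{n+1}(X_n)$ via the Sears transformation to obtain $b_n$, then read off $\lambda_n$ from the coefficient of $X^n$---is exactly what you have outlined, with the correct special value $X_n=bdq^{1-n}+cq^n/b$ and the correct leading coefficient $\alpha_n$.
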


\section{A special type \( R_I \) polynomial}
\label{powers}

In this section we consider the case $b_n=0$ for orthogonal polynomials, so 
that the polynomials are either even or odd. Hermite polynomials are one example. 
We choose $d_n(x)=(1+ax)^n,$ then modify the three term recurrence by inserting the factor 
$1+ax$, independent of $n$, for a type $R_I$ polynomial, see \eqref{modevenodd}. 
The new linear functional $\LL_a$ has new values on the polynomials, so the moments do 
change, unlike the previous examples.

General results for the type $R_I$ polynomials and their moments, in terms of the 
original orthogonal polynomials, are given in Proposition~\ref{genr},
Proposition~\ref{genmom}, and Theorem~\ref{genthm}. We do not know 
a representing measure for the new linear functional $\LL_a$ in terms of an 
original measure, even in the Hermite case.

\subsection{General results}
\aftersubsection

Let $p_n(x)$ be orthogonal polynomials defined by
\begin{equation}
\label{OPrr}
p_{n+1}(x)=x p_n(x) -\lambda_n p_{n-1}(x)
\end{equation}
with nonzero moments $\mu_{2n}=\LL(x^{2n}).$

Consider a type $R_I$ version of $p_n$ defined by
\begin{equation}
\label{modevenodd}
r_{n+1}(x)=x r_n(x) -\lambda_n (1+ax) r_{n-1}(x).
\end{equation}
These polynomials are rescaled versions of the original orthogonal polynomials. 
Proposition~\ref{genr} follows easily by rescaling.

\begin{prop} 
\label{genr}
We have
$$
r_n(x) =p_n\left( \frac{x}{\sqrt{1+ax}}\right) (\sqrt{1+ax})^n.
$$
\end{prop}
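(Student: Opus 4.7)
The plan is a direct induction on $n$, using the substitution $s=\sqrt{1+ax}$ and $y=x/s$, so that $x=ys$ and $1+ax=s^2$. Set $\widetilde r_n(x) := p_n(y)\,s^n$; the goal is to show $\widetilde r_n = r_n$.

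For the base cases, $\widetilde r_0 = p_0(y) = 1 = r_0(x)$ and $\widetilde r_1 = p_1(y)\,s = y\,s = x = r_1(x)$, using $p_0=1$ and $p_1(x)=x$. For the inductive step, assuming the formula holds for indices $n-1$ and $n$, one computes
\[
x\,\widetilde r_n(x) - \lambda_n(1+ax)\,\widetilde r_{n-1}(x)
= ys\cdot p_n(y)s^n - \lambda_n\,s^2\cdot p_{n-1}(y)s^{n-1}
= s^{n+1}\bigl(y\,p_n(y) - \lambda_n\,p_{n-1}(y)\bigr),
\]
and this equals $s^{n+1}\,p_{n+1}(y) = \widetilde r_{n+1}(x)$ by the recurrence \eqref{OPrr} for $p_n$. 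By \eqref{modevenodd} this means $\widetilde r_n$ satisfies the same recursion and initial conditions as $r_n$, so they coincide.

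The only subtlety, and the step that really uses $b_n=0$, is verifying that $\widetilde r_n(x)$ is actually a polynomial in $x$ (not merely a rational or algebraic function of $x$), so that the induction makes sense and its output is really an element of the polynomial ring. Because $b_n=0$ for all $n$, the recurrence \eqref{OPrr} forces $p_n(x)$ to be an even polynomial when $n$ is even and an odd polynomial when $n$ is odd. Writing $p_n(y)=y^{\epsilon}q_n(y^2)$ with $\epsilon\in\{0,1\}$ of the same parity as $n$ and $q_n$ a polynomial of degree $\lfloor n/2\rfloor$, we obtain
\[
\widetilde r_n(x) = y^{\epsilon}\,q_n(y^2)\,s^n = x^{\epsilon}\,q_n\!\left(\frac{x^2}{1+ax}\right)(1+ax)^{(n-\epsilon)/2},
\]
and the exponent $(n-\epsilon)/2$ is a nonnegative integer $\ge \deg q_n$, which clears the denominators and leaves a genuine polynomial in $x$. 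Once polynomiality is secured, the induction above is purely formal, so this parity observation is really the only nontrivial ingredient of the proof.
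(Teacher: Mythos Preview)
Your proof is correct and is exactly the ``rescaling'' the paper has in mind, just written out in full; the paper itself gives no details beyond that one word. One minor comment: the polynomiality paragraph, while correct and a nice observation, is not actually needed to justify the induction---the inductive computation is a valid identity of algebraic functions regardless, and since $r_n$ is a polynomial by construction, the equality $\widetilde r_n=r_n$ then \emph{implies} polynomiality of $\widetilde r_n$ rather than requiring it as input.
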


Next we see how the moments are related. 
Let $\LL_a$ be the linear functional for these polynomials. 
Let $\mu_n$ be the moments for the orthogonal polynomials in \eqref{OPrr}, and let
$\theta_n=\LL_a(x^n)$ be the moments for the type $R_I$ polynomials \eqref{modevenodd}.

\begin{prop}
\label{genmom}
The moments $\theta_n=\LL_a(x^n)$ for the above type $R_I$ polynomials are given by
\begin{align*}
\theta_{2n} &=\sum_{k\ \mathrm{even}}\binom{n+k/2}{k} a^k\mu_{2n+k},\\
\theta_{2n+1}&=\sum_{k\ \mathrm{odd}}\binom{n+(k+1)/2}{k}  a^k\mu_{2n+1+k}.
\end{align*}
\end{prop}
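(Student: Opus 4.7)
The plan is to apply Corollary~\ref{cor:R1cfrac} to the recurrence \eqref{modevenodd} and recognize the resulting continued fraction as a substitution into the classical moment generating function of $p_n$. Comparing \eqref{modevenodd} with \eqref{eq:favard1}, the polynomials $r_n$ are type $R_I$ with $b_n = 0$ and $a_n x + \lambda_n = \lambda_n(1 + ax)$, so the $n$-th continued fraction numerator in Corollary~\ref{cor:R1cfrac} equals $\lambda_n x(a+x)$. Hence
\[
\sum_{n\ge 0}\theta_n x^n = \cfrac{1}{1 - \cfrac{\lambda_1 x(a+x)}{1 - \cfrac{\lambda_2 x(a+x)}{\ddots}}}.
\]

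For the classical polynomials $p_n$, specializing Corollary~\ref{cor:R1cfrac} to $a_n = 0$ (with $b_n = 0$) shows that the odd moments $\mu_{2k+1}$ vanish and that
\[
G(y) := \sum_{k\ge 0}\mu_{2k}y^k = \cfrac{1}{1 - \cfrac{\lambda_1 y}{1 - \cfrac{\lambda_2 y}{\ddots}}}.
\]
Comparing the two continued fractions yields the formal power series identity
\[
\sum_{n\ge 0}\theta_n x^n = G\bigl(x(a+x)\bigr) = \sum_{k\ge 0}\mu_{2k}\,x^k(a+x)^k,
\]
the substitution being valid because $x(a+x)$ has zero constant term.

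It then remains to expand by the binomial theorem and read off the coefficient of $x^n$. From $x^k(a+x)^k = \sum_{j=0}^{k}\binom{k}{j}a^{k-j}x^{k+j}$, the coefficient of $x^n$ is a sum over pairs $(k,j)$ with $k + j = n$ and $0 \le j \le k$; the parity of $k - j$ must match that of $n$. Setting $k - j = 2j'$ when $n = 2N$ gives $k = N+j'$ and $\binom{k}{j} = \binom{N+j'}{2j'}$, while setting $k - j = 2j'+1$ when $n = 2N+1$ gives $k = N+j'+1$ and $\binom{k}{j} = \binom{N+j'+1}{2j'+1}$. Reindexing by the power of $a$ reproduces the two stated formulas. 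The conceptual crux is the continued fraction substitution; the remaining bookkeeping is routine and presents no obstacle.
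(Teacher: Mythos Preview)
Your proof is correct but follows a different route from the paper's. The paper argues bijectively on lattice paths: since $b_n=0$, the Motzkin--Schr\"oder paths for $\theta_{2n}$ have only up, diagonal-down, and vertical-down steps; replacing the $k$ vertical-down steps (necessarily $k$ even) by diagonal-down steps yields a Dyck path of length $2n+k$ contributing to $\mu_{2n+k}$, and each such Dyck path arises $\binom{n+k/2}{k}$ times by choosing which of its $n+k/2$ down steps were originally vertical. Your approach instead establishes the generating-function identity $\sum_n\theta_n x^n=G(x(a+x))$ directly from the continued fraction of Corollary~\ref{cor:R1cfrac} and then extracts coefficients. This is exactly the content of Theorem~\ref{genthm}, which in the paper is stated \emph{after} Proposition~\ref{genmom}; you have effectively reversed the logical order, deriving the proposition from the generating-function identity rather than the other way around. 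The paper's bijection explains the binomial coefficient combinatorially, while your argument is shorter and makes the connection to Theorem~\ref{genthm} immediate.
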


\begin{proof} We use the combinatorial interpretation of $\theta_n$ as weighted
  Motzkin-Schr\"oder paths and $\mu_{2n}$ as weighted Dyck paths, where a Dyck
  path is a Motzkin path with no horizontal steps.

Take the paths for $\theta_{2n}$, which start at $(0,0)$, end at $(2n,0)$, and
stay at or above the $x$-axis. There are no horizontal steps, as $b_n=0$ in
\eqref{modevenodd}. There are diagonal down steps starting at $y$-coordinate $n$ with
weight $\lambda_n$, and vertical down steps starting at $y$-coordinate $n$ with
weight $a\lambda_n.$ If there are $k$ vertical down steps, where $k$ must be even,
these contribute a weight of $a^k$. We can change these $k$ steps to diagonal down steps
to obtain a weighted Dyck path from $(0,0)$ to $(2n+k,0)$. This is a term in the
combinatorial expansion for $\mu_{2n+k}$. But each such Dyck path for
$\mu_{2n+k}$ occurs $\binom{n+k/2}{k}$ times, by choosing which of the $n+k/2$
diagonal down steps are switched to vertical down steps.
 
The proof for $\theta_{2n+1}$ is basically the same.
\end{proof}

These moments may also be connected via Chebyshev polynomials of the
  second kind
\begin{align*}
U_{2n}(x) &= (-1)^n \Hyper21{-n,n+1}{1/2}{x^2},\\
  U_{2n+1}(x)&=(-1)^n(n+1)x\Hyper21{-n,n+2}{3/2}{x^2}.
\end{align*}

\begin{prop} 
\label{gencor} 
The moments $\theta_n$ of the Chebyshev polynomials \( U_n(x) \) satisfy
$$
\theta_n=\LL_a(x^n)=\LL(x^n w_n(x,a)),
$$
where
\begin{align*}
w_{2n}(x,a) &=\sum_{k\  even}\binom{n+k/2}{k} a^k x^k=
\Hyper21{-n,n+1}{1/2}{-a^2x^2/4}=(-1)^n U_{2n}(aix/2),\\
w_{2n+1}(x,a)&=\sum_{k\  odd}\binom{n+(k+1)/2}{k}  a^k x^k=
(n+1)ax  \Hyper21{-n,n+2}{3/2}{-a^2x^2/4}\\
            &= 2i(-1)^{n+1} U_{2n+1}(aix/2).
\end{align*}
\end{prop}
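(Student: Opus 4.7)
My plan is to derive Proposition~\ref{gencor} directly from Proposition~\ref{genmom} combined with the hypergeometric representations of $U_n(x)$ written just above the statement. Conceptually the proof has two halves: first upgrade the numerical identity of Proposition~\ref{genmom} to an identity ``inside'' $\LL$, and second identify the generating polynomial $w_n(x,a)$ with a Chebyshev polynomial of the second kind.

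For the first half I would pull $\LL$ outside the finite sums of Proposition~\ref{genmom} by linearity:
\[
 \theta_{2n} = \sum_{k\text{ even}} \binom{n+k/2}{k} a^k \mu_{2n+k}
 = \LL\!\left(x^{2n}\sum_{k\text{ even}}\binom{n+k/2}{k}a^k x^k\right)
 = \LL(x^{2n} w_{2n}(x,a)),
\]
with the analogous manipulation giving $\theta_{2n+1} = \LL(x^{2n+1} w_{2n+1}(x,a))$. This is the identity $\theta_n=\LL(x^n w_n(x,a))$ claimed in the statement, once $w_n$ is identified with the stated power series.

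For the second half I would verify that the sum defining $w_{2n}(x,a)$ coincides with the stated ${}_2F_1$. Reindexing $k=2j$ yields $w_{2n}(x,a)=\sum_{j=0}^{n}\binom{n+j}{2j}a^{2j}x^{2j}$. I would then compare this coefficient-by-coefficient with the series for $\hyper21{-n,n+1}{1/2}{-a^2x^2/4}$, using the standard Pochhammer evaluations $(-n)_j=(-1)^j n!/(n-j)!$, $(n+1)_j=(n+j)!/n!$, and $(1/2)_j=(2j)!/(4^j j!)$; the signs and powers of $4$ cancel to give exactly $\binom{n+j}{2j}a^{2j}$. Substituting $x\mapsto aix/2$ in the stated formula $U_{2n}(x)=(-1)^n\,\hyper21{-n,n+1}{1/2}{x^2}$ then turns $x^2$ into $-a^2x^2/4$ and yields $U_{2n}(aix/2)=(-1)^n w_{2n}(x,a)$, i.e.\ $w_{2n}(x,a)=(-1)^n U_{2n}(aix/2)$.

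The odd case is entirely parallel: reindex $k=2j+1$ to get $w_{2n+1}(x,a)=\sum_{j=0}^{n}\binom{n+j+1}{2j+1}a^{2j+1}x^{2j+1}$, then match against $(n+1)ax\cdot\hyper21{-n,n+2}{3/2}{-a^2x^2/4}$ using the same Pochhammer identities together with $(3/2)_j=(2j+1)!/(4^j j!)$. The Chebyshev comparison is then immediate from the given formula for $U_{2n+1}(x)$; an extra factor of $aix/2$ out front accounts for the prefactor $2i(-1)^{n+1}$ in the statement. The only obstacle is routine bookkeeping of Pochhammer symbols, signs, and powers of $2$ and $4$, so I would write the even case in full as a template and simply remark that the odd case is identical in spirit.
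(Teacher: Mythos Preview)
Your proposal is correct and follows exactly the approach the paper intends: the paper states Proposition~\ref{gencor} without proof immediately after recording Proposition~\ref{genmom} and the hypergeometric formulas for $U_{2n}$ and $U_{2n+1}$, so the implicit argument is precisely to read off $w_n$ from Proposition~\ref{genmom} by linearity of $\LL$ and then match it against those Chebyshev formulas. One small point worth making explicit is why the sums in Proposition~\ref{genmom} are finite (so that linearity applies): with $k=2j$ the coefficient $\binom{n+j}{2j}$ vanishes for $j>n$, and similarly in the odd case, which is exactly what forces the ${}_2F_1$ to terminate.
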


The moment generating functions, which are given by the continued fractions 
in Theorem~\ref{weirdCFthm} are related.

\begin{thm} 
\label{genthm}
As formal power series in $t$, we have for the usual orthogonal polynomials
$$
\sum_{n=0}^\infty \mu_{2n}t^{2n}=\LL\left(\frac{1}{1-xt}\right)=\LL\left(\frac{1}{1-x^2t^2}\right),
$$
and for the type $R_I$ polynomials 
$$
\sum_{n=0}^\infty \theta_n t^n= \LL_a\left(\frac{1}{1-xt}\right)=\LL\left(  \frac{1}{1-x^2t(a+t)}\right).
$$
\end{thm}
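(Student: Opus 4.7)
The plan is to verify the four claimed formal power series identities in turn, with the only real content being the last one.

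For the chain \(\sum_{n\ge 0}\mu_{2n}t^{2n}=\LL(1/(1-xt))=\LL(1/(1-x^2t^2))\), I would first observe that since \(b_n=0\) in \eqref{OPrr}, the polynomials \(p_n(x)\) are even or odd according to the parity of \(n\), and hence the odd moments vanish: \(\mu_{2n+1}=0\) for all \(n\ge 0\). Both asserted equalities then follow by expanding \(1/(1-xt)\) or \(1/(1-x^2t^2)\) as a formal geometric series inside \(\LL\) and using linearity.

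For the second chain, the equality \(\sum_{n\ge 0}\theta_n t^n=\LL_a(1/(1-xt))\) is simply the definition \(\theta_n=\LL_a(x^n)\) combined with the formal expansion of \(1/(1-xt)\).

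The substantive claim is \(\sum_{n\ge 0}\theta_n t^n=\LL(1/(1-x^2t(a+t)))\). I would expand the right-hand side, by expanding the geometric series in \(x^2t(a+t)\) and then the binomial \((a+t)^m\):
\[
\LL\!\left(\frac{1}{1-x^2t(a+t)}\right)=\sum_{m\ge 0}\mu_{2m}\,t^m(a+t)^m=\sum_{m\ge 0}\sum_{j=0}^{m}\binom{m}{j}a^{m-j}\mu_{2m}\,t^{m+j}.
\]
Then I would extract the coefficient of \(t^N\) by setting \(k=m-j\), so that \(m=(N+k)/2\), \(j=(N-k)/2\), and the constraint \(0\le j\le m\) becomes \(k\ge 0\) with \(k\equiv N\pmod 2\). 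Using \(\binom{m}{j}=\binom{m}{k}=\binom{(N+k)/2}{k}\), the coefficient reads
\[
[t^N]\LL\!\left(\frac{1}{1-x^2t(a+t)}\right)=\sum_{\substack{k\ge 0\\ k\equiv N\,(2)}}\binom{(N+k)/2}{k}a^k\mu_{N+k}.
\]
Splitting into the cases \(N=2n\) and \(N=2n+1\) reproduces exactly the two formulas for \(\theta_{2n}\) and \(\theta_{2n+1}\) in Proposition~\ref{genmom}, and summing over \(N\) gives the claim.

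The only obstacle is bookkeeping: matching the summation index via \(k=m-j\) and recognizing that the two parity classes of \(N\) correspond precisely to the even-\(k\) and odd-\(k\) sums in Proposition~\ref{genmom}. Once that is done, the identity is immediate from the combinatorial evaluation of the moments \(\theta_n\) already established.
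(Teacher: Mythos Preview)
Your proof is correct. The paper states Theorem~\ref{genthm} without proof, presenting it as an immediate consequence of the preceding Propositions~\ref{genmom} and~\ref{gencor}; your derivation makes this explicit by expanding $\LL(1/(1-x^2t(a+t)))$ as a double sum and matching the coefficient of $t^N$ to the formulas in Proposition~\ref{genmom}, which is exactly the intended route.

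One small remark on bookkeeping: in your coefficient extraction you should note that the constraint $j\ge 0$ also forces $k\le N$, so the sum over $k$ is finite. This is harmless because the binomial coefficient $\binom{(N+k)/2}{k}$ vanishes for $k>N$ anyway, so the unrestricted sums in Proposition~\ref{genmom} agree with the finite sums you obtain; but it is worth saying so the reader sees why the ranges match.
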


\subsection{Explicit examples}
\label{sec:exex}
\aftersubsection

\subsubsection{Chebyshev polynomials}
\aftersubsection

\begin{prop} If $b_n=b$, $a_n=a$ and $\lambda_n=\lambda$ are constant, 
then the  type $R_I$ monic polynomials are
$$
p_n(x)=U_n\left( \frac{x-b}{2\sqrt{ax+\lambda}}\right)(\sqrt{ax+\lambda})^n
$$
where \( U_n(x) \) is the Chebyshev polynomial.
\end{prop}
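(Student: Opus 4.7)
The plan is to prove the formula by direct verification: show that the claimed right-hand side satisfies the defining three-term recurrence \eqref{eq:favard1} with $b_n=b$, $a_n=a$, $\lambda_n=\lambda$ and the correct initial conditions, then invoke uniqueness.

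First, I would introduce the shorthand $w=\sqrt{ax+\lambda}$ and $y=(x-b)/(2w)$, so that the claim becomes $p_n(x)=U_n(y)\,w^n$. Define $q_n(x):=U_n(y)\,w^n$ and note that $q_{-1}=0$ and $q_0=1$. The Chebyshev recurrence $U_{n+1}(y)=2y\,U_n(y)-U_{n-1}(y)$, after multiplying by $w^{n+1}$, becomes
\[
q_{n+1}(x)=(2yw)\,U_n(y)w^n - w^{2}\,U_{n-1}(y)w^{n-1}=(x-b)\,q_n(x)-(ax+\lambda)\,q_{n-1}(x),
\]
since $2yw=x-b$ and $w^{2}=ax+\lambda$. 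This is precisely the type $R_I$ recurrence for the constant sequences, and with matching initial conditions uniqueness gives $p_n=q_n$.

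Second, I would check that $q_n(x)$ is actually a polynomial in $x$ (rather than merely a rational expression in $w$). Two options work: (i) parity — $U_n(y)$ is even in $y$ for $n$ even and odd in $y$ for $n$ odd, so $U_n(y)w^n$ is a polynomial in $y^2w^2=(x-b)^2/4$ and $w^2=ax+\lambda$ (with an extra factor of $yw=(x-b)/2$ in the odd case); or (ii) an immediate induction from the recurrence, since $q_{-1}$ and $q_0$ are polynomials and the recurrence coefficients $x-b$ and $ax+\lambda$ are polynomials in $x$.

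Finally, monicness needs a quick check. The leading term of $U_n(y)$ is $(2y)^n$, so the highest-degree piece of $q_n(x)$ is $(2y)^n w^n=(2yw)^n=(x-b)^n$, which expands to $x^n$ plus lower order. Hence $q_n(x)$ is the monic type $R_I$ polynomial $p_n(x)$. I do not foresee a genuine obstacle here; the only mildly delicate step is the parity observation ensuring $q_n$ is a polynomial rather than merely a formal expression involving $\sqrt{ax+\lambda}$.
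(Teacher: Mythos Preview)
Your proof is correct and is essentially the argument the paper has in mind. The paper does not give a separate proof of this proposition; it sits in the section built on Proposition~\ref{genr}, which the authors say ``follows easily by rescaling,'' and your direct verification that $q_n(x)=U_n(y)w^n$ satisfies the recurrence (together with the polynomiality and monicness checks) is exactly that rescaling argument spelled out.
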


Note that these are the polynomials we considered in Theorem~\ref{thm:hankel}.

\subsubsection{Hermite polynomials}
\aftersubsection

\begin{prop} 
\label{RIHerm}
If $b_n=0$, $a_n=an$ and $\lambda_n=n$, 
then the  type $R_I$ monic polynomials are $p_n(x)$
satisfying
$$
\sum_{n=0}^\infty \frac{p_n(x)}{n!}t^n=
e^{xt-(1+ax)t^2/2}.
$$
Also
$$
p_n(x) =\mathrm{He}_n\left( \frac{x}{\sqrt{1+ax}}\right) (\sqrt{1+ax})^n,
$$
where $\mathrm{He}_n(x)$ are the monic Hermite polynomials normalized by
$$
\mathrm{He}_{n+1}(x)=x\mathrm{He}_n(x)-n\mathrm{He}_{n-1}(x), \quad \mathrm{He}_{-1}(x)=0,\quad \mathrm{He}_0(x)=1.
$$
\end{prop}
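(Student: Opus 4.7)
The plan is to deduce this proposition as an immediate specialization of the general framework set up earlier in Section~\ref{powers}. First, I would observe that plugging $b_n=0$, $a_n=an$, and $\lambda_n=n$ into the defining recurrence \eqref{eq:favard1} yields
\[
p_{n+1}(x) = x\,p_n(x) - n(1+ax)\,p_{n-1}(x),
\]
which is exactly the recurrence \eqref{modevenodd} for the orthogonal polynomial sequence with $\lambda_n=n$. The orthogonal polynomials defined by $\mathrm{He}_{n+1}(x)=x\,\mathrm{He}_n(x)-n\,\mathrm{He}_{n-1}(x)$ are the monic Hermite polynomials, so Proposition~\ref{genr} applies directly to give the explicit formula
\[
p_n(x) = \mathrm{He}_n\!\left(\frac{x}{\sqrt{1+ax}}\right)(\sqrt{1+ax})^n.
\]

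Next, I would derive the generating function from the classical generating function for the monic Hermite polynomials,
\[
\sum_{n=0}^\infty \frac{\mathrm{He}_n(y)}{n!}\, s^n = e^{ys - s^2/2}.
\]
Setting $y = x/\sqrt{1+ax}$ and $s = t\sqrt{1+ax}$, the exponent becomes
\[
ys - \tfrac{s^2}{2} = xt - \tfrac{(1+ax)t^2}{2},
\]
while the left-hand side transforms into $\sum_{n\ge 0} \mathrm{He}_n(x/\sqrt{1+ax})(\sqrt{1+ax})^n t^n/n!$, which by the explicit formula is exactly $\sum_{n\ge 0} p_n(x) t^n/n!$. This yields the stated generating function.

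There is essentially no obstacle here: the whole proposition is a one-line substitution once the polynomial recurrence has been matched with \eqref{modevenodd} and Proposition~\ref{genr} has been invoked. The only minor subtlety is that the square-root substitution is formal (it should be interpreted at the level of formal power series in $t$, or equivalently via even and odd parts, since $p_n(x)$ is a polynomial in $x$ by construction); this is clear because the formula $\mathrm{He}_n(y)y'^n$ with $y' = \sqrt{1+ax}$ and $y = x/y'$ only involves even powers of $y'$ when expanded, giving a polynomial in $x$.
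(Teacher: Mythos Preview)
Your proposal is correct and follows exactly the approach the paper intends: the proposition is stated in the paper without proof precisely because it is an immediate specialization of Proposition~\ref{genr} to the Hermite case $\lambda_n=n$, and your generating-function substitution $y=x/\sqrt{1+ax}$, $s=t\sqrt{1+ax}$ into the classical identity $\sum_{n\ge0}\mathrm{He}_n(y)s^n/n!=e^{ys-s^2/2}$ is the natural way to recover the stated exponential generating function.
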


\section{Combinatorics of three type \( R_I \) polynomials}
\label{sec:combinatorics}

In this section we study combinatorial aspects of some type $R_I$ orthogonal
polynomials considered in Sections~\ref{glue} and \ref{powers}.

\subsection{Type \( R_I \) Hermite polynomials}
\aftersubsection

The classical Hermite  polynomials $\mathrm{He}_n(x)$ are the generating functions for 
matchings (or involutions) on $n$ points. Their moments count perfect matchings on 
$n$ points. In this subsection we give the corresponding results for the 
type $R_I$ Hermite polynomials in Section~\ref{sec:exex}.

Let $H_n(x,a)$ be the type $R_I$ Hermite polynomials given by 
Proposition~\ref{RIHerm}. The following result follows from either Theorem~\ref{thm:favard} 
or the exponential generating function for $H_n(x,a).$

\begin{prop}
  The type \( R_I \) Hermite polynomial $H_n(x,a)$ is the generating function
  for involutions on $n$ points, where 1-cycles are weighted by $x$, and
  2-cycles are two colored, with weights $-1$ and $-ax$.
\end{prop}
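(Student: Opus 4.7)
The plan is to deduce the proposition from the exponential generating function recorded in Proposition~\ref{RIHerm}, namely
$$\sum_{n\ge 0} H_n(x,a)\frac{t^n}{n!} = \exp\!\left(xt-(1+ax)\frac{t^2}{2}\right),$$
by applying the exponential formula for labelled species. The idea is to read the exponent as the EGF of weighted ``connected'' building blocks and recognize that its exponential is precisely the EGF for weighted disjoint unions of such blocks on the label set $\{1,\dots,n\}$.

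First I would identify the connected blocks. A $1$-cycle lives on one labelled point and is to be weighted by $x$; a $2$-cycle lives on two labelled points and comes in two colors with respective weights $-1$ and $-ax$. Since there is exactly one labelled $1$-cycle on a one-element set and exactly one labelled $2$-cycle on a two-element set (once a color has been selected), the EGF of the weighted connected blocks is
$$F(t) = x\cdot\frac{t}{1!} + \bigl((-1)+(-ax)\bigr)\cdot\frac{t^2}{2!} = xt-(1+ax)\frac{t^2}{2}.$$

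Next I would invoke the exponential formula to obtain
$$\exp\bigl(F(t)\bigr) = \sum_{n\ge 0} \frac{t^n}{n!} \sum_{\pi} \wt(\pi),$$
where the inner sum runs over all involutions $\pi$ of $\{1,\dots,n\}$, each fixed point contributing a factor of $x$ and each $2$-cycle contributing either $-1$ or $-ax$ according to its chosen color. Comparing this identity with Proposition~\ref{RIHerm} and equating coefficients of $t^n/n!$ yields the claim.

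An alternative route goes through Theorem~\ref{thm:favard}: for $b_n=0$, $a_n=an$, $\lambda_n=n$, red monominos disappear because $b_{i-1}=0$, while a black (resp.\ red) domino with largest entry $i$ carries weight $-a(i-1)x$ (resp.\ $-(i-1)$), and one then interprets the factor $i-1$ as a choice of partner $j<i$ to convert a domino into a $2$-cycle $\{j,i\}$ of the corresponding color. The only real bookkeeping obstacle in either route is pairing the factor $i-1$ with the choice of $2$-cycle partner; this is precisely what the exponential formula encodes automatically, so I would use the EGF approach as the cleanest proof.
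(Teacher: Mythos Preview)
Your proposal is correct and matches the paper's approach exactly: the paper does not give a detailed proof but simply states that the result follows from either Theorem~\ref{thm:favard} or the exponential generating function in Proposition~\ref{RIHerm}, and you have carried out both of these routes, with the EGF/exponential formula argument as the primary proof. There is nothing to add.
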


The combinatorics of the moments can be given using Proposition~\ref{genmom}.

\begin{prop}
The moments $\theta_m=\LL(x^m)$ for the type $R_I$ Hermite polynomials are the generating 
functions for the following perfect matchings in which the edges are colored red and blue.
\begin{enumerate}
\item If $m=2n$,  the perfect matching is on $2n+2K$ points, 
and $2K$ of these $n+K$ edges are colored red, each of weight $a$, 
and the remaining edges are colored blue, each of weight \( 1 \),
for some $0\le K\le n.$\\
\item If $m=2n+1$,  the perfect matching is on $2n+2K+2$ points, 
and $2K+1$ of these $n+K+1$ edges are colored red, each of weight $a$, 
and the remaining edges are colored blue, each of weight \( 1 \),
for some $0\le K\le n.$\\ 
\end{enumerate}  
\end{prop}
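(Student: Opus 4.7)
The plan is to reduce this statement to a direct unpacking of Proposition~\ref{genmom}, using the classical fact that the Hermite moments enumerate perfect matchings. First, I would identify the underlying classical orthogonal polynomials: the type $R_I$ Hermite polynomials $H_n(x,a)$ are built via the recipe \eqref{modevenodd} from the monic Hermite polynomials ($b_n=0$, $\lambda_n=n$), whose moments are $\mu_{2m}=(2m-1)!!$, the number of perfect matchings on $2m$ points, and $\mu_{2m+1}=0$.

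Second, I would substitute these data into the two identities of Proposition~\ref{genmom}. Since the odd-indexed Hermite moments vanish, only the $k$ of the correct parity contributes in each sum. Writing $k=2K$ in the even case and $k=2K+1$ in the odd case (and observing that the binomial coefficients vanish when $K>n$) yields
\[
  \theta_{2n} = \sum_{K=0}^{n}\binom{n+K}{2K} a^{2K}\,\mu_{2(n+K)},\qquad
  \theta_{2n+1} = \sum_{K=0}^{n}\binom{n+K+1}{2K+1} a^{2K+1}\,\mu_{2(n+K+1)}.
\]

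Third, I would read each summand combinatorially. For $\theta_{2n}$, the factor $\mu_{2(n+K)}$ counts perfect matchings on $2(n+K)$ points (so having $n+K$ edges); the binomial $\binom{n+K}{2K}$ counts the ways to designate $2K$ of these edges as red; and $a^{2K}$ records that each red edge carries weight $a$, with the remaining edges blue of weight $1$. Summing over $0\le K\le n$ gives exactly case (1). The argument for $\theta_{2n+1}$ is identical after noting that $2(n+K+1)=2n+2K+2$ and $n+K+1$ edges are present, of which $2K+1$ are chosen to be red.

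The main (and essentially only) obstacle is the parity bookkeeping in Proposition~\ref{genmom}: one must confirm that the index shift $k\mapsto 2K$ or $k\mapsto 2K+1$ matches the parities that survive the vanishing of the classical odd Hermite moments, and that the binomial coefficient in Proposition~\ref{genmom} becomes exactly the one counting red-edge choices. Both verifications are straightforward arithmetic, so the proof should be short.
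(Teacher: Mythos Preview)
Your proposal is correct and follows exactly the route the paper intends: the paper simply prefaces this proposition with ``The combinatorics of the moments can be given using Proposition~\ref{genmom},'' and your argument carries this out by substituting the Hermite moments $\mu_{2m}=(2m-1)!!$, reindexing $k=2K$ (resp.\ $k=2K+1$), and interpreting $\binom{n+K}{2K}\mu_{2(n+K)}a^{2K}$ as colored matchings. One small remark: the parity restriction on $k$ is already built into Proposition~\ref{genmom}, so there is no additional vanishing to check---your sentence about ``only the $k$ of the correct parity contributes'' is redundant rather than an extra step.
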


Here is a simple linearization result:
$$
H_n(x,a)H_m(x,a)=\sum_{s=0}^{min(m,n)} \binom{n}{s}\binom{m}{s} s! (1+ax)^s 
H_{n+m-2s}(x,a).
$$

\subsection{Type \( R_I \) Laguerre polynomials}
\aftersubsection

The classical Laguerre polynomials have moments $\mu_n=(a+1)_n=(A)_n$ if 
$A=a+1.$ When $A=1$ this is $\mu_n=n!,$ so the number of weighted 
Motzkin paths is the number of permutations of length $n$. Viennot \cite{ViennotLN} used 
{\it{Laguerre histories}} to give a bijection which explained this fact, and implied 
many weighted versions.

In Section~\ref{sec:laguerre-polynomials} the type $R_I$ Laguerre polynomials are given. 
The moments remain $\mu_n=(a+1)_n=(A)_n$, but we now have weighted 
Motzkin-Schr\"oder paths with different weights, 
$$
b_n=a-n, \quad a_n=n, \quad  \lambda_n=0.
$$
Note that since $\lambda_n=0$, Motzkin-Schr\"oder paths become Schr\"oder paths.
In terms of weighted lattice paths we have the following proposition.

\begin{prop}\label{prop:laguerre}
  For $b_n=a-n$, $a_n=n$, and $\lambda_n=0$, we have
\[
\sum_{\pi\in \Sch_n} \wt (\pi) = (a+1)_n.
\]
\end{prop}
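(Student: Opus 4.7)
The plan is to reduce the claim to the fact, recorded in Section~\ref{sec:laguerre-polynomials}, that the type \( R_I \) Laguerre moments equal \( (a+1)_n \). The proof then consists of two observations joined together.

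First I would invoke Corollary~\ref{cor:R1cfrac}, which gives
\[
\mu_n \;=\; \sum_{\pi\in\MS_n} \wt(\pi)
\]
for the type \( R_I \) polynomials with recurrence coefficients \( b_n, a_n, \lambda_n \). Since \( \lambda_n=0 \) for all \( n \), every diagonal down step has weight zero, so only those Motzkin--Schr\"oder paths that contain no diagonal down steps contribute a nonzero weight. By Definition~\ref{defn:MSpath} these are precisely the Schr\"oder paths in \( \Sch_n \), and hence
\[
\mu_n \;=\; \sum_{\pi\in\Sch_n} \wt(\pi).
\]

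Second I would identify \( \mu_n \). By the construction in Section~\ref{sec:laguerre-polynomials}, the linear functional \( \LL_a \) for these type \( R_I \) Laguerre polynomials extends the classical Laguerre linear functional (with weight \( x^a e^{-x} \) on \( [0,\infty) \)), and Theorem~\ref{weirdCFthm} guarantees that the extension preserves the moment sequence. The classical Laguerre moments are the well-known values \( \mu_n = (a+1)_n \), which is also recorded in the remark following Section~\ref{sec:laguerre-polynomials}. Combining the two displayed equalities gives the proposition.

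There is essentially no obstacle, because the weighted moments have already been computed in two different ways earlier in the paper; the proposition is simply the equating of those two computations under the substitution \( \lambda_n=0 \). A more ambitious alternative would be a direct bijective argument: one would construct a weight-preserving bijection from \( \Sch_n \) (with weights \( b_k = a-k \) on a horizontal step at height \( k \) and \( a_k = k \) on a vertical down step at height \( k \)) to a set of Laguerre-type histories of cardinality \( (a+1)_n \), in the spirit of Viennot's Laguerre histories. Adapting Viennot's bijection to allow vertical down steps (i.e.\ Schr\"oder rather than Motzkin paths) would be the hard part of such an argument, but it is not needed for the statement as given.
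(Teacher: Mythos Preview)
Your argument is correct, and indeed the paper itself sets up Proposition~\ref{prop:laguerre} in exactly this way: the text immediately preceding the proposition notes that the moments remain \( (a+1)_n \) from Section~\ref{sec:laguerre-polynomials}, that \( \lambda_n=0 \) forces Motzkin--Schr\"oder paths to be Schr\"oder paths, and that the proposition is simply the lattice-path restatement of these facts. So your reduction via Corollary~\ref{cor:R1cfrac} and Theorem~\ref{weirdCFthm} is precisely the implicit justification the paper offers for why the identity holds.

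However, the paper's \emph{stated} proof of the proposition is the ``more ambitious alternative'' you sketch at the end. Immediately after the proposition the paper announces a combinatorial proof via a \emph{type \( R_I \) Laguerre history}. The argument proceeds by (i) collapsing peaks \( (U,V) \) to horizontal steps, which converts the weight \( b_n=a-n \) into the constant weight \( a+1 \) on horizontal steps (Lemma~\ref{lem:wt=wt'}); (ii) refining the remaining Schr\"oder paths into labeled objects (the histories in \( \LH_n \)) so that the weight becomes \( (a+1)^{H(\pi)} \); and (iii) constructing an explicit bijection \( \phi:\LH_n\to\Sym_n \) sending \( H(\pi) \) to the number of cycles, so that \( \sum_{\pi\in\LH_n}(a+1)^{H(\pi)}=\sum_{\sigma\in\Sym_n}(a+1)^{\cyc(\sigma)}=(a+1)_n \). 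Your approach is shorter and entirely self-contained within the earlier sections; the paper's approach is longer but yields a bijective interpretation independent of the integral representation of \( \LL_a \), which is the point of Section~\ref{sec:combinatorics}.
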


In this subsection we give a combinatorial proof of
Proposition~\ref{prop:laguerre} using a \emph{type $R_I$ Laguerre history}.

Let $\Sch_n'$ denote the set of Schr\"oder paths $\pi\in \Sch_n$ that contain no
peaks $(U,V)$. For $\pi\in \Sch_n'$, define $\wt'(\pi)$ to be the weight of
$\pi$ with respect to $b_n = a+1$, $a_n=n$, and $\lambda_n=0$.

\begin{lem}\label{lem:wt=wt'}
  We have
\[
\sum_{\pi\in \Sch_n} \wt (\pi) =\sum_{\pi\in \Sch'_n} \wt' (\pi).
\]
\end{lem}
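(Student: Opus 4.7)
The plan is to use the algebraic identity $a - k = (a+1) - (k+1)$. This lets me rewrite the weight of each horizontal step at height $k$ (which is $b_k = a-k$ under $\wt$) as a signed sum of two contributions: $+(a+1)$, which matches the horizontal weight under $\wt'$, and $-(k+1)$, which is minus the $\wt'$ weight of a $(U,V)$ peak starting at height $k$ (since $a_{k+1} = k+1$). Expanding this product over all horizontal steps of each $\pi \in \Sch_n$, I will rewrite $\sum_{\pi} \wt(\pi)$ as a signed sum over pairs $(\pi, S)$, where $S$ is a subset of the horizontal steps of $\pi$, carrying a sign $(-1)^{|S|}$.

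Next I will define a bijection $(\pi, S) \mapsto (\pi^*, T)$ by replacing each ``marked'' horizontal step $H \in S$ at height $k$ by a $(U,V)$ peak starting at height $k$, and letting $T$ be the set of resulting peaks in $\pi^*$. Since $H$ and $UV$ both have horizontal displacement $1$ and end at the same height, $\pi^*$ is a legitimate Schr\"oder path; the inverse map replaces each peak in $T$ by a horizontal step. Under this bijection each summand becomes $(-1)^{|T|}\wt'(\pi^*)$, so after reindexing
$$\sum_{\pi \in \Sch_n} \wt(\pi) = \sum_{\pi^* \in \Sch_n} \wt'(\pi^*) \sum_{T \subseteq P(\pi^*)} (-1)^{|T|},$$
where $P(\pi^*)$ denotes the set of $(U,V)$ peaks of $\pi^*$.

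The inner sum equals $1$ if $P(\pi^*) = \emptyset$ and $0$ otherwise, so only Schr\"oder paths with no $(U,V)$ peaks, i.e.\ $\pi^* \in \Sch_n'$, contribute, yielding the claimed equality. (Equivalently, on pairs $(\pi^*, T)$ with $P(\pi^*) \neq \emptyset$, flipping membership of the leftmost peak in $T$ is a sign-reversing involution with the same fixed-point set.) There is no serious obstacle here --- the identity $a-k = (a+1)-(k+1)$ does all the work; one only has to check that replacing an $H$ by $UV$ preserves the ``on or above the $x$-axis'' condition, which is automatic since the $V$ step returns to the height where $H$ started.
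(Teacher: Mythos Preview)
Your proof is correct and rests on the same $H \leftrightarrow UV$ interchange as the paper's argument. The paper's version is slightly more direct: it groups $\Sch_n$ into equivalence classes under that interchange, takes the peakless path in $\Sch_n'$ as the representative, and uses the additive form $(a-k)+(k+1)=a+1$ to sum each class in one line—so no signed expansion or inclusion–exclusion is needed.
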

\begin{proof}
  For $\pi,\sigma\in \Sch_n$ define a relation $\pi\sim\sigma$ if $\pi$ is
  obtained from $\sigma$ by a sequence of replacing a peak $(U,V)$ by a
  horizontal step $H$ or vice versa. Since a peak $(U,V)$ and a horizontal step
  $H$ have the same starting and ending points, it is easy to see that $\sim$ is
  an equivalence relation on $\Sch_n$ and $\Sch_n'$ is a system of
  representatives of the equivalence classes. Moreover, for each $\pi\in\Sch'_n$,
  we have
\[
\sum_{\sigma\sim \pi} \wt(\sigma) = \wt'(\pi)
\]
because the weight of a peak $(U,V)$ and a horizontal step $H$ starting at
height $n$ are, respectively, $n+1$ and $a-n$, whose sum is $a+1$.
Summing the above equation over $\pi\in\Sch'_n$ gives the desired identity.
\end{proof}

\begin{defn}
  A \emph{type $R_I$ Laguerre history} of length $n$ is a labeled Schr\"oder
  path without peaks \( (U,V) \) from $(0,0)$ to $(n,0)$ in which each vertical down step starting at height
  $h$ is labeled by an integer in $\{1,2,\dots,h\}$. The set of type $R_I$
  Laguerre histories of length $n$ is denoted by $\LH_n$.
\end{defn}

By the definition of a type $R_I$ Laguerre history we have
\begin{equation}
  \label{eq:17}
\sum_{\pi\in \Sch'_n} \wt' (\pi) = \sum_{\pi\in \LH_n} (a+1)^{H(\pi)},
\end{equation}
where $H(\pi)$ is the number of horizontal steps in $\pi$. By
Lemma~\ref{lem:wt=wt'} and \eqref{eq:17}, to show
Proposition~\ref{prop:laguerre} it suffices to show the following proposition.

\begin{prop}\label{prop:LH}
  We have
\[
 \sum_{\pi\in \LH_n} (a+1)^{H(\pi)} = (a+1)_n.
\]
\end{prop}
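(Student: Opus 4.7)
The plan is to prove Proposition~\ref{prop:LH} by a transfer-matrix induction. For each $h \ge 0$, let $\LH_n^{(h)}$ denote the set of labeled Schr\"oder paths without $UV$-peaks going from $(0,0)$ to $(n,h)$, with each vertical down step starting at height $j$ labeled in $\{1,\dots,j\}$, and set $L_n^{(h)} := \sum_{\pi \in \LH_n^{(h)}}(a+1)^{H(\pi)}$, so that $L_n^{(0)}$ is the left-hand side of the proposition. Each path decomposes into $n$ \emph{columns} (one per unit $x$-interval), each consisting of a single $U$ or $H$ step followed by some number of consecutive $V$ steps. Analyzing the last column: a $U$-column must have no trailing $V$'s (to avoid a $UV$-peak), contributes weight $1$, and raises the height by $1$; an $H$-column starting at height $g \ge h$ contributes weight $a+1$ from the $H$ and a label factor $g!/h! = g(g-1)\cdots(h+1)$ from the $g-h$ trailing $V$'s. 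This yields the recurrence
\[
L_n^{(h)} = L_{n-1}^{(h-1)} + (a+1)\sum_{g \ge h}\frac{g!}{h!}\, L_{n-1}^{(g)},
\]
with base cases $L_0^{(0)}=1$, $L_0^{(h)}=0$ for $h\ge 1$, and $L_{n-1}^{(-1)}=0$.

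The key step is to introduce the auxiliary sum $M_n^{(k)} := \sum_{h \ge 0}\frac{(h+k)!}{k!}\,L_n^{(h)}$ for each integer $k \ge 0$. Substituting the recurrence, re-indexing the first term via $h \mapsto h+1$ gives $(k+1)\,M_{n-1}^{(k+1)}$; in the second (double) sum, swapping the order of summation and using the hockey-stick identity $\sum_{h=0}^{g}\binom{h+k}{k} = \binom{g+k+1}{k+1}$ yields $(a+1)\,M_{n-1}^{(k+1)}$. Adding the two contributions, we obtain the clean recursion
\[
M_n^{(k)} = (a+k+2)\,M_{n-1}^{(k+1)}.
\]
Since $M_0^{(k)} = 1$ for every $k\ge 0$, iteration gives $M_n^{(k)} = (a+k+2)_n$.

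Finally, setting $h = 0$ in the original recurrence produces $L_n^{(0)} = (a+1)\sum_{g\ge 0} g!\, L_{n-1}^{(g)} = (a+1)\,M_{n-1}^{(0)} = (a+1)\,(a+2)_{n-1} = (a+1)_n$, which proves the proposition. The main conceptual obstacle is spotting the correct auxiliary statistic: the weight $(h+k)!/k!$ is precisely what converts the awkward $g!/h!$ label factor into a hockey-stick-summable expression. A direct bijection with $S_n$ respecting the cycle statistic, suggested by $(a+1)_n = \sum_{\sigma \in S_n}(a+1)^{c(\sigma)}$, would also be natural to pursue, but any column-insertion scheme must bookkeep the $V$-labels consistently, which looks technically delicate, so the transfer-matrix computation is the cleaner route.
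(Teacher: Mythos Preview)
Your proof is correct. The column decomposition, the recurrence for $L_n^{(h)}$, the hockey-stick manipulation for $M_n^{(k)}$, and the final evaluation all check out; note also that $L_n^{(h)}=0$ for $h>n$, so all the infinite sums are in fact finite.

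Your approach, however, is genuinely different from the paper's. The paper proves the identity by constructing an explicit bijection $\phi:\LH_n\to\Sym_n$ with $H(\pi)=\cyc(\phi(\pi))$ and then invoking the classical identity $\sum_{\sigma\in\Sym_n}(a+1)^{\cyc(\sigma)}=(a+1)_n$. Concretely, each horizontal step of $\pi$ opens a new cycle (headed by the $x$-coordinate of that step), and the labels on the run of $V$-steps immediately following it successively select which of the remaining ``available'' integers to adjoin to that cycle. This is exactly the bijective route you mention at the end as ``technically delicate,'' but in fact the bookkeeping is quite clean and the paper carries it out in a few lines. What the bijection buys is a direct combinatorial correspondence that refines the identity statistic by statistic and serves as a template for the more elaborate Meixner case treated next. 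What your transfer-matrix argument buys is a purely algebraic proof that avoids inventing and verifying a bijection; the trick of weighting by $(h+k)!/k!$ to turn the awkward $g!/h!$ factor into a hockey-stick sum is a nice device in its own right.
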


Recall the well known result \cite[1.3.7~Proposition]{EC1}
\[
  \sum_{\sigma\in\Sym_n} (a+1)^{\cyc(\sigma)} = (a+1)_n,
\]
where $\Sym_n$ is the set of permutations on $[n]$ and \( \cyc(\sigma) \) is the
number of cycles in \( \pi \). To show the above proposition we give a bijection
$\phi:\LH_n\to \Sym_n$ such that \( H(\pi) = \cyc(\phi(\pi))\).

Let $\pi\in\LH_n$. Then $\phi(\pi)$ is the permutation in $\Sym_n$ constructed
as follows. The basic idea is to create a cycle for each horizontal step of
$\pi$ and the vertical down steps following immediately after that.
\begin{itemize}
\item First, consider the leftmost horizontal step. Suppose there are $k$
  vertical down steps, labeled $v_1,\dots,v_k$, following this horizontal step. If
  the horizontal step is between the lines $x=i-1$ and $x=i$, create a cycle
  starting with $i$. Then for $j=1,2,\dots,k$, add at the end of the cycle the
  $v_j$th smallest integer in $[i]$ that have not been used. This creates a
  cycle of length $k+1$ with largest integer $i$.
\item For each of the remaining horizontal steps, from left to right, repeat the
  above process.
\end{itemize}

For example, let $\pi$ be the type $R_I$ Laguerre history in
Figure~\ref{fig:lag}. Then the corresponding permutation $\phi(\pi)$, in cycle
notation, is given by
\[
\phi(\pi) = (4,2,3) (8) (9,7,1) (10) (12) (13,5,11,6).
\]

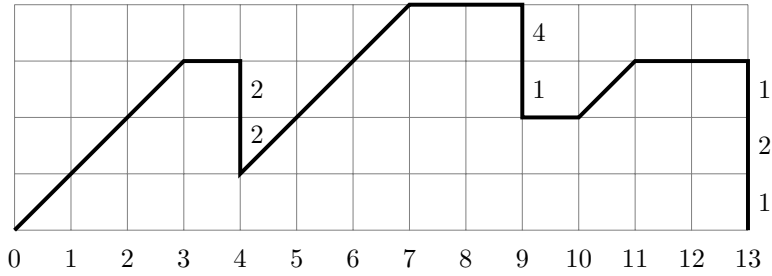
\begin{figure}
  \centering
\begin{tikzpicture}[scale=0.75]
    \draw[help lines] (0,0) grid (13,4);
    \foreach \x in {0,...,13} \draw node at (\x,-.5) {$\x$};
    \draw[line width = 1.5pt] (0,0) -- (3,3) --++ (1,0) --++ (0,-2) --++(3,3) --++ (2,0) --++(0,-2)
    --++(1,0) --++(1,1) --++(2,0) --++(0,-3);
    \node at (4.3,2.5) {$2$};
    \node at (4.3,1.7) {$2$};
    \node at (9.3,3.5) {$4$};
    \node at (9.3,2.5) {$1$};
    \node at (13.3,2.5) {$1$};
    \node at (13.3,1.5) {$2$};
    \node at (13.3,.5) {$1$};
\end{tikzpicture}
  \caption{A type $R_I$ Laguerre history of length $13$.}
  \label{fig:lag}
\end{figure}

It is easy to check that the map $\phi$ is a bijection such that if
$\phi(\pi)=w$, then $H(\pi)$ is equal to the number of cycles in $w$. This proves
Proposition~\ref{prop:LH}.

\subsection{Type \( R_I \) Meixner polynomials}
\aftersubsection

For the Meixner polynomials, the situation is similar to the 
Laguerre polynomials. The moments for the classical Meixner polynomials are
\begin{equation}\label{eq:mei_mo}
\mu_n = \sum_{j=1}^n S(n,j) (b)_j \left( \frac{c}{1-c}\right)^j,
\end{equation}
which involves set partitions and permutations as the fundamental combinatorial objects.

The type $R_I$ Meixner polynomials in Section~\ref{sec:meixner-polynomials} retain 
these moments, but with different paths and weights. In this section we develop a
\emph{type $R_I$ Meixner history} to prove \eqref{eq:mei_mo}. 
To simplify matters we reformulate the formula for the moments. Let
$d=c/(1-c)$, so that
\begin{equation}\label{eq:bala3}
b_n = n-dn+bd-d, \qquad a_n = nd, \qquad \lambda_n = bdn-dn^2.
\end{equation}

\begin{prop}\label{prop:meixner}
  Let $b_n,a_n$, and $\lambda_n$ be given by \eqref{eq:bala3}. Then
\[
\mu_n = \sum_{j=1}^n S(n,j) (b)_j d^j.
\]
\end{prop}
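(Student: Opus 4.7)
The plan is to parallel the argument for the type $R_I$ Laguerre polynomials (Proposition~\ref{prop:laguerre}), passing from the raw Motzkin--Schr\"oder path model to a type $R_I$ Meixner history and then bijecting to a labeled set-partition model. By Corollary~\ref{cor:R1cfrac}, $\mu_n=\sum_{\pi\in\MS_n}\wt(\pi)$ with the weights $b_k=k(1-d)+(b-1)d$, $a_k=kd$, and $\lambda_k=kd(b-k)$ from \eqref{eq:bala3}.

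First I would reduce $\MS_n$ to a smaller family on which the effective weights are manifestly positive. As in Lemma~\ref{lem:wt=wt'}, I would glue each horizontal step $H$ at height $k$ to the $(U,V)$ peak based at the same height, since the two patterns share endpoints; the combined weight $b_k+a_{k+1}=k+bd$ then splits cleanly into ``insert the new point into one of $k$ existing blocks'' ($k$) or ``open a new block of weight $b$'' ($bd$), with the factor $d$ in the latter marking a block creation. A second, analogous merger is needed to absorb the signed weight $\lambda_k=kd(b-k)$: I plan to pair $(U,D)$ peaks based at height $k$ with $D$ steps starting at height $k+1$, so that after cancellation only clean positive contributions of the form $k\cdot d$ or $b\cdot d$ survive on the reduced paths.

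Next I would define a \emph{type $R_I$ Meixner history} of length $n$ as a canonical representative of each equivalence class, namely a reduced Motzkin--Schr\"oder path in which each down step at height $k$ is labeled by an integer in $\{1,\dots,k\}$ (recording which currently open block is closed), and each surviving $H$ step at height $k$ is either labeled by an integer in $\{1,\dots,k\}$ (``add a point to an existing block'') or marked as a ``new block'' contribution of weight $bd$. The sum of weights of these histories equals $\mu_n$ by the mergers above. I would then construct a bijection from Meixner histories to pairs $(\Pi,\sigma)$, where $\Pi$ is a set partition of $[n]$ into $j$ blocks and $\sigma\in\Sym_j$, weighted by $d^j\cdot b^{\cyc(\sigma)}$, using the standard Flajolet ``read left to right'' assignment: up steps and new-block $H$ steps open blocks, down steps close them according to their labels, and old-block $H$ steps attach a point to an already-open block. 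Summing $b^{\cyc(\sigma)}$ over $\sigma\in\Sym_j$ yields $(b)_j$, whence $\mu_n=\sum_{j\ge0}S(n,j)(b)_j d^j$.

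The main obstacle is the second merger: unlike the Laguerre case where $\lambda_k=0$ makes diagonal down steps vanish, here $\lambda_k=kd(b-k)$ is not a positive monomial in $b$ and $d$ and in fact becomes negative once $k>b$. Identifying the correct signed cancellation on $\MS_n$ so that every residual weight factors cleanly into a block-opening part (proportional to $bd$) and a block-closing part (proportional to $d$), while remaining compatible with the $(U,V)$-peak/$H$-step merger and avoiding double counting, is the delicate technical point. Once this cancellation is in place, the bijection to labeled set partitions is essentially routine and mirrors the type $R_I$ Laguerre construction.
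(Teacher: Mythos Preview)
Your high-level plan is the paper's: two successive mergers on $\MS_n$ to reach a positive model, then a ``type $R_I$ Meixner history,'' then a bijection to pairs $(\Pi,\sigma)$ with $\Pi$ a set partition of $[n]$ and $\sigma$ a permutation of its blocks, weighted $b^{\cyc(\sigma)}d^{|\Pi|}$. The first merger (folding $(U,V)$ peaks into $H$ steps to get effective horizontal weight $k+bd$) is exactly Lemma~\ref{lem:wt=wt'2}.

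The genuine gap is the second merger. Pairing $(U,D)$ peaks based at height $k$ with lone $D$ steps from height $k+1$ cannot work: a $(U,D)$ peak runs from $(x,k)$ to $(x+2,k)$ while a $D$ step runs from $(x,k+1)$ to $(x+1,k)$, so they do not share endpoints and there is no equivalence class on which to sum their weights. The correct move, and the idea you are missing, is to merge each $D$ step with the pair $(H,V)$ having the \emph{same} endpoints (Lemma~\ref{lem:wt=wt'3}). After the first merger, at height $h$ one gets
\[
\underbrace{(h+bd)\cdot hd}_{(H,V)}\;+\;\underbrace{hd(b-h)}_{D}\;=\;(bd+b)\cdot hd,
\]
so on the peakless, $D$-free representatives the clean weight $\wt''$ emerges: an $H$ at height $h$ has weight $bd+b$ if followed by a $V$ and $bd+h$ otherwise, while each $V$ from height $h$ has weight $hd$. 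This is the positive factorization you were seeking; it does not come from anything involving $(U,D)$.

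Your bijection sketch is also incomplete. A standard Flajolet left-to-right reading that opens, grows, and closes blocks yields only the set partition $\Pi$; it does not manufacture the permutation $\sigma$ carrying the factor $b^{\cyc(\sigma)}$ needed for $(b)_j$. In the paper's bijection $\psi$, each horizontal step together with the run of $V$ steps immediately after it builds one \emph{cycle of blocks}: a non-labeled $H$ (weight $bd$) opens a fresh block as cycle leader, an $H$ labeled $0$ (weight $b$) designates an existing block as leader, and the labeled $V$'s (weight $d$ each) successively append available blocks to that cycle; an $H$ labeled $j\ge 1$ merely grows a block and creates no cycle. With this, the exponent of $d$ is the number of blocks and the exponent of $b$ is the number of cycles, as required. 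Without this cycle-building mechanism your construction cannot account for $(b)_j$.
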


In this subsection we give a combinatorial proof of
Proposition~\ref{prop:meixner}. 

For $\pi\in\MS_n$ define $\wt(\pi)$ to be the weight of $\pi$ with
respect to the weights in Proposition~\ref{prop:meixner}. Then a combinatorial restatement of 
Proposition~\ref{prop:meixner} is Proposition~\ref{prop:meixner2}.

\begin{prop}\label{prop:meixner2}
  We have
\[
\sum_{\pi\in \MS_n} \wt (\pi)= \sum_{j=1}^n S(n,j) (b)_j d^j.
\]
\end{prop}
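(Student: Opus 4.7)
The plan is to follow the pattern of the Laguerre proof (Proposition~10.2). Corollary~3.4 gives $\mu_n=\sum_{\pi\in\MS_n}\wt(\pi)$, so it suffices to construct a weight-preserving bijection between decorated Motzkin--Schr\"oder paths of length $n$ and pairs consisting of a set partition of $[n]$ together with a labeling whose generating function is $\sum_j S(n,j)(b)_j d^j$.

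First, I would absorb $(U,V)$ peaks into horizontal steps as in the proof of Lemma~10.1. A peak $(U,V)$ whose $U$ starts at height $h$ has weight $a_{h+1}=(h+1)d$, while a horizontal step at height $h$ has weight $b_h=h(1-d)+(b-1)d$, so their sum is $h+bd$. Since both $(U,V)$ and $H$ go from $(x,h)$ to $(x+1,h)$, the equivalence relation on $\MS_n$ that swaps $H$ with $(U,V)$ at any position has as representatives the subset $\Sch''_n\subseteq\MS_n$ of paths with no $(U,V)$ peak, and
\[
\sum_{\pi\in\MS_n}\wt(\pi)=\sum_{\pi\in\Sch''_n}\wt''(\pi),
\]
where $\wt''$ assigns weight $h+bd$ to each horizontal step at height $h$ (other step weights unchanged).

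Next I would introduce a \emph{Type~$R_I$ Meixner history}: a path $\pi\in\Sch''_n$ together with decorations that decompose each step weight---each $H$ at height $h$ is either \emph{plain} (labeled by an integer in $[h]$, weight $1$) or \emph{colored} (labeled by an integer in $[b]$, weight $d$); each $V$ at height $h$ carries a label in $[h]$ (weight $d$); each $D$ at height $h$ carries a pair in $[h]\times[b-h]$ (weight $d$). Then $\mu_n$ equals the sum of $d^e$ over all histories, where $e$ counts the $d$-contributing decorations; since each $U$ is matched by either a $V$ or a $D$, one has $e=u+k$ with $u$ the number of $U$'s and $k$ the number of colored $H$'s. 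A bijection with labeled set partitions is then constructed by the familiar rules: $U$ opens a new block with the current element; plain $H$ with label $\alpha$ adds the current element to the $\alpha$-th currently open block; colored $H$ creates a singleton; $D$ closes an open block after adding the current element; $V$ closes an open block without adding an element. This produces a partition $\Pi=\{B_1,\dots,B_j\}$ of $[n]$ into $j=e$ blocks ordered by smallest element, and the remaining decoration data is repackaged as a label sequence $(c_1,\dots,c_j)$ with $c_i\in[b+i-1]$.

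The main obstacle is constructing the label sequence $(c_1,\dots,c_j)$ consistently. Because blocks closed by a vertical down step receive no "extra" decoration, one cannot simply read off one decoration per block; instead, the additional values $\{b+1,\dots,b+i-1\}$ available to $c_i$ must encode structural choices about the $i$-th block (whether it is a colored-$H$ singleton, closed by a $D$ with a chosen $\beta\in[b-h]$, or $U$-opened and $V$-closed). Verifying that such an encoding is bijective---equivalently, that for each set partition $\Pi$ with $j$ blocks the total count of histories mapping to $\Pi$ is precisely $(b)_j$---is the technical heart of the argument. The case $n=2$ already illustrates the mechanism: the partition $\{\{1\},\{2\}\}$ is realized by $b^2$ histories on $HH$ (both colored) together with $b$ histories on $UH_{\mathrm{colored}}V$, totaling $b(b+1)=(b)_2$.
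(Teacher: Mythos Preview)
Your first reduction---absorbing each $(U,V)$ peak into a horizontal step to pass from $\MS_n$ to peak-free paths with horizontal weight $h+bd$---is correct and matches the paper (Lemma~\ref{lem:wt=wt'2}).  After that, however, your argument is incomplete in the sense you yourself flag: you describe a map from decorated paths to set partitions but never construct the promised label sequence $(c_1,\dots,c_j)$ with $c_i\in[b+i-1]$, and you concede that verifying bijectivity is ``the technical heart of the argument.''  A single check at $n=2$ does not substitute for this.  Concretely, the path shape is \emph{not} determined by the partition alone (a non-singleton block can be closed by a $D$ at its maximum or by a $V$ sometime after its maximum; a singleton can arise from a colored $H$ or from a $U$ later closed by $V$), so you must exhibit, for each fixed partition with $j$ blocks, an explicit bijection between all histories producing it and the set $\prod_{i=1}^{j}[b+i-1]$.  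You have not done this.  A secondary issue is that your label sets $[b]$ and $[b-h]$ tacitly assume $b$ is an integer exceeding every height; this is harmless if one invokes polynomiality, but it should be said.

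For comparison, the paper takes a different route after the peak absorption.  It performs a \emph{second} contraction, merging each diagonal down step $D$ into an $(H,V)$ pair (Lemma~\ref{lem:wt=wt'3}); this eliminates $D$ entirely and produces peak-free Schr\"oder paths whose horizontal-step weight depends on whether the next step is $V$.  The resulting histories are then matched not with sequences $(c_i)\in\prod[b+i-1]$ but with pairs $(P,\sigma)$ of a set partition $P$ and a permutation $\sigma$ of its blocks, using the identity $(b)_j=\sum_{\sigma\in\Sym_j}b^{\cyc(\sigma)}$.  In that encoding the horizontal steps create cycles (not merely blocks), and the vertical-down labels build the cycle structure directly, so the bijection can be written down and inverted explicitly.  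If you want to salvage your $\prod(b+i-1)$ encoding you will need an analogous explicit construction; otherwise, the paper's two-step contraction and $(P,\sigma)$ model avoids precisely the obstacle you identified.
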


Let $\MS_n'$ denote the set of Motzkin-Schr\"oder paths $\pi\in \MS_n$ that
contain no peaks $(U,V)$. For $\pi\in \MS_n'$, define $\wt'(\pi)$ to be the
weight of $\pi$ with respect to 
\[
b_n = n+bd, \qquad a_n=nd, \qquad \lambda_n=bdn-dn^2.
\]
Then the following lemma is proved by the same argument as in the proof of
Lemma~\ref{lem:wt=wt'}.

\begin{lem}\label{lem:wt=wt'2}
  We have
\[
\sum_{\pi\in \MS_n} \wt (\pi) =\sum_{\pi\in \MS'_n} \wt' (\pi).
\]
\end{lem}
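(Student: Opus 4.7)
The plan is to mirror the proof of Lemma~\ref{lem:wt=wt'} almost verbatim. First I would define an equivalence relation $\sim$ on $\MS_n$ by declaring $\pi\sim\sigma$ whenever one path is obtained from the other by a finite sequence of local swaps that exchange a peak $(U,V)$ with a horizontal step $H$ (and vice versa). Since a $(U,V)$-peak starting at height $h$ and a horizontal step at height $h$ share the same starting and ending points, each swap preserves the endpoints and non-negativity of the path, so $\sim$ really is an equivalence relation on $\MS_n$. By construction, $\MS'_n$ (paths with no $(U,V)$-peak) is a system of representatives: each class contains a unique element obtained by systematically replacing every $(U,V)$-peak with the corresponding $H$.

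The key computation is then to check, for each representative $\pi\in\MS'_n$, the weight-aggregation identity
\[
\sum_{\sigma\sim\pi}\wt(\sigma)=\wt'(\pi).
\]
Each horizontal step of $\pi$ at height $h$ can independently be either kept (contributing weight $b_h$) or replaced by a $(U,V)$-peak starting at height $h$ (contributing weight $1\cdot a_{h+1}$). The up steps, diagonal down steps, and original vertical down steps of $\pi$ contribute the same factor throughout the class. So it suffices to verify, for every $h\ge 0$, that $b_h+a_{h+1}=b'_h$ under the Meixner weights \eqref{eq:bala3}. This is a one-line check:
\[
b_h+a_{h+1}=(h-dh+bd-d)+(h+1)d=h+bd=b'_h,
\]
exactly the horizontal weight used to define $\wt'$. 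Summing over $\pi\in\MS'_n$ then yields the claimed equality.

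The main (and only) obstacle is bookkeeping: one must ensure that $\MS'_n$ really does pick out one element per class (there is no interaction between the swaps, since distinct horizontal steps occupy disjoint $x$-intervals) and that diagonal down steps and vertical down steps which are \emph{not} part of a peak are left untouched by the equivalence, so their contributions $\lambda_\bullet$ and $a_\bullet$ factor out cleanly and agree with $\wt'$. Once that is noted, the argument is essentially identical to the Laguerre case, with the Laguerre weight identity $(n+1)+(a-n)=a+1$ replaced by the Meixner identity $b_h+a_{h+1}=b'_h$ above.
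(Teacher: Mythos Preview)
Your proposal is correct and follows exactly the approach the paper intends: the paper's proof of Lemma~\ref{lem:wt=wt'2} is simply the one-line remark that it is ``proved by the same argument as in the proof of Lemma~\ref{lem:wt=wt'},'' and your write-up spells out precisely that argument, including the key Meixner identity $b_h+a_{h+1}=(h-dh+bd-d)+(h+1)d=h+bd=b'_h$.
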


Let $\MS_n''$ denote the set of Motzkin-Schr\"oder paths $\pi\in \MS'_n$ that
contain no diagonal down steps. Note that $\MS_n''\subset\Sch_n$. For $\pi\in \MS_n''$,
define $\wt''(\pi)$ to be the product of the weight of each step, where the
weight of a step starting at height $n$ is given by
\begin{itemize}
\item $1$ if the step is an up step,
\item $nd$ if the step is a vertical down step,
\item $bd+n$ if the step is a horizontal step not followed by a vertical down step, and
\item $bd+b$ if the step is a horizontal step followed by a vertical down step.
\end{itemize}

By defining a relation $\pi \sim \sigma$ if $\pi$ is obtained from $\sigma$ by a
sequence of replacing a pair $(H,V)$ with a diagonal down step $D$ or vice versa, we
similarly obtain the following lemma.

\begin{lem}\label{lem:wt=wt'3}
  We have
\[
\sum_{\pi\in \MS'_n} \wt' (\pi) =\sum_{\pi\in \MS''_n} \wt'' (\pi).
\]
\end{lem}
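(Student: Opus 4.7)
The plan is to mirror the proof of Lemma~\ref{lem:wt=wt'}: introduce the equivalence relation $\sim$ on $\MS'_n$ in which $\pi\sim\sigma$ iff one path is obtained from the other by a finite sequence of local moves, each of which swaps a consecutive pair $(H,V)$ starting at some height $h$ with a single diagonal down step $D$ starting at height $h$ (or vice versa). First I verify that $\sim$ is well-defined on $\MS'_n$: the swap preserves the start and end points (both $(H,V)$ and $D$ go from $(x,h)$ to $(x+1,h-1)$), and it preserves the no-$(U,V)$-peak condition, since any $V$ created by such a swap is by construction immediately preceded by its partner $H$, never by a $U$; conversely, contracting an $(H,V)$ pair to a $D$ can at worst create a $(U,D)$ peak, which is permitted in $\MS'_n$.

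Next, each equivalence class contains a unique representative in $\MS''_n$, obtained by replacing every $D$ step by an $(H,V)$ pair. Conversely, given $\pi^* \in \MS''_n$ with $k$ (necessarily disjoint) consecutive $(H,V)$ pairs, its equivalence class consists of the $2^k$ paths obtained by independently choosing, for each pair, whether to keep it or to contract it to a diagonal step.

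The proof then reduces to a local weight identity at each swap position of height $h$:
\[
(h+bd)(hd) + hd(b-h) = bhd(d+1) = (bd+b)(hd),
\]
where the left-hand side is the $\wt'$-contribution of the two alternatives (keep the pair $(H,V)$ weighted $(h+bd)(hd)$, or contract to a $D$ weighted $hd(b-h) = \lambda_h$), and the right-hand side is the $\wt''$-contribution of an $H$ followed by a $V$ at height $h$. Every other step contributes identically to $\wt'$ and $\wt''$: up steps have weight $1$, vertical down steps at height $h$ have weight $hd$, and an isolated horizontal step (not followed by $V$) at height $h$ has weight $bd+h$ under both weightings. Factoring out these common contributions shows
\[
\sum_{\pi\,\sim\,\pi^*} \wt'(\pi) \;=\; \wt''(\pi^*),
\]
and summing over the equivalence classes $\pi^* \in \MS''_n$ gives Lemma~\ref{lem:wt=wt'3}.

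The main thing to be careful about is simply the bookkeeping that the local swap stays inside $\MS'_n$ (so that the equivalence relation really partitions $\MS'_n$, not some larger set), but as noted this is immediate because a newly introduced $V$ step is preceded by its partner $H$. The calculation itself is the short identity above, which is exactly the algebraic reason the modified weight $bd+b$ (using the parameter $b$, not the height $h$) appears in the definition of $\wt''$ for an $H$ followed by a $V$.
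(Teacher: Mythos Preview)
Your proof is correct and follows exactly the approach the paper indicates: the paper's proof is literally the one sentence ``By defining a relation $\pi \sim \sigma$ if $\pi$ is obtained from $\sigma$ by a sequence of replacing a pair $(H,V)$ with a diagonal down step $D$ or vice versa, we similarly obtain the following lemma,'' and you have supplied the details of that argument, including the crucial local identity $(h+bd)(hd)+hd(b-h)=(bd+b)(hd)$ that explains the otherwise mysterious weight $bd+b$ in the definition of $\wt''$.
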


\begin{defn}
A \emph{type $R_I$ Meixner history} of length $n$ is a path $\pi\in\MS''_n$
together with a labeling such that 
\begin{itemize}
\item each vertical down step starting at height $h$ is labeled by an integer in
  $\{1,2,\dots,h\}$,
\item each horizontal step followed by a vertical down step is not labeled or is
  labeled by $0$, and
\item each horizontal step not followed by a vertical down step and starting at
  height $h$ is not labeled or is labeled by an integer in $\{1,2,\dots,h\}$.
\end{itemize}
\end{defn}

Let $\MH_n$ denote the set of type $R_I$ Meixner histories of length $n$.

For $\pi\in\MH_n$, define $\wt(\pi)$ to be the product of the weight of each
step defined as follows:
\begin{itemize}
\item An up step has weight $1$.
\item A vertical down step has weight $d$.
\item A non-labeled horizontal step has weight $bd$.
\item A horizontal step labeled $0$ has weight $b$.
\item A horizontal step labeled $i$, for $i\ge1$, has weight $1$.
\end{itemize}

By definition it is clear that
\begin{equation}
  \label{eq:MH}
 \sum_{\pi\in \MS''_n} \wt'' (\pi) = \sum_{\pi\in \MH_n} \wt(\pi).
\end{equation}

By Lemmas~\ref{lem:wt=wt'2} and \ref{lem:wt=wt'3} and \eqref{eq:MH},
Proposition~\ref{prop:meixner2} is equivalent to
\begin{equation}\label{eq:Sbd}
 \sum_{\pi\in \MH_n} \wt(\pi)= \sum_{j=1}^n S(n,j) (b)_j d^j.
\end{equation}

Let $S_n$ denote the set of pairs $(P,\sigma)$ of a set partition
$P=\{B_1,\dots,B_k\}$ of $[n]$ and a permutation $\sigma$ of the blocks
$B_1,\dots,B_k$ of $P$. For $(P,\sigma)\in S_n$, define
\[
\wt(P,\sigma) = b^{\cyc(\sigma)} d^{|P|},
\]
where $\cyc(\sigma)$ is the number of cycles in $\sigma$ and $|P|$ is the number
of blocks in $P$.
Since $\sum_{\pi\in \Sym_n} b^{\cyc(\pi)} = (b)_n$, we have
\begin{equation}\label{eq:P,sigma}
\sum_{j=1}^n S(n,j) (b)_j d^j = \sum_{(P,\sigma)\in S_n} \wt(P,\sigma).
\end{equation}

Using \eqref{eq:P,sigma} we can rewrite \eqref{eq:Sbd} as
\begin{equation}
  \label{eq:wt=wt}
 \sum_{\pi\in \MH_n} \wt(\pi)= \sum_{(P,\sigma)\in S_n} \wt(P,\sigma) .
\end{equation}
To prove \eqref{eq:wt=wt} we find a weight-preserving bijection $\psi:\MH_n\to
S_n$.

Let $\pi\in\MH_n$. Then $\pi$ has exactly $n$ steps which are not vertical down steps, say
$A_1,\dots,A_n$ from left to right. Note that the ending point of $A_i$ has
$x$-coordinate $i$. We create \emph{available blocks} and \emph{cycles of
  blocks} as follows. We will use the convention that once an available block is
used as an element of a cycle the block is no longer available. Moreover, if
there are several available blocks, these are ordered by their smallest
elements.
\begin{itemize}
\item Initially there are no available blocks and no cycles of blocks.
\item For $i=1,2,\dots,n$, do the following procedure:
  \begin{description}
  \item[Case 1] $A_i$ is an up step. In this case we create a new available block containing a single element $i$.
  \item[Case 2] $A_i$ is a horizontal step not followed by a vertical down step.
    \begin{description}
    \item[Case 2-a] $A_i$ is non-labeled. In this case create a new available block
      containing a single element $i$ and make a cycle consisting only of this
      block.
    \item[Case 2-b] $A_i$ is labeled by $j\ge1$. In this case insert the integer $i$
      in the $j$-th available block.
    \end{description}
  \item[Case 3] $A_i$ is a horizontal step followed by a vertical down step.
    \begin{description}
    \item[Case 3-a] $A_i$ is non-labeled. In this case create a new
      available block containing a single element $i$ and make a cycle starting
      with this block. Suppose that $A_i$ is followed by $k$ vertical down steps
      labeled $r_1,r_2,\dots,r_k$. Then add the $r_1$-th available block at the
      end of the cycle, add the $r_2$-th available block at the end of the
      cycle, and so on. After this process we obtain a cycle consisting of $k+1$
      blocks.
    \item[Case 3-b] $A_i$ is labeled by $0$. Suppose that $A_i$ is followed by
      $k$ vertical down steps labeled $r_1,r_2,\dots,r_k$. First insert $i$ in
      the $r_1$-th available block and then create a new cycle starting with
      this block. Then, as in the previous case, add the $r_2$-th available
      block at the end of the cycle, add the $r_3$-th available block at the end
      of the cycle, and so on. After this process we obtain a cycle consisting
      of $k$ blocks.
    \end{description}
  \end{description}
\end{itemize}

For example, if $\pi$ is the Meixner history in Figure~\ref{fig:MH},
then the cycles of $\psi(\pi)$ are created as in Table \ref{tab:MH} and we get
\[
\psi(\pi) = (\{3,4\}, \{1\}) (\{7\}) (\{8\}, \{5,6\})
(\{12\}, \{11\}, \{9\}, \{10\})
(\{13,14\}, \{2\}).
\]

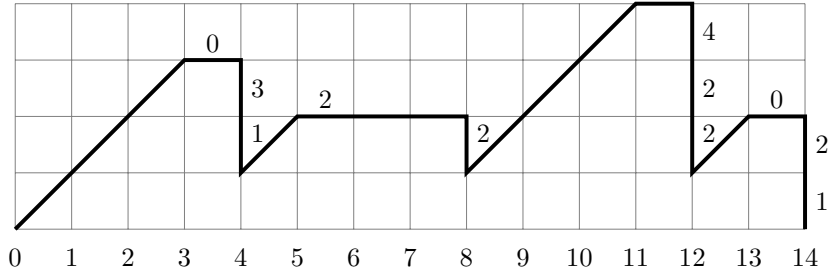
\begin{figure}
  \centering
 \begin{tikzpicture}[scale=0.75]
    \draw[help lines] (0,0) grid (14,4);
    \foreach \x in {0,...,14} \draw node at (\x,-.5) {$\x$};
    \draw[line width = 1.5pt] (0,0) -- (3,3) --++ (1,0) --++ (0,-2) --++(1,1) --++(3,0)
    --++(0,-1) --++(3,3) --++(1,0) --++(0,-3) --++(1,1) --++(1,0) --++(0,-2);
    \node at (3.5,3.3) {$0$};
    \node at (4.3,2.5) {$3$};
    \node at (4.3,1.7) {$1$};
    \node at (5.5,2.3) {$2$};
    \node at (8.3,1.7) {$2$};
    \node at (12.3,3.5) {$4$};
    \node at (12.3,2.5) {$2$};
    \node at (12.3,1.7) {$2$};
    \node at (13.5,2.3) {$0$};
    \node at (14.3,1.5) {$2$};
    \node at (14.3,.5) {$1$};
\end{tikzpicture}
  \caption{A type $R_I$ Meixner history of length $14$.}
  \label{fig:MH}
\end{figure}

\begin{table}
  \centering
\begin{tabular}{| l | l | l |} \hline
  steps except vertical down steps & available blocks & new cycles of blocks \\ \hline
  $A_1$ & $\{1\}$ & \\
  $A_2$ & $\{1\}, \{2\}$ & \\
  $A_3$ & $\{1\}, \{2\}, \{3\}$ & \\
  $A_4$ & $\{1\}, \{2\}, \{3\}$ & $(\{3,4\}, \{1\})$\\
  $A_5$ & $\{2\}, \{5\}$ & \\
  $A_6$ & $\{2\}, \{5,6\}$ & \\
  $A_7$ & $\{2\}, \{5,6\}$ & $(\{7\})$\\
  $A_8$ & $\{2\}, \{5,6\}$ & $(\{8\}, \{5,6\})$\\
  $A_9$ & $\{2\}, \{9\}$ & \\
  $A_{10}$ & $\{2\}, \{9\}, \{10\}$ & \\
  $A_{11}$ & $\{2\}, \{9\}, \{10\}, \{11\}$ & \\
  $A_{12}$ & $\{2\}, \{9\}, \{10\}, \{11\}$ & $(\{12\}, \{11\}, \{9\}, \{10\})$\\
  $A_{13}$ & $\{2\}, \{13\}$ & \\
  $A_{14}$ & $\{2\}, \{13\}$ & $(\{13,14\}, \{2\})$\\ \hline
\end{tabular}
\medskip
  \caption{The process of the map $\psi$ for each step $A_i\ne V$.}
  \label{tab:MH}
\end{table}

It is straightforward to check that the map $\psi:\MH_n\to S_n$ is a
weight-preserving map. To prove that this is a bijection we describe its
inverse.

Let \( \tau\in S_n \). Then we construct the corresponding type \( R_I \)
Meixner history \( \pi\in \MH_n \) by adding one or more steps at a time. For \(
1\le i\le n \), a block \( B \) in \( \tau \) is called an \emph{\( i
  \)-available block} if \( B \) contains an integer less than \( i \)
and the cycle containing \( B \) has an integer at least \( i \).

\begin{itemize}
\item Initially \( \pi \) is set to be the empty path.
\item For $i=1,2,\dots,n$, do the following procedure:
  \begin{description}
  \item[Case 1] \( i \) is contained in a block \( B \) and the cycle containing
    \( B \) has an integer greater than \( i \). In this case create an up step.
  \item[Case 2] \( (\{i\}) \) is a cycle in \( \tau \). Then create a
    non-labeled horizontal step.
  \item[Case 3] \( i \) is contained in a block \( B \) with \( \min(B)<i \) and
    the cycle \( \alpha\) containing \( B \) has an integer greater than \( i
    \). Let \( B_1,\dots,B_k \) be the \( i \)-available blocks with \(
    \min(B_1)<\dots<\min(B_k) \). If \( i\in B_j \), then we create a horizontal
    step labeled by \( j \).
  \item[Case 4] \( i \) is the largest integer in the cycle \( \alpha \)
    containing it and \( \alpha\ne (\{i\}) \).
    \begin{description}
    \item[Case 4-a] \( \{i\} \) is an element of the cycle \( \alpha \). Let \(
      B_1,\dots,B_k \) be the \( i \)-available blocks with \(
      \min(B_1)<\dots<\min(B_k) \). Then we can write \( \alpha= (\{i\},
      B_{s_1},\dots,B_{s_k}) \). Let \( r_1,\dots,r_k \) be the integers such
      that \( s_j \) is the \( r_j \)-th smallest integer among \(
      \{1,2,\dots,k\}\setminus\{s_1,\dots,s_{j-1}\} \). Then we create a
      non-labeled horizontal step followed by \( k \) vertical down steps
      labeled by \( r_1,\dots,r_k \).
    \item[Case 4-b] \( \{i\} \) is not an element of the cycle \( \alpha \). Let
      \( B_1,\dots,B_k \) be the \( i \)-available blocks with \(
      \min(B_1)<\dots<\min(B_k) \). Then we can write \( \alpha=
      (B_{s_1},\dots,B_{s_k}) \), where \( i\in B_{s_1} \). Let \( r_1,\dots,r_k
      \) be the integers such that \( s_j \) is the \( r_j \)-th smallest
      integer among \( \{1,2,\dots,k\}\setminus\{s_1,\dots,s_{j-1}\} \). Then we
      create a horizontal step labeled by \( 0 \) followed by \( k \) vertical
      down steps labeled by \( r_1,\dots,r_k \).
    \end{description}
  \end{description}
\end{itemize}

It is not hard to check that the above map \( \tau\mapsto \pi \) is the inverse
of $\psi$. Therefore $\psi:\MH_n\to S_n$ is a weight-preserving bijection, which
shows \eqref{eq:wt=wt}.

Finally we note that the $n$th moment of the Meixner polynomials has the
following formula due to Zeng \cite{Zeng1992} (see also de M\'edicis \cite[Theorem 2]{de_M_dicis_1998}):
\begin{equation}
    \label{eq:meixner2}
\mu_n = (1-c)^{-n} \sum_{\pi\in \Sym_n} b^{\cyc(\pi)} c^{\nexc(\pi)},
\end{equation}
where $\nexc(\pi)$ is the number of non-excedances of $\pi$, i.e., the number of
integers $i\in[n]$ with $\pi(i)<i$. Since the Meixner polynomials and the type
$R_I$ Meixner polynomials have the same $n$th moment, combining
Proposition~\ref{prop:meixner} and \eqref{eq:meixner2} yields the following
corollary.

  \begin{cor}
   We have 
\[
\sum_{\pi\in \Sym_n} b^{\cyc(\pi)} c^{\nexc(\pi)}=
\sum_{j=1}^n S(n,j) (b)_j c^j(1-c)^{n-j}.
\]
  \end{cor}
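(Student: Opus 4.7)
The proof is an immediate algebraic consequence of the two formulas for the $n$th moment $\mu_n$ of the (type $R_I$) Meixner polynomials that the paper has already established. The plan is simply to set these two formulas equal and clear denominators.

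First I would recall Proposition~\ref{prop:meixner}, which gives $\mu_n = \sum_{j=1}^n S(n,j)(b)_j d^j$ with $d = c/(1-c)$. Since the remark following the type $R_I$ Meixner polynomials asserts that the Meixner polynomials and the type $R_I$ Meixner polynomials share the same $n$th moment, this is also the formula for the classical Meixner $\mu_n$. Next I would quote Zeng's identity \eqref{eq:meixner2}, namely $\mu_n = (1-c)^{-n} \sum_{\pi\in \Sym_n} b^{\cyc(\pi)} c^{\nexc(\pi)}$.

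Equating the two expressions and substituting $d^j = c^j/(1-c)^j$ gives
\[
\sum_{j=1}^n S(n,j)(b)_j \frac{c^j}{(1-c)^j} = (1-c)^{-n} \sum_{\pi\in \Sym_n} b^{\cyc(\pi)} c^{\nexc(\pi)}.
\]
Multiplying through by $(1-c)^n$ yields exactly the claimed identity, since $(1-c)^n \cdot c^j/(1-c)^j = c^j(1-c)^{n-j}$.

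There is essentially no obstacle here; the entire content lives in the two inputs. The only care needed is to match the substitution $d = c/(1-c)$ correctly and to keep track of the factor $(1-c)^n$ arising from Zeng's normalization. All the combinatorial substance is already invested in the proof of Proposition~\ref{prop:meixner} (via the bijection $\psi: \MH_n \to S_n$) and in the proof of Zeng's theorem \eqref{eq:meixner2}.
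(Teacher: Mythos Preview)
Your proposal is correct and follows exactly the approach the paper uses: the corollary is stated immediately after \eqref{eq:meixner2} as the result of combining Proposition~\ref{prop:meixner} with Zeng's formula, substituting $d=c/(1-c)$, and clearing the factor $(1-c)^n$. There is nothing to add.
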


\section*{Acknowledgements}  

The authors would like to thank the anonymous referees,
who provided detailed suggestions and corrections.

\end{document}